\numberwithin{equation}{section}
\theoremstyle{plain} 
\newtheorem{thm}{Theorem}[section]
\newtheorem{cor}[thm]{Corollary}
\newtheorem{prop}[thm]{Proposition}
\newtheorem{lem}[thm]{Lemma}
\theoremstyle{definition}
\newtheorem{defi}[thm]{Definition}
\newtheorem{rem}[thm]{Remark}
\newtheorem{question}[thm]{Question}
\newtheorem{ex}[thm]{Example}
\newcommand{\h}{\widehat}
\newcommand{\cone}{\operatorname{cone}}
\newcommand{\Span}{\operatorname{span}}
\newcommand{\Sp}{\operatorname{Sp}}
\newcommand{\dprec}{\,\dot\prec\,}
\newcommand{\oo}{\infty}
\newcommand{\dep}{\textnormal{dp}}
\newcommand{\sgn}{\operatorname{sgn}}
\newcommand{\hQact}{\h Q_{\textnormal{act}}}
\newcommand{\Gen}{\operatorname{Gen}}
\newcommand{\Diag}{\operatorname{Diag}}
\newcommand{\Ef}{E_{\textnormal{f}}}
\newcommand{\Efcov}{E_{\textnormal{f}}^{\textnormal{\tiny cov}}}
\newcommand{\Ecov}{E^{\textnormal{\tiny cov}}}
\newcommand{\precf}{\,\prec_{\textnormal{f}}\,}
\newcommand{\dprecf}{\,\dot\prec_{\textnormal{f}}\,}
\newcommand{\Eelem}{E_{\textnormal{elem}}}
\newcommand{\PP}{\Phi^{+}}
\newcommand{\imc}{\mathcal Z}
\newcommand{\ol}{\overline}
\newcommand{\vphi}{\operatorname{\varphi}}
\newcommand{\Acc}{\operatorname{Acc}}
\newcommand{\al}{\alpha}
\newcommand{\bt}{\beta}
\newcommand{\g}{\gamma}
\newcommand{\e}{\epsilon}
\newcommand{\wt}{\widetilde}
\newcommand{\eset}{\varnothing}
\newcommand{\sneq}{\subsetneq}
\newcommand{\Id}{\text{\rm Id}}
\newcommand{\Int}{\mathbb {Z}}
\newcommand{\mpair}[1]{B(#1)} 
\newcommand{\Mpair}{B}
\newcommand{\mset}[1]{\set{\,#1\,}}
\newcommand{\pair}[1]{\langle #1\rangle}
\newcommand{\set}[1]{\{#1\}}
\newcommand{\seq}{\subseteq}
\newcommand{\sm}{\setminus}
\newcommand{\dist}{\operatorname{\textnormal{d}}\mathopen{}}
\newenvironment{conds}{
                       
                        \begin{enumerate} }
                     {\end{enumerate} }
\newenvironment{num}{
                      
                      \begin{enumerate} }
                    {\end{enumerate} }
 \newenvironment{subconds}{
                       
                        \begin{enumerate} }
                     {\end{enumerate} }
\newcommand{\Nat}{\mathbb {N}}
\newcommand{\real}{\mathbb{R}}
\newcommand\mc{\mathcal}
   \DeclareMathOperator{\aff}{\mathrm{aff}}
   \DeclareMathOperator{\conv}{\mathrm{conv}}
   \DeclareMathOperator{\ri}{{\mathrm{relint}}}
   \DeclareMathOperator{\rb}{{\mathrm{rb}}}
   \DeclareMathOperator{\ray}{{\mathrm{Ray}}}
\newcommand\G{{\Gamma}}
\newcommand\sreq{{\supseteq}}
 \newcommand\ek{{\epsilon'}}
\newcommand{\Eext}{E_{\mathrm{ext}}}
\newcommand{\Eexp}{E_{\mathrm{exp}}}
\newcommand{\tmax}{t_{\mathrm{max}}}
\newcommand{\tmin}{t_{\mathrm{min}}}
\newcommand{\wh}{\widehat}
\author[M.~Dyer]{Matthew Dyer}
\address[Matthew Dyer]{University of Notre Dame\\ Department of Mathematics\\
   255 Hurley Hall \\46556-4618, USA}
\email{dyer.1@nd.edu}
\author[C. Hohlweg]{Christophe~Hohlweg}
\address[Christophe Hohlweg]{Universit\'e du Qu\'ebec \`a Montr\'eal\\
LaCIM et D\'epartement de Math\'ematiques\\ CP 8888 Succ. Centre-Ville\\
Montr\'eal, Qu\'ebec, H3C 3P8\\ Canada}
\email{hohlweg.christophe@uqam.ca}
\urladdr{http://hohlweg.math.uqam.ca}
\author[V. Ripoll]{Vivien~Ripoll}
\address[Vivien Ripoll]{Fakult\"at f\"ur Mathematik, Universit\"at Wien\\ Oskar-Morgenstern-Platz 1\\ 1090 Wien\\Austria}
\email{vivien.ripoll@univie.ac.at}
\urladdr{http://www.normalesup.org/~vripoll}
\title[Imaginary cones and limit roots of infinite Coxeter groups]{Imaginary cones and limit roots of\\ infinite Coxeter groups}
\keywords{Coxeter group, root system, roots, limit point, accumulation set, elementary roots, small roots.}
\subjclass[2010]{Primary: 20F55: Secondary: 17B22, 37B05}
\begin{document}
\thanks{The second author is supported by a NSERC grant and the third author is supported by a postdoctoral fellowship from CRM-ISM and LaCIM}

\begin{abstract}
Let $(W,S)$ be an infinite Coxeter system. To each geometric representation of $W$ is associated a root system. While a root system lives in the positive side of the isotropic cone of its associated bilinear form, an imaginary cone lives in the negative side of the isotropic cone. Precisely on the isotropic cone, between root systems and imaginary cones, lives the set $E$ of limit points of the  directions of roots. In this article we study the close relations of the imaginary cone with the set~$E$, which leads to new fundamental results  about the structure of geometric representations of infinite Coxeter groups. In particular, we show that the $W$-action on $E$ is minimal and faithful, and that $E$ and the imaginary cone can be approximated arbitrarily well by sets of limit roots and imaginary cones of universal root subsystems of $W$, i.e., root systems for Coxeter groups without braid relations (the free object for Coxeter groups). Finally, we discuss open questions as well as  the possible relevance of our framework in other areas such as  geometric group theory.
\end{abstract}

\date{\today}


 \maketitle


\section{Introduction}\label{se:Intro}

Root systems are fundamental in the theory of Coxeter groups. 
Finite root systems and their associated finite Coxeter groups have received a lot of attention because of their fundamental role in the theories of semisimple complex Lie algebras and Lie groups, algebraic groups, quantum groups, regular polytopes, singularities, representations of quivers etc;  see for instance~\cite{bourbaki,humphreys,geck-pf,bjorner-brenti} and the references therein. 
This article rather focuses on infinite root systems (and their associated infinite Coxeter groups), for which many natural questions remain unexplored. 
Important results have been obtained  on the geometry and topology of infinite Coxeter groups; see for instance~\cite{davis,abramenko-brown} and the references therein. In particular, we mention the strong Tits' alternative of Margulis-Noskov-Vinberg (see \cite{noskovvinberg}), according to which any subgroup of a Coxeter group has a finite index subgroup which is either abelian or surjects onto a non-abelian free group. The approach used in this study  is very often related to the Tits cone, Coxeter complex or Davis complex, which are dual objects to root systems. On the other hand, root systems are natural objects to consider and they provide tools that are not provided by their dual counterparts. 
Infinite crystallographic root systems and Coxeter  groups have been  studied  
because of their natural association  with Lie algebras, Kac-Moody algebras 
and their generalizations; see for instance
 \cite{bourbaki,kac,moodypianzola,loos-neher:loc}.  Root systems of general 
 Coxeter groups  are also at the heart of fundamental work  such as B.~Brink 
 and R.~Howlett's proof that  Coxeter groups have an automatic
  structure~\cite{BH:aut} or D.~Krammer's work on the conjugacy problem for Coxeter groups~\cite{krammer}.

\smallskip

One of the main goals of  this article is  to better understand the geometric representations, and especially the  associated root systems, of infinite Coxeter groups.  A principal motivation  for that goal is the hope it  will lead to progress in  the study of reflection orders of Coxeter groups and their initial sections, which play a  significant role in relation to  Bruhat order and Iwahori-Hecke algebras  (see \cite{bjorner-brenti} for more details) and   conjecturally are  important  for associated representation categories. Despite many important potential applications, reflection orders and their initial sections are  poorly understood in  general, and   many  of their basic properties remain conjectural. For example, Conjecture 2.5 in  \cite{dyer:weak} suggests that  the initial sections, ordered by inclusion,  form a complete lattice  that may be viewed as a natural  ortholattice completion of weak order.
   In efforts to refine and prove    these conjectures for general Coxeter groups,  one fundamental difficulty is that  not  much   is known about how the roots of an infinite root system are geometrically distributed over   the space, and  it is our intention to begin to fill this gap. 
   Another motivation  is to study  discrete subgroups of isometries in quadratic spaces; for instance modules associated to geometric representations of $W$ are quadratic spaces and $W$ is itself a discrete subgroup of isometries generated by reflections. The case of Lorentzian spaces is discussed in~\cite{HPR} but the results here suggest such a  study may  be of considerable interest more generally.

\smallskip
In recent years, several studies about infinite root systems of Coxeter groups have been conducted (see for instance \cite{bonnafe-dyer,dyer:rig,dyer:weak,fu:dom}). One of the notions introduced is a nice generalization of the \emph{imaginary cone}, which first appears in the context of root systems of Kac-Moody algebras (see~\cite{kac}), to root systems of Coxeter groups in general; see~\cite{hee:imc,fu:imc,dyer:imc,edgar}.  While a root system lives in the positive side of the isotropic cone of its associated bilinear form, an imaginary cone lives in the negative side of the isotropic cone. Precisely on the isotropic cone, between root systems and imaginary cones, lives the set of limit points of the directions of roots, which we call \emph{limit roots}.

In~\cite{HLR}, the second and third author, together with
J.-P.~Labb\'e, initiated a study of the set $E(\Phi)$ of limit roots of a based root system $(\Phi,\Delta)$, with associated Coxeter system $(W,S)$.  In this second article we study the close relations of the set~$E(\Phi)$ with the imaginary cone studied by the first author~\cite{dyer:imc}, which leads to new fundamental results  about the structure of geometric representations of infinite Coxeter groups.

\smallskip

To study a root system $\Phi$ in the geometric $W$-module $V$, the
approach used in~\cite{HLR} is to consider a projective version of
$\Phi$ by cutting the cone $\cone(\Delta)$, in which the positive
roots live, by an affine hyperplane $V_1$. We obtain this way the
so-called \emph{normalized root system $\h\Phi$} that is the
intersection of the rays spanned by the roots with $V_1$. By doing so,
we obtain that $\h\Phi$ is contained in the polytope $\conv(\h\Delta)$
and therefore $\h\Phi$ (when infinite) has a non-empty set of
accumulation points denoted by $E(\Phi)$. The following properties of
$\h\Phi$ and $E(\Phi)$ were brought to light in~\cite{HLR}: $E(\Phi)$
is contained in the isotropic cone $Q$ (the red curve in
Figure~\ref{fig:intro}) of the bilinear form associated to the
geometric representation of $(W,S)$; $W$ acts on $\h\Phi \sqcup
E(\Phi)$ by projective transformations and has a nice geometric
interpretation that can be seen on Figure~\ref{fig:intro}; and
$E(\Phi)$ is the closure of the set $E_2(\Phi)$ of the limit points
obtained from dihedral reflection subgroups.  Independently, the first
author showed in~\cite{dyer:imc} that the closure of the imaginary cone
is the convex cone $\cone(E(\Phi))$ spanned by the elements $E(\Phi)$
seen as vectors in $V$.
 
  In \S\ref{se:imc}, we recall the definition of $E(\Phi)$, of the $W$-action, of the imaginary cone $\imc(\Phi)$ and bring together, with slight improvements, the frameworks and results from \cite{HLR} and \cite{dyer:imc}.  In particular we extend in \S\ref{se:imc} the projective $W$-action to include the \emph{imaginary convex set} $Z(\Phi)$ that is an   affine section of $\imc(\Phi)$, see~Figure~\ref{fig:ICB}.  Then in \S\ref{ss:minimal}, we prove our first fundamental fact: 
 the $W$-action on $E(\Phi)$ is \emph{minimal}, i.e.,  for any $x$ in $E(\Phi)$, the orbit $W\cdot x$ of $E(\Phi)$
under $W$ is dense in $E(\Phi)$ (Theorem~\ref{cor:minimal}). In order to do so, 
we study the convexity properties of $Z(\Phi)$ and give fundamental results  on the set of extreme points and exposed faces of  the closure $\ol{Z(\Phi)}$ of $Z(\Phi)$.

\begin{figure}
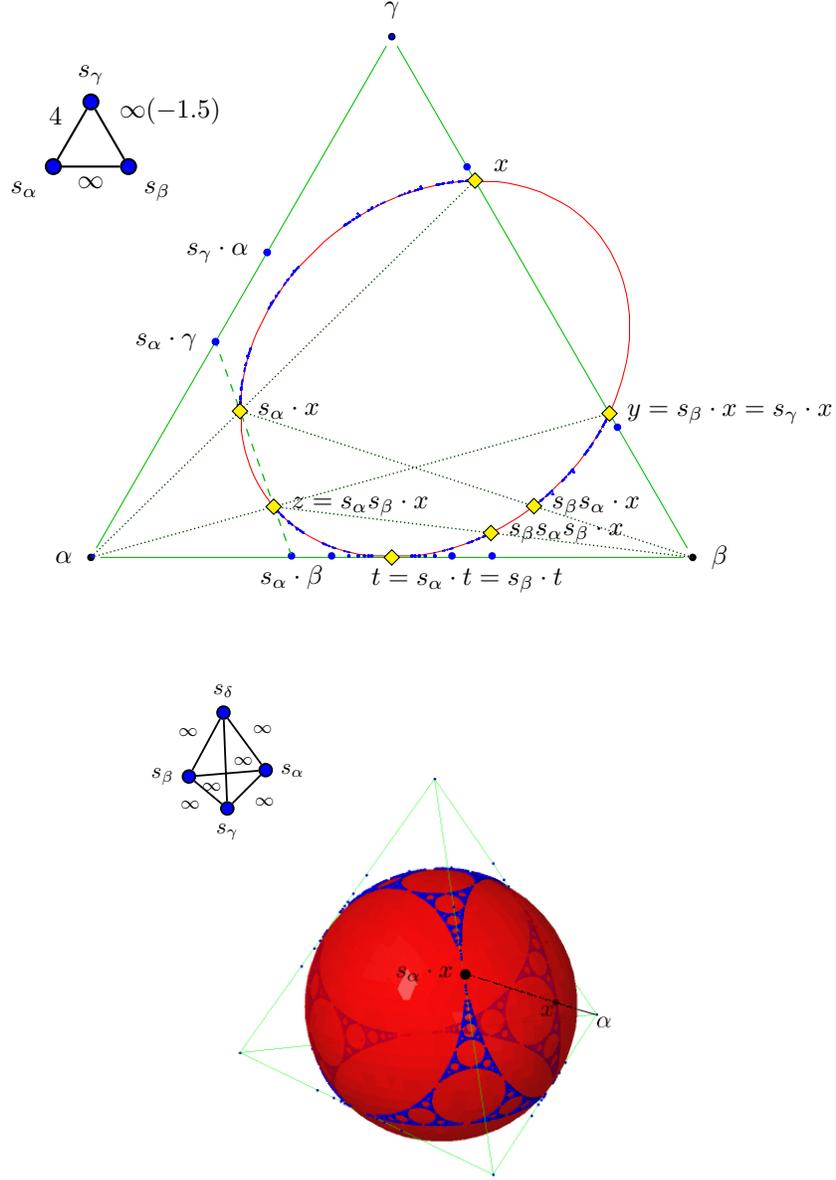

\begin{minipage}[t]{\linewidth}
\centering
\scalebox{1}{\input{FigIntro1.tex}}
\end{minipage}
\hfill
\\
\begin{minipage}[b]{\linewidth}
\centering
\scalebox{0.85}{\input{FigIntro2.tex}}
\end{minipage}%
\caption{Pictures in rank $3$ and $4$ of the normalized isotropic cone
  $\h{Q}$ (in red), the first normalized roots (in blue dots, with
  depth~$\leq 8$) for the based root system with diagram given in the upper
  left of each picture. The set $E(\Phi)$ of limit roots is the limit set of the normalized roots. It is acted on by $W$, as explained in
  \S\ref{ss:limitroots}: for example, in the upper picture, the limit root $x$ is sent to $y$ by $s_\beta$, which is then sent to $z$ by $s_\alpha$.}
\label{fig:intro}
\end{figure}

In \S\ref{se:fractal} we turn our attention to two `fractal conjectures' stated in
\cite[\S3.2]{HLR} about  fractal, self-similar descriptions of the
set of limit roots $E(\Phi)$. We use the minimality of the $W$-action
from \S\ref{se:imc}, as well as some additional work on the case
where $(\Phi,\Delta)$ is weakly hyperbolic, in order to  completely prove in any ranks 
\cite[Conjecture 3.9 ]{HLR}, as well as to prove in the weakly hyperbolic case the conjecture stated just above Conjecture~3.9. 
This will turn out to give us in~\S\ref{se:univ} another fundamental result, which has a ``hyperbolic discrete  group taste'':   the accumulation set of the $W$-orbit of any $z\in Z$ contains $E(\Phi)$ (see Corollary~\ref{cor:acc-imc}).  We have been made aware while preparing this article that those two fractal conjectures have been solved independently for root systems of signature  $(n-1,1)$  in \cite{rank3} with a different approach, see Remark~\ref{rem:rank3}.   

In \S\ref{se:dom} we explore the question of the restriction of $E(\Phi)$ to a face $F_I$ of $\conv(\Delta)$. In particular we prove that $E_2(\Phi)$ behaves well with the restriction to standard facial subgroups (those are exactly the standard parabolic subgroups when $\Delta$ is a basis of~$V$).  By doing so, we will be brought to give a useful interpretation of \emph{the dominance order} and \emph{elementary roots}  in our affine normalized setting.
 
In \S\ref{se:univ}, we  study the geometry of $E(\Phi)$ and $Z(\Phi)$ in detail. We prove  two other fundamental results under the assumption that the root system is irreducible and neither finite nor affine. Firstly, the $W$-action on $E(\Phi)$ is faithful (Theorem~\ref{thm:faithful}).  Secondly,   the set of limit roots $E(\Phi)$ (resp., each  face of the closed imaginary cone) can be approximated arbitrarily closely
(in a Hausdorff-type metric)  by the sets of limits roots (resp., closed imaginary cones) of the universal root subsystems of $\Phi$ i.e., root systems for Coxeter groups without braid relations (which are the free objects for Coxeter groups, called universal Coxeter groups in \cite{humphreys}).  This second  result may be viewed as asserting that $\Phi$ contains ``large'' universal root subsystems. As  the subgroup of even length elements in a universal Coxeter group is a free group, this is a result   very much in the spirit of the Tits alternative for $W$.

 In \S\ref{se:questions}  we collect open  questions and discuss further avenues of research. 
 In particular, in \S\ref{sse:RelationGeo}, we discuss the relations  of  our framework with hyperbolic geometry and geometric group theory. The link with Kleinian groups, which is precisely discussed in another article~\cite{HPR}, is outlined, and we also explain for instance how the convex core of $W$ is related to the imaginary convex set and the limit roots. Considering that our framework and results apply more generally to discrete subgroups of isometries on  quadratic spaces, an important question we raise is what part of the theory of Kleinian groups and discrete subgroups of hyperbolic isometries can be generalized to quadratic spaces.

 In  a final   appendix, we discuss the relation of the set of limit roots  defined here with a notion of limit set of  a Zariski dense subgroup of the group of $k$-points of   a connected reductive group defined  over    a local field $k$  (only $k=\mathbb{R}$ here)  as  studied by Benoist in~\cite{benoist}. 

\smallskip

Sections \ref{se:fractal}, \ref{se:dom} and \ref{se:univ} can be read independently of each other. In view of the length of this article, we made sure to treat each section as a small chapter by writing a short introduction and stating the main results it contains as soon as possible.  

\subsection*{Figures} The pictures of normalized roots and the
imaginary convex body were computed with the computer algebra system
\emph{Sage} \cite{sage}, and drawn using the \TeX-package TikZ.

\subsection*{Acknowledgements} The authors wish to thank Jean-Philippe
Labb\'e who made the first version of the Sage and TikZ functions used to
compute and draw the normalized roots. The third author gave, in France in
November 2012, several seminar talks about a preliminary version of these results; he is grateful to the organizers of these seminars and
to the participants for many useful comments. In particular, he would
like to thank Vincent Pilaud for valuable discussions. 

The authors also wish to thank an anonymous referee for  valuable suggestions that improved the present manuscript, especially    \S\ref{sse:RelationGeo},  and  that led to  the Appendix.

\section{Imaginary cone, limit roots, and action of $W$}
\label{se:imc}

The aim of this section is to bring together, and slightly improve, the frameworks and results from \cite{HLR} and \cite{dyer:imc}. This  will lead  to  our first main result in \S\ref{ss:minimal}.

\medskip

Let $V$ be a real vector space of dimension $n$ equipped with a
symmetric bilinear form (inner product)~$\Mpair$.  Let $(\Phi,\Delta)$ be a
\emph{based root system} in $(V,\Mpair)$ with associated Coxeter
system $(W,S)$, i.e., $\Delta$ is a \emph{simple system}, $W$ is
generated by the set of simple reflections
${{S:=\{s_\alpha\,|\,\alpha\in\Delta\}}}$, where
 \[
s_\alpha(v)=v-2\mpair{\alpha,v}\alpha,\ \text{for } v\in V,
\]
 and $\Phi:=W(\Delta)$ is the associated \emph{root system}. The set $\Phi^+:=\cone(\Delta)\cap \Phi$ is the set of \emph{positive roots}. We recall\footnote{{\bf Note to the reader.} This  article follows directly~\cite{HLR}. In   this spirit, we chose not to rewrite in details an introduction to    based root systems. We refer the unfamiliar reader to  \cite[\S1]{HLR} for a more detailed introduction to this framework, which generalizes the classical geometric representation of Coxeter groups.} from \cite[\S1]{HLR} that a simple system  $\Delta$ is a finite subset of $V$ such that:
\begin{enumerate}[(i)]
\item $\Delta$ is positively independent: if $\sum_{\alpha \in
    \Delta} \lambda_{\alpha} \alpha =0$ with all $\lambda_\alpha \geq
  0$, then all $\lambda_\alpha=0$;
\item for all $\alpha, \beta \in \Delta$, with $\alpha \neq \beta$,
  $\displaystyle{\mpair{\alpha,\beta} \in \ ]-\oo,-1] \cup
    \{-\cos\left(\frac{\pi}{k}\right), k\in \mathbb Z_{\geq 2} \} }$;
\item for all $\alpha \in \Delta$, $\mpair{\alpha,\alpha}=1$.
\end{enumerate}
The \emph{rank of $(\Phi, \Delta)$} is the cardinality $|\Delta|$ of $\Delta$. The \emph{signature of the based root system~$(\Phi,\Delta)$} is the signature of the quadratic form $q_{\Mpair,\Delta}$ associated to 
 the restriction of $\Mpair$ to the subspace $\Span(\Delta)$. In the  case where~$\Delta$ spans $V$, the signature of $(\Phi,\Delta)$ is the signature of $q_B=B(\cdot,\cdot)$.

 Throughout the article we always assume that $\Delta$
is finite, i.e., $W$ is a finitely generated Coxeter group. Some of
the results may be extended to the general case. To lighten the
notations, we will often shorten the terminology ``\emph{based root system}''
and use ``\emph{root system}'' instead.

\subsection{Normalized roots and limit roots}
\label{ss:limitroots}

Let $V_1$ be a hyperplane that is \emph{transverse to $\PP$}, i.e.,
such that each ray $\mathbb R_{>0}\alpha$, for $\alpha\in\Delta$, intersects
$V_1$ in one point, denoted by~$\h\alpha$, see for
instance~\cite[Figure~2, Figure~3 and \S5.2]{HLR}. 

 Denote by $V_0$ the linear hyperplane directing $V_1$. For any $v\in V\setminus V_0$,
the line $\mathbb Rv$ intersects the hyperplane $V_1$ in one point,
that we denote also by $\h v$ (we also use the analog notation~$\h P$
relatively to a subset $P$ of $V\setminus V_0$).  More precisely, denote by $\vphi$
the linear form associated to~$V_0$ such that the equation of $V_1$ is
$\vphi(v)=1$. Then we have
\[ \h v= \frac{v}{\vphi(v)} ,\  \forall v \in  V\setminus V_0 .\]
For instance, if $\Delta$ is a basis for $V$, we can take for $V_1$ the
affine hyperplane spanned by $\Delta$ seen as points, so
$\h\Delta=\Delta$ and $\vphi(v)$ is simply the sum of the coordinates
of $v$ in $\Delta$ (see \cite[\S2.1 and \S5.2]{HLR} for more details).

\smallskip

Since $V_1$ is transverse to $\PP$, for any root $\rho\in \Phi$ we can
define its associated \emph{normalized root} $\h \rho$ in $V_1$. We
denote by $\h \Phi$ the set of normalized roots. It is contained in
the convex hull $\conv(\h\Delta)$ of the \emph{normalized simple roots} $\h\alpha$ in
$\h\Delta$, and it can be seen as the set of representatives of the
directions of the roots, i.e., the roots seen in the projective space
$\mathbb{P}V$. In~Figure~\ref{fig:intro}, normalized roots are the
blue dots, while the edges of the polytope $\conv(\h\Delta)$ are in
green. Note that since $\Phi=\PP \sqcup (-\PP)$, we also have
\[ \h \Phi = \h{\PP} = V_1 \cap \bigcup_{\rho \in \PP} \real_{>0}\rho.\]
The \emph{set of limit roots} is the accumulation set of $\h\Phi$:
\[
E(\Phi) = \Acc(\h \Phi) .
\]
In~Figure~\ref{fig:intro}, this is the (Apollonian gasket-like) shape to which  the blue dots tend. It is well known that $\Phi$, and therefore, $\h \Phi$, are discrete (see for instance \cite[Cor.~2.9]{HLR}); so $E(\Phi)$ is also the complement of $\h \Phi$ in the closure of $\h \Phi$. Since the elements of $E(\Phi)$ are limit points of normalized roots, we call them for short the \emph{limit roots} of $\Phi$.

In \cite[Theorem 2.7]{HLR}, it was shown that  $E(\Phi)\subseteq \h{Q} \cap \conv(\h\Delta)$, where
\[Q:=\{v\in V\,|\, \mpair{v,v}=0\}\]
 is the \emph{isotropic cone} of $\Mpair$, and $\h Q=Q\cap V_1$; $\h Q$ is represented in red in Figure~\ref{fig:intro}. Recall also that the natural geometric action of $W$ on $V$ induces a \emph{$W$-action on $\h\Phi\sqcup E(\Phi)$}:
\begin{equation} \label{eq:Wact}
w\cdot x=\h{w(x)}=\frac{w(x)}{\vphi(w(x))} \text{ for }w\in W, x\in \h\Phi\sqcup E(\Phi),
\end{equation}
where $\vphi$ is, as above, the linear form such that $\ker \vphi=V_0$ is the
direction of the transverse hyperplane $V_1$, see~\cite[\S3.1]{HLR} for more details.
 This action has a nice  geometric interpretation on $E(\Phi)$: for $\beta\in \Phi$ and $x\in E(\Phi)$ denote by   $L(\h\beta,x)$ the line in $V_1$ passing through the points $x$ and $\h\beta$, then either $s_\beta\cdot x=x$ if  $L(\h\beta,x)$ is tangent  to $\h Q$, or $s_\beta\cdot x$ is the other point of intersection of $L(\h\beta,x)$ with $\h Q$, see Figure~\ref{fig:intro}.

\medskip
Finally, it is interesting to notice that the signature of $(\Phi,\Delta)$ is intimately linked to the shape of $E(\Phi)$. If $q_{\Mpair,\Delta}$ is:
\begin{itemize}
  \item positive definite: $(\Phi,\Delta)$ is said to be of
    \emph{finite type}; in this case $\Phi$ and $W$ are finite, and
    $E(\Phi)$ is empty;
  \item positive semi-definite (and not definite): $(\Phi,\Delta)$ is said to be of \emph{affine type}; in this case $E(\Phi)$ is finite non-empty; if in addition $(\Phi,\Delta)$ is irreducible, then $\h Q$ is a singleton and $E(\Phi)=\h Q$ (see \cite[Cor.~2.15]{HLR});
  \item not positive semi-definite: $(\Phi,\Delta)$ is said to be
    of \emph{indefinite type}. In this case $E(\Phi)$ is infinite.
\end{itemize}
Some special cases of  root systems of indefinite type are the  root systems of hyperbolic type; they will be discussed in \S\ref{ss:facial}.

\smallskip
 In the following, we will write $E$ instead of $E(\Phi)$ if there is no possible confusion with more than one root system.

\subsection{The convex hull of limit roots and the imaginary cone}
\label{ss:imc}

The imaginary cone has been introduced by Kac (see \cite[Ch.~5]{kac})
in the context of Weyl groups of Kac-Moody Lie algebras: its name comes from the fact that it was 
defined as the cone pointed on $0$ and spanned by the positive imaginary roots of
the Weyl group. This notion has been generalized afterwards to
arbitrary Coxeter groups, first by H\'ee \cite{hee:imc,hee:thesis}, then
by the first author \cite{dyer:imc} (see also Edgar's thesis
\cite{edgar}).  The definition we use here applies to any finitely generated Coxeter group (see Remark~\ref{rem:SingularB} below).  

\smallskip
Let $(\Phi,\Delta)$ be a   based root system in $V$ with  associated Coxeter group $W$. The \emph{imaginary cone} $\imc(\Phi)$ of  $(\Phi,\Delta)$ is the union of the cones in the $W$-orbit of the cone
  \[
\mathcal K(\Phi) :=\{v\in \cone(\Delta)\,|\, \mpair{v,\alpha}\leq 0,
\ \forall \alpha\in\Delta\}. \]
The imaginary cone $\imc(\Phi)$ is by definition stable by the action of $W$. Observe that for each $\alpha\in \Delta$,  the reflecting hyperplane $H_\alpha=\{v\in V\,|\, B(v,\alpha)=0\}$ associated to the simple reflection~$s_\alpha$ supports a facet of $\mathcal K(\Phi)$. In~\cite[Prop.~3.2.(c)]{dyer:imc}, it is shown that  $\imc(\Phi)$ is contained in the cone $\cone(\Delta)$ and  for $x,y \in \imc(\Phi)$, $\mpair{x,y} \leq 0$. In particular, letting ${Q^-}:=\{ v\in V\,|\, B(v,v)\leq 0\}$, we have: 
\[
\imc(\Phi) \subseteq \cone(\Delta)\cap {Q^-}.
\]

The imaginary cone is intimately linked to the set of limit roots: it is proven in \cite{dyer:imc} that the closure $\ol{\imc}$ of
$\imc(\Phi)$ is equal to the convex cone spanned by the ``limit rays of roots''. These limit rays in the sense of \cite{dyer:imc} are  the rays spanned by limit roots in the sense of~\cite{HLR}  (see~\cite[\S5.6]{dyer:imc} for more details). We get from~\cite[Theorem~5.4]{dyer:imc} that the set $E$ of limit roots and the imaginary cone $\imc(\Phi)$ have the  following relation:
  \[ \ol{\imc(\Phi)}=\cone(E(\Phi)).\]
  
\smallskip

We now ``\emph{normalize}'' these notions. Let $V_1$ be an affine hyperplane transverse to~$\Phi^+$ and let
\[
 K(\Phi):=\h{\mathcal K(\Phi)}=\mathcal K(\Phi)\cap V_1\quad \textrm{and}\quad  Z(\Phi):=\h{\mathcal Z(\Phi)}=\mathcal Z(\Phi)\cap V_1.
\]
In Figure~\ref{fig:ICB} we draw two examples in rank 3, and an example in rank 4 is in Figure~\ref{fig:gasket}(b)-(c) at the end of this article. Similarly to
the case of the cone $\mathcal K$, we observe that for each $\alpha\in
\Delta$, the trace in $V_1$ of the reflecting hyperplane $H_\alpha$
associated to the simple reflection $s_\alpha$ supports a facet of
$K(\Phi)$. The converse is not always true: a facet of $K(\Phi)$ may
be rather contained in a facet of $\conv(\Delta)$, see the example on the right in Figure~\ref{fig:ICB}. Moreover,
\[
Z(\Phi)\subseteq \conv(\h\Delta) \cap {Q^-}.
\]
 and
 \begin{equation}
 \label{eq:ineqICB}
 \mpair{x,y}\leq 0, \quad\text{for }x,y\in \ol{Z}.
 \end{equation}
 
\begin{defi}
  \label{def:imc}
   The closure $\ol{Z(\Phi)}$ of $Z(\Phi)$  is called the \emph{imaginary convex body} of $(\Phi,\Delta)$.
\end{defi}
As for the set $E$, when the root system is unambiguous we will write simply $\mathcal K$, $K$, $\imc$ and $Z$ instead of $\mathcal K(\Phi)$, $K(\Phi)$, $\imc(\Phi)$ and $Z(\Phi)$.  We sometimes refer to the set $Z$ as the \emph{imaginary convex set}.  We get this normalized version of \cite[Theorem~5.4]{dyer:imc}. 

\begin{thm}
\label{thm:imc-closure}
 The convex hull of $E$ equals the imaginary convex body $\ol Z$:
\[\conv(E)=\ol{Z}.\]
\end{thm}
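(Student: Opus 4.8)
The plan is to deduce the normalized statement $\conv(E)=\ol Z$ directly from the conical statement $\ol{\imc}=\cone(E)$ recalled just above, by intersecting everything with the transverse affine hyperplane $V_1$ and then checking that passing to the affine section commutes with taking the closure.

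First I would record the elementary fact that, for a set contained in $V_1$, the conical span and the convex hull are linked by $\cone(E)\cap V_1=\conv(E)$. Indeed, since $E\subseteq \h Q\cap\conv(\h\Delta)\subseteq V_1$ we have $\vphi\equiv 1$ on $E$; hence for a finite nonnegative combination $v=\sum_i\lambda_i e_i$ with $e_i\in E$ one computes $\vphi(v)=\sum_i\lambda_i$, so $v\in V_1$ exactly when $\sum_i\lambda_i=1$, i.e. when $v$ is a convex combination of the $e_i$. Combined with $\ol{\imc}=\cone(E)$ this gives the exact set equality
\[ \ol{\imc}\cap V_1=\conv(E). \]
(As a consistency check, $E$ is closed and contained in the compact polytope $\conv(\h\Delta)$, hence compact, so $\conv(E)$ is compact.)

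The heart of the argument is the identity $\ol{\imc\cap V_1}=\ol{\imc}\cap V_1$, that is, the commutation of the affine section with the closure; since $Z=\imc\cap V_1$ by definition, this reads $\ol Z=\ol{\imc}\cap V_1$. The inclusion $\ol Z\subseteq\ol{\imc}\cap V_1$ is immediate because $V_1$ is closed. For the reverse inclusion I would use the radial projection $v\mapsto \h v=v/\vphi(v)$, which is continuous on the open half-space $\{\vphi>0\}$ and restricts to the identity on $V_1$. Transversality of $V_1$ to $\PP$ forces $\vphi>0$ on $\cone(\Delta)\setminus\{0\}$, hence on $\imc\setminus\{0\}$ since $\imc\subseteq\cone(\Delta)$; as $\imc$ is a cone, $\h v\in\imc\cap V_1=Z$ for every $v\in\imc\setminus\{0\}$. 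Now take $x\in\ol{\imc}\cap V_1$ and write $x=\lim_n x_n$ with $x_n\in\imc$. Then $\vphi(x_n)\to\vphi(x)=1$, so for $n$ large $x_n\neq 0$ and $\vphi(x_n)>0$, and by continuity $\h{x_n}\to\h x=x$ with $\h{x_n}\in Z$; thus $x\in\ol Z$.

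Putting the two steps together yields $\ol Z=\ol{\imc}\cap V_1=\conv(E)$, as claimed. The only point needing care — and the main (minor) obstacle — is precisely this commutation of closure with the affine section in the second step: closure and intersection do not commute in general, and what rescues the argument is the transversality of $V_1$, which makes the radial projection continuous near $V_1$ and sends any approximating sequence in $\imc$ to an approximating sequence in $Z$ with the same limit.
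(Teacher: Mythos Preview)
Your proposal is correct and is precisely the argument the paper leaves implicit: the theorem is stated as the ``normalized version'' of \cite[Theorem~5.4]{dyer:imc}, i.e.\ of the conical equality $\ol{\imc}=\cone(E)$, and the paper gives no further proof. You have simply spelled out the two routine steps of that normalization---that $\cone(E)\cap V_1=\conv(E)$ because $E\subseteq V_1$, and that $\ol{\imc}\cap V_1=\ol{\imc\cap V_1}=\ol{Z}$ via the continuous radial projection onto the transverse hyperplane---so there is nothing to add.
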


Using this equality, we also get the following
very nice description of  $\conv(E)$, which was mentioned in \cite[Remark 3.3]{HLR} and proved in \cite[Thm.~5.1]{dyer:imc}.

\begin{thm}
 \label{thm:imc-inter}
\[ \conv(E)= V_1 \cap \bigcap_{w\in W} w(\cone(\Delta)) .\]
\end{thm}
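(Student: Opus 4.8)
The plan is to prove Theorem~\ref{thm:imc-inter} by using the already-established Theorem~\ref{thm:imc-closure}, which gives $\conv(E)=\ol Z$, together with the definition of the imaginary cone as a $W$-orbit of the fundamental cone $\mathcal K$. The starting point is the normalized version of the identity $\ol{\imc}=\cone(E)$, so it suffices to show that the cone over the right-hand side, namely $\bigcap_{w\in W} w(\cone(\Delta))$, coincides with $\ol{\imc(\Phi)}$. Indeed, once I know that
\[
\ol{\imc(\Phi)}=\bigcap_{w\in W} w(\cone(\Delta)),
\]
intersecting both sides with $V_1$ and using $\ol{\imc(\Phi)}=\cone(E(\Phi))$ (so that $V_1\cap\cone(E)=\conv(E)$, since $E\subseteq V_1$) yields the claim. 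So the whole problem reduces to the cone-level identity, which is a statement purely about the imaginary cone and the simple cone $\cone(\Delta)$.

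First I would establish the inclusion $\ol{\imc(\Phi)}\subseteq\bigcap_{w\in W} w(\cone(\Delta))$. By definition $\imc(\Phi)=\bigcup_{w\in W} w(\mathcal K(\Phi))$, so it is enough to check $w'(\mathcal K)\subseteq u(\cone(\Delta))$ for all $w',u\in W$, equivalently $u^{-1}w'(\mathcal K)\subseteq\cone(\Delta)$ for all pairs, i.e.\ $v(\mathcal K)\subseteq\cone(\Delta)$ for every $v\in W$. This is exactly the $W$-stability of $\imc(\Phi)$ combined with the containment $\imc(\Phi)\subseteq\cone(\Delta)$ recalled in the excerpt (from \cite[Prop.~3.2.(c)]{dyer:imc}): since $\imc(\Phi)$ is $W$-stable and sits inside $\cone(\Delta)$, we get $w^{-1}(\imc)\subseteq\cone(\Delta)$, hence $\imc\subseteq w(\cone(\Delta))$ for every $w$, so $\imc\subseteq\bigcap_w w(\cone(\Delta))$; taking closures and using that each $w(\cone(\Delta))$ is closed gives the desired inclusion of $\ol{\imc}$ in the intersection.

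The reverse inclusion $\bigcap_{w\in W} w(\cone(\Delta))\subseteq\ol{\imc(\Phi)}$ is the substantive direction and the main obstacle. Here the plan is to take $v\in\bigcap_w w(\cone(\Delta))$ and show $v\in\ol{\imc}$. The key idea is that membership in $w(\cone(\Delta))$ for all $w$ forces strong sign conditions: writing $v\in w(\cone(\Delta))$ as $w^{-1}(v)\in\cone(\Delta)$ for every $w$, one gets control on the pairings $\mpair{v,\beta}$ for all $\beta\in\Phi^{+}$ via the reflection action. Concretely, I would try to show that $\mpair{v,\beta}\le 0$ for every positive root $\beta$ (a ``dominance''-type condition), using that applying $s_\beta$ and staying in $\cone(\Delta)$ constrains the sign. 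The natural route is then to exhibit $v$ as a limit of elements of $\imc$: because $\imc(\Phi)$ is the $W$-orbit of $\mathcal K=\{u\in\cone(\Delta)\mid\mpair{u,\alpha}\le 0,\ \forall\alpha\in\Delta\}$, one wants to approximate $v$ by images $w(u)$ with $u\in\mathcal K$, or to directly identify the cone $\bigcap_w w(\cone(\Delta))$ with $\cone(E)$ by showing every ray in it is a limit of imaginary rays. I expect the cleanest argument to route through the characterization $\ol{\imc}=\cone(E)$ and the convexity results on $\ol Z$ developed in \S\ref{ss:minimal}, rather than a bare-hands sign computation.

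The delicate point to get right is that the intersection $\bigcap_{w\in W} w(\cone(\Delta))$ is automatically a closed convex $W$-stable cone containing $\imc$, and the crux is proving it contains \emph{nothing more} than $\ol{\imc}$. I would handle this by a separation/supporting-hyperplane argument: if some $v$ lay in the intersection but outside the closed convex cone $\ol{\imc}=\cone(E)$, I would separate $v$ from $\cone(E)$ by a hyperplane, then use the facet description of $\mathcal K$ (its facets supported by the hyperplanes $H_\alpha$, $\alpha\in\Delta$) and the $W$-action to produce some $w$ with $w^{-1}(v)\notin\cone(\Delta)$, contradicting $v\in\bigcap_w w(\cone(\Delta))$. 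Establishing that the separating functional can be realized through some simple-root inequality after a suitable group element is applied is the technical heart; I would expect to invoke the interplay between $E$, the isotropic cone $\h Q$, and the $W$-action \eqref{eq:Wact} to close the gap, together with Theorem~\ref{thm:imc-closure} to translate back from the cone picture to the affine statement $\conv(E)=V_1\cap\bigcap_{w\in W}w(\cone(\Delta))$.
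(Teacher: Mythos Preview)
The paper does not actually prove this theorem; it simply cites \cite[Thm.~5.1]{dyer:imc} (with a pointer to \cite[Remark~3.3]{HLR}). So there is no in-paper argument to compare against, only the reduction you correctly identify: the theorem is the normalized form of the cone-level identity $\ol{\imc}=\bigcap_{w\in W} w(\cone(\Delta))$, combined with Theorem~\ref{thm:imc-closure}.

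Your easy inclusion $\ol{\imc}\subseteq\bigcap_{w}w(\cone(\Delta))$ is fine. The reverse inclusion, however, is where all the content lies, and your proposal does not prove it; it only outlines hopes. The separation idea as you state it has a real gap: separating $v\notin\ol{\imc}$ from $\ol{\imc}$ by a hyperplane gives you \emph{some} linear functional nonnegative on $\ol{\imc}$ and negative at $v$, but you then need this functional to lie in the cone generated by the $W$-translates of the simple-root functionals (equivalently, in the Tits cone, up to the identification via $\Mpair$). That is precisely the nontrivial duality statement underlying \cite[Thm.~5.1]{dyer:imc}; it does not follow from a bare separation argument plus the facet description of $\mathcal K$. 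Your suggested alternative of showing $\mpair{v,\beta}\le 0$ for all $\beta\in\Phi^{+}$ also does not go through: membership of $w^{-1}(v)$ in $\cone(\Delta)$ gives inequalities against the fundamental coweights, not against the roots, so you cannot read off the sign of $\mpair{v,\beta}$ directly. Finally, routing through Theorem~\ref{thm:imc-closure} and ``the interplay with $E$ and $\h Q$'' risks circularity, since both Theorem~\ref{thm:imc-closure} and the present theorem are established in \cite{dyer:imc} as part of the same development; neither is derived in this paper from the other by an independent argument.
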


\begin{figure}[!h]
\begin{tabular}{cc}

\scalebox{0.6}{\input{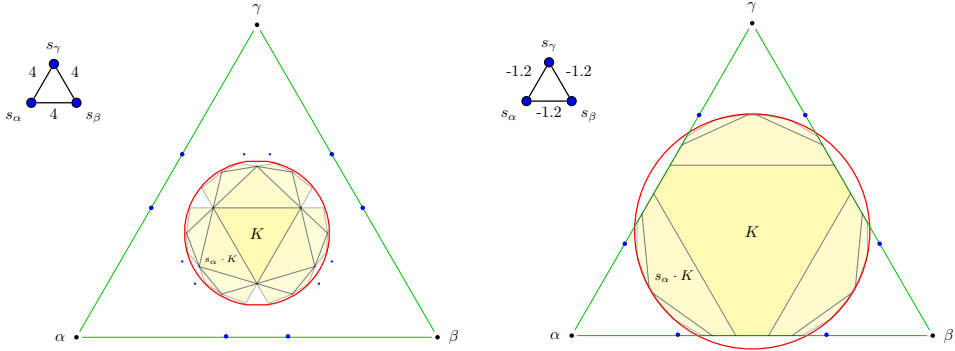}}
&
\scalebox{0.6}{\begin{tikzpicture}
	[scale=2,
	 q/.style={red,thick},
	 racine/.style={blue},
	 racinesimple/.style={black},
	 racinedih/.style={blue},
	 sommet/.style={inner sep=2pt,circle,draw=black,fill=blue,thick,anchor=base},
	 rotate=0]

\def\grosseur{0.0125}
\def\grosseursimple{0.025}

\def\grosseurdih{0.0075}



\draw[q] (2, 1.15470053837925) circle (1.30267789455786) ;

\node[label=left :{$\alpha$}] (a) at (0.000000000000000,0.000000000000000) {};
\fill[racinesimple] (0.000000000000000,0.000000000000000) circle (\grosseursimple);\node[label=right :{$\beta$}] (b) at (4.00000000000000,0.000000000000000) {};
\fill[racinesimple] (4.00000000000000,0.000000000000000) circle (\grosseursimple);\node[label=above :{$\gamma$}] (g) at (2.00000000000000,3.46410161513775) {};
\fill[racinesimple] (2.00000000000000,3.46410161513775) circle (\grosseursimple);
\draw[green!75!black] (a) -- (b) -- (g) -- (a);
\fill[racine] (1.17647058823529,0.0100000000000000) circle (\grosseursimple);
\fill[racine] (1.41176470588235,2.44524819892077) circle (\grosseursimple);
\fill[racine] (2.82352941176471,0.0100000000000000) circle (\grosseursimple);
\fill[racine] (3.41176470588235,1.01885341621699) circle (\grosseursimple);
\fill[racine] (0.588235294117647,1.01885341621699) circle (\grosseursimple);
\fill[racine] (2.58823529411765,2.44524819892077) circle (\grosseursimple);

\coordinate (ancre) at (-0.5,2.6);
\node[sommet,label=below left:$s_\alpha$] (alpha) at (ancre) {};
\node[sommet,label=below right :$s_\beta$] (beta) at ($(ancre)+(0.5,0)$) {} edge[thick] node[auto] {-1.2} (alpha);
\node[sommet,label=above:$s_\gamma$] (gamma) at ($(ancre)+(0.25,0.43)$) {} edge[thick] node[auto,swap] {-1.2} (alpha) edge[thick] node[auto] {-1.2} (beta);

\filldraw[draw= black ,fill= yellow ,opacity= 0.300000000000000 ]
(2.909090908, 1.8895099722) --
(1.0909090908, 1.8895099722) --
(0.909090909, 1.5745916434) --
(1.8181818182, 0.0) --
(2.181818182, 0.0) --
(3.09090909, 1.5745916426) --
cycle ;


\filldraw[draw= black ,fill= yellow!80 ,opacity= 0.300000000000000 ]
(0.855614973194145, 0.555738227281955)  -- 
(0.779220779101243, 1.34964998007078)  -- 
(0.909090909001093, 1.57459164340189)  -- 
(1.81818181816364, 0.000000000000000)  -- 
(1.55844155834879, 0.000000000000000)  -- 
(0.909090909200340, 0.463115189191957)  -- 
cycle ;

\filldraw[draw= black ,fill= yellow!80 ,opacity= 0.300000000000000 ]
(3.22077922077631, 1.34964997879825)  -- 
(3.14438502678509, 0.555738227066197)  -- 
(3.09090909093551, 0.463115189208216)  -- 
(2.44155844154916, 0.000000000000000)  -- 
(2.18181818163636, 0.000000000000000)  -- 
(3.09090909053703, 1.57459164166984)  -- 
cycle ;

\filldraw[draw= black ,fill= yellow!80 ,opacity= 0.300000000000000 ]
(2.90909090835412, 1.88950997158664)  -- 
(1.09090909044588, 1.88950997158664)  -- 
(1.22077922060431, 2.11445163513458)  -- 
(1.94652406417647, 2.44524819892077)  -- 
(2.05347593588235, 2.44524819892077)  -- 
(2.77922077811584, 2.11445163554275)  -- 
cycle ;


\filldraw[draw= black ,fill= yellow!60 ,opacity= 0.300000000000000 ]
(2.88170407005843, 0.197647486673624)  -- 
(3.05271199391595, 0.396955876465156)  -- 
(3.09090909093583, 0.463115189208773)  -- 
(2.44155844156772, 0.000000000000000)  -- 
(2.55161787368442, 0.000000000000000)  -- 
(2.85271933305869, 0.171898651017698)  -- 
cycle ;

\filldraw[draw= black ,fill= yellow!60 ,opacity= 0.300000000000000 ]
(1.57522826510843, 2.38457808676480)  -- 
(1.27580893667477, 2.20976589928114)  -- 
(1.22077922060375, 2.11445163513362)  -- 
(1.94652406416578, 2.44524819892077)  -- 
(1.87012987010259, 2.44524819892077)  -- 
(1.61201977942229, 2.39680518749896)  -- 
cycle ;

\filldraw[draw= black ,fill= yellow!60 ,opacity= 0.300000000000000 ]
(0.947288006310731, 0.396955876200963)  -- 
(1.11829592999777, 0.197647486608127)  -- 
(1.14728066692370, 0.171898651013539)  -- 
(1.44838212635149, 0.000000000000000)  -- 
(1.55844155853432, 0.000000000000000)  -- 
(0.909090909358289, 0.463115188918381)  -- 
cycle ;

\filldraw[draw= black ,fill= yellow!60 ,opacity= 0.300000000000000 ]
(2.72419106255821, 2.20976589972896)  -- 
(2.42477173484066, 2.38457808679453)  -- 
(2.38798022062887, 2.39680518749691)  -- 
(2.12987012986740, 2.44524819892077)  -- 
(2.05347593577540, 2.44524819892077)  -- 
(2.77922077784185, 2.11445163601732)  -- 
cycle ;

\filldraw[draw= black ,fill= yellow!60 ,opacity= 0.300000000000000 ]
(0.855614973298298, 0.555738227101555)  -- 
(0.779220779281918, 1.34964998038372)  -- 
(0.724191063174665, 1.25433571593848)  -- 
(0.722508932116326, 0.907624877326688)  -- 
(0.730315709398947, 0.869648941020573)  -- 
(0.817417876158433, 0.621897540049636)  -- 
cycle ;

\filldraw[draw= black ,fill= yellow!60 ,opacity= 0.300000000000000 ]
(3.22077922059563, 1.34964997911119)  -- 
(3.14438502668094, 0.555738226885798)  -- 
(3.18258212374129, 0.621897539719493)  -- 
(3.26968429060517, 0.869648941040586)  -- 
(3.27749106788816, 0.907624877348487)  -- 
(3.27580893678228, 1.25433571494016)  -- 
cycle ;

\draw  (2, 1.15) node{$K$};
\draw  (1.13827349116821, 0.657182506657764) node{{\small $s_{\alpha}\cdot K$}};

\end{tikzpicture}}

\end{tabular}
\caption{Two examples of pictures of $K$ and of its first images by
  the group action (in shaded yellow), giving the first steps to
  construct the imaginary convex set $Z$. The red circle is the
  normalized isotropic cone $\h Q$; in black and blue are the first
  normalized roots. The example on the left is a group of hyperbolic
  type (see \S\ref{ss:facial}): $K$ is simply a triangle and $Z$ turns
  out to be the whole open disk inside $\h Q$. The example on the
  right is weakly hyperbolic but not hyperbolic: $K$ is a truncated
  triangle and $Z$ is stricly contained in the open disk (see
  \S\ref{se:fractal}).}
\label{fig:ICB}
\end{figure}

Before extending the $W$-action on $\h\Phi\sqcup E$ to include $\ol
Z$, we discuss the affine space $\aff(\ol Z)$ spanned by the imaginary
convex body $\ol Z$. Obviously we have
\[\aff(K)\subseteq \aff(Z)\subseteq \aff(\h\Delta).\]
Moreover, we have the following particular situations. 
\begin{itemize} 
\item In the case where the root system if finite, then we know that $E=\eset$ and therefore $\ol Z=\conv(E)=\eset=\aff(\ol Z)$. 
\item  If $(\Phi,\Delta)$ is affine irreducible, then $E=\{x\}$ is a singleton and therefore $\{x\}=\conv(E)=\ol Z=Z=K$.

\item If  $(\Phi,\Delta)$ is non-affine infinite dihedral, so $\Delta=\{\al,\bt\}$, then we obtain by direct computation that $E=\{x,y\}$ is of cardinality $2$, $Z=]x,y[$ and $K\subsetneq ]x,y[$ (see also \cite{HLR} and \cite[\S9.10]{dyer:imc}). 

\item  When the root system is of indefinite type and irreducible, then  $\aff(K) = \aff(\h\Delta)$. The essential point to prove this fact is the following result (mentioned without proof in \cite[\S4.5]{dyer:imc} and which goes back to Vinberg \cite{vinberg}).
\end{itemize}

\begin{lem}
  \label{lem:sphere}
  Let $(\Phi,\Delta)$ be an irreducible based root system of indefinite
  type. Then there exists a vector $z$ in the topological interior, for the induced topology on $\Span(\Delta)$, of
  $\cone(\Delta)$ such that $\mpair{z,\alpha}<0$, for all
  $\alpha\in\Delta$.    
  Equivalently, $K$ has non-empty interior for the induced topology on
  $\aff(\h \Delta)$.
\end{lem}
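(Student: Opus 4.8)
The plan is to reduce the statement to the Perron--Frobenius theorem applied to the Gram matrix of $\Delta$. Since nothing changes if we replace $V$ by $\Span(\Delta)$, I assume $\Delta=\{\al_1,\dots,\al_m\}$ spans $V$ and set $A=(\mpair{\al_i,\al_j})_{1\le i,j\le m}$. By conditions (ii) and (iii), $A=I-N$, where $N$ is the symmetric matrix with $N_{ii}=0$ and $N_{ij}=-\mpair{\al_i,\al_j}\ge 0$ for $i\ne j$; moreover $N_{ij}>0$ exactly when $s_{\al_i}$ and $s_{\al_j}$ do not commute. Hence the support of $N$ is the Coxeter graph of $(W,S)$, and irreducibility of $(\Phi,\Delta)$ means precisely that $N$ is an irreducible nonnegative matrix (in particular $N\ne 0$).

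By the Perron--Frobenius theorem, the spectral radius $\lambda:=\rho(N)>0$ is an eigenvalue of $N$ admitting an eigenvector $z=(z_1,\dots,z_m)$ with all $z_i>0$. As $N$ is symmetric, $\lambda$ is its largest eigenvalue, so $1-\lambda$ is the smallest eigenvalue of $A=I-N$. The next step is to read off the signature hypothesis spectrally: the quadratic form $\sum_{i,j}A_{ij}\mu_i\mu_j$ on $\real^m$ takes exactly the same values as $q_{\Mpair,\Delta}$, because the coordinate map $\real^m\to\Span(\Delta)$, $(\mu_i)\mapsto\sum_i\mu_i\al_i$, is surjective. Therefore $A$ has a negative eigenvalue if and only if $(\Phi,\Delta)$ is of indefinite type, and under our hypothesis the smallest eigenvalue $1-\lambda$ of $A$ is negative, i.e.\ $\lambda>1$. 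I expect this translation --- from the signature of $q_{\Mpair,\Delta}$ on $\Span(\Delta)$, where $\Delta$ may fail to be linearly independent, to the spectral condition $\lambda>1$ on $N$ --- to be the only delicate point; the surjectivity remark is exactly what prevents a possibly singular Gram matrix from causing trouble.

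Writing $z=\sum_i z_i\al_i\in\Span(\Delta)$, the eigenvalue equation gives, for every $j$,
\[ \mpair{z,\al_j}=(Az)_j=(1-\lambda)z_j<0, \]
since $z_j>0$ and $\lambda>1$. Because all coefficients $z_i$ are strictly positive, $z$ lies in the relative interior $\ri(\cone(\Delta))$, which --- the cone containing the origin --- coincides with the interior of $\cone(\Delta)$ for the topology induced on $\Span(\Delta)$. This $z$ is the required vector.

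It remains to record the asserted equivalence, which holds with no extra hypotheses. Given such a $z$, its normalization $\h z$ lies in $\ri(\conv(\h\Delta))=\ri(\cone(\Delta)\cap V_1)$ and satisfies all the defining inequalities $\mpair{\cdot,\al}\le 0$ of $K$ strictly, so by continuity a full $\aff(\h\Delta)$-neighbourhood of $\h z$ lies in $K$, whence $K$ has nonempty interior there. Conversely, if $p$ is an interior point of $K$ relative to $\aff(\h\Delta)$, then each affine function $\mpair{\cdot,\al}$ is $\le 0$ on a neighbourhood of $p$, so it is either strictly negative at $p$ or constant equal to $0$ on $\aff(\h\Delta)$; the latter would force $\mpair{\al,\al}=0$, contradicting $\mpair{\al,\al}=1$. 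Hence $\mpair{p,\al}<0$ for all $\al$, and a de-normalization of $p$ is a vector as in the first formulation.
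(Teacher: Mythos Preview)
Your proof is correct and follows essentially the same route as the paper: both write the Gram matrix as $A=I-N$ with $N$ nonnegative irreducible, apply Perron--Frobenius to get a strictly positive eigenvector for the spectral radius $\lambda$, and use the indefinite-type hypothesis to force $\lambda>1$, whence $Az=(1-\lambda)z$ has all entries negative. Your treatment is in fact slightly more careful than the paper's in two places: you justify the passage from ``$q_{\Mpair,\Delta}$ is not positive semidefinite'' to ``$A$ has a negative eigenvalue'' via surjectivity of the coordinate map (the paper invokes Sylvester's law of inertia, which is a bit loose when $\Delta$ is linearly dependent), and you actually prove the ``Equivalently'' clause about $K$, which the paper leaves implicit.
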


We give a proof here for convenience.

\begin{proof}
  Let $\alpha_1,\dots, \alpha_p$ be the simple roots and  $A$
  be the matrix $(\mpair{\alpha_i,\alpha_j})_{1\leq i,j \leq p}$. For
  any $X={}^t(x_1,\dots, x_p)$ column matrix of size $n$, define
  $v_X=\sum_i x_i\alpha_i$. If $X\in(\mathbb R_{>0})^p$, then $v_X$ is
  in the interior of $\cone(\Delta)$. Moreover, $\mpair{v_X,\alpha}<0$
  for all $\alpha\in\Delta$ if and only if $AX\in (\mathbb
  R_{<0})^p$. Thus if we prove that there is $Z\in (\mathbb R_{>0})^p$
  such that $AZ\in (\mathbb R_{<0})^p$, the lemma follows by setting
  $z=v_Z$.

  Set $M:=I_p-A$. Since $(\Phi,\Delta)$ is irreducible, $A$ is
  indecomposable, and so is $M$. The matrix $M$ has nonnegative
  coefficients, because $A$ has $1$'s as diagonal coefficients and
  nonpositive coefficients elsewhere. So the Perron-Frobenius theorem
  implies the following two facts:
  \begin{itemize}
  \item define the spectral radius of $M$: $r:=\max \{|\lambda|,
    \lambda \in \Sp(M)\}$. Then $r\in \Sp(M)$. Moreover, since the
    root system is of indefinite type, the signature of $\Mpair$ has at
    least one $-1$. So Sylvester's law of inertia implies that $A$ has
    at least one negative eigenvalue, so $M=I_p-A$ has an eigenvalue
    that is strictly greater than $1$.  Therefore $r>1$.
  \item The eigenspace associated to the eigenvalue $r$ is a line
    spanned by a vector $Z$ of $M$ with strictly positive
    coefficients.
  \end{itemize}
  Therefore, there is $Z\in (\mathbb R_{>0})^p$ such that
  $MZ=rZ$. Hence we obtain that $AZ=(1-r)Z\in (\mathbb R_{<0})^p$,
  since $1-r<0$.
\end{proof}

We deduce easily that $K$ (and also $E$) affinely spans the same space as $\h \Delta$:

\begin{prop}
  \label{prop:aff}
  Suppose $(\Phi,\Delta)$ is an irreducible based root system of indefinite type. Then:
  \[ \aff(E)=\aff(\ol{Z})=\aff(Z)=\aff(K)=\aff(\h\Delta).\]
\end{prop}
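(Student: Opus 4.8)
The plan is to deduce the whole chain of equalities from three ingredients already at hand: the elementary inclusions $\aff(K)\subseteq\aff(Z)\subseteq\aff(\h\Delta)$ recorded above, the identity $\conv(E)=\ol Z$ from Theorem~\ref{thm:imc-closure}, and Lemma~\ref{lem:sphere}. No new geometric input is needed; the argument is essentially a short sequence of soft observations about affine hulls, with Lemma~\ref{lem:sphere} doing all the real work.

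First I would dispose of the ``free'' equalities. Since taking the convex hull does not change the affine hull of a set, Theorem~\ref{thm:imc-closure} gives immediately
\[
\aff(E)=\aff(\conv(E))=\aff(\ol Z).
\]
For the passage from $\ol Z$ to $Z$, I would avoid invoking convexity of $Z$: one inclusion $\aff(Z)\subseteq\aff(\ol Z)$ is clear, and for the reverse one uses that any affine subspace is closed, so from $Z\subseteq\aff(Z)$ we get $\ol Z\subseteq\aff(Z)$ and hence $\aff(\ol Z)\subseteq\aff(Z)$. Thus $\aff(E)=\aff(\ol Z)=\aff(Z)$, and it remains only to tie these to $\aff(K)$ and $\aff(\h\Delta)$.

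The crucial step is the equality $\aff(K)=\aff(\h\Delta)$, which is exactly where irreducibility and indefinite type enter through Lemma~\ref{lem:sphere}. That lemma says $K$ has non-empty interior for the induced topology on $\aff(\h\Delta)$. I would then invoke the standard fact that a non-empty open subset $U$ of an affine space contains a small ball, whose affine hull is the whole ambient space; since $U\subseteq K\subseteq\aff(\h\Delta)$, this forces $\aff(K)=\aff(\h\Delta)$. Combining with the inclusions already noted, the squeeze $\aff(\h\Delta)=\aff(K)\subseteq\aff(Z)\subseteq\aff(\h\Delta)$ collapses everything, and together with $\aff(Z)=\aff(\ol Z)=\aff(E)$ this yields the five-fold equality.

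I do not expect a genuine obstacle here, precisely because Lemma~\ref{lem:sphere} already supplies the content. The only points requiring care are bookkeeping rather than substance: handling the closure step without presuming $Z$ convex (as above), and correctly using the ``non-empty relative interior implies full affine span'' principle, for which one should make sure the interior is taken in the induced topology on $\aff(\h\Delta)$ and not in $V$.
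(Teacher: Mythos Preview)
Your proof is correct and follows essentially the same route as the paper: the identity $\aff(E)=\aff(\ol Z)$ from $\ol Z=\conv(E)$, the passage $\aff(\ol Z)=\aff(Z)$ via closedness of affine hulls, and then Lemma~\ref{lem:sphere} (non-empty relative interior of $K$) to force $\aff(K)=\aff(\h\Delta)$, with the remaining equalities collapsing by the squeeze $\aff(K)\subseteq\aff(Z)\subseteq\aff(\h\Delta)$.
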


In particular, we get $\Span(E)=\Span(\Delta)$. We will prove later
(in Theorem~\ref{thm:faithful2}) that actually any non empty open subset of $E$
is sufficient to (affinely) span $\aff(\h\Delta)$.

\begin{proof} The first equality $\aff(E)=\aff(\ol{Z})$ is straightforward using 
$\ol{Z}=\conv(E)$, and the second is clear since an affine span is closed. We also have by 
definition $\aff(K)\subseteq \aff(Z)\subseteq \aff(\h\Delta)$, so it will suffice to show that
 $\aff(K)=\aff(\h\Delta)$. By Lemma~\ref{lem:sphere}, the interior of $K$ (for the induced 
 topology on $\aff(\h \Delta)$) is not empty. So~$K$ contains an open ball of $\aff(\h \Delta)$ 
 (of nonzero radius), and $\aff(K)=\aff(\h \Delta)$.
\end{proof}

\begin{rem}
\label{rem:SingularB}
Many of the results in \cite{dyer:imc} involving the Tits cone and its
relationship to the imaginary cone require the assumption that
$\Mpair$ should be non-degenerate. For
reasons explained in \cite[\S12]{dyer:imc}, this assumption is not
necessary for results on the imaginary cone itself.  Consider a
based root system $(\Phi,\Delta)$ in $(V,\Mpair)$, with associated
Coxeter system $(W,S)$.  If $V'$ is any subspace of $V$ containing
$\Delta$ and $\Mpair$ is the restriction of $\Mpair$ to $V'$, we may
regard $(\Phi,\Delta)$ as a based root system in $(V',B')$, which we
say \emph{arises by restriction} (of ambient vector space).  The associated
Coxeter systems of these two based root systems are canonically
isomorphic and we identify them.  The definitions show that the
imaginary cones of these two based root systems are equal (as
$W$-subsets of $\Span(\Delta)$).  We also say that  $(\Phi,\Delta)$, as a based
root system in $(V,\Mpair)$, is an \emph{extension} of the based root system
$(\Phi,\Delta)$ in $(V',\Mpair')$, so the above shows that the imaginary
cone is unchanged by extension or restriction.  Similar facts apply to
the limit roots.

 Say that a based root system $(\Phi,\Delta)$ in $(V,\Mpair)$ is \emph{spanning} if 
 $V=\Span( \Delta)$  and is \emph{non-degenerate} if $\Mpair$ is non-degenerate. 
  Observe that any based root system has  a  restriction which is spanning, and also has some non-degenerate extension.
 The results we give in this paper are insensitive to restriction or extension, so, whenever convenient, we shall assume that a based root system under consideration is  non-degenerate
 (so that results from \cite{dyer:imc}  proved for non-degenerate root systems apply) or 
 spanning. Note however, we cannot always assume that it is simultaneously spanning and non-degenerate, as this would exclude affine Weyl  groups, for instance, from consideration. 
\end{rem}

\subsection{The $W$-action on the imaginary convex body}
\label{ss:WactICB}

We want now to extend the $W$-action on $\h\Phi\sqcup E$ to include the imaginary convex body $\ol Z$.  Recall that $\vphi$ denotes the linear form 
such that $\ker \vphi=V_0$ is the direction of the transverse hyperplane $V_1$.We know from \cite[\S3.1]{HLR} 
that the $W$-action on $\h\Phi\sqcup E$ defined in Equation \ref{eq:Wact} is  well defined on the set 
\[
D^+=\bigcap_{w\in W} w(V_0^+)\cap V_1,
\]
where $V_0^+$ is the open halfspace defined by  $\vphi(x)>0$. It is proven in \cite[Prop.~3.2]{HLR} that $E\subseteq D^+$. 
Since $D^+$ is convex, we have necessarily that 
\[
  \ol Z=\conv(E)\subseteq D^+.
\]  
So this $W$-action is also well-defined on $\ol Z$, and therefore $Z$. Note that $W$ acts on $\ol{Z}$  by  (restrictions of) projective transformations of $V_{1}$, and not by
  restrictions of affine maps. However, it does preserve convex  closures. 
We get therefore the following result, whose  illustration can be seen in Figure~\ref{fig:ICB}. 

\begin{prop}
  \label{rk:action}
The $W$-action from Equation \ref{eq:Wact} is an action on $\h\Phi\sqcup (E\cup \ol Z)$. More precisely:
\begin{enumerate}

\item $Z=W\cdot K$ is stable under this $W$-action;

\item $\ol Z$ is stable\footnote{This action may be identified with the restriction
  to the rays in $\ol{\imc}$ of the natural $W$-action on the set
  $\ray(V)$ of rays of $V$ (with origin $0$), as in
  \cite{dyer:imc}.} under this $W$-action.

\end{enumerate}
Moreover  the $W$-action on $\ol Z = \conv(E)$  is the restriction of projective 
transformations that preserve  convex closures, in the sense that
$\conv(w\cdot X)=w\cdot \conv(X)$ for $X\seq \ol Z$.
\end{prop}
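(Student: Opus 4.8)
The plan is to establish Proposition~\ref{rk:action} in three stages, corresponding to the two stability claims and the compatibility with convex closures. The main conceptual point is already in hand from the preceding discussion: the action \eqref{eq:Wact} is a well-defined projective action on $D^+$, and we have shown $E\subseteq D^+$ and hence $\ol Z=\conv(E)\subseteq D^+$. So the domain issue is settled, and what remains is to verify stability and the convexity compatibility.

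First I would prove part~(1), that $Z=W\cdot K$ is stable. By definition $Z=\h{\imc(\Phi)}$ and $\imc(\Phi)=\bigcup_{w\in W}w(\mathcal K(\Phi))$ is $W$-stable as a cone (this is stated right after the definition of $\imc$). The normalization commutes with the $W$-action in the sense encoded by \eqref{eq:Wact}: for $v\in\imc(\Phi)\setminus V_0$ and $w\in W$ we have $\h{w(v)}=w\cdot\h v$ directly from the formula $w\cdot x=\h{w(x)}$. Thus $w\cdot Z=w\cdot\h{\imc}=\h{w(\imc)}=\h{\imc}=Z$, giving stability of $Z$. The identity $Z=W\cdot K$ follows the same way from $\imc(\Phi)=W(\mathcal K(\Phi))$ after cutting by $V_1$.

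For part~(2), I would deduce stability of $\ol Z$ from stability of $Z$ together with the fact that the action is by restrictions of projective transformations of $V_1$ that are \emph{homeomorphisms} of $D^+$. Concretely, each $w$ acts on $V\setminus V_0$ linearly, and the induced projective map $x\mapsto\h{w(x)}$ is continuous and invertible (with inverse induced by $w^{-1}$) on the region where $\vphi(w(x))\neq 0$; since $\ol Z\subseteq D^+$ and $w(\ol Z)\subseteq D^+$, both $w$ and $w^{-1}$ are continuous there. Because a homeomorphism sends the closure of a set to the closure of its image, $w\cdot\ol Z=w\cdot\ol{Z}=\ol{w\cdot Z}=\ol Z$, using part~(1). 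The one point to be careful about is that taking closures within $V_1$ is the same as taking closures within $D^+$ for subsets of $D^+$; this holds because $D^+$ is (relatively) open or at least because $\ol Z$ is already known to lie in $D^+$, so no closure points escape the domain.

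Finally, for the convex-closure compatibility $\conv(w\cdot X)=w\cdot\conv(X)$ for $X\subseteq\ol Z$, I would argue that a projective transformation, while not affine, does preserve the operation of ``convex closure'' on subsets of a region lying strictly on one side of the vanishing hyperplane $V_0$ of $\vphi$. The cleanest route is to lift to the cone: $w$ acts linearly on $V$ and hence sends the cone $\cone(X)$ to $\cone(w(X))$, and linear maps preserve $\cone$ and $\conv$ of cones; intersecting back with $V_1$ and normalizing, the projective map carries $\conv(X)$ onto $\conv(w\cdot X)$ provided everything stays in $V_0^+$, which it does since $X\subseteq\ol Z\subseteq D^+$ and $D^+\subseteq V_0^+$. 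Taking closures as in part~(2) upgrades this to convex closures. The main obstacle I anticipate is precisely this last step: one must check that the nonlinearity of the projective map (the denominator $\vphi(w(x))$) does not distort convexity, and the resolution is that on the half-space $V_0^+$ the normalization $v\mapsto\h v$ is the restriction of a \emph{projective} map that sends line segments to line segments and preserves betweenness, so it carries convex sets to convex sets and commutes with $\conv$ there.
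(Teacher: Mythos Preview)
Your proof is correct. Parts (1) and (2) match the paper's approach; indeed the paper regards (2) as immediate from the preceding discussion and only writes out the $Z=W\cdot K$ identity, which you handle the same way via $\imc=W(\mathcal K)$ and the compatibility $\h{w(v)}=w\cdot\h v$.

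For the convex-hull compatibility you take a genuinely different route. The paper works entirely in $V_1$: it reduces to $X=\{x_1,x_2\}$, writes $z=\lambda x_1+(1-\lambda)x_2$, and computes directly from \eqref{eq:Wact} that
\[
w\cdot z=\lambda'\,(w\cdot x_1)+(1-\lambda')\,(w\cdot x_2),\qquad
\lambda'=\frac{\lambda\,\vphi(w(x_1))}{\lambda\,\vphi(w(x_1))+(1-\lambda)\,\vphi(w(x_2))},
\]
noting $\lambda'\in[0,1]$ since $\vphi(w(x_i))>0$ on $D^+$; the reverse inclusion comes from applying the same to $w^{-1}$. Your argument instead lifts to $V$: $\conv(X)=V_1\cap\cone(X)$ for $X\subseteq V_0^+$, linearity of $w$ gives $w(\cone(X))=\cone(w(X))$, and renormalizing yields $w\cdot\conv(X)=\conv(w\cdot X)$. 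Both are valid; the paper's computation is more self-contained and makes the positivity of $\vphi(w(x_i))$ (i.e.\ the role of $D^+$) completely explicit, while your cone argument is more conceptual and explains \emph{why} projective maps preserve convexity on a half-space without any calculation. One small point: in the paper ``convex closure'' just means $\conv$, so your final sentence about ``taking closures'' is unnecessary---the identity $\conv(w\cdot X)=w\cdot\conv(X)$ is already the full claim.
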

\begin{proof} Almost all the statements follow from the previous discussion. We only need to show  that  $Z=W\cdot K$ and  the statement for convex combinations. 

We know that $\mathcal Z=W(\mathcal K)$. 
Take $z\in Z=\mathcal Z\cap V_1$, so there is $w\in W$ and $x\in \mathcal K\setminus\{0\}$ such that $z=w(x)$.  Since $\mathcal K\subseteq \cone(\Delta)\subseteq V_0^+$, $V_1$ cuts $\mathbb R x$ and therefore the normalized version $\h x$ of $x$ exists. We know that  $\h{w(x)}=\h{w(y)}$ for all nonzero $y\in \mathbb Rx$.  Since $\h x \in \mathbb Rx$ we have
\[
w\cdot \h x = \h{w\cdot  \h x}  =\h{w(x)}=\h z= z .
\]

The action is clearly by restrictions of projective transformations, as already noted.  We show the statement about convex closures.  
 It is sufficient to consider the case where $X=\set{x_{1},x_{2}}$, with $x_1,x_2\in \ol Z$.  Note that 
 \[
 \conv(X)=\mset{\lambda x_{1}+(1-\lambda)x_{2}\mid 0\leq \lambda \leq 1}.
 \]
  Let $z=  \left( \lambda x_1+(1-\lambda)x_{2}\right)\in  \conv(X)$, with $\lambda \in \real$ and $0\leq \lambda\leq 1$.
 Take  $w\in W$,  then:
  \[ w\cdot z = w \cdot \left( \lambda x_1+(1-\lambda)x_{2}\right) =  \lambda' \ w\cdot x_1+(1-\lambda')\ w \cdot x_{2} \ ,\]
  where $\lambda'=\displaystyle{\frac{\lambda\vphi(w(x_1))}{\lambda \vphi (w(x_{1}))+(1-\lambda)\vphi (w(x_{2}))}}$ by~\eqref{eq:Wact}. Since $\vphi(w(x_{i}))>0$ for $i=1,2$, we have $\lambda'\in [0,1]$ and so $w\cdot z \in \conv(w\cdot X)$. Therefore $w\cdot \conv (X) \subseteq \conv(w\cdot X)$. For the reverse inclusion we use this result with $w\cdot X$ instead of $X$ and $w^{-1}$ instead of $w$: 
$
w^{-1}\cdot \conv(w\cdot X)\subseteq  \conv(w^{-1}\cdot (w\cdot X))=   \conv(X).
$
Therefore
$
\conv(w\cdot X)=w\cdot (w^{-1}\cdot \conv(w\cdot X))\subseteq w\cdot  \conv(X),
$ which concludes the proof.
 \end{proof}

 We end this section with this  fundamental property of the convex hull of an orbit in $\ol{Z}$ shown in~\cite{dyer:imc}.
 
\begin{thm}
\label{thm:Wact}
Let $(\Phi,\Delta)$ be an irreducible  based root system.
 Then for any  $z\in
\ol{Z}$, one has $\ol{\conv(W\cdot z)}=\conv(\ol{W\cdot z})=\ol{Z}
$.
 If $z\in Z\cup (\ol{Z}\cap \h Q)$, then $\Acc(W\cdot z)\subseteq \h Q$.  In particular, $\ol{Z}$ is the only non-empty, closed,
$W$-invariant convex set contained in $\ol{Z}$.
\end{thm}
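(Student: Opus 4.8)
The plan is to reduce the three equalities and the uniqueness statement to the cone-theoretic minimality of the imaginary cone proved in \cite{dyer:imc}, isolating the accumulation statement as the one point that must be re-established directly in the normalized (affine) picture. First I would record the formal reductions. Since $\ol Z$ is closed, convex and $W$-stable (Proposition~\ref{rk:action}), the inclusions $\ol{\conv(W\cdot z)}\subseteq\ol Z$ and $\conv(\ol{W\cdot z})\subseteq\ol Z$ are automatic, and the two hulls agree: $\ol{W\cdot z}$ is a closed subset of the compact set $\ol Z$, hence compact, so $\conv(\ol{W\cdot z})$ is already compact (thus closed) in finite dimension, forcing $\conv(\ol{W\cdot z})=\ol{\conv(W\cdot z)}$. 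Using that $W$ preserves convex closures (Proposition~\ref{rk:action}), the set $C:=\ol{\conv(W\cdot z)}$ is itself nonempty, closed, convex and $W$-invariant; hence the first assertion is \emph{equivalent} to the uniqueness assertion, and it suffices to prove the latter. The finite type case ($\ol Z=\eset$, vacuous) and the affine irreducible case ($\ol Z$ a single point, trivial) are disposed of at once, so I assume $(\Phi,\Delta)$ irreducible of indefinite type.

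To prove uniqueness, let $C\subseteq\ol Z$ be nonempty, closed, convex and $W$-invariant, and pass to cones. Since $\ol Z=\ol{\imc}\cap V_1$ and $C$ is a compact convex subset of $V_1$ not containing $0$, the set $\wt C:=\mset{tv\mid t\ge 0,\ v\in C}$ is a nonzero, closed, convex subcone of $\ol{\imc}$ that is stable under the \emph{linear} $W$-action (because $C$ is stable under the projective action). The minimality result of \cite{dyer:imc}, that $\ol{\imc}$ is the unique nonzero closed convex $W$-stable cone contained in itself, then gives $\wt C=\ol{\imc}$, whence $C=\wt C\cap V_1=\ol Z$. Its engine, which I would reproduce if needed, is that $\mathcal K$ has nonempty interior in $\Span(\Delta)$ (Lemma~\ref{lem:sphere}), providing a full-dimensional seed, together with the self-similar, dense distribution of the limit rays coming from dihedral reflection subgroups, which forces any nonzero $W$-stable closed convex subcone of $\ol{\imc}=\cone(E)$ to exhaust it. Combined with the first paragraph and Theorem~\ref{thm:imc-closure}, this yields all three identities $\ol{\conv(W\cdot z)}=\conv(\ol{W\cdot z})=\ol Z$.

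There remains the accumulation statement, which I would establish directly. If $z\in Z=\imc\cap V_1$ then $z\in\imc$, and $w(z)\in\imc$ for all $w\in W$ since $\imc$ is $W$-stable. As $W$ acts by isometries of $\Mpair$, the norm $\mpair{w(z),w(z)}=\mpair{z,z}$ is constant along the orbit, so after normalizing
\[
\mpair{w\cdot z,\,w\cdot z}=\frac{\mpair{z,z}}{\vphi(w(z))^{2}}.
\]
If $z\in\ol Z\cap\h Q$ then $\mpair{z,z}=0$ and normalization preserves isotropy, so $W\cdot z\subseteq\h Q$; as $\h Q$ is closed, $\Acc(W\cdot z)\subseteq\h Q$. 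If $z\in Z$ then $\mpair{z,z}<0$, and any accumulation point $p=\lim_n w_n\cdot z$ satisfies $\mpair{p,p}=\mpair{z,z}\cdot\lim_n\vphi(w_n(z))^{-2}\le 0$, with $p\in\h Q$ exactly when $\vphi(w_n(z))\to\infty$.

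Thus the accumulation statement comes down to showing $\vphi(w_n(z))\to\infty$ along every sequence $(w_n)$ for which the orbit points $w_n(z)$ are pairwise distinct, and \textbf{this is the main obstacle}. A bounded subsequence of $\vphi(w_n(z))$ would confine the corresponding $w_n(z)$, all carrying the same negative $\Mpair$-norm and lying in $\imc$, to a compact region of $V$; one must then rule out infinitely many distinct orbit points there. I would derive this from the proper discontinuity of the $W$-action on the interior of the imaginary cone (equivalently, the local finiteness of the orbit $W\cdot z$ away from the isotropic cone $Q$), which is the essential dynamical input separating the ``imaginary'' region from $\h Q$. Finally, the non-affine infinite dihedral case ($E=\{x,y\}$, $\ol Z=[x,y]$) I would verify by hand: the two generators push any interior point alternately towards each endpoint, so its orbit accumulates at both $x$ and $y$, giving $\ol{\conv(W\cdot z)}=[x,y]$ and $\Acc(W\cdot z)=\{x,y\}\subseteq\h Q$, in agreement with the general argument.
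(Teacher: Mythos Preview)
Your proposal is essentially correct and follows the same route as the paper: both reduce everything to results from \cite{dyer:imc}. The paper's own proof is pure citation---\cite[Theorem~7.5(b), Lemma~7.4, Theorem~7.6]{dyer:imc}---while you unpack the formal reductions more explicitly (the compactness argument giving $\conv(\ol{W\cdot z})=\ol{\conv(W\cdot z)}$, and the equivalence between the orbit-closure statement and the uniqueness statement via Proposition~\ref{rk:action}); these reductions are correct and are exactly the ``general facts as stated in \cite[A11]{dyer:imc}'' that the paper invokes.

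Two small imprecisions are worth flagging. First, you assert $\mpair{z,z}<0$ for $z\in Z$, but $Z\subseteq Q^-$ only gives $\le 0$; points of $Z\cap\h Q$ do occur (e.g.\ limit roots of affine facial subsystems, cf.\ the remark after Proposition~\ref{prop:decompGeneric}), though as you implicitly note, those are already handled by the isotropic case. Second, ``proper discontinuity of the $W$-action on the interior of the imaginary cone'' is not quite the right formulation---the linear action on a cone cannot be properly discontinuous; you mean the action on the affine slice $Z$ (or the projectivization), which is indeed the content of \cite[Lemma~7.4]{dyer:imc}. You correctly identify this as the one genuine analytic input and defer it to \cite{dyer:imc}, just as the paper does.
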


\begin{proof}
 The first two assertions are equivalent, by general facts as stated in
 \cite[A11]{dyer:imc}, to \cite[Theorem~7.5(b) and Lemma 7.4]{dyer:imc}. The third
 assertion follows from the first, and is a slightly weaker version of
 \cite[Theorem 7.6] {dyer:imc}.
\end{proof}

\section{The $W$-action on $E$ is minimal}
\label{ss:minimal}

The aim of this section is to prove  the first main result of this article:  the $W$-action on $E(\Phi)$ is \emph{minimal}, i.e., \emph{every}
$W$-orbit $W\cdot x$ in $E(\Phi)$ is dense in $E(\Phi)$.

\begin{thm} \label{cor:minimal}
  Let $(\Phi,\Delta)$ be an irreducible based root system.
  \begin{num}
  \item If $z\in \ol{Z}$, then $\ol{W\cdot z} \supseteq E$.
  \item If $x\in E$, then $\ol{W\cdot x}= E$, i.e., the action of $W$ on
    $E$ is minimal.
  \item    If  $\al\in \Phi$, then
    $ E=\Acc(W\cdot \al) = \ol{W\cdot \al}\setminus W\cdot \al.$
  \end{num}
\end{thm}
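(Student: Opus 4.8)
The plan is to deduce the three parts from the strong structural results already assembled in the preceding section, principally Theorem~\ref{thm:Wact}, together with the relation $\ol Z=\conv(E)$ from Theorem~\ref{thm:imc-closure}. The key observation driving part (a) is that the closure of an orbit closure is $W$-invariant and closed, so $\conv(\ol{W\cdot z})$ is a non-empty, closed, $W$-invariant convex subset of $\ol Z$; by the uniqueness clause of Theorem~\ref{thm:Wact} it must equal $\ol Z$. Thus $\conv(\ol{W\cdot z})=\ol Z=\conv(E)$, and I must extract from this that $\ol{W\cdot z}\supseteq E$. This is where the geometry of extreme points enters: every point of $E$ lies on the isotropic cone $\h Q$, and points of $\conv(E)$ lying on $\h Q$ are necessarily extreme points of $\ol Z$ (since $\h Q$ bounds the convex region $Q^-$ and $\ol Z\subseteq Q^-$, a point of $\ol Z$ on $\h Q$ cannot lie in the relative interior of a segment in $\ol Z$). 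Since $\ol{W\cdot z}$ is a closed set whose convex hull is $\ol Z$, it must contain every extreme point of $\ol Z$, hence all of $E$.

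For part (b), I would specialize (a) to $z=x\in E\subseteq\ol Z$, giving $\ol{W\cdot x}\supseteq E$. The reverse inclusion $\ol{W\cdot x}\subseteq E$ follows because $E$ is $W$-invariant and closed: indeed $E=\Acc(\h\Phi)$ is closed, and the $W$-action preserves $\h\Phi\sqcup E$ while mapping $E$ into itself (as established in \S\ref{ss:limitroots}), so $W\cdot x\subseteq E$ and hence $\ol{W\cdot x}\subseteq E$. Combining the two inclusions gives $\ol{W\cdot x}=E$, which is exactly minimality.

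For part (c), take $\al\in\Phi$ and work with its normalized root $\h\al\in\h\Phi$. The set $\ol{W\cdot\h\al}$ is contained in $\h\Phi\sqcup E$, and its accumulation set $\Acc(W\cdot\h\al)$ lies in $E$. To pin it down I would again invoke Theorem~\ref{thm:Wact}: the convex hull of the orbit closure of $\h\al$ (which lies in $\ol Z$ after one notes $\h\al\in\ol Z$ need not hold directly — so instead I argue via accumulation) gives that $\Acc(W\cdot\al)$ is a closed $W$-invariant set whose points lie on $\h Q$, hence equals $E$ by the minimality just proved applied to any of its points, or more directly because $E$ is the smallest such set. The decomposition $\ol{W\cdot\al}=W\cdot\al\sqcup\Acc(W\cdot\al)$ holds since $\h\Phi$ is discrete (so no point of the orbit is an accumulation point of the orbit lying in $\h\Phi$), giving $E=\Acc(W\cdot\al)=\ol{W\cdot\al}\setminus W\cdot\al$.

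The main obstacle I anticipate is the extreme-point argument in part (a): making rigorous that $\ol{W\cdot z}$, being a closed set with convex hull $\ol Z$, must contain all extreme points of $\ol Z$, and that $E$ consists precisely of such extreme points. The first requires the standard fact (Minkowski/Krein–Milman in finite dimensions) that the extreme points of a compact convex set are contained in every closed set generating it by convex hull; the second requires the convexity estimate \eqref{eq:ineqICB}, namely $\mpair{x,y}\le 0$ on $\ol Z$, to show that a point of $\ol Z\cap\h Q$ cannot be a proper convex combination of two other points of $\ol Z$. I expect this latter computation — that isotropic points of $\ol Z$ are extreme — to be the technical heart, and it presumably relies on the fine analysis of extreme points and exposed faces of $\ol Z$ that the introduction announces is carried out in this very section.
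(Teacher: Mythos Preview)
Your overall architecture for parts (b) and (c) matches the paper's, but there is a genuine gap in part (a): the claim that every point of $E$ is an extreme point of $\ol Z$ is \emph{false} in general. Remark~\ref{rk:Eext} gives an explicit irreducible example (from \cite[Example~5.8]{HLR}) with a limit root $\tfrac{\alpha+\beta+\delta+\epsilon}{4}\in E\setminus\Eext$. Your heuristic ``$\h Q$ bounds the convex region $Q^-$'' breaks down because $Q^-$ is not convex, let alone strictly convex: your computation using \eqref{eq:ineqICB} only shows that if an isotropic $x\in\ol Z$ is written as $x=tx_1+(1-t)x_2$ with $x_1,x_2\in\ol Z$, then $x_1,x_2$ are themselves isotropic and $\mpair{x_1,x_2}=0$; it does \emph{not} force $x_1=x_2$. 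When the signature of $\Mpair$ is not $(n-1,1)$, pairwise orthogonal isotropic vectors abound, and $\h Q$ can contain affine subspaces of positive dimension. So the step ``$\ol{W\cdot z}\supseteq\Eext\Rightarrow\ol{W\cdot z}\supseteq E$'' cannot be closed the way you propose.

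The paper replaces your false claim $E=\Eext$ by the true (and harder) statement $\ol{\Eext}=E$ (Theorem~\ref{thm:Eext}). Since $\ol{W\cdot z}$ is closed and contains $\Eext$ (by Minkowski's theorem, exactly as you argued), it then contains $\ol{\Eext}=E$. Proving $\ol{\Eext}=E$ is where the actual work lies: it uses the self-similarity property of Theorem~\ref{thm:fractact} (given $x\in E$ and a sequence $\h{\al_n}\to x$, the reflections $s_{\al_n}$ eventually contract any closed set disjoint from $\ol Z\cap x^\perp$ into any prescribed neighbourhood of $x$), together with the existence of an exposed point $y\in\Eexp$ with $\mpair{x,y}<0$ (Lemma~\ref{lem:exp}), so that $s_{\al_n}\cdot y\to x$ and $x\in\ol{\Eexp}=\ol{\Eext}$. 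Your intuition that the crux is ``the fine analysis of extreme points and exposed faces of $\ol Z$'' is correct, but the upshot of that analysis is density of $\Eext$ in $E$, not equality.
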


Part (b) of this theorem is a huge improvement of the only density result we had on $E(\Phi)$. In~\cite[Theorem 4.2]{HLR}, it is shown that $E(\Phi)$ is the closure of the set $E_2(\Phi)$ of the limit points obtained from dihedral reflection subgroups:  
\begin{equation}
\label{eq:E2}
E_2(\Phi):=\bigcup_{\alpha,\beta\in\Phi} L(\h\alpha,\h\beta)\cap \h Q,
\end{equation}
where $L(\h\alpha,\h\beta)$ denotes the line in $V_1$ passing through the points $\h\alpha$ and $\h\beta$. Even if Theorem~\ref{cor:minimal} is  much stronger, the density of $E_2(\Phi)$ into $E(\Phi)$ remains a very important result. Indeed, it is the main ingredient to prove Theorem~\ref{thm:Wact}, which is a main ingredient to prove  this new stronger density property.

\smallskip

While writing this article, we have been made aware that  part (b) of this theorem in the case of   root systems of signature  $(n-1,1)$, and with $\Delta$ linearly independent,  was proven in~\cite{rank3}.  See Remark~\ref{rem:rank3} for more details.  

\smallskip

A remarkable consequence of Theorem~\ref{cor:minimal}
is the fact that any orbit of the $W$-action on $\ol{Z}=\conv(E)$ can get
arbitrarily close to any face of $\ol{Z}$. Let $C$ be a convex set. 
Recall that a \emph{face} of $C$ is a convex subset $F$
of $C$ such that whenever $tc'+(1-t)c''\in F$ with $c',c''\in C$ and
$0<t<1$, one has $c'\in F$ and $c''\in F$.  

\begin{cor} \label{cor:face}
  Let $(\Phi,\Delta)$ be an irreducible  based root system.  Let $x\in
  \overline{Z}$. Then for any non-empty face $F$ of $\ol{Z}$, for any
  open subset $U$ of $\ol{Z}$ which contains $F$, the $W$-orbit of $x$
  meets $U$: $W\cdot x\cap U\neq \eset$.
\end{cor}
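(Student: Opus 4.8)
The plan is to deduce Corollary~\ref{cor:face} directly from Theorem~\ref{cor:minimal}(a) together with the elementary topology of the face~$F$ inside the compact convex set~$\ol Z$. The key point is that a nonempty face of a convex set must contain at least one point that is itself a limit root, so that the minimality of the $W$-action on $E$ can be brought to bear. First I would fix a nonempty face $F$ of $\ol Z$ and an open subset $U\supseteq F$, and observe that since $F$ is nonempty we may pick a point $p\in F$. The goal is to produce a point of $E$ lying in $U$; since $U$ is open and contains $F$, it suffices to find a point of $E$ arbitrarily close to $F$, and in fact I claim $F$ itself meets $E=\Acc(\h\Phi)\subseteq \partial(\ol Z)$.

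The crux is the observation that extreme points of $\ol Z$ lie in $E$. Indeed, by Theorem~\ref{thm:imc-closure} we have $\ol Z=\conv(E)$, and a standard fact (Minkowski/Krein--Milman, valid here since $\ol Z$ is compact in the induced topology on $\aff(\h\Delta)$) says that the set of extreme points of $\conv(E)$ is contained in the closure of $E$; but $E$ is already closed (it is the accumulation set of the discrete set $\h\Phi$, hence compact), so every extreme point of $\ol Z$ lies in $E$. Next, any nonempty face $F$ of the compact convex set $\ol Z$ contains an extreme point of $\ol Z$: a nonempty compact convex set has extreme points, the extreme points of $F$ are extreme points of $\ol Z$ (this is immediate from the definition of a face), and $F$ is itself compact and convex as a closed face. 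Combining these two facts, $F$ contains a point $x\in E$.

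Now I would invoke Theorem~\ref{cor:minimal}(b): since $x\in E$, we have $\ol{W\cdot x}=E$, and more to the point I apply part~(a) to the given point $x_{0}\in\ol Z$ of the statement (I rename the point of the corollary to avoid clashing with the $x\in E$ just produced). By Theorem~\ref{cor:minimal}(a), $\ol{W\cdot x_{0}}\supseteq E$, so in particular the point $x\in E\cap F$ lies in $\ol{W\cdot x_{0}}$. Since $U$ is an open set containing $x$, the closure relation means $W\cdot x_{0}$ meets every neighbourhood of $x$, hence $W\cdot x_{0}\cap U\neq\eset$, which is exactly the assertion.

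The main obstacle, and the only place demanding genuine care, is the first step: verifying that a nonempty face of $\ol Z$ meets $E$. The subtle points are (i) confirming that $\ol Z$ is genuinely compact in the relevant topology, which follows from $\ol Z=\conv(E)$ with $E$ compact (being the bounded closed accumulation set of $\h\Phi\subseteq\conv(\h\Delta)$) via the fact that the convex hull of a compact set in finite dimensions is compact; and (ii) the clean extraction of an extreme point inside an arbitrary nonempty face, which needs the face to be treated as a compact convex set in its own right so that Krein--Milman applies to it. Once these are in place the remaining steps are formal, and no further computation is required beyond the topological bookkeeping.
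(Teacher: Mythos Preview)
Your proof is correct and follows essentially the same approach as the paper's: both arguments locate a point of $E$ inside the face $F$ and then apply Theorem~\ref{cor:minimal}(a). The only cosmetic difference is that the paper writes an arbitrary $z\in F$ as a finite convex combination of points of $E$ (which the face property then forces into $F$), whereas you pass through an extreme point of $F$; since extreme points of a face are extreme in $\ol Z$ and $\ol Z=\conv(E)$ with $E$ closed, both routes land on the same conclusion $F\cap E\neq\eset$.
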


\begin{proof}
  Choose a point $z\in F$. Since  $\ol{Z}=\conv(E)$ we can express $z$ as a convex combination
  $\sum_{i=1}^{p}\lambda_{i}z_{i}$ of points of $E$   
  where $p>0$, $0<\lambda_{i}\leq 1$, $z_{i}\in E$, and
  $\sum_{i}\lambda_{i}=1$. 
 Fix any $i$ in $\{1,\dots,p\}$. The point
$z_i$ is in $E$, so for any $x\in \ol Z$, $z_{i}\in \ol{W\cdot x}$ by
Theorem~\ref{cor:minimal}(a). Moreover, since $F$ is a face and $z\in
F$, one also has $z_{i}\in F$.  Since $U$ is an open subset of
$\ol{Z}$ containing $F$, hence $z_{i}$, we conclude that $U\cap W\cdot
x\neq \eset$ as required. 
\end{proof}

\begin{rem}
\label{rem:irred}
~
\begin{enumerate}
\item     Suppose $(\Phi,\Delta)$ is reducible, and
  denote its irreducible components by $(\Phi_i,\Delta_i)$, for
  $i=1,\dots, p$.  Then $\Delta=\bigsqcup_i \Delta_i$,
  $\Phi=\bigsqcup_i\Phi_i$ (where $\bigsqcup$ denotes the disjoint union), 
  $\Span(\Delta)=\sum_{i}\Span(\Delta_i)$ (a sum of orthogonal subspaces, not necessarily direct) and $W=W_1\times \dots \times W_p$. We have $E(\Phi)=\bigcup_1^p E(\Phi_i)$ where the union is not necessarily disjoint (see \cite[Example 8.2]{dyer:imc}) but the sets $E(\Phi_{i})$ are pairwise orthogonal and so any limit root in $E(\Phi_i)\cap E(\Phi_j)$, where $i\neq j$, is in the radical of the restriction of  the bilinear form $B$  to  $\Span(E(\Phi_i)\cup E(\Phi_j))$.
 Each subset $E(\Phi_{i})$ of $E(\Phi)$ is $W$-invariant and   the $W$-action on $E(\Phi_{i})$ is the pullback of the natural $W_{i}$-action on this set by the projection $W\to W_{i}$.  On the other hand,  we have from \cite{dyer:imc}, $\imc (\Phi)=\imc
  (\Phi_1)+\ldots+\imc (\Phi_p)$ (sum of cones with pairwise
  orthogonal linear spans) and hence $\ol{\imc (\Phi)}=\ol{\imc
    (\Phi_1)}+\ldots+\ol{\imc (\Phi_p)}$.
    Consequently, 
    \[
    \ol{Z(\Phi)}= \conv\left(\ol{Z (\Phi_1)}\cup\ldots \cup \ol{Z(\Phi_p)}\right).  
    \]

 \item If $\Delta$ is linearly independent,  then  $E(\Phi)=\bigsqcup_1^p E(\Phi_i)$,
  the $W$-action on $E(\Phi)$ is the cartesian product of the actions of each $W_i$ on $E(\Phi_i)$ (see
  \cite[Prop.~2.14]{HLR}) and 
    \[
    \ol{Z(\Phi)}= \ol{Z (\Phi_1)}*\ldots * \ol{Z(\Phi_p)},  
    \]
    where $*$ denotes the join  of two (disjoint) spaces: 
    \[
     A*B:=\{(1-t)a + tb \ | \ a\in A, b\in B,  t\in [0,1] \}.
     \]
  \item  Part (a) of the theorem  implies that $\Acc (W\cdot z) \supseteq  E$, for $z\in \ol{Z}\sm E$ (see  
  Corollary~\ref{cor:acc-imc} for a  stronger statement).     
    
\end{enumerate}
\end{rem}

\subsection{Extreme limit roots and the proof of Theorem~\ref{cor:minimal}}

 The main ingredients for the proof of Theorem~\ref{cor:minimal} are  Theorem~\ref{thm:Wact}, as we just mentioned,   
 along with a detailed study of convexity relations between the imaginary cone and the set of limit roots. 
We show in particular that the set of extreme points of $\conv(E(\Phi))$ is dense in $E(\Phi)$. Let us introduce this result.  

\medskip

Given a convex set $C$, recall that the \emph{extreme points} of $C$ are the points in $C$ which cannot be written as a convex combination of other points of $C$, or, equivalently, the $x$ in $C$ such that $C\setminus \{x\}$ is convex. Thus, a point $c$ in $C$
is an extreme point of $C$ if and only if $\set{c}$ is a face of $C$, i.e., $c$ does not lie in the
interior of any segment with extremities in $C$. If $C$ is compact, Minkowski's theorem (finite-dimensional
Krein-Milman Theorem) asserts that the set of extreme points of $C$ is
the unique inclusion-minimal subset of $C$ with its convex hull equal
to $C$, see \cite{webster} for more details.

Denote by $\Eext(\Phi)$ (or simply $\Eext$, when there is no possible confusion) the set of extreme points of the imaginary convex
body $\ol{Z}=\conv(E)$; so $\Eext\subseteq E$ since $\ol Z$ is compact. The elements of $\Eext$ are called \emph{extreme limit roots}. 
 By Theorem~\ref{thm:Wact} we have $\ol Z = \conv(\ol{W\cdot z})$ for any $z\in \ol{Z}$, so by Minkowski's theorem we have 
 $\Eext\subseteq \ol{W\cdot z}$ for any $z\in \ol{Z}$. Thus, the statement that the closure of any orbit contains  the whole of $E$, which is Theorem~\ref{cor:minimal}(a), is a consequence of the following theorem.

\begin{thm}
  \label{thm:Eext}
Let $(\Phi,\Delta)$ be a based root system. Assume  that either $\Phi$ is irreducible or $\Delta$ is linearly independent. Then
$
\ol{\Eext(\Phi)}=E(\Phi) .
$
\end{thm}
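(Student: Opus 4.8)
The plan is to reduce to the irreducible indefinite case and then combine the minimality of $\ol Z$ among convex $W$-invariant subsets (Theorem~\ref{thm:Wact}) with a geometric analysis of the faces of $\ol Z$ that lie on $\h Q$.

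First I would dispose of the reducible situation under the linear independence hypothesis. By Remark~\ref{rem:irred}(2), if $\Delta$ is linearly independent then $E(\Phi)=\bigsqcup_i E(\Phi_i)$ and $\ol{Z(\Phi)}=\ol{Z(\Phi_1)}*\cdots*\ol{Z(\Phi_p)}$ is a join of convex sets spanning affinely independent subspaces; for such a join the extreme points are exactly the union of the extreme points of the factors, so $\Eext(\Phi)=\bigsqcup_i\Eext(\Phi_i)$ and $\ol{\Eext(\Phi)}=\bigsqcup_i\ol{\Eext(\Phi_i)}$. Hence it suffices to treat each irreducible $(\Phi_i,\Delta_i)$. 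For an irreducible system the finite type ($E=\eset=\Eext$) and the affine type ($E=\{x\}=\Eext$) are immediate, so the entire content lies in the irreducible indefinite case, which I treat henceforth.

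Next come the soft inclusions. The inclusion $\Eext\seq E$ is Minkowski's theorem, since $\ol Z=\conv(E)$ is compact. The $W$-action on $\ol Z$ is by projective transformations preserving convex closures (Proposition~\ref{rk:action}); it therefore maps segments to segments and faces to faces, so it permutes the extreme points, i.e. $\Eext$ is $W$-invariant, and so is $\ol{\Eext}$. Using $\conv(w\cdot X)=w\cdot\conv(X)$, the set $\conv(\ol{\Eext})$ is convex, closed (convex hull of a compact set in finite dimension), $W$-invariant, and contained in $\ol Z$; by the uniqueness statement of Theorem~\ref{thm:Wact} it must equal $\ol Z$. Thus $\conv(\ol{\Eext})=\ol Z=\conv(E)$, so the extreme limit roots already ``fill out'' the imaginary convex body.

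It remains to prove $E\seq\ol{\Eext}$. Fix $x\in E$ and let $F$ be the unique face of $\ol Z$ with $x\in\ri F$. Writing $x=\sum\lambda_i y_i$ as a convex combination of points $y_i\in\Eext\seq E$ and using $\mpair{x,x}=0$ together with $\mpair{y_i,y_j}\le 0$ from~\eqref{eq:ineqICB}, all cross terms vanish; hence the $y_i$ are pairwise orthogonal isotropic vectors, the face $F$ lies on $\h Q$, and its direction space is totally isotropic for $B$. If $\dim F=0$ then $x\in\Eext$ and we are done. The main obstacle is the case $\dim F\ge 1$: then $x$ lies in the relative interior of an isotropic face and is genuinely not extreme, so I must produce extreme points of $\ol Z$ accumulating at $x$ from outside $\aff(F)$. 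The natural mechanism is to approach $x$ along $\h Q$ transversally to $F$: for points $p\in\ol Z\cap\h Q$ near $x$ with $p\notin\aff(F)$, the functional $\mpair{p,\cdot}$ supports $\ol Z$ by~\eqref{eq:ineqICB} along the exposed face $\ol Z\cap\{\,\mpair{p,\cdot}=0\,\}\ni p$, and one expects these exposed faces to be far smaller than $F$; their vertices are exposed, hence extreme, points of $\ol Z$, and the plan is to show they converge to $x$ as $p\to x$. Since Straszewicz's theorem gives $\ol{\Eexp}=\ol{\Eext}$, producing such exposed points suffices. The technical heart — the step I expect to be hardest — is to guarantee enough points $p$ and to control the shrinking of their exposed faces, i.e. to show that the isotropic faces of $\ol Z$ are refined by nearby limit roots; I would attack this using the explicit description of the reflection action on $\h Q$ recalled in~\S\ref{ss:limitroots} together with the accumulation statement $\Acc(W\cdot z)\seq\h Q$ of Theorem~\ref{thm:Wact}, supplemented by the dedicated analysis of the weakly hyperbolic case.
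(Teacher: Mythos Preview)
Your reduction to the irreducible indefinite case is correct and matches the paper, as does the observation that the minimal face $F$ of $\ol Z$ containing a point $x\in E$ is totally isotropic. However, the proof stops exactly at the crucial step: when $\dim F\ge 1$ you only \emph{describe} a hoped-for mechanism (``find $p\in\h Q$ near $x$ transverse to $F$, look at the exposed face $\ol Z\cap p^\perp$, hope its vertices approach $x$'') without establishing that such $p$ exist or that the associated exposed faces shrink to $x$. This is not a detail that can be filled in routinely: nothing you have written forces the exposed face $\ol Z\cap p^\perp$ to be small or even to lie near $x$; a priori it could be another large isotropic face far from $x$. Your invocation of Theorem~\ref{thm:Wact} earlier in the argument is also idle: from $\conv(\Eext)=\ol Z$ (Minkowski) one already has $\conv(\ol{\Eext})=\ol Z$ without any $W$-invariance, so that paragraph adds nothing toward $E\subseteq\ol{\Eext}$.

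The paper's argument supplies precisely the missing contraction mechanism, and it is worth internalizing because it is both short and robust. Fix $x\in E$ and a sequence $(\al_n)$ in $\PP$ with $\h{\al_n}\to x$. By Lemma~\ref{lem:exp} there exists an \emph{exposed} point $y\in\Eexp$ with $\mpair{x,y}<0$, so $y\notin F:=\ol Z\cap x^\perp$. Theorem~\ref{thm:fractact}(b) (proved by a direct estimate on $s_{\al_n}(z)$, independent of Theorem~\ref{thm:Wact} and of any hyperbolicity assumption) then gives $s_{\al_n}\cdot y\to x$. Since $\Eexp$ is $W$-stable, all $s_{\al_n}\cdot y$ lie in $\Eexp\subseteq\Eext$, so $x\in\ol{\Eext}$. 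The key idea you were missing is that one moves a \emph{single} exposed point $y$ toward $x$ via reflections in roots converging to $x$, rather than trying to control whole exposed faces attached to auxiliary isotropic points $p$; the condition $\mpair{x,y}<0$ is exactly what makes the reflections $s_{\al_n}$ contract $y$ onto $x$.
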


\begin{rem}
 \label{rk:Eext}
~
\begin{enumerate}
 \item  The  equality $\Eext(\Phi)=E(\Phi)$ holds in some cases (see Corollary  \ref{cor:Eexthyp}).  
However,    the based root system in \cite[Example~5.8]{HLR} is irreducible and has linearly independent simple roots, but $\frac{\alpha+\beta
    + \delta + \epsilon}{4}\in E \setminus \Eext$.
 \item 
The following example shows the assumption in the statement of the theorem cannot be omitted.   Suppose $\Delta$ has three irreducible affine components $\set{\al_{i},\bt_{i}}$ of type $\wt A_{1}$,  for $i=1, 2, 3$, where, setting $\delta_{i}:=\al_{i}+\bt_{i}$, the space of linear relations on $\Delta$ is spanned by $\delta_{1}-\delta_{2}+\delta_{3}=0$.   Then $E(\Phi)=\mset{\h\delta_{i}\mid i=1,2,3}$ but $\ol{\Eext(\Phi)}=\Eext(\Phi)= \mset{\h\delta_{i}\mid i=1,3}$. \end{enumerate}
 \end{rem}

The proof of Theorem~\ref{thm:Eext} needs some more detailed study  of the convex geometry of $E$; we postpone it to \S\ref{ss:proofext} in order to present right now the proof of Theorem~\ref{cor:minimal}. 

\begin{proof}[Proof of Theorem~\ref{cor:minimal}]
  (a) Let $z\in \ol Z$. As explained above, $\ol{W\cdot z}$ contains the set $\Eext$ of
  extreme points of $\ol{Z}$ since $\conv(\ol{W\cdot z})=\ol{Z}=\conv(E)$ by
  Theorem \ref{thm:Wact} and Theorem~\ref{thm:imc-closure}. Hence $\ol{W\cdot z}\supseteq \ol{\Eext}=E$ by
  Theorem \ref{thm:Eext}.

  (b) For $x\in E$, the inclusion $\ol{W\cdot x}\supseteq E$ holds by (a) and the reverse inclusion holds since $E$ is closed and $W$-invariant.

  (c) For $\al \in \Phi$, since $W\cdot \al \subseteq \h \Phi$ and $\h
  \Phi$ is discrete, we get
  \[\ol{W\cdot \al}\setminus W\cdot \al=\Acc(W\cdot \al) \subseteq \Acc (\h \Phi)= E.\]
  Thus $\Acc(W\cdot \al)$ is a closed, $W$-stable subset of $E$. It is non-empty if $E\neq \eset$, so (b)  implies that it is equal to $E$.
\end{proof}
 
\medskip

\begin{rem}[What it means in the imaginary cone setting from~\cite{dyer:imc}]
~
 \begin{enumerate}
 \item Part (a) of Theorem~\ref{cor:minimal} is equivalent to the assertion that for $z\in
  \ol{\imc}\setminus \{0\}$, every limit ray of positive root rays 
  (in the space $\ray(V)$ of rays of $V$, as defined in
  \cite[5.2]{dyer:imc}) is
  contained in the closure of the set of rays in the $W$-orbit of the
  ray spanned by $z$; the special case
  (\cite[Theorem~7.5(a)]{dyer:imc}) in which $z\in {\imc}\setminus
  \{0\}$ was a key step in the proof  of Theorem~\ref{thm:Wact}.

\item Theorem~\ref{thm:Eext} corresponds,  in the setting of \cite{dyer:imc}, to the following statement: 
  the set of limit rays of positive roots is equal to the closure
  of the set of extreme rays of the closed imaginary cone $\ol{\imc}$.
    This result amounts to a new
  description of the set of limit rays of roots. More precisely, 
  Theorem~\ref{thm:Eext} is equivalent to the assertion in
  \cite[Remark 7.9(d)]{dyer:imc} that
  $\ol{\mathrm{R}_{\mathrm{ext}}}=\mathrm{R}_{0}$, which was unproved
  there.  It leads to substantial strengthenings of \cite[Corollary
    7.9(c)--(d) and Remark (2)]{dyer:imc}; for example, the inclusions in
  \cite[Corollary 7.9(c)]{dyer:imc} are actually equalities.

  \end{enumerate}
\end{rem}

\subsection{Exposed faces of $\ol{Z}$}

The aim of the rest of this section is to prove Theorem~\ref{thm:Eext}. In order to do so, we need to carefully study the convexity properties that are enjoyed by the imaginary convex body $\ol Z$. We refer to \cite[Chap.2]{webster} for more details
on convexity theory.

\medskip

A \emph{supporting half-space} of a convex set $C$ is a closed (affine) half-space
in $V$ which contains $C$ and has a point of $C$ in its boundary; the
boundary (which is an affine hyperplane) of the half-space is then
called a \emph{supporting hyperplane} of $C$.  An \emph{exposed face}
of $C$ is defined to be a subset of $C$ which is either the
intersection of $C$ with a supporting hyperplane of $C$, empty, or
equal to $C$.  Exposed faces of $C$ are faces of $C$. A face or exposed face $F$ of $C$ is
said to be proper if $F\neq C$.  It is known that any proper face of
$C$ is contained in the relative boundary $\rb(C)$ of $C$. 

Assume  for notational convenience in this subsection that $\Mpair$ is non-singular (see Remark \ref{rem:SingularB}). For any linear hyperplane $H$ of $V$ there is  $x\in V\setminus\{0\}$ such that
\[
x^\perp=\{v\in V\,|\, \mpair{v,x}=0\}=H.
\]
Since $V_1$ is an affine hyperplane that does not contain $0$, any affine hyperplane~$\mathcal H$ of $V_1$ is the intersection of $V_1$ with a linear hyperplane of $V$: there is $x\in V$ such that $\mathcal H=x^\perp\cap V_1$. In particular, any (affine) half-space
in $V$ that contains the imaginary convex body $\ol Z=\conv(E)$ has a boundary of this form. 
 The next proposition, which  refines parts of \cite[Proposition 7.10]{dyer:imc}, describes special properties of certain exposed faces of $\ol Z$.

\begin{prop}
\label{prop:isoface}
 Let $x\in \ol{Z}$ and  $F:=\ol{Z}\cap x^{\perp}$. Then:
 \begin{num}
 \item $F$ is an exposed face of $\ol{Z}$.
 \item If $U$ is an open subset of $\ol{Z}$ which contains $F$, then for some $\e>0$ one has:
\[ U \supseteq \mset{z\in \ol{Z}\mid \mpair{x,z}>-\epsilon} \supseteq
   F.\]
 \item If $x\notin \h Q$ (i.e. is non-isotropic), then $x\not\in F$, so $F$ is a
   proper face of $\ol{Z}$.
 \item  If $x\in \h Q$ (i.e. is isotropic), then $x\in F$, so $F$ is a non-empty face of $\ol{Z}$.
 \end{num}
\end{prop}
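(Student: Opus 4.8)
The plan is to build everything on the single inequality~\eqref{eq:ineqICB}. Since $x\in\ol Z$, the linear functional $v\mapsto \mpair{x,v}$ is $\leq 0$ on all of $\ol Z$, and it vanishes at $z\in\ol Z$ exactly when $z\in x^{\perp}$; thus $\mpair{x,\cdot}$ attains its maximum value $0$ over $\ol Z$ precisely along $F=\ol Z\cap x^{\perp}$. Combined with the fact that $\ol Z=\conv(E)$ is compact (as already used above), this observation yields every assertion.

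For part (a): as $x\in\ol Z\subseteq V_1$ and $V_1$ avoids the origin, $x\neq 0$; since $\Mpair$ is non-singular (as assumed throughout this subsection), $\mpair{x,\cdot}$ is a nonzero functional and $x^{\perp}$ is a genuine linear hyperplane of $V$. The closed half-space $\mset{v\in V\mid \mpair{x,v}\leq 0}$ contains $\ol Z$ by~\eqref{eq:ineqICB} and has boundary $x^{\perp}$. If $F=\eset$ then $F$ is an exposed face by definition; otherwise $x^{\perp}$ is a supporting hyperplane of $\ol Z$ and $F=\ol Z\cap x^{\perp}$ is the corresponding exposed face. This settles (a).

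Parts (c) and (d) are then pure bookkeeping. Because $x\in\ol Z$, we have $x\in F$ if and only if $x\in x^{\perp}$, i.e. if and only if $\mpair{x,x}=0$, i.e. if and only if $x\in\h Q$. Hence if $x\notin\h Q$ then $x\in\ol Z\setminus F$, forcing $F\neq\ol Z$, so $F$ is proper, which is~(c); and if $x\in\h Q$ then $x\in F$, so $F\neq\eset$, which is~(d).

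The only real work is in (b). The inclusion $\mset{z\in\ol Z\mid \mpair{x,z}>-\e}\supseteq F$ holds for every $\e>0$, since $\mpair{x,\cdot}$ vanishes on $F$. For the first inclusion I would argue by contradiction: if no $\e>0$ works, then choosing $\e=1/n$ produces points $z_n\in\ol Z\setminus U$ with $\mpair{x,z_n}>-1/n$. As $U$ is relatively open in the compact set $\ol Z$, the complement $\ol Z\setminus U$ is compact, so a subsequence of $(z_n)$ converges to some $z_{*}\in\ol Z\setminus U$; continuity of $\mpair{x,\cdot}$ together with $-1/n<\mpair{x,z_n}\leq 0$ gives $\mpair{x,z_{*}}=0$, whence $z_{*}\in F\subseteq U$, contradicting $z_{*}\notin U$. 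I expect this compactness step to be the main (indeed the only) obstacle, everything else following formally from~\eqref{eq:ineqICB} and the definitions of exposed face and of $\h Q$.
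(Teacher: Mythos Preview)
Your proof is correct and follows essentially the same approach as the paper: both rely on~\eqref{eq:ineqICB} to see that $\mpair{x,\cdot}$ is nonpositive on $\ol Z$ and attains its maximum $0$ along $F$, and both use compactness of $\ol Z\setminus U$ for part~(b). The only cosmetic difference is that in~(b) the paper defines $\epsilon$ directly as $\min\{-\mpair{x,z}\mid z\in\ol Z\setminus U\}$ (positive since the continuous function $-\mpair{x,\cdot}$ is strictly positive on the compact $\ol Z\setminus U$), whereas you reach the same conclusion via a sequential-compactness contradiction; the underlying idea is identical.
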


\begin{proof} 
  (a) If $x\in V^\perp$, then $F=\ol{Z}$ is an exposed face of
  $\ol{Z}$ (by convention). Assume now that $x\notin
  V^\perp$. For any $z,z'\in \ol{\imc}$, one has
  $\mpair{z,z'}\leq 0$ by Equation~\eqref{eq:ineqICB}.  This
  implies that $\ol{Z}$ is contained in the half-space $\mset{z\in
    V\mid \mpair{x,z}\leq 0}$ of $V$.  If $\ol{Z}\cap
  x^{\perp}=\eset$, then $F=\eset$ is an exposed face of $\ol{Z}$ by
  convention. Otherwise, by definition, $x^\perp$ is a supporting
  hyperplane of $\ol{Z}$, so $F=\ol{Z}\cap x^{\perp}$ is an exposed
  face of $\ol{Z}$.

  \smallskip

  (b) Let $U$ be an open subset of $\ol{Z}$ containing $F$. Since
  $\ol{Z}\sm U$ is compact, we can define $\epsilon:=
  \min(-\mset{\mpair{x,z}\mid z\in \ol{Z}\sm U})$. For $z\in
  \ol Z$, one has $\mpair{x,z}\leq 0$, and $\mpair{x,z}=0$ if and only if
  $z \in F=\ol Z \cap x^{\perp}$ (which is contained in $U$). So
  $\epsilon>0$. Then $F \subseteq \mset{z\in
    \ol{Z}\mid\mpair{x,z}>-\epsilon} \subseteq U$. Parts  (c)--(d) follow directly from the definitions. 
\end{proof}

\subsection{Extreme limit roots and exposed limit roots}
\label{ss:extpoint}

 An \emph{exposed point} of a convex set $C$ is a point $c\in C$ such
that $\set{c}$ is an exposed face of $c$. In particular, any exposed point of $c$ is an
extreme point of $C$. The converse is not true, see for example the picture
 \cite[Fig.~2.10]{webster}.  Recall  that if $C$ is compact, the set of extreme points of $C$  is the
minimal subset of $C$ whose convex hull is equal to $C$. Moreover,
Strascewicz' theorem asserts that every extreme point of $C$ is in the
closure of the set of exposed points of $C$. 

\smallskip

Let $\Eexp$ denote the set of  exposed points of $\ol{Z}=\conv(E)$: we
 call its points  \emph{exposed limit roots}.
 Since  $\ol{Z}$, is convex and compact, we have the following inclusions:
\begin{equation*}\label{eq:extpt1}
\Eexp\subseteq\Eext\subseteq \ol{\Eexp}=\ol{\Eext}, \quad \text{ and} \quad
\ol{\Eext}\subseteq E=\ol{E} \subseteq  \ol{Z}\cap \widehat{Q}.
\end{equation*}
Moreover,
\begin{equation}\label{eq:extpt2}
 \conv(\Eext)=\conv(\,\ol{\Eexp}\,)=\conv(E)=\ol{Z}.
\end{equation}

It was already known that the $W$-action preserves $E$, $Z$, $\ol Z$
and $\widehat Q\cap \ol Z$. From Proposition~\ref{rk:action}, one sees
that the $W$-action on $\ol{Z}$ sends convex sets to convex sets and
preserves $\Eexp$, $\Eext$, and $\ol{\Eexp}$. It also (obviously)
preserves the signs of inner products, in the sense that $\mpair{x,y}$
and $\mpair{w\cdot x,w\cdot y}$ have the same sign (positive, negative
or zero) for all $w\in W$ and $x,y \in \ol Z$.

\begin{lem}
  \label{lem:exp}
  Let $(\Phi,\Delta)$ be irreducible of indefinite type. Then:
  \begin{num}
  \item 
    For any $x\in \cone(\Delta)\sm\set{0}$,
    there exists $y\in E_{\exp}$ with $\mpair{x,y} \neq 0$.
    \item  
      If $x\in \ol{Z}$ in (a), then $\mpair{x,y}< 0$.
  \end{num}
\end{lem}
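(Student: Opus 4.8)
The plan is to establish (a) by contradiction and then read off (b) from the sign inequality~\eqref{eq:ineqICB}. So suppose that $\mpair{x,y}=0$ for \emph{every} $y\in\Eexp$. As $\mpair{x,\cdot}$ is linear, this says $x$ is orthogonal to $\Span(\Eexp)$, and my first task is to compute this span. Using $\conv(\ol{\Eexp})=\conv(E)=\ol Z$ from~\eqref{eq:extpt2}, the fact that a linear span is unchanged by passing to the closure or the convex hull, and $\Span(E)=\Span(\Delta)$ from Proposition~\ref{prop:aff} (available because $(\Phi,\Delta)$ is irreducible of indefinite type), I get
\[
\Span(\Eexp)=\Span(\ol{\Eexp})=\Span(\conv(\ol{\Eexp}))=\Span(\ol Z)=\Span(E)=\Span(\Delta).
\]
Hence $x$ is orthogonal to all of $\Span(\Delta)$; since moreover $x\in\cone(\Delta)\seq\Span(\Delta)$, the vector $x$ lies in the radical of $\Mpair$ restricted to $\Span(\Delta)$ while also lying in $\cone(\Delta)\sm\set{0}$.

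The crux of the proof --- and the step I expect to be the main obstacle --- is to show this cannot happen: for an irreducible based root system of indefinite type, the radical of $\Mpair|_{\Span(\Delta)}$ meets $\cone(\Delta)$ only at $0$. I would prove this by the Perron--Frobenius argument already used in Lemma~\ref{lem:sphere}. Let $\alpha_1,\dots,\alpha_p$ be the simple roots, $A=(\mpair{\alpha_i,\alpha_j})$ the Gram matrix, and $M=I_p-A$; then $M$ is entrywise nonnegative (the diagonal of $A$ is $1$, the off-diagonal entries are $\leq 0$) and indecomposable (as $\Phi$ is irreducible), and indefiniteness forces $A$ to have a negative eigenvalue, so $M$ has spectral radius $r>1$. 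Writing a hypothetical radical vector as $x=\sum_i\mu_i\alpha_i$ with $\mu=(\mu_i)\geq 0$ and $\mu\neq 0$, the radical condition reads $A\mu=0$, i.e. $M\mu=\mu$; thus $\mu$ would be a nonnegative eigenvector of $M$ for the eigenvalue $1$. But by Perron--Frobenius the only nonnegative eigenvectors of the nonnegative indecomposable matrix $M$ are positive multiples of its Perron vector, whose eigenvalue is $r\neq 1$ --- a contradiction. This proves (a): some $y\in\Eexp$ must satisfy $\mpair{x,y}\neq 0$.

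For (b), if in addition $x\in\ol Z$, then the $y$ just produced also lies in $\Eexp\seq\ol Z$, so~\eqref{eq:ineqICB} gives $\mpair{x,y}\leq 0$; together with $\mpair{x,y}\neq 0$ this forces $\mpair{x,y}<0$, as claimed. Throughout, by Remark~\ref{rem:SingularB} I may harmlessly replace $(\Phi,\Delta)$ by its spanning restriction, so that ``radical of $\Mpair|_{\Span(\Delta)}$'' becomes simply ``radical of $\Mpair$''; but the Perron--Frobenius step is intrinsic to $\Span(\Delta)$ and goes through verbatim in either case.
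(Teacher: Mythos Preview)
Your proof is correct and follows essentially the same approach as the paper: argue by contradiction, use \eqref{eq:extpt2} and Proposition~\ref{prop:aff} to conclude that $x$ is orthogonal to all of $\Span(\Delta)$, and then derive a contradiction from $x\in\cone(\Delta)\sm\{0\}$ via Perron--Frobenius; part (b) is identical. The only minor difference is in how the final contradiction is obtained: the paper invokes Lemma~\ref{lem:sphere} (the existence of $z$ in the interior of $\cone(\Delta)$ with $\mpair{z,\alpha}<0$ for all $\alpha$, itself proved by Perron--Frobenius) and pairs $x$ with $z$, whereas you appeal directly to the uniqueness of the nonnegative eigenvector of the irreducible nonnegative matrix $M=I_p-A$ to rule out $M\mu=\mu$ with $\mu\geq 0$, $\mu\neq 0$.
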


\begin{proof}
  (a) Let $x\in \cone(\Delta) \sm\set{0}$ be arbitrary.  By Proposition~\ref{prop:aff}, we have
   $\aff(Z)=\aff(\widehat \Delta)$.  By Equation~\eqref{eq:extpt2},
  $\aff(\Eexp)=\aff(\Eext)=\aff(\widehat \Delta)$ as well.  We claim
  that there is some $y\in \Eexp$ with $\mpair{x,y}\neq 0$.  For
  otherwise, the above would imply that $\mpair{x,\widehat\Delta}=0$
  and so $\mpair{x,\Delta}=0$ also. Since $x\in \cone(\Delta)\setminus
  \set{0}$ and $\Phi$ is irreducible of indefinite type, this is
  impossible by Lemma~\ref{lem:sphere} (see also \cite[Prop.~4.8]{HLR} or \cite[Lemma 7.1 and
    \S4.5]{dyer:imc}). For (b), since $y \in \Eexp \subseteq \ol{Z}$, if in addition $x\in
  \ol{{Z}}$, then one has $\mpair{x,y}\leq 0$ by
 Equation~\eqref{eq:ineqICB}.
\end{proof}

\begin{prop}
  \label{prop:isoface2}
  Let $x\in \ol{Z}$ and  $F:=\ol{Z}\cap x^{\perp}$. If $\Phi$ is irreducible of indefinite type and $x\in Q$ is isotropic, then $\eset\neq F\subsetneq \ol{Z}$ is a proper, non-empty, exposed  face of $\ol{Z}$.
\end{prop}

\begin{proof} 
  By Proposition~\ref{prop:isoface}(d), it suffices to  show that $F\subsetneq \ol{Z}$. By Lemma~\ref{lem:exp}(b), there exists $y\in \Eexp$ such that $\mpair{x,y}<0$. Thus $y \in \ol{Z}\sm x^\perp$, and the result follows.
\end{proof}

\begin{rem}\label{rk:extcomp}
~
\begin{enumerate}
\item   \label{rk:Eextreduc}
   It is easily seen that when $A$ and $B$ are disjoint
  convex sets, the join $A*B$ of $A$ and $B$ is convex and the set of extreme points of $A*B$ is the disjoint union
  of the set of extreme points of $A$ and of the set of extreme points of $B$. It follows directly from Remark~\ref{rem:irred} that if $\Delta$ is linearly independent, then   the
   set $\Eext(\Phi)$ of extreme limit  roots of $\Phi$ is the disjoint union of the sets $\Eext(\Phi_i)$. 
  
   \label{rk:cutEext}
 \item  As the sets $\Eext$ and $\Eexp$ can be constructed from cones,
  their properties do not depend on the choice of the transverse
  hyperplane ($V_1$) used to define the normalization map. If $H$ and
  $H'$ are two different affine hyperplanes, both transverse to $\PP$,
  denote by $\pi_H$, $\pi_{H'}$ the associated normalization maps (see
  \cite[Prop.~5.3]{HLR}), such that $\pi_H$ sends
  $\conv(\pi_{H'}(\Delta))$ to $\conv(\pi_H(\Delta))$ and $E(\Phi,H')$
  to $E(\Phi,H)$. Then $\pi_H$ also maps $\Eext(\Phi,H')$ to
  $\Eext(\Phi,H)$ and $\Eexp(\Phi,H')$ to $\Eexp(\Phi,H)$.
\end{enumerate}
 \end{rem}

\subsection{A fractal property and proof of Theorem \ref{thm:Eext}}

\label{ss:proofext}

We want to prove that the extreme limit roots (or, equivalently, the
exposed limit roots) are dense in the limit roots. Before proving Theorem~\ref{thm:Eext}, and concluding this section, 
we state a last  theorem that concisely encapsulates certain aspects of the
``fractal'' (self-similar) nature of the boundary of $\ol{Z}$ (see also \S\ref{se:fractal} for other fractal-like properties).
Only the weaker part (b) will be needed in the proof of Theorem~\ref{thm:Eext}, 
but (a) will be used in \S\ref{se:univ}.  Of course, we write $w\cdot X$ for the set $\h{w(X)}=\{w\cdot x \ | \ x \in X\}$,  where $w \in W$ and $X\subseteq V\setminus V_0$.

\begin{thm}
  \label{thm:fractact}
  Suppose that $(\Phi,\Delta)$ is irreducible of indefinite type.  
  
  Let $x\in
  E$ and $(\al_{n})$ be a
  sequence in $\PP$ such that $\widehat{\al_{n}}\to x$ as
  $n\to\infty$.
  
  Let $F:= \ol{Z}\cap x^{\perp}$ (which is a proper face of
  $\ol{Z}$ containing $x$, by Proposition~\ref{prop:isoface2}). Let $U$ be
  an open subset of $\ol{Z}$ containing $x$ and $P$ be a closed subset
  of $\ol{Z}$ such that $F\cap P=\eset$.  
 \begin{num}
 \item There exists $N\in \Nat$ such that for $n\geq N$,
   $s_{\al_{n}} \cdot P \subseteq  U$, or equivalently
   $s_{\al_{n}} \cdot U\supseteq P$.
 \item For any $z\in\ol{Z}\sm F$, one has
  $s_{\al_{n}} \cdot z \to x$ as $n\to \infty$.
 \end{num}
\end{thm}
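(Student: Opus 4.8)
The plan is to write down the projective reflection $s_{\al_n}$ explicitly and read off the dynamics. The starting observation is that the normalized root $\h{\al_n}$ is a fixed point of the projective action of $s_{\al_n}$: writing $\al_n=c_n\h{\al_n}$ with $c_n:=\vphi(\al_n)>0$ (positive since $\al_n\in\PP\subseteq\cone(\Delta)$), the relation $s_{\al_n}(\al_n)=-\al_n$ gives $s_{\al_n}\cdot\h{\al_n}=\h{\al_n}$. Substituting $\al_n=c_n\h{\al_n}$ into $s_{\al_n}(z)=z-2\mpair{\al_n,z}\al_n$ and normalizing (using $\vphi(z)=\vphi(\h{\al_n})=1$ for $z\in\ol Z\subseteq V_1$) yields, with $t_n(z):=2c_n^2\,\mpair{\h{\al_n},z}$,
\[
s_{\al_n}\cdot z=\frac{z-t_n(z)\,\h{\al_n}}{1-t_n(z)},
\qquad\text{so that}\qquad
s_{\al_n}\cdot z-\h{\al_n}=\frac{z-\h{\al_n}}{1-t_n(z)}.
\]
This identity is the engine of the proof: it reduces the statement to controlling the single scalar $t_n(z)$.

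First I would establish $c_n\to\infty$. Every root satisfies $\mpair{\rho,\rho}=1$, so $\mpair{\h{\al_n},\h{\al_n}}=1/c_n^2$; since $\h{\al_n}\to x\in\h Q$ and $\mpair{x,x}=0$, continuity of $\Mpair$ forces $1/c_n^2\to0$, i.e. $c_n\to\infty$. For part (b), fix $z\in\ol Z\sm F$. As $x\in E\subseteq\ol Z$, Equation~\eqref{eq:ineqICB} gives $\mpair{x,z}\le0$, with equality exactly when $z\in\ol Z\cap x^\perp=F$; hence $\mpair{x,z}<0$ strictly. By continuity $\mpair{\h{\al_n},z}\to\mpair{x,z}<0$, which together with $c_n^2\to\infty$ forces $t_n(z)\to-\infty$ and so $1-t_n(z)\to+\infty$. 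Since $z-\h{\al_n}$ stays bounded and $\h{\al_n}\to x$, the displayed identity gives $s_{\al_n}\cdot z-\h{\al_n}\to0$, hence $s_{\al_n}\cdot z\to x$; this proves (b).

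For part (a) the task is to make this convergence uniform in $z\in P$. As a closed subset of the compact $\ol Z$, the set $P$ is compact, and since $P\cap F=\eset$ the continuous function $z\mapsto\mpair{x,z}$ is strictly negative on $P$, so $\sup_{z\in P}\mpair{x,z}=:-\dt<0$. Fixing a Euclidean norm on $V$, the convergence $\mpair{\h{\al_n},z}\to\mpair{x,z}$ is uniform over the bounded set $\ol Z$, so for $n$ large we have $\mpair{\h{\al_n},z}\le-\dt/2$ for every $z\in P$, whence $1-t_n(z)\ge1+c_n^2\dt$. As $\|z-\h{\al_n}\|$ is uniformly bounded (all points lying in the compact polytope $\conv(\h\Delta)$), the identity yields $\|s_{\al_n}\cdot z-\h{\al_n}\|\to0$ uniformly over $z\in P$, and with $\h{\al_n}\to x$ we get $s_{\al_n}\cdot z\to x$ uniformly over $P$. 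Since $U$ is open in $\ol Z$ and contains $x$, it contains a relative ball about $x$, and uniform convergence places $s_{\al_n}\cdot P$ inside that ball, hence inside $U$, for all large $n$; the equivalent form $s_{\al_n}\cdot U\supseteq P$ follows because $s_{\al_n}$ is an involution of $\ol Z$. The one genuinely delicate point is this uniform control of the denominator $1-t_n(z)$, which is exactly where the compactness of $P$ and its disjointness from $F=\ol Z\cap x^\perp$ (equivalently, the strict sign $\mpair{x,z}<0$ off $F$) become indispensable.
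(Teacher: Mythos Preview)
Your proof is correct and follows essentially the same approach as the paper's: both compute the projective reflection explicitly, show $\vphi(\al_n)\to\infty$ from the isotropy of $x$, use compactness of $P$ together with $P\cap F=\eset$ to get a uniform bound $\mpair{x,z}\le-\delta<0$, and conclude uniform convergence of $s_{\al_n}\cdot z$ to $x$. Your packaging via the identity $s_{\al_n}\cdot z-\h{\al_n}=(z-\h{\al_n})/(1-t_n(z))$ is a slightly cleaner rewriting of the same formula the paper derives, and your order (proving (b) first, then uniformizing for (a)) is reversed but equivalent.
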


The theorem implies the following self-similarity property: given a
point $x\in E$ and its associated face $F:= \ol{Z}\cap x^{\perp}$, if
we consider any open neighborhood of $x$ inside $\ol{Z}$ and any
closed subset of $\ol{Z}$ disjoint from $F$, we can send the latter
inside the former by the action of some element of $W$ (see Figure~\ref{fig:self}).

\begin{figure}[!h]
\begin{minipage}[b]{\linewidth}
\centering
\scalebox{0.5}{\input{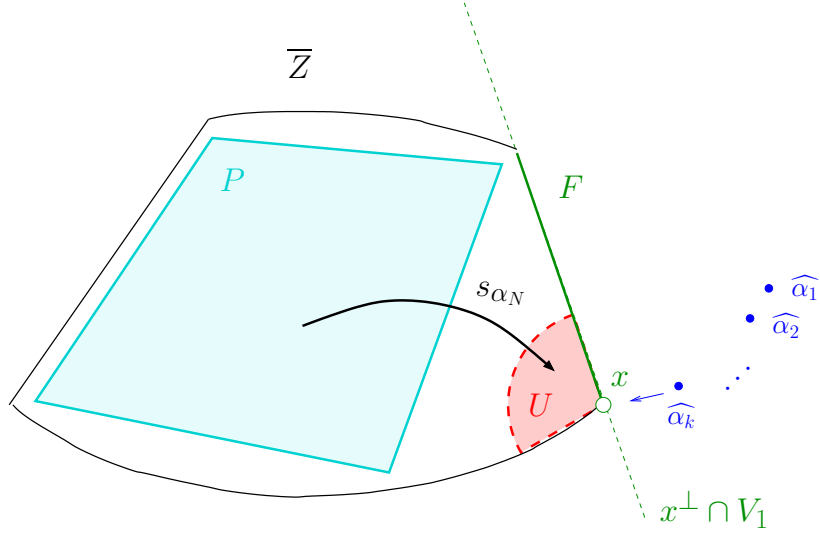}}
\end{minipage}%
\caption{A schematic visualization of Theorem~\ref{thm:fractact},
  displaying the self-similar nature of the boundary of
  $\ol{Z}$. Consider a limit root $x\in E$ (say
  $x=\lim(\h{\alpha_n})$), an open neighborhood $U$ of $x$ inside
  $\ol{Z}$ and a closed subset $P$ of $\ol{Z}$ disjoint from the face
  $F$ of $\ol{Z}$ containing $x$ ($F=\ol{Z}\cap x^\perp$). Then for
  $N$ large enough, $s_{\alpha_N}$ sends $P$ inside $U$.}
\label{fig:self}
\end{figure}

\begin{proof}
  Recall that $\vphi$ denotes the linear form such that $\h v =
  v/\vphi(v)$ for any $v \in V \setminus V_0$. Since $x\in E$, by
  definition there is a sequence $(\al_{n})$ of positive roots with
  $\widehat {\al_{n}}=\al_{n}/\vphi(\al_{n})\to x$ as $n\to\infty$.
  Since $x$ is isotropic and $\mpair{\al_{n},\al_{n}}=1$ for all $n$,
  $\vphi(\al_{n})\to \infty$ as $n\to \infty$.    Note first that the 
property (a) obviously implies (b) (take
  $P=\{z\}$). The equivalence of the conditions $s_{\al_{n}} \cdot P
  \subseteq U$ and $s_{\al_{n}} \cdot U\supseteq P$ in (a) is clear
  since $(w,z)\mapsto w\cdot z$ is a $W$-action on $\ol{Z}$ and 
$s_{\al_{n}}$ is an involution.

   To prove the inclusion $s_{\al_{n}} \cdot P \subseteq U$, it will 
suffice to
  show that $s_{\al_{n}} \cdot z\to x$ uniformly on $P$.
   Applying Proposition \ref{prop:isoface}(b) to the open subset
  $\ol{Z}\sm P\supseteq F$ of $\ol{Z}$ shows that there is some
  $\epsilon>0$ such that $P \subseteq \mset{z\in \ol{Z}\mid
    \mpair{x,z}\leq -\epsilon}$.   Since $\Mpair$ is bilinear,  the function 
$\Mpair:\conv(\h\Delta)\times P\to \mathbb R$ is uniformly continuous.
   By compactness of $P$, the function $f:\conv(\h\Delta)\to\mathbb R$ 
defined by  $f(y)= \sup_{z\in P} \mpair{y,z}$ is well-defined and continuous.
  So  $U'=f^{-1}(]-\infty,-\epsilon/2[)$ is   an open neighbourhood of 
$x$ in $\conv(\h\Delta)$ such that:
  $\forall z \in P, \forall y \in U', \mpair{y,z} \leq - \epsilon/2$.  
Moreover, for $z\in P$, one has
  $s_{\al_{n}}(z)=z-2\mpair{z,\al_{n}}\al_{n}$. Hence for $n$ large
  enough so that $\h\alpha_n=\alpha_n/\vphi(\al_{n})\in U'$, one has
  $\vphi(s_{\al_{n}}(z))=1-2\mpair{z,\al_{n}}\vphi(\al_{n}) \geq 1+
  \vphi(\al_{n})\epsilon$. So :
  \begin{equation}
      \label{cvu} \vphi(s_{\al_{n}}(z)) \to \oo \text{ as } 
n\to\infty\text{, uniformly on } P.
  \end{equation}
  Moreover,
  \[
   s_{\al_{n}} \cdot z= \frac{z}{\vphi(s_{\al_{n}}(z))} - 2 
\frac{\mpair{z,\al_{n}}}{\vphi(s_{\al_{n}}(z))}\al_{n}
  =\frac{z}{\vphi(s_{\al_{n}}(z))} +
  \frac{\vphi(s_{\al_{n}}(z))-1}{\vphi(s_{\al_{n}}(z))}\ \h{\al_{n}} \ .
  \]
  Since $P$ is compact,  $P$ is bounded and therefore,   using 
\eqref{cvu}, we have $
  \frac{z}{\vphi(s_{\al_{n}}(z))} \to 0$ and
  $\frac{\vphi(s_{\al_{n}}(z))-1}{\vphi(s_{\al_{n}}(z))}\to 1$,
  uniformly on $P$. Since $\h{\al_{n}}\to x$, we
  conclude that $s_{\al_{n}} \cdot z$ converges to the same limit as  
$\h\alpha_n$, that is converges to $x$,
  uniformly on $P$.
\end{proof}

We can now prove the density of $\Eext$ in $E$.

\begin{proof}[Proof of Theorem~\ref{thm:Eext}]
First we treat the case in which 
  $(\Phi,\Delta)$ is an irreducible root system. 
     If $\Phi$ is finite, then there are no limit roots and no extreme
  limit roots.  If $\Phi$ is affine, there is one limit root (see
  \cite[Cor.~2.15]{HLR}), and the closed imaginary cone consists of the
  ray of this root alone.  Hence the desired conclusion holds in these
  two cases.  Suppose henceforward that $(\Phi,\Delta)$ is of
  indefinite type.  Let $x\in E$.  We may choose a sequence
  $(\al_{n})$ in $\PP$ such that $\widehat{\al_{n}}\to x$ as $n\to
  \infty$.  By Lemma~\ref{lem:exp}(b), there exists $y\in E_{\exp}$
  with $\mpair{x,y}<0$. In particular, $y$ does not lie in the face
  $\ol{Z}\cap x^\perp$, so by Theorem \ref{thm:fractact}(b),
  $s_{\al_{n}}\cdot y$ converges to $x$ as $n\to \infty$. Since
  $\Eexp$ is stable by the $W$-action (see Proposition~\ref{rk:action}),
  $s_{\al_{n}}\cdot y$ lies in $\Eexp$ for any $n$. Hence $x$ is in
  the closure $\ol{\Eexp}$. Since exposed points are extreme, this
  concludes the proof in case $\Phi$ is irreducible.
  
  It remains to deal with the case in which 
  $(\Phi,\Delta)$ is a root system such that $\Delta$ is linearly independent. Denote its irreducible components by
  $(\Phi_{i},\Delta_{i})$ for $i=1,\ldots, p$. Then
  $E(\Phi)=\bigsqcup_{i} E(\Phi_{i})$, and, by  Remark \ref{rk:extcomp}(\ref{rk:Eextreduc}),
   the set of extreme limit roots satisfies  $\Eext(\Phi)=\bigsqcup_{i} \Eext(\Phi_{i})$.   This reduces the proof to the case in which the root system is irreducible, which is already known.
  \end{proof}

\section{Fractal properties}
\label{se:fractal}

We already explained in Theorem~\ref{thm:fractact} a fractal property of $E$.
 In this section, we turn our attention to  two conjectures about 
fractal descriptions of the set of limit roots $E(\Phi)$ that are stated in the prequel of this article, see
\cite[\S3.2]{HLR} and  notice Figure~\ref{fig:fractalConj} below.  We use the minimality of the
$W$-action from Theorem~\ref{cor:minimal}(b), as well as some additional
works on the case where $\Phi$ is weakly hyperbolic, to completely settle 
 \cite[Conjecture 3.9]{HLR} and, in the case of weakly hyperbolic Coxeter groups,  
 to settle the conjecture stated just above Conjecture 3.9 in \cite[\S3.2]{HLR}.

\begin{figure}[!ht]
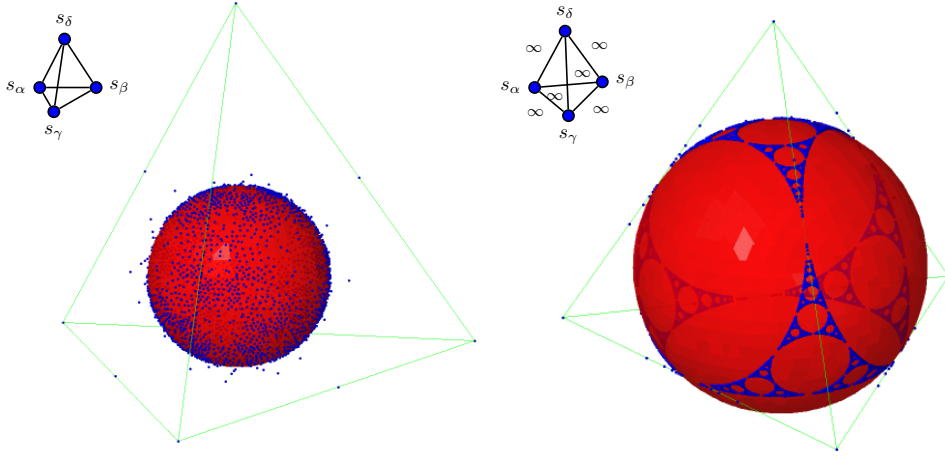

\begin{minipage}[b]{.48\linewidth}
\centering
\captionsetup{width=\textwidth}   
\scalebox{0.75}{\input{Img_3d-333333.tex}}
\end{minipage}
\hfill
\begin{minipage}[b]{.48\linewidth}
\centering
\captionsetup{width=\textwidth}   
\scalebox{0.75}{\input{Img_3d-oooooo.tex}}
\end{minipage}
\caption{The normalized isotropic cone $\h{Q}$ and the first normalized roots (with depth $\leq 8$) for two weakly hyperbolic root systems, whose diagrams are in in the upper left of each picture. In the left picture, the root system is of hyperbolic type and  $E(\Phi)$ is the whole $\h Q$, whereas in the right picture the root system is strictly weakly hyperbolic and the roots converge toward a fractal form.}
\label{fig:fractalConj}
\end{figure}

\medskip
Let $(\Phi,\Delta)$ be a based root system in $(V,\Mpair)$, with associated Coxeter group $(W,S)$.  For simplicity, we assume  throughout this section that $\Span(\Delta)=V$, and we denote by $n$ the dimension of $V$.  The interplay between $\h Q$, $\conv(\h\Delta)$ and the faces of the polytope $\conv(\h\Delta)$ is  at the heart of the fractal properties of $E$. 

 Let us recall the existing link between the faces of $\conv(\h\Delta)$ and some standard parabolic subgroups of $(W,S)$. Recall that a \emph{standard parabolic subgroup} of $W$ is a subgroup
$W_I$ of $W$ generated by a subset $I$ of $S$. It is well known that:
\begin{itemize}
\item $(W_I,I)$ is a Coxeter system;
\item $(\Phi_I,\Delta_I)$ is a  based root system in\footnote{Note that $(\Phi_I,\Delta_I)$ can also be seen as a based root system in $(V,\Mpair)$, since we do not require that the simple roots generate the whole space in the definition of based root system.} $(V_I,\Mpair_{|V_I})$ with associated Coxeter group $W_I$, where:
\[
\Delta_I:=\{\alpha\in\Delta\,|\, s_\alpha\in I\},\quad \Phi_I:=W_I (\Delta_I)\quad\textrm{and}\quad V_I:=\Span(\Delta_I).
\]
\end{itemize}
This allows us  to define easily the subset $E(\Phi_I)$ of $E(\Phi)$, consisting of the accumulation points of $\h{\Phi_I}$ (see \cite[\S5.4]{HLR}).

\medskip

We say that $I\subseteq S$ (or, $\Delta_I \subseteq \Delta$) is
\emph{facial for the based root system $(\Phi,\Delta)$} if
$\conv(\h{\Delta_I})$ is a face of the polytope $\conv(\h\Delta)$. The
corresponding standard parabolic subgroup~$W_I$ is then called a
\emph{standard facial subgroup for $(\Phi,\Delta)$}, and $(\Phi_I,\Delta_I)$ is called a \emph{facial root subsystem}.  
If $\Delta$ is a basis for $V$ then obviously any standard parabolic subgroup is a
standard facial subgroup.  But if $(\Phi,\Delta)$ is, for instance, a rank~$4$
based root system in $V$ of dimension $3$ (see for example
\cite[Example~5.1]{HLR}), then the subsets of $S$ corresponding to the
diagonals of the quadrilateral $\conv(\h\Delta)$ are obviously not
facial.

\begin{rem}
 The  notion of standard   facial reflection subgroup which we use 
 in this paper differs  from that in \cite{dyer:imc}. Their relationship  may be characterized 
 as follows:  the  family of standard  facial reflection subgroups as defined in this paper is unchanged,  as a family of  subgroups of $W$,
 by  extension or restriction of ambient vector space (as in Remark \ref{rem:SingularB})  and coincides 
 with that in \cite{dyer:imc}  for  based root systems $(\Phi, \Delta)$ in $(V,\Mpair)$ for which $\Mpair$ is non-singular.  
  We refer the reader 
to~\cite[\S2,\S8]{dyer:imc} for more details and properties of standard facial subgroups.
\end{rem}

\subsection{Facial subgroups, hyperbolicity and a self-similar dense subset of the set of limit roots}
\label{ss:facial}

We first completely characterize the root systems that have the same property as the one in the left picture in Figure~\ref{fig:fractalConj} shows, i.e., such that $\h Q = E$. This settles \cite[Conjecture 3.9(i)]{HLR}.  
 
\begin{defi}  
  \label{rk:def-hyp}
  We say that~$(\Phi,\Delta)$ is \emph{weakly hyperbolic} if the signature of the bilinear form $\Mpair$ is  $(n-1,1)$, where $n$ is the dimension of $\Span(\Delta)$.  We say that a weakly hyperbolic based root system $(\Phi,\Delta)$ is \emph{hyperbolic} if  every proper  facial root subsystem of $(\Phi,\Delta)$ has all its irreducible components  of finite or affine type.
\end{defi}

\smallskip

\begin{rem}
  ~
  \begin{enumerate}
  \item When $|\Delta|=2,3$, then any root system $(\Phi,\Delta)$ of
    indefinite type is weakly hyperbolic. In higher ranks, there are
    still many families of weakly hyperbolic root system. For example,
    this is the case when all the inner products
    $\mpair{\alpha,\beta}$ are the same (and non-zero) for any
    $\alpha\neq \beta \in \Delta$ (see the examples of
    Figure~\ref{fig:fractalConj}). In particular, any universal
    Coxeter group (where the labels of the Coxeter graph are all
    $\oo$) can be associated with a root system of weakly hyperbolic
    type. It is not true in general that all the based root systems
    associated to any universal Coxeter groups are of signature
    $(n-1,1)$; see Figure~\ref{fig:nonhyp} for such an example in rank
    $4$ with signature $(2,2)$, see also \cite[Example~1.4]{dyer:imc}.
  \item If $(\Phi,\Delta)$ is weakly hyperbolic and reducible, then
    all but one of its irreducible components are of finite type, and
    the remaining one is weakly hyperbolic. Also, if $(\Phi,\Delta)$
    is hyperbolic, then it is irreducible.  For details on these
    different notions of hyperbolicity, we refer to
    \cite[\S9.1-2]{dyer:imc} and the references therein.
\end{enumerate}

\end{rem}
\begin{thm}
\label{thm:Qincl}
Assume $(\Phi,\Delta)$ is irreducible of indefinite type. Then the following properties are equivalent:
\begin{enumerate}[(i)]
 \item $(\Phi,\Delta)$ is hyperbolic;
  \item $\h Q \subseteq \conv(\h\Delta)$;
  \item $E(\Phi)=\h Q$.
 \end{enumerate}
\end{thm}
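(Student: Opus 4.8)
The plan is to establish the cycle by combining the convexity results for $\ol Z=\conv(E)$ obtained above with the observation that \emph{each} of the three conditions forces $(\Phi,\Delta)$ to be weakly hyperbolic, so that $\h Q$ is a genuine sphere bounding a compact ball. First I would dispose of the trivial direction (iii)$\Rightarrow$(ii): since $E\subseteq\conv(\h\Delta)$ always, $E=\h Q$ gives $\h Q\subseteq\conv(\h\Delta)$. Next I would show that (ii) (hence also (iii), via the previous sentence) implies weak hyperbolicity: $\h Q=Q\cap V_1$ is closed and contained in the compact polytope $\conv(\h\Delta)$, hence compact. Boundedness of the affine quadric section $\h Q$ is equivalent to $Q\cap V_0=\{0\}$ (triviality of its asymptotic directions), and for an indefinite form this holds precisely when $B$ is nondegenerate of signature $(n-1,1)$; so $(\Phi,\Delta)$ is weakly hyperbolic. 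From here on I write $\ol B:=\conv(\h Q)$ for the compact ball with $\partial\ol B=\h Q$ and $\Eext(\ol B)=\h Q$. Since $\{v\in V_1\mid B(v,v)\le 0\}=\ol B$ in the weakly hyperbolic case, the inclusion $Z\subseteq\conv(\h\Delta)\cap Q^-$ together with \eqref{eq:ineqICB} yields $\ol Z\subseteq\ol B$.

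For the geometric heart, (ii)$\Rightarrow$(iii), I would first prove $\h Q\subseteq D^+$. Because $Q\cap V_0=\{0\}$, the linear form $\vphi$ has constant sign on each nappe of the null cone, and the nappe meeting $V_1$ is $Q^+:=\{v\in Q\setminus\{0\}\mid\vphi(v)>0\}$. Each generator $s_\alpha$ of $W$ is a reflection in a spacelike vector ($B(\alpha,\alpha)=1$), hence fixes a timelike vector in $\alpha^\perp$ and therefore preserves $Q^+$; consequently $W$ preserves $Q^+$, so $\vphi(w(x))>0$ for every $w\in W$ and $x\in\h Q$, i.e.\ $\h Q\subseteq D^+$. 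Now fix $x\in\h Q$ and $w\in W$: the normalized point $w^{-1}\cdot x$ lies in $\h Q\subseteq\conv(\h\Delta)\subseteq\cone(\Delta)$ by (ii), and since $\vphi(w^{-1}(x))>0$ the unnormalized vector $w^{-1}(x)$ is a positive multiple of it and so also lies in $\cone(\Delta)$. Thus $x\in w(\cone(\Delta))$ for all $w$, whence $x\in V_1\cap\bigcap_{w\in W}w(\cone(\Delta))=\ol Z$ by Theorem~\ref{thm:imc-inter}. This gives $\h Q\subseteq\ol Z\subseteq\ol B$, forcing $\ol Z=\ol B$; every $x\in\h Q$ is extreme in $\ol B$ and lies in $\ol Z$, hence is extreme in $\ol Z$, so $\h Q\subseteq\Eext\subseteq E\subseteq\h Q$ and $E=\h Q$. (Alternatively, one can argue directly that for $x\in\h Q\subseteq\ol Z$ the tangent hyperplane $x^\perp$ meets $\ol B$ only at $x$, so $\ol Z\cap x^\perp=\{x\}$ is an exposed face by Corollary~\ref{lem:isoface}, putting $x$ in $\Eexp\subseteq E$.)

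Finally I would prove (i)$\Leftrightarrow$(ii) by a facet analysis. The polytope $\conv(\h\Delta)$ is the intersection of the closed halfspaces bounded by its facet hyperplanes $H_I=\aff(\h\Delta_I)$, indexed by the rank $n-1$ facial subsystems $\Phi_I$, and $H_I\cap\h Q=\h{Q_I}$ is the limit quadric of $(\Phi_I,\Delta_I)$. If $B|_{V_I}$ is positive semidefinite then $\h{Q_I}$ is empty or a single tangency point, so $H_I$ does not cut the open ball; since that open ball meets $\inter(K)\subseteq\inter\conv(\h\Delta)$ by Lemma~\ref{lem:sphere}, it lies on the interior side, giving $\h Q\subseteq H_I^-$. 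If instead $B|_{V_I}$ is indefinite, $H_I$ cuts the open ball and $\h Q$ protrudes to the exterior side. Hence $\h Q\subseteq\conv(\h\Delta)$ holds iff every facet subsystem $\Phi_I$ has $B|_{V_I}$ positive semidefinite, equivalently all its components of finite or affine type. A short monotonicity argument then identifies this facet condition with the defining condition (i): positive semidefiniteness of components is inherited by facial sub-subsystems, while an indefinite component of any proper facial subsystem persists in any facet containing it. This yields (i)$\Leftrightarrow$(ii) and closes the equivalences.

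The main obstacle is the bridge in (ii)$\Rightarrow$(iii), namely proving $\h Q\subseteq\ol Z$: this is where one genuinely needs that $W$ acts orthochronously (so $\h Q\subseteq D^+$) and the intersection description of $\ol Z$ from Theorem~\ref{thm:imc-inter}; without weak hyperbolicity neither the ball $\ol B$ nor the sign control on $\vphi$ is available. The secondary delicate point is the signature bookkeeping in (i)$\Leftrightarrow$(ii) — correctly relating the definiteness of each facet subsystem to whether the sphere crosses its facet hyperplane, and reducing the condition on \emph{all} proper facial subsystems to the facets. The remaining steps (compactness forcing signature $(n-1,1)$, and the extreme-point conclusion) are routine given the earlier results.
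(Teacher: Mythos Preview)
Your proof is correct, but the route to $E=\h Q$ differs from the paper's. The paper argues the cycle $(iii)\Rightarrow(ii)\Rightarrow(i)\Rightarrow(iii)$; its $(ii)\Rightarrow(i)$ is the same facet analysis you give, while its $(i)\Rightarrow(iii)$ quotes a black-box result on the closed imaginary cone in the hyperbolic case (\cite[Prop.~9.4(c)]{dyer:imc}, asserting $\ol{\imc}\cap V_1=\conv(\h Q)$), and then extracts extreme points. You instead prove $(ii)\Rightarrow(iii)$ directly: using that each $s_\alpha$ fixes a timelike vector you get the orthochronous property $\h Q\subseteq D^+$, and then feed $\h Q$ into the intersection description $\ol Z=V_1\cap\bigcap_{w}w(\cone(\Delta))$ of Theorem~\ref{thm:imc-inter} to obtain $\h Q\subseteq\ol Z$. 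This is more self-contained within the present paper and conceptually explains why the $W$-action is defined on all of $\h Q$ under~(ii); the paper's approach, by contrast, avoids the orthochronous discussion at the price of importing the hyperbolic-specific structure result. One small imprecision: your sentence ``boundedness of $\h Q$ is equivalent to $Q\cap V_0=\{0\}$, and for an indefinite form this holds precisely when $B$ has signature $(n-1,1)$'' conflates a property of the particular hyperplane $V_0$ with a property of $B$; the direction you actually use (bounded $\h Q$ with $B$ indefinite forces signature $(n-1,1)$) is exactly Corollary~\ref{cor:bounded}, which is the cleanest thing to cite here.
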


The proof of this theorem is postponed to \S\ref{ss:fractal1}, but 
we use the theorem now to  explain the qualitative appearance of  the right picture of Figure~\ref{fig:fractalConj}. 
 The idea of Conjecture~3.9 in \cite{HLR} is to describe $E(\Phi)$ 
 by acting with $W$ only on the limit roots of parabolic root 
 subsystems $\Phi_I$ such that $
 \h Q \cap \Span (\Delta_I) \subseteq \conv (\h{\Delta_I})$. By 
 Theorem~\ref{thm:Qincl}, we know that in this case  
 $E(\Phi_I)=\h{Q_I}$. This will explain why the  set of limit 
 roots in Figure~\ref{fig:fractalConj}, or in Figure~\ref{fig:intro},
  looks like a self-similar union of circles.
  
 In general, say that a subset  $\Delta_I\subseteq \Delta$ is generating if $\h{Q_I}:=\h Q \cap \Span (\Delta_I) \subseteq \conv (\h{\Delta_I})$.   Denote the set of  irreducible generating subsets $\Delta_I$ of $\Delta$ such that $\Phi_I$ is not finite as
\[ \Gen(\Phi,\Delta)=\{ \Delta_I\subseteq \Delta \ | \ (\Phi_I,\Delta_I) \text{ is irreducible and } \eset \neq \h{Q_I} \subseteq \conv(\Delta_I)\}\] 
(note that  $\Phi_I$ is infinite if and only if $\h{Q_I} \neq \eset$). 
 Using Theorem~\ref{thm:Qincl}, we have:
 \begin{equation*} \begin{split}\Gen(\Phi,\Delta)&=\{ \Delta_I\subseteq \Delta \ | \ (\Phi_I,\Delta_I) \text{ is irreducible and of hyperbolic or affine type}\}\\
&= \{ \Delta_I\subseteq \Delta \ | \ (\Phi_I,\Delta_I) \text{ is irreducible and }\h{Q}_{I}=E(\Phi_{I})\neq \eset. \}\end{split}\end{equation*}
\smallskip

This theorem settles the discussion at the end of section 2.2 in
\cite{HLR}, and proves \cite[Conjecture~3.9(i)]{HLR}.  Indeed, one
easily sees that if $\Delta$ is linearly independent\footnote{In
  \cite{HLR}, it is assumed that $\Delta$ is a basis of $V$; in this
  case any parabolic based root subsystem is facial.}, a subset
$\Delta_I\subseteq \Delta$ is generating if and only if all its
components are generating and it has at most one component of infinite
type; this implies by the above that if $\Delta_{I}$ is generating,
one has $\h{Q}_{I}=E(\Phi_{I})$.

The second item of the next corollary, which is a consequence of Theorem~\ref{cor:minimal} and Theorem~\ref{thm:Qincl}, settles  \cite[Conjecture~3.9(ii)]{HLR}.

\begin{cor}[{\cite[Conj.~3.9]{HLR}(ii)}]
\label{cor:fractal1}
 Let $(\Phi,\Delta)$ be an irreducible root system in
 $(V,\Mpair)$. Then:
\begin{enumerate}[(i)]
\item $\Gen(\Phi,\Delta)$ is empty if and only if $\Phi$ is finite;
\item the set $E$ is the topological closure of the subset $\mathcal F_0$ of
  $\h Q$ defined by:
    \[\mathcal F_0:=W\cdot\left( \bigcup_{\Delta_I\in \Gen(\Phi,\Delta)} \h{Q_I}\right).
    \]
\end{enumerate}
\end{cor}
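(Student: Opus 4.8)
The plan is to prove the two parts of Corollary~\ref{cor:fractal1} in sequence, using Theorem~\ref{thm:Qincl} to identify $\Gen(\Phi,\Delta)$ with the collection of irreducible hyperbolic-or-affine facial subsystems, and Theorem~\ref{cor:minimal} (minimality of the $W$-action) to promote a single good limit root into the whole of $E$.

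For part (i), I would argue that $\Gen(\Phi,\Delta)$ is nonempty whenever $\Phi$ is infinite. If $\Phi$ is finite then every $\Phi_I$ is finite, so $\h{Q_I}=\eset$ for all $I$ and $\Gen(\Phi,\Delta)=\eset$; this direction is immediate. Conversely, suppose $\Phi$ is infinite (equivalently, of indefinite or affine type, since $(\Phi,\Delta)$ is irreducible). The key is to produce \emph{some} irreducible facial subsystem $(\Phi_I,\Delta_I)$ of hyperbolic or affine type with $\h{Q_I}\neq\eset$. The natural strategy is to take $\Delta_I$ minimal among subsets of $\Delta$ for which $(\Phi_I,\Delta_I)$ is irreducible and non-finite (such minimal subsets exist since $\Phi$ itself is non-finite). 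For such a minimal $\Delta_I$, every proper facial subsystem of $\Phi_I$ has only finite or affine irreducible components, and one checks that a minimal non-finite irreducible subsystem is either affine or weakly hyperbolic; in the weakly hyperbolic case minimality forces it to be hyperbolic by Definition~\ref{rk:def-hyp}. In either case $\h{Q_I}=E(\Phi_I)\neq\eset$ by Theorem~\ref{thm:Qincl} (affine case handled directly as in the trichotomy after \S\ref{ss:limitroots}), so $\Delta_I\in\Gen(\Phi,\Delta)$.

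For part (ii), by Theorem~\ref{thm:Qincl} each $\Delta_I\in\Gen(\Phi,\Delta)$ satisfies $\h{Q_I}=E(\Phi_I)\subseteq E(\Phi)$, so $\mathcal F_0\subseteq E$ because $E$ is $W$-invariant (from \S\ref{ss:limitroots}) and closed; hence $\ol{\mathcal F_0}\subseteq E$. For the reverse inclusion $E\subseteq\ol{\mathcal F_0}$, I would invoke minimality: by part (i), $\Gen(\Phi,\Delta)\neq\eset$, so $\mathcal F_0$ contains some point $x\in\h{Q_I}=E(\Phi_I)\subseteq E$. Then $W\cdot x\subseteq\mathcal F_0$, and by Theorem~\ref{cor:minimal}(b) the orbit closure $\ol{W\cdot x}=E$. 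Therefore $E=\ol{W\cdot x}\subseteq\ol{\mathcal F_0}\subseteq E$, giving equality.

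The main obstacle I expect is part (i), specifically the combinatorial/classification step of guaranteeing that a minimal non-finite irreducible facial subsystem is affine or hyperbolic. One must verify that such a minimal subsystem cannot be, say, weakly hyperbolic but strictly (non-hyperbolic), which would require a proper facial subsystem with an indefinite irreducible component, contradicting minimality; and one must confirm that the minimal subset genuinely is facial (so that $E(\Phi_I)$ and the restriction of the trichotomy apply), which is automatic when $\Delta$ is a basis but needs the facial framework of \S\ref{se:fractal} in general. Once this structural lemma is in hand, both inclusions in part (ii) are short, the substance being entirely carried by Theorems~\ref{thm:Qincl} and~\ref{cor:minimal}.
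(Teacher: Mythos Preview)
Your proposal is correct and matches the paper's proof: the paper argues part~(i) by induction on rank (which is equivalent to your minimality argument---choosing a minimal irreducible non-finite $\Delta_I$ is exactly what the induction produces), and part~(ii) identically via $\h{Q_I}=E(\Phi_I)\subseteq E$ plus minimality of the $W$-action. Your flagged concern about facialness is not actually an obstacle: the definition of $\Gen(\Phi,\Delta)$ does not require $\Delta_I$ to be facial in $\Delta$, and your minimal $\Delta_I$ is automatically linearly independent (otherwise removing a redundant root would give a smaller system with the same span and hence the same, non-finite, signature), so every subset of $\Delta_I$ is facial \emph{within} $(\Phi_I,\Delta_I)$, which is all that is needed to apply Definition~\ref{rk:def-hyp} and Theorem~\ref{thm:Qincl}.
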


In the example of right-hand side of Figure~\ref{fig:fractalConj}, the set
$\mathcal F_0$ is the self-similar fractal constituted by the circles
appearing on the facets (which are the $\h{Q_I}$'s) and all their
$W$-orbits, the first of them are the smaller visible circles; thus
$\mathcal F_0$ is an infinite (countable) union of circles, similar to
an Apollonian gasket drawn on a sphere; see also the example of
Figure~\ref{fig:gasket}(a), where the only generating subsets of
$\Delta$ are $\{\alpha,\beta,\gamma\}$ and
$\{\alpha,\gamma,\delta\}$, which produce, respectively, the single
  limit root of the bottom face and the ellipse of limit roots of the
  left face of the tetrahedron.

\medskip

\begin{rem} 
  ~
  \begin{enumerate}
  \item Define a based root system $(\Phi,\Delta)$ to be 
    \emph{compact hyperbolic} if every proper  facial root subsystem 
    of $(\Phi,\Delta)$ has all its irreducible components  of finite  
    type.   The notion of hyperbolic  (resp., compact hyperbolic)  
    Coxeter group defined in \cite[\S6.8]{humphreys} corresponds in 
    our setting to a Coxeter group with a hyperbolic (resp., compact 
    hyperbolic)  based root system such that  the simple system is a 
    basis of $V$. Using the theorem and its corollary, it is easy to 
    deduce the following addition: an irreducible root system of indefinite type 
    $(\Phi,\Delta)$ is compact hyperbolic  if and only if 
    $\h Q \subseteq \ri(\conv(\h\Delta))$ where 
    $\ri(X)$ denotes relative interior of $X$, see for instance \cite[Appendix A]{dyer:imc} where it is denoted by ``ri$(X)$''.
    
  \item   By looking carefully at 
    the proof below, we note that the corollary still holds if we 
    replace $\Gen(\Phi,\Delta)$ by the set  $\Gen'(\Phi,\Delta)$ of
    $\Delta_I\subseteq \Delta$ such that $(\Phi_I,\Delta_I)$ is 
    irreducible of affine type or compact hyperbolic type.  
  \end{enumerate}
\end{rem}

\begin{figure}
\captionsetup{width=\textwidth} 
\begin{minipage}[b]{\linewidth}
\centering
\scalebox{1}{\input{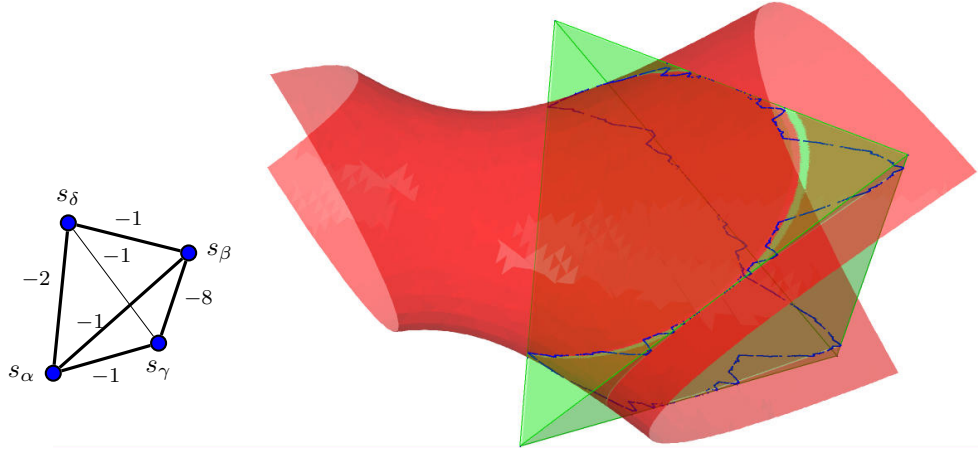}}
\end{minipage}%

\caption{A based root system which is not weakly hyperbolic, but whose
  associated Coxeter group is the universal Coxeter group of rank
  $4$. The inner products between the simple roots are indicated in
  the diagram on the left. The signature of the bilinear form is
  $(2,2)$, and $\h Q$ is a hyperboloid of one sheet. The normalized
  roots are drawn until depth $8$. Note that the geometry of the limit
  shape $E$ looks very different from the case of weakly hyperbolic
  root systems.}
\label{fig:nonhyp}
 \end{figure}

\begin{proof}[Proof of Corollary~\ref{cor:fractal1}]
(i) Clearly $\Gen(\Phi,\Delta)$ is empty when $\Phi$ is
  finite. Suppose now that $\Phi$ is irreducible infinite. We prove
  that $\Gen(\Phi,\Delta)\neq \eset$ by induction on the rank of the
  root system. When $(\Phi,\Delta)$ has rank 2, $\Delta\in
  \Gen(\Phi,\Delta)$. Suppose now that the property is true until some
  rank $n-1\geq 2$, and take $(\Phi,\Delta)$ irreducible infinite of
  rank $n$. If $(\Phi,\Delta)$ has a proper facial root subsystem
  $\Phi_I$ which is infinite, we can conclude by applying the induction
  hypothesis on one infinite irreducible component of
  $\Phi_I$. Suppose now that every proper facial root subsystem is
  finite. Then the positive index in the signature of $\Mpair$ is
  at least $n-1$, so $(\Phi,\Delta)$ is (irreducible) of finite,
  affine, or weakly hyperbolic type. It is not finite by hypothesis,
  and if it is affine, then $\Delta$ is in $\Gen(\Phi,\Delta)$. If
  $(\Phi,\Delta)$ is weakly hyperbolic, since every proper facial
  root subsystem is finite, $(\Phi,\Delta)$ is hyperbolic by
  definition (it is even compact hyperbolic), so $\Delta$ is in
  $\Gen(\Phi,\Delta)$.

(ii) If $\Delta_I\in \Gen(\Phi,\Delta)$, then $\h{Q_I}=E(\Phi_I)\subseteq
  E(\Phi)$: indeed, the affine case is clear (see \cite[Cor.~2.15]{HLR}), and
  the hyperbolic case is one implication of
  Theorem~\ref{thm:Qincl}. Moreover $\h{Q_I}\neq \eset$, so by (i), $\mathcal F_0$
  is not empty and contained in $E$. It is also stable by $W$, and
  from Corollary~\ref{cor:minimal}(b), the orbit of any point of $E$ is dense
  in $E$, so $\mathcal F_0$ is dense in $E$.
\end{proof}

\subsection{Fractal description of $E$ using parts of the isotropic cone $\h Q$}
\label{ss:fractal2}

In this subsection, we study the  conjecture mentioned in \cite[\S3.2]{HLR} before Conjecture~3.9. We start by constructing a natural subset $\mathcal F $ of $\h Q$ by removing the parts of $\h Q$ that cannot belong to $E$ for straightforward reasons\footnote{We use the letter $\mathcal F $ for the sake of consistency
with \cite{HLR}, and because $\mathcal F $ can be thought of as a fractal set.}. We conjecture below that $\mathcal F $ is actually equal to $E$, and prove this conjecture when the root system is weakly hyperbolic.

\smallskip

The construction of the set $\mathcal F $ was roughly described in \cite[\S3.2]{HLR}. Here we need a more precise definition, taking care of the fact that the $W$-action is not defined everywhere on $V_1$. Let $(\Phi,\Delta)$ be an irreducible   root system. We denote by $D$ the part of $V_1$ on which $W$ acts (see \cite[\S3.1]{HLR}):
\[ D= V_1 \cap \bigcap_{w\in W} w(V\setminus V_0).\]
Is clear that the domain $D^+$ considered in \S\ref{ss:WactICB} is contained in $D$.

\begin{defi}
Denote by $\hQact:=\h Q \cap D$ the part of $\h Q$ where $W$ acts. Then 
\[ \mathcal F:=\hQact \setminus \bigcup_{w\in W} w\cdot (\hQact \setminus \hQact \cap \conv(\h\Delta)).\]
\end{defi}

The idea of the construction of $\mathcal F $ is actually mostly
naive. We know the limit roots are in $\hQact=\h Q \cap D$, but there
cannot be any limit root outside $\conv(\h\Delta)$. Since $E$ is
$W$-stable, there cannot be any limit root also in the orbits of the
parts of $\h Q$ that are outside $\conv(\h\Delta)$. We remove all these
parts from $\h Q$ to construct $\mathcal F $.  In the example pictured
in Figure~\ref{fig:fractalConj}(right), $\mathcal F $ is the complement, in the
red sphere, of the union of the open spherical caps associated to the
circles used to define $\mathcal F_0$. Similarly, in
Figure~\ref{fig:gasket}(a), $\mathcal F$ is the complement of the
union of the open ellipsoidal caps which are the images of the one cut
by the left face.

We give in Proposition~\ref{prop:F} other characterizations of
$\mathcal F$, in particular $\mathcal F=\conv(E)\cap \h Q$.

\begin{rem} 
As explained in \cite[\S3.2]{HLR}, $E$ is contained in $\mathcal F $. Indeed, it is contained in $Q$ (\cite[Theorem~2.7]{HLR}), contained in $\conv(\h\Delta)$ (clear), contained in~$D$ and stable by $W$ (\cite[Prop.~3.1]{HLR}).
\end{rem}

\begin{question}
\label{conj:fractal2}
For any irreducible root system, is $E$ equal to $\mathcal F $ ?
\end{question}

We are able to answer this question for any weakly hyperbolic root system, using the specific geometry of $\h Q$ in this case. We do not know if this result extends to more general root
systems. 

\begin{thm}
\label{thm:fractal2}
Let $(\Phi,\Delta)$ be an irreducible root system. Assume that
it is \emph{weakly hyperbolic}. Then:
\begin{enumerate}[(i)]
\item $\mathcal F = E=\h Q \cap \conv(E)$;
\item $E$ is the unique non-empty, closed subset of $\hQact \cap \conv(\h\Delta)$, which is stable by $W$.
\end{enumerate}
\end{thm}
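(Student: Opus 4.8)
The plan is to reduce part (i) to the single equality $E=\h Q\cap\conv(E)$ and then derive part (ii) formally from it together with the minimality of the $W$-action. First I would invoke Proposition~\ref{prop:F}, which gives $\mathcal F=\h Q\cap\conv(E)$ for an arbitrary irreducible root system; granting this, part (i) amounts exactly to showing $E=\h Q\cap\conv(E)$, and this is the only point at which weak hyperbolicity is genuinely used.

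For that equality, the inclusion $E\seq\h Q\cap\conv(E)$ is immediate: $E\seq\h Q\cap\conv(\h\Delta)$ by \cite[Theorem~2.7]{HLR}, and $E\seq\conv(E)$ by definition. The substance is the reverse inclusion, and the idea is to show that every isotropic point of $\ol Z$ is an \emph{extreme} point. I would take $x\in\h Q\cap\ol Z$ and suppose $x=\lambda a+(1-\lambda)b$ with $a,b\in\ol Z$ and $0<\lambda<1$. Since $\ol Z\seq Q^-$ we have $\mpair{a,a}\leq 0$ and $\mpair{b,b}\leq 0$, while $\mpair{a,b}\leq 0$ by \eqref{eq:ineqICB}. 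Expanding $0=\mpair{x,x}=\lambda^2\mpair{a,a}+2\lambda(1-\lambda)\mpair{a,b}+(1-\lambda)^2\mpair{b,b}$ as a sum of three nonpositive terms forces $\mpair{a,a}=\mpair{b,b}=\mpair{a,b}=0$, so $\Span\{a,b\}$ is totally isotropic. Here weak hyperbolicity enters decisively: a form of signature $(n-1,1)$ has Witt index $1$, so every totally isotropic subspace is a line; hence $a$ and $b$ are linearly dependent, and being two points of the affine hyperplane $V_1$ (which avoids $0$) they must coincide. Thus $x$ is an extreme point of $\ol Z=\conv(E)$, and since the extreme points of $\ol Z$ form the set $\Eext\seq E$, we conclude $x\in E$.

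For part (ii), $E$ itself satisfies the listed conditions: it is non-empty (the system is of indefinite type), closed, and $W$-stable, and $E\seq\hQact\cap\conv(\h\Delta)$ because $E\seq\conv(\h\Delta)\cap\h Q$ by \cite[Theorem~2.7]{HLR} and $E\seq D^+\seq D$ by \cite[Prop.~3.2]{HLR}. For uniqueness, let $X$ be any non-empty, closed, $W$-stable subset of $\hQact\cap\conv(\h\Delta)$. Unwinding the definition (and using that $\hQact$ is $W$-stable) yields $\mathcal F=\{z\in\hQact\mid W\cdot z\seq\conv(\h\Delta)\}$. Every $x\in X$ then satisfies $W\cdot x\seq X\seq\conv(\h\Delta)$ and $x\in\hQact$, so $x\in\mathcal F=E$ by part~(i); hence $X\seq E$. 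Conversely, choosing any $x_0\in X$, minimality (Theorem~\ref{cor:minimal}(b)) gives $\ol{W\cdot x_0}=E$, and since $X$ is closed and $W$-stable this forces $E\seq X$. Therefore $X=E$.

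The main obstacle is the extreme-point step in part~(i); its crux is the elementary but decisive fact that in signature $(n-1,1)$ a totally isotropic subspace is at most a line, combined with the identification of the extreme points of $\conv(E)$ as a subset of $E$. I would take care to handle general convex combinations (not just midpoints) and to make airtight the passage from ``linearly dependent'' to ``equal'' for points of $V_1$. A secondary point to verify is that Proposition~\ref{prop:F}'s characterization $\mathcal F=\h Q\cap\conv(E)$ is available in the needed generality; should one wish to bypass it, one can instead establish $\mathcal F\seq\conv(E)$ directly from $\conv(E)=V_1\cap\bigcap_{w\in W}w(\cone(\Delta))$ (Theorem~\ref{thm:imc-inter}), at the cost of controlling the signs of $\vphi(w(\cdot))$ on the domain $D^+$.
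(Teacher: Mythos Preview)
Your proof is correct. Part~(ii) is essentially identical to the paper's argument (you spell out Proposition~\ref{prop:F}(iv)/(i) rather than merely citing it, and you check explicitly that $E$ itself satisfies the hypotheses, but the logic is the same).

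For part~(i), however, you take a genuinely different route. The paper invokes Proposition~\ref{prop:sphere} to replace the cutting hyperplane $V_1$ by one for which $\h Q$ is a Euclidean sphere, and then argues geometrically: a nontrivial convex combination of points on a round sphere lies strictly inside it, so $\conv(E)\cap\h Q\subseteq E$ (this is the same mechanism as in Corollary~\ref{cor:Eexthyp}). Your argument stays in the original hyperplane and is purely bilinear: using \eqref{eq:ineqICB} (with $x=y$ to get $\ol Z\subseteq Q^-$, and with $x\neq y$ to get $\mpair{a,b}\leq 0$), you force any segment in $\ol Z$ through an isotropic point to span a totally isotropic subspace, and then use that the Witt index in signature $(n-1,1)$ is~$1$ to collapse the segment. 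Your approach buys independence from the choice of transverse hyperplane and avoids the machinery of Proposition~\ref{prop:sphere} altogether; the paper's approach buys a concrete geometric picture (the sphere model) that it reuses elsewhere in \S\ref{se:fractal}. Both exploit the same underlying feature of signature $(n-1,1)$, just packaged differently.
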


\begin{rem}
  \label{rem:rank3}
  ~
  \begin{enumerate}
  \item In \cite{rank3}, Higashitani, Mineyama and Nakashima prove both conjectures of \cite[\S3.2]{HLR} in the case of rank $(n-1,1)$ root systems with $\Delta$ linearly independent. They also obtain, as a by-product of their work, Theorem~\ref{cor:minimal}(b) under these assumptions.  Their proof is different from ours and based on a careful analysis of the limit roots  by looking at~$\h Q$ as a metric space $(\h Q, d_\Mpair)$. It would be interesting to see which part of their work could be generalized to based root systems of arbitrary ranks that are not necessarily weakly hyperbolic. 
  \item In the particular case where the root system is \emph{hyperbolic}, the theorem is implied by Theorem~\ref{thm:Qincl}, and $E=\mathcal F=\h Q$ is  homeomorphic to an $(n-2)$-sphere (see Remark \ref{rk:Fhyp}).
\end{enumerate}
\end{rem}

The proof of Theorem \ref{thm:fractal2} is postponed to \S\ref{ss:fractal2.1}.
The theorem implies that $\Acc (W\cdot z)\seq E$ for $z\in Z$ if $W$ is weakly hyperbolic; we show equality in  Corollary 
\ref{cor:acc-imc}.

\begin{rem}
  \label{rk:klein}
  Suppose that the rank of $\Phi$ is $3$ or $4$ and $\Phi$ is weakly hyperbolic.  In this case, we  can always describe $E$ using Theorem \ref{thm:fractal2}. The  Coxeter group~$W$ acts on $E$ as a group generated by hyperbolic
  reflections, so can be seen as a Fuchsian or Kleinian group. Using
  this point of view, the set $E$ is no other than the limit set of a
  Kleinian group, which explains the shape of Apollonian gasket
  obtained in \cite[Fig.~9]{HLR}. These relations are explored in the general context of Lorentzian spaces in the article \cite{HPR}, and are outlined in \S\ref{sse:RelationGeo} at the end of this article.
\end{rem}

\subsection{The normalized isotropic cone for weakly hyperbolic groups}
\label{ss:sphere}

Before proving Theorem~\ref{thm:Qincl} and Theorem~\ref{thm:fractal2}, we first describe what $\h Q$ looks like when the root system $(\Phi,\Delta)$ is weakly hyperbolic. Recall that we assume (without loss of generality) that $\Span (\Delta)=V$ and denote by $n$ the dimension of $V$, which is smaller than or equal to the rank $|\Delta|$ of $(\Phi,\Delta)$. Recall that a hyperplane of $V$ is said to be \emph{transverse to $\PP$} if for any $\rho \in \PP$, $H$ intersects the ray $\real_{>0}\rho$ in one point. We prove below that we can find a hyperplane $H$ of $V$, transverse to $\PP$, and such that $Q\cap H$ is an $(n-2)$-dimensional sphere. This will be useful in the following subsections.

\begin{prop}
  \label{prop:sphere}
  Let $(\Phi,\Delta)$ be an irreducible based root system  in
  $(V,\Mpair)$ of dimension $n$. Suppose that $(\Phi,\Delta)$ is weakly
  hyperbolic. Then there exists a basis $(e_1,\dots, e_n)$ for $V$
  such that:
  \begin{enumerate}[(i)]
  \item the restriction of $\Mpair$ to the hyperplane
    $H_0:=\Span(e_1,\dots, e_{n-1})$ is positive definite;
  \item the matrix of $\Mpair$ in this basis is
    $\Diag(1,\dots, 1,-1)$;
  \item $Q$ intersects the affine hyperplane $H:=e_n+H_0$ in an
    $(n-2)$-dimensional sphere; if $x_1,\dots, x_n$ are the
    coordinates in the basis,
\[ Q\cap H = \{(x_1,\dots, x_{n-1},1) \, | \, x_1^2+\dots +
  x_{n-1}^2=1 \}\ ;\]
 \item the vector $e_n$ satisfies $\mpair{e_n,\alpha}<0$, for all
   $\alpha\in\Delta$;
\item $H$ is transverse to $\PP$.
\end{enumerate}
\end{prop}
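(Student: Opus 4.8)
The plan is to build the required basis by taking $e_n$ to be a suitable normalization of the vector produced by Lemma~\ref{lem:sphere}, and then completing it by an orthonormal basis of $e_n^{\perp}$.

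First I would record that $(\Phi,\Delta)$, being irreducible and weakly hyperbolic, is of indefinite type: its signature $(n-1,1)$ is not positive semi-definite (note $n\geq 2$, since the presence of a simple root $\alpha$ with $\mpair{\alpha,\alpha}=1>0$ rules out a negative definite form). Hence Lemma~\ref{lem:sphere} applies and yields a vector $z\in\cone(\Delta)\setminus\{0\}$ with $\mpair{z,\alpha}<0$ for every $\alpha\in\Delta$. Writing $z=\sum_{\alpha\in\Delta}\lambda_\alpha\alpha$ with $\lambda_\alpha\geq 0$ not all zero, I would then compute
\[
\mpair{z,z}=\sum_{\alpha\in\Delta}\lambda_\alpha\mpair{z,\alpha}<0,
\]
so $z$ is negative (``timelike''). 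Setting $e_n:=z/\sqrt{-\mpair{z,z}}$ gives $\mpair{e_n,e_n}=-1$ while preserving $\mpair{e_n,\alpha}<0$ for all $\alpha\in\Delta$ by homogeneity; this is exactly property~(iv).

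Next I would set $H_0:=e_n^{\perp}$. Since $\mpair{e_n,e_n}=-1\neq 0$ we have the orthogonal decomposition $V=\real e_n\oplus H_0$, on which $\Mpair$ restricts to $\Diag(-1)$ on $\real e_n$; additivity of the signature over orthogonal direct sums then forces the restriction of $\Mpair$ to $H_0$ to have signature $(n-1,0)$, i.e.\ to be positive definite, which is property~(i). Taking any orthonormal basis $(e_1,\dots,e_{n-1})$ of $H_0$ for this restricted form gives a basis $(e_1,\dots,e_n)$ of $V$ whose Gram matrix is $\Diag(1,\dots,1,-1)$, proving~(ii). Property~(iii) is then immediate: a point $v\in H=e_n+H_0$ has coordinates $(x_1,\dots,x_{n-1},1)$, so $\mpair{v,v}=x_1^2+\dots+x_{n-1}^2-1$, and $v\in Q$ exactly when $x_1^2+\dots+x_{n-1}^2=1$, the unit $(n-2)$-sphere.

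The only step needing attention is the transversality~(v). First I would rewrite $H=\{v\in V\mid \mpair{v,e_n}=-1\}$, using that any $v=e_n+w$ with $w\in H_0$ satisfies $\mpair{v,e_n}=\mpair{e_n,e_n}=-1$; consequently a ray $\real_{>0}\rho$ meets $H$ in exactly one point if and only if $\mpair{\rho,e_n}<0$. The subtlety is that~(iv) supplies this inequality only for the simple roots, whereas transversality to $\PP$ demands it for every positive root. This is where I would invoke $\PP=\cone(\Delta)\cap\Phi$: any $\rho\in\PP$ is a nonnegative nonzero combination $\sum_\alpha c_\alpha\alpha$ of simple roots, so $\mpair{\rho,e_n}=\sum_\alpha c_\alpha\mpair{\alpha,e_n}<0$, giving~(v). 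I do not expect a genuine obstacle here: the whole content is carried by Lemma~\ref{lem:sphere}, and everything after it is elementary linear algebra for a form of signature $(n-1,1)$.
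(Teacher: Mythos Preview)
Your proof is correct and follows essentially the same approach as the paper: both take $e_n$ as a normalization of the vector $z$ from Lemma~\ref{lem:sphere}, complete to an orthonormal basis of $e_n^\perp$, and deduce (i)--(iv) directly. Your argument for~(v) is actually a bit more streamlined than the paper's case-by-case contradiction: you identify $H=\{v\mid \mpair{v,e_n}=-1\}$ and read off transversality from the sign of $\mpair{\rho,e_n}$; note, incidentally, that the paper's definition of ``transverse to $\PP$'' only requires checking the rays through $\alpha\in\Delta$, so your extension to all positive roots is harmless but unnecessary.
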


\begin{rem}
  In the case $(\Phi,\Delta)$ is weakly hyperbolic but not
  irreducible, then only one of its irreducible components (say
  $(\Phi',\Delta')$) is infinite, and $(\Phi',\Delta')$ is weakly
  hyperbolic (see Remark~\ref{rk:def-hyp}). So $Q$ lives in $\Span
  (\Delta')$ and Proposition~\ref{prop:sphere} implies that there is a
  transverse hyperplane $H$ such that $Q\cap H$ is a sphere of
  dimension $\dim(\Span (\Delta'))-2$.
\end{rem}

Note that items (i)-(iii) are straightforward. Indeed, as the
signature of $\Mpair$ is $(n-1,1)$, there exists a basis such
that the equation of $Q$ is $x_1^2+\dots + x_{n-1}^2-x_n^2=0$, and $Q$
is a conical surface on $\mathbb S^{n-2}$. The fact that $H$ is
transverse to $\PP$ (item (v)) is not direct and will follow from
(iv) and Lemma~\ref{lem:sphere}. 

\begin{proof}
  Let $(\Phi,\Delta)$ be an irreducible root system of weakly
  hyperbolic type. Take~$z$ as in Lemma~\ref{lem:sphere}. Since
  $\mpair{z,\alpha}<0$ for all $\alpha \in \Delta$, and $z$ is
  in\footnote{In this proof we do not need the stronger statement that
    $z$ is in the topological interior of $\cone(\Delta)$.}
  $\cone(\Delta)\setminus\{0\}$, we have $\mpair{z,z}<0$. Denote
  $e_n:=z/\sqrt{-\mpair{z,z}}$, so that $\mpair{e_n,e_n}=-1$, and
  $e_n$ satisfies item (iv).

  Since the signature of $\Mpair$ is $(n-1,1)$, we can complete
  $\{e_n\}$ in a basis $(e_1,\dots, e_n)$ such that the matrix of
  $\Mpair$ in this basis is $\Diag(1,\dots, 1,-1)$.  Let $H_0$ be the
  hyperplane spanned by $e_1,\dots, e_{n-1}$. The restriction of
  $\Mpair$ to $H_0$ is positive definite, so $(H_0, \Mpair_{|H_0})$ is a
  Euclidean plane with $(e_1,\dots, e_{n-1})$ as an orthonormal
  basis. Note that $\mpair{e_n,v}=0$ for any $v\in H_0$. Consider
  $H:=e_n+H_0$ the affine hyperplane directed by $H_0$ and passing
  through the point $e_n$.

  This proves items (i) and (ii). Item (iii) follows since the
  equation of $Q$ in the chosen basis is $x_1^2+\dots +
  x_{n-1}^2-x_n^2=0$, so:
  \[ Q\cap H = \{(x_1,\dots, x_n) \, | \, x_n=1 \text{ and } x_1^2+\dots +
  x_{n-1}^2=1 \} . \]

  We are left to proving item (v), that is, $H$ is transverse to $\PP$:
  we have to show that $\mathbb R_{>0}\alpha\cap H$ is nonempty for
  all $\alpha \in \Delta$. Since $0\notin H$, the line $\mathbb
  R\alpha$ cannot be contained in $H$, and therefore neither can
  $\mathbb R_{>0}\alpha$. Assume by contradiction that $\mathbb
  R_{>0}\alpha\cap H=\eset$ for some $\alpha \in \Delta$. We have two
  cases:
  \begin{itemize}
  \item either $\mathbb R_{>0}\alpha$ is in $H_0$ and so is the line
    $\mathbb R\alpha$. Therefore $\alpha\in H_0$. So
    $\mpair{\alpha,e_n}=0$ contradicting $\mpair{\alpha,e_n}<0$;
  \item or $\mathbb R_{<0}\alpha\cap H$ is a point $\lambda \alpha =
    e_n+u$, with $\lambda<0$ and $u\in H_0$. Since $\mpair{e_n,u}=0$
    we have $ -1= \mpair{e_n,e_n}= \mpair{\lambda\alpha-u,e_n}=
    \lambda \mpair{\alpha,e_n}$. But $\lambda<0$ and
    $\mpair{\alpha,e_n}<0$, so we get a contradiction.
  \end{itemize}
  Thus, overall, $\mathbb R_{>0}\alpha\cap H$ must be a point.
\end{proof}

Proposition \ref{prop:sphere} will be used in the two following
subsections. We can already deduce an interesting consequence on the
set $\Eext$ of extreme points of $\ol{Z}=\conv(E)$: as they live on a
sphere, no limit root can be written as a convex combination of other
limit roots.

\begin{cor}
\label{cor:Eexthyp}
Let $(\Phi,\Delta)$ be an irreducible based root system. If
$(\Phi,\Delta)$ is weakly hyperbolic, then $\Eext(\Phi)=E(\Phi)$.
\end{cor}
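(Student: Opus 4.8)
The plan is to exploit the very special geometry of the normalized isotropic cone in the weakly hyperbolic case, as recorded in Proposition~\ref{prop:sphere}, rather than to appeal to the density statement of Theorem~\ref{thm:Eext}. Since $\ol Z=\conv(E)$ is compact we always have $\Eext(\Phi)\subseteq E(\Phi)$, so it suffices to prove the reverse inclusion $E(\Phi)\subseteq \Eext(\Phi)$; that is, I would show that \emph{every} limit root is in fact an extreme point of the imaginary convex body.

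First I would use the fact, noted in Remark~\ref{rk:extcomp}, that the set $\Eext$ is insensitive to the choice of transverse hyperplane used to normalize. This frees me to take for $V_1$ the hyperplane $H=e_n+H_0$ produced by Proposition~\ref{prop:sphere}. In the coordinates of that proposition the normalized isotropic cone becomes the genuine Euclidean sphere $\h Q=\{(x_1,\dots,x_{n-1},1)\mid x_1^2+\dots+x_{n-1}^2=1\}$, which bounds the closed ball $\ol B=\{(x_1,\dots,x_{n-1},1)\mid x_1^2+\dots+x_{n-1}^2\leq 1\}$. Recalling that $E\subseteq \h Q$ by \cite[Theorem~2.7]{HLR}, and that $\ol B$ is convex, I obtain $\ol Z=\conv(E)\subseteq \ol B$.

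The geometric heart of the argument is then the observation that a sphere is the extreme boundary of the ball it bounds: for $x\in\h Q$ the tangent hyperplane to $\h Q$ at $x$ supports $\ol B$ and meets it only in $x$, so $x$ is an exposed, hence extreme, point of $\ol B$ (equivalently, this is the strict convexity of the Euclidean norm). I would conclude by the standard transfer of extremality to convex subsets: given $x\in E\subseteq \h Q$, if $x=\lambda c'+(1-\lambda)c''$ with $c',c''\in \ol Z\subseteq \ol B$ and $0<\lambda<1$, then extremality of $x$ in $\ol B$ forces $c'=c''=x$, so $x$ is extreme in $\ol Z$ and $x\in\Eext(\Phi)$. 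This gives $E(\Phi)\subseteq \Eext(\Phi)$, and hence equality.

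There is no serious obstacle here; the statement is essentially a soft consequence of Proposition~\ref{prop:sphere}. The only points requiring care are the appeal to the hyperplane-independence of $\Eext$ (so that the convenient spherical model may legitimately be used) and the remark that, because $E$ lies \emph{on} the sphere while $\ol Z$ lies only \emph{in} the ball, one must transfer extremality down from the ambient ball $\ol B$ to the subset $\ol Z$ rather than argue inside $\ol Z$ directly. I note finally that irreducibility enters only through Proposition~\ref{prop:sphere}, where together with weak hyperbolicity it makes the spherical picture available.
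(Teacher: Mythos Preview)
Your proof is correct and follows essentially the same approach as the paper's: choose the transverse hyperplane via Proposition~\ref{prop:sphere} so that $\h Q$ is a Euclidean sphere, invoke the hyperplane-independence of $\Eext$ (Remark~\ref{rk:extcomp}), and then use that every point of a sphere is extreme in the ball it bounds. Your version is arguably slightly cleaner in that you transfer extremality from the ambient ball $\ol B$ directly to the subset $\ol Z=\conv(E)$, whereas the paper argues by contradiction with a convex combination of points of $E$; but the two arguments are the same in substance.
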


After Theorem~\ref{thm:Eext}, it is always true that $\ol{\Eext}=E$. The equality $\Eext=E$ is not always valid (see Remark~\ref{rk:Eext}(3)). We do not know  how to characterize  root systems such that $\Eext=E$.

\begin{proof}
  From Proposition~\ref{prop:sphere}, we can choose the transverse hyperplane $V_1$ such that $\h Q=Q\cap V_1$ is a sphere. This does not change the properties of $E$ and $\Eext$, as explained in Remark~\ref{rk:cutEext}. Suppose there exists $x\in E \setminus \Eext$. Then $x$ is a linear combination with positive coefficients of points $x_1,\dots, x_p$ in $E$ (with $p>1$). Since $E\subseteq \h Q$, $x_1,\dots, x_p$ lie on the sphere $\h Q$. So $x$ can not be in $\h Q$ (since every point in $\h Q$ is an extreme point of the ball $\conv(\h Q)$), which contradicts the inclusion $E\subseteq \h Q$.
\end{proof}

\subsection{Proof of Theorem~\ref{thm:Qincl}}
\label{ss:fractal1}

To prove Theorem~\ref{thm:Qincl} we will use the following (technical but
elementary) lemma, which answers the question: when can $\h Q$ be
bounded? Note that the letters $Q$ and $B$ below are specific to the
lemma and its proof, and more general than in the framework of the
article.

\begin{lem}
\label{lem:bounded}
Let $B$ be a symmetric bilinear form on a $n$-dimensional vector space~$V$ (over a field of characteristic $0$). Define the associated quadric
\[ Q:=\{v\in V \ | \ B(v,v)=0\}.\]
Let $H$ be an affine (nonlinear) hyperplane in $V$.  If $Q\cap H$ is
bounded, then we have one of the following:
\begin{itemize}
\item $B$ is positive (definite or not);
\item $B$ is negative (definite or not);
\item $B$ has signature $(n-1,1)$ or $(1,n-1)$.
\end{itemize}
 More precisely, if $B$ does not satisfy any of these conditions, then
$Q\cap H$ contains an affine line.
\end{lem}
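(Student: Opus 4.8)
The plan is to treat $Q\cap H$ as a real affine quadric and to extract both conclusions from the restriction of $B$ to the linear hyperplane directing $H$. Write $H=\{v\in V\mid \ell(v)=1\}$ for the linear form $\ell$ cutting it out, put $H_0:=\ker\ell$ for its direction space, and set $B_0:=B|_{H_0\times H_0}$. Since ``bounded'' is a metric notion I work over $\mathbb R$ and diagonalise $B$ by Sylvester's law. Fixing a basepoint $p_0\in H$ and writing a general point of $H$ as $p_0+w$ with $w\in H_0$, the defining equation $B(p_0+w,p_0+w)=0$ reads $B_0(w,w)+2B(p_0,w)+B(p_0,p_0)=0$, an affine quadric in $H_0$ whose purely quadratic part is exactly $B_0$.

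The first assertion I would settle through the definiteness of $B_0$. By the classical dichotomy for affine quadrics, if $B_0$ is definite then completing the square exhibits the solution set as an ellipsoid, a point, or the empty set, hence as a bounded set; conversely, if $B_0$ is indefinite or degenerate and $Q\cap H\neq\eset$, then $Q\cap H$ is unbounded. So boundedness forces $B_0$ definite on the $(n-1)$-dimensional space $H_0$. Adjoining the one extra direction $\mathbb R p_0$ alters the inertia of $B$ in at most one slot, so $B$ carries at least $n-1$ indices of a single sign; its signature is therefore positive (semi)definite, negative (semi)definite, or $(n-1,1)$/$(1,n-1)$, which is precisely the stated list.

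For the second assertion my approach is the recession (asymptotic direction) method. Choosing $v_k\in Q\cap H$ with $\|v_k\|\to\infty$ and passing to a subsequence, I may assume $v_k/\|v_k\|\to u$ for some unit vector $u$. Dividing $\ell(v_k)=1$ by $\|v_k\|$ and $B(v_k,v_k)=0$ by $\|v_k\|^2$ and letting $k\to\infty$ gives $\ell(u)=0$ and $B(u,u)=0$, so $u$ is a nonzero isotropic vector lying in the direction space $H_0$. For any $v\in Q\cap H$ the line $v+\mathbb R u$ then lies in $H$ (as $u\in H_0$) and satisfies $B(v+tu,v+tu)=2t\,B(v,u)$; hence it lies in $Q$ as soon as $B(v,u)=0$. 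The task reduces to producing one point $v\in Q\cap H$ with $B(v,u)=0$.

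The main obstacle is exactly this centering condition $B(v,u)=0$: the limiting argument only yields $B(v_k,u)=o(\|v_k\|)$, not an honest orthogonality, so the asymptotic direction need not be realised by a line through any actual point of the slice. To clear it I would split according to the first part. When $B_0$ is degenerate I would take $u$ in its radical; then $B(v,u)=B(p_0,u)$ is constant on $Q\cap H$, and the favourable case is when this constant vanishes, i.e.\ $u$ lies in the radical of $B$ itself, whereupon every point of $Q\cap H$ spawns a line. When $B_0$ is indefinite of index at least two on each side I would instead locate a totally isotropic plane of $B$ meeting $H_0$, which supplies simultaneously an isotropic direction in $H_0$ and an isotropic point orthogonal to it that scales into $H$. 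Controlling the remaining, genuinely Lorentzian-type configurations --- in which an asymptotic direction $u$ exists but no point of $Q\cap H$ is $B$-orthogonal to it --- is the delicate heart of the matter, and is where mere unboundedness must be supplemented by the inertia information from the first part.
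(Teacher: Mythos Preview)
Your approach to the first assertion via the restriction $B_0=B|_{H_0}$ is correct and genuinely different from the paper's. The paper argues by explicit construction: it diagonalises $B$ as $\sum x_i^2 - \sum y_j^2$, assumes the signature is outside the list (so, after the symmetry $B\leftrightarrow -B$, either $q\geq 2$ with $r=0$, or $q\geq 1$ with $r\geq 1$), and in each case writes down a one-parameter family of points of $Q\cap H$ that is an affine line. This yields the contrapositive of the first assertion directly. Your route is more structural: bounded nonempty affine quadric $\Rightarrow$ definite homogeneous part $B_0$ $\Rightarrow$ interlacing forces the signature of $B$ into the list. One small gap in your write-up: you must dispose of the empty case separately, since $Q\cap H=\eset$ is bounded while $B_0$ need not be definite. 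This is easy --- if $B$ is indefinite the isotropic vectors span $V$, so some isotropic $v$ has $\ell(v)\neq 0$ and $v/\ell(v)\in Q\cap H$; hence $Q\cap H=\eset$ already forces $B$ semidefinite.

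For the second assertion you are right to feel uneasy, and the obstruction you identify is genuine: the statement ``$Q\cap H$ unbounded $\Rightarrow$ $Q\cap H$ contains an affine line'' is \emph{false} as written. In signature $(2,1)$ take $Q:x^2+y^2=z^2$ and $H:\{x=1\}$; then $Q\cap H=\{(1,y,z):z^2-y^2=1\}$ is a hyperbola, unbounded but containing no affine line. So your recession-direction argument cannot be completed in general, and the ``Lorentzian-type configurations'' you single out are exactly where it breaks. What the paper's proof actually establishes (and all that is used later) is the weaker statement: if the signature of $B$ is \emph{not} in the list, then $Q\cap H$ contains an affine line. This drops out of the paper's explicit construction for free, but is not a byproduct of your interlacing argument. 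If you want that conclusion as well, you should supplement your proof with the paper's parametric-line construction in the two bad-signature cases.
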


\begin{rem}
  \label{rk:bounded}
  ~
  \begin{enumerate}
  \item The converse is obviously false: for example when the signature of $B$ is $(n-1,1)$, $Q$ is a cone on a sphere, so the boundedness of $Q\cap H$ depends on the choice of the hyperplane $H$.
  \item Of course $Q$  is unchanged if we replace $B$ by $(-B)$, so if we assume the signature of $B$ to be $(p,q)$ with $p\geq q$, the lemma is equivalent to
    \[ Q\cap H \text{ bounded } \implies q=0 \text{ or }(p,q)=(n-1,1).\]
  \end{enumerate}
\end{rem}

\begin{proof}
Denote $\sgn B=(p,q)$, and set $r=n-(p+q)$. Let us choose an adapted basis for $V$ such that $X=(x_1,\dots, x_p,y_1,\dots, y_q, z_1,\dots, z_r) \in Q\cap H$ if and only if:
\begin{equation}
\left\{
\begin{alignedat}{11}
x_1^2 & {}+{} & \dots & {}+{} & x_p^2 \ & {}-{} & y_1^2 &{}-{} & \dots &{}-{} & y_q^2  &&&&&& & {}={} & 0 & \qquad & (Q)\\
a_1 x_1 &{}+{} & \dots &{}+{} & a_p x_p \ &{}+{} & b_1 y_1 &{}+{} & \dots &{}+{} & b_q y_q \ &{}+{} & c_1 z_1 &{}+{} & \dots &{}+{} & c_r z_r & {}={} &1 & \qquad & (H)
\end{alignedat}
\right.
\label{eq:Q}\end{equation}
where $(a_1,\dots, b_1,\dots, c_1,\dots)\neq (0,\dots,0)$. Suppose
that $Q\cap H$ is bounded. For simplicity we assume that $p\geq q$
(see Remark \ref{rk:bounded}(2)), and we want to prove that $q=0$ or
$(p,q)=(n-1,1)$. Supposing this is not the case, we will reach a
contradiction by constructing an affine line contained in
$Q\cap H$.

(1) Suppose $r=0$ and $q\geq 2$.  By reordering the coordinates if
needed, we can assume that $(a_1,b_1)\neq (0,0)$. For any
$s,t\in \real$, define $X(s,t)=(x_1,\dots, x_p,y_1,\dots, y_q)$ where
$x_1=s$, $y_1=\epsilon s$ (for some $\epsilon =\pm 1$), $x_2=t$,
$y_2=t$, and $x_i=0$, $y_j=0$ for all $i,j\geq 3$. Then we clearly
have $X(s,t)\in Q$, and $X(s,t)\in H$ if and only if
\begin{equation} (a_1+b_1\epsilon)s + (a_2+b_2)t=1. \label{eq:st}\end{equation}
Since $(a_1,b_1)\neq (0,0)$ we can choose $\epsilon$ such that $a_1+b_1\epsilon \neq 0$. Thus for any $t$ there is a unique solution $s(t)$ in Equation \ref{eq:st}. Hence $(X(s(t),t))_{t\in \real}$ is an affine line  contained in $Q\cap H$.

(2) Suppose now $r\geq 1$ and $q\geq 1$. By reordering the coordinates, we can suppose that $(a_1,b_1,c_1)\neq (0,0,0)$. For any
$s,t\in \real$, define $Y(s,t)=(x_1, \dots, x_p, y_1, \dots, y_q,\\ z_1, \dots, z_r)$ where
$x_1=s$, $y_1=\epsilon s$ (for some $\epsilon =\pm 1$), $z_1=t$, and $x_i=0$, $y_j=0$, $z_k=0$ for all $i,j,k\geq 2$. Then we clearly
have $Y(s,t)\in Q$, and $Y(s,t)\in H$ if and only if
\begin{equation}(a_1+b_1\epsilon)s + c_1t=1 .\label{eq:st2}\end{equation}
Since $(a_1,b_1,c_1)\neq (0,0,0)$ we can choose $\epsilon$ such that $(a_1+b_1\epsilon,c_1) \neq (0,0)$. Thus Equation \ref{eq:st2} has an affine line (L) of solutions for $(s,t)$. Hence we obtain an affine line $(Y(s,t))_{(s,t)\in L}$ living in $Q\cap H$.
 
\end{proof}

By definition of finite, affine, and weakly hyperbolic type,
Lemma~\ref{lem:bounded} automatically implies the following property,
which is the key point in the proof of Theorem~\ref{thm:Qincl}.

\begin{prop}
  \label{prop:bounded}
  Let $(\Phi,\Delta)$ be an irreducible  root system, and $Q$ be the isotropic cone   of its associated bilinear form. Let $H$ be an affine   nonlinear hyperplane.   If the intersection $Q\cap H$ is   bounded, then $(\Phi,\Delta)$ is of finite, affine or weakly   hyperbolic type.  If $(\Phi, \Delta)$ is of another type, then $Q\cap H$ contains an affine line. 
\end{prop}

\begin{proof}[Proof of Theorem~\ref{thm:Qincl}]
 
  $(iii) \implies (ii)$ is straightforward, since $E\subseteq \conv(\h\Delta)$.

  $(ii) \implies (i)$: $\h Q\subseteq \conv(\h\Delta)$, so in
  particular $\h Q=Q\cap V_1$ is bounded. By
  Proposition~\ref{prop:bounded}, the root system $(\Phi,\Delta)$ (which
  is assumed to be of indefinite type) is necessarily weakly
  hyperbolic. Let us choose the transverse hyperplane $V_1$ as in
  Proposition~\ref{prop:sphere} such that $\h Q=Q\cap V_1$ is an
  $(n-2)$-dimensional sphere. Let $I$ be a facial subset of $\Delta$, and
  consider the facial root subsystem $(\Phi_I,I)$. Its normalized
  isotropic cone $\h{Q_I}$ is $\h Q\cap \Span (I)$, so it is either
  (1) empty, or (2) a singleton, or (3) an $(|I|-2)$-dimensional
  sphere of positive radius. Since $\h Q\subseteq \conv(\h\Delta)$,
  $\h Q$ cannot cross nontrivially the faces of $\conv(\h\Delta)$, and
  case (3) cannot arise. So any component of $(\Phi_I,I)$  is of  finite  or  affine type. Hence, $(\Phi,\Delta)$ is hyperbolic.

  $(i) \implies (iii)$: Suppose $(\Phi,\Delta)$ is hyperbolic. Choose
  the transverse hyperplane $V_1$ as in Proposition~\ref{prop:sphere} such that
  $\h Q=Q\cap V_1$ is an $(n-2)$-dimensional sphere. From the specific
  study of the imaginary cone $\imc$ for hyperbolic groups in
  \cite{dyer:imc}, we have that $\ol{\imc}\cap V_1$ is equal to the
  ball $\conv(\h Q)$ (this corresponds to the statement
  $\ol{\imc}=\ol{\mathcal L}$ in \cite[Prop.~9.4(c)]{dyer:imc}). From
  Theorem~\ref{thm:imc-closure}, we know that $\ol{\imc}\cap
  V_1=\ol{Z}=\conv(E)$, so we get $\conv(E)=\conv(\h Q)$, i.e., $E$
  contains the set of extreme points of $\h Q$. But since $\h Q$ is a
  sphere, any point in $\h Q$ is an extreme point of the ball
  $\conv(\h Q)$. Hence $E\supseteq \h Q$, and $E=\h Q$.
\end{proof}

\subsection{Proof of Theorem~\ref{thm:fractal2}}
\label{ss:fractal2.1}

We start by giving several equivalent descriptions for the set $\mathcal F $, arising from the characterization of the closed imaginary cone in Theorem~\ref{thm:imc-inter}.

\begin{prop}
\label{prop:F}
Let $\mathcal F $ be defined as above. We have:
\begin{enumerate}[(i)]
\item $\displaystyle{\mathcal F = \bigcap_{w\in W} w \cdot (\hQact \cap \conv(\h\Delta))}$ ;
\item $\mathcal F = Q \cap \ol{\imc} \cap V_1$;
\item $\mathcal F =\h Q \cap \conv(E)$;
\item $\mathcal F $ is the maximal closed subset of $\hQact \cap
  \conv(\h\Delta)$, which is stable by $W$.
\end{enumerate}
\end{prop}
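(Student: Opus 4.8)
The plan is to prove the set-theoretic identity (i) first, deduce that (ii) and (iii) coincide and reduce everything to (iii), and then obtain (iv) formally; the single delicate point will be one inclusion in (iii), where I must control the sign of $\vphi$ along $W$-orbits.

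Writing $A:=\hQact\cap\conv(\h\Delta)$, note that $Q$, $V_1$ and $D$ are $W$-stable, hence so is $\hQact=\h Q\cap D$, and each $w$ acts on it as a bijection with inverse $w^{-1}$. Thus $w\cdot(\hQact\sm A)=\hQact\sm(w\cdot A)$, so $\bigcup_w w\cdot(\hQact\sm A)=\hQact\sm\bigcap_w w\cdot A$; complementing inside $\hQact$ yields $\mathcal F=\bigcap_{w\in W}w\cdot A$, which is (i). Because $w\cdot$ commutes with this intersection, $\mathcal F$ is $W$-stable, and taking $w=e$ gives $\mathcal F\seq A\seq\h Q\cap\conv(\h\Delta)$. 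For the reductions: by Theorem~\ref{thm:imc-closure}, $\ol{\imc}\cap V_1=\conv(E)=\ol Z$, and $Q\cap V_1=\h Q$, so $Q\cap\ol{\imc}\cap V_1=\h Q\cap\conv(E)$; hence (ii) and (iii) are the same statement. Granting (iii), $\mathcal F=\h Q\cap\conv(E)$ is closed, $W$-stable and contained in $A$, while any closed $W$-stable $G\seq A$ satisfies $G=w^{-1}\cdot G\seq w^{-1}\cdot A$ for every $w$, hence $G\seq\bigcap_w w\cdot A=\mathcal F$; this is exactly (iv).

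For the easy inclusion $\h Q\cap\conv(E)\seq\mathcal F$, take $x\in\h Q\cap\conv(E)$. Since $\conv(E)=\ol Z$ is $W$-stable, contained in $\conv(\h\Delta)$, and lies in $D^+\seq D$, for each $w$ the point $w^{-1}\cdot x$ belongs to $\conv(E)\seq\conv(\h\Delta)$ and to $\h Q\cap D$; thus $w^{-1}\cdot x\in A$, so $x\in w\cdot A$, and therefore $x\in\mathcal F$.

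The reverse inclusion $\mathcal F\seq\h Q\cap\conv(E)$ is the crux. By Theorem~\ref{thm:imc-inter}, $\conv(E)=V_1\cap\bigcap_w w(\cone(\Delta))$, so it suffices to show $w^{-1}(x)\in\cone(\Delta)$ for every $x\in\mathcal F$ and every $w$. As $\mathcal F$ is $W$-stable and contained in $\conv(\h\Delta)=\cone(\Delta)\cap V_1$, we have $w^{-1}\cdot x=\h{w^{-1}(x)}\in\conv(\h\Delta)$, and writing $w^{-1}(x)=\vphi(w^{-1}(x))\,(w^{-1}\cdot x)$ reduces the claim to the positivity $\vphi(w^{-1}(x))>0$, i.e.\ to $\mathcal F\seq D^+$. \emph{This positivity is the main obstacle.} To prove it I would first establish a local statement: if $p\in\h Q\cap\conv(\h\Delta)$, $\al\in\Delta$ and $s_\al\cdot p\in\conv(\h\Delta)$, then $\vphi(s_\al(p))>0$. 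Indeed, if instead $\vphi(s_\al(p))<0$ then $\mpair{\al,p}>0$ and $s_\al(p)=\vphi(s_\al(p))\,(s_\al\cdot p)\in-\cone(\Delta)$, so $p+c=2\mpair{\al,p}\al$ for some $c\in\cone(\Delta)$; positive independence of $\Delta$ then forces either $\h p=\h\al$ — impossible since $p$ is isotropic — or $\al\in\cone(\Delta\sm\set{\al})$ — impossible since that would give $1=\mpair{\al,\al}\leq 0$ using $\mpair{\al,\bt}\leq 0$ for $\bt\neq\al$. Finally, for $x\in\mathcal F$ and a reduced word $w=s_{i_1}\cdots s_{i_\ell}$, set $q_j:=(s_{i_j}\cdots s_{i_\ell})\cdot x\in\mathcal F\seq\h Q\cap\conv(\h\Delta)$; then $q_j=s_{i_j}\cdot q_{j+1}$ with both points isotropic and in $\conv(\h\Delta)$, so the local statement gives $\vphi(s_{i_j}(q_{j+1}))>0$, and the relation $\vphi\big((s_{i_j}\cdots s_{i_\ell})(x)\big)=\vphi(s_{i_j}(q_{j+1}))\cdot\vphi\big((s_{i_{j+1}}\cdots s_{i_\ell})(x)\big)$ propagates positivity from $\vphi(x)=1$ to $\vphi(w(x))>0$. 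Applying this to every $w$ gives $\mathcal F\seq D^+$ and completes (iii), hence the proposition.
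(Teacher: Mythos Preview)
Your proof is correct and follows the same overall route as the paper: derive (i) from $W$-stability of $\hQact$, then pass to (ii)--(iii) via Theorem~\ref{thm:imc-closure} and Theorem~\ref{thm:imc-inter}, and read off (iv). The paper's own argument is considerably terser: after writing $\mathcal F=\hQact\cap\bigcap_w w\cdot(\conv(\h\Delta)\cap D)$ it simply asserts this equals $\hQact\cap\bigcap_w w(\cone(\Delta))\cap V_1$, citing only the fact that the latter intersection lies in $D$.

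You have correctly spotted that this transition from the projective action $w\cdot$ to the linear action $w(\,\cdot\,)$ hides exactly the question of why $\vphi(w^{-1}(x))>0$ rather than $<0$ for $x\in\mathcal F$; the paper's stated justification (``$\bigcap_w w(\cone(\Delta))\cap V_1\subseteq D$'') secures the inclusion $\supseteq$ but says nothing about $\subseteq$. Your local statement for a single simple reflection --- using positive independence of $\Delta$, the inequalities $\mpair{\al,\bt}\leq 0$ for $\bt\neq\al$, and crucially the isotropy of $p$ to rule out $\h p=\h\al$ --- together with the multiplicative propagation of the sign of $\vphi$ along a reduced word, fills this gap cleanly. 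So your argument is not merely equivalent to the paper's: it is a genuine completion of it at the one point where the paper is elliptic.
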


\begin{rem}
\label{rk:Fhyp}
In the case where $\h Q \subseteq \conv(\h\Delta)$ (i.e., finite, affine, or hyperbolic type, according to Theorem~\ref{thm:Qincl}), we have $\mathcal F =\hQact$. But by the same theorem, $E=\h Q$, so $\h Q\subseteq D$, $\hQact=\h Q$ and $\mathcal F =\h Q=E$.
\end{rem}

\begin{proof}
Recall that we denote the $W$-action inside $V_1$ (defined in $D$) as ``$w\cdot v$'', whereas the geometric action of $W$ on $V$ is denoted by ``$w(v)$''.

(i) is clear since $\hQact$ is stable by the $W$-action. For (ii), note that
since $\hQact$ is stable,
\begin{equation*}
 \mathcal F  = \bigcap_{w\in W} w \cdot (\hQact \cap \conv(\h\Delta)) = \hQact \cap \bigcap_{w\in W}  w \cdot (\conv(\h\Delta) \cap D) \ .
\end{equation*}
Moreover $\bigcap_{w\in W}  w (\cone(\h\Delta))\cap V_1 \subseteq D$, so
\begin{align*}
\mathcal F  &= \hQact \cap  \bigcap_{w\in W}  w (\cone(\h\Delta))\cap V_1 \\
&= \h Q \cap  \bigcap_{w\in W}  w (\cone(\h\Delta))\cap V_1  \\
&= \h Q \cap  \bigcap_{w\in W}  w (\cone(\Delta))\cap V_1
\end{align*}
Now, from \cite[Thm.~5.1]{dyer:imc} (see also \cite[Def.~3.1]{dyer:imc}), we have
\[ \bigcap_{w\in W}  w (\cone(\Delta)) = \ol{\imc} \ ,\]
so (ii) is proved. Since $\ol{\imc}\cap V_1=\ol{Z}=\conv(E)$ (see
Theorem~\ref{thm:imc-closure}), the equality (iii) follows (see also
Theorem~\ref{thm:imc-inter}). Finally, the characterization (iv) is clear from equality (i).
\end{proof}

\begin{proof}[Proof of Theorem~\ref{thm:fractal2}]
  Let us prove the equality (i), which is equivalent to $E=\mathcal F $ from Proposition~\ref{prop:F}(iii). The inclusion $E \subseteq \h Q \cap \conv(E)$ is always true. From Proposition~\ref{prop:sphere}, we can choose the transverse hyperplane  $V_1$ such that $\h Q$ is a sphere. Let $x$ be a point in $\conv(E)\cap \h Q$, and suppose $x\notin E$. Then $x$ is a convex combination of some points in $E$, which are points in the sphere $\h Q$. So $x$ cannot lie on the sphere, i.e., $x\not\in \h Q$ (same argument as in the proof of Corollary~\ref{cor:Eexthyp}), which is contradictory. Thus $ \h Q \cap \conv(E) \subseteq E$.

(ii) Let $G$ be a non-empty, closed subset of $\hQact \cap \conv(\h\Delta)$, which is stable by $W$. Then $G\subseteq \mathcal F$ by Proposition~\ref{prop:F}(iv). So $G\subseteq E$. As it is non-empty, closed and $W$-stable, Corollary~\ref{cor:minimal}(b) implies that $G=E$.
\end{proof}

\section{On facial restrictions of subsets of $E$ and the dominance order}
\label{se:dom}
 
Let $(\Phi,\Delta)$ be a based root system, with associated Coxeter
system $(W,S)$. Take~$I\subseteq S$ a facial subset, i.e., 
$F_I:=\conv(\h{\Delta_I})$ is a face of  the polytope $\conv(\h\Delta)$. 
 Recall that $(\Phi_I,\Delta_I)$ is facial root subsystem, as recalled 
in the introduction of \S\ref{se:fractal}, and the set $E(\Phi_I)$ is the set of
limit roots which are accumulation points of $\h{\Phi_I}$.  A natural
question to ask is whether it is possible to describe $E(\Phi_I)$ from
$E(\Phi)$. Clearly~$E(\Phi_I)$ is contained in $E(\Phi)\cap F_I$;
however, the equality is not true in general, and a 
counterexample was provided in \cite[Ex.~5.8]{HLR}. It is interesting to note that the imaginary convex set behaves well with facial restriction:
 $K(\Phi_I)=K(\Phi)\cap F_I$ and $Z(\Phi_I)=Z(\Phi)\cap F_I$, see~\cite[Lemma~3.4]{dyer:imc}.

\smallskip
In this section we explore the question of the restriction of some subsets of $E(\Phi)$ to a face $F_I$ of $\conv(\h\Delta)$.
 By doing so, we will be brought to interpret \emph{the dominance order} and \emph{elementary roots}  in our geometrical setting.  
 \smallskip

A natural, (countable and dense) subset of $E(\Phi)$ that we start to consider for facial restriction is  
the set of \emph{dihedral limit roots} already considered in Equation~\eqref{eq:E2}:
\[ 
  E_2(\Phi)= \bigcup_{\alpha,\beta\in\Phi} L(\h\alpha,\h\beta)\cap \h Q, 
\]
where $L(\h\alpha,\h\beta)$ denotes again the line containing $\h\al$ and
$\h\bt$. One of the main results of this section is that the set of dihedral limit roots respects the facial structure. 

\begin{thm} 
\label{thm:E2facial} 
For all $I\subseteq S$ facial, $E_2(\Phi_I)=E_2(\Phi)\cap F_I$.
\end{thm}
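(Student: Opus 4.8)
The plan is to prove the two inclusions separately, the reverse one $E_2(\Phi)\cap F_I\seq E_2(\Phi_I)$ being the substantial one. Two preliminary observations organize both directions. First, since $F_I=\conv(\h{\Delta_I})\seq\Span(\Delta_I)$ and $\h{Q_I}=\h Q\cap\Span(\Delta_I)$, any line $L(\h\gamma,\h\delta)$ joining two normalized roots of $\Phi_I$ lies in $\Span(\Delta_I)$, so on such a line the conditions ``$\in\h Q$'' and ``$\in\h{Q_I}$'' coincide; thus $E_2(\Phi_I)=\bigcup_{\gamma,\delta\in\Phi_I}L(\h\gamma,\h\delta)\cap\h Q$. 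Second --- and this is the geometric heart --- I would record the facial dictionary $\h\Phi\cap F_I=\h{\Phi_I}$, equivalently $\h\rho\in F_I\iff\rho\in\Phi_I$ for $\rho\in\PP$. This follows because each $\h\rho$ is a convex combination $\sum_{\gamma\in\Delta}\lambda_\gamma\h\gamma$ of the normalized simple roots, the $\h\Delta$ being the vertices of $\conv(\h\Delta)$ by positive independence of $\Delta$; the defining property of the face $F_I$ then forces $\lambda_\gamma>0$ only when $\h\gamma\in F_I$, i.e.\ when $\gamma\in\Delta_I$, so $\rho$ is supported on $\Delta_I$ and hence lies in $\Phi_I$.

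With these in hand the forward inclusion is immediate: given $\alpha,\beta\in\Phi_I$ with $L(\h\alpha,\h\beta)\cap\h{Q_I}\neq\eset$, the intersection points are the limit roots of the dihedral reflection subgroup $\langle s_\alpha,s_\beta\rangle\leq W_I$, hence lie in $\conv(\h{\Phi_I})\seq F_I$; and since $\Phi_I\seq\Phi$ and $\h{Q_I}\seq\h Q$, they also lie in $E_2(\Phi)$. So $E_2(\Phi_I)\seq E_2(\Phi)\cap F_I$. For the reverse inclusion, take $x\in E_2(\Phi)\cap F_I$ and write $x\in L(\h\alpha,\h\beta)\cap\h Q$ with $\alpha,\beta\in\PP$ and $\h\alpha\neq\h\beta$. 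Because $L$ meets $\h Q$, the subgroup $W'=\langle s_\alpha,s_\beta\rangle$ is infinite and $x$ is one of its limit roots. I would then replace $\alpha,\beta$ by the canonical simple system $\{\alpha',\beta'\}\seq\PP$ of $W'$: all normalized roots of $W'$ lie on the same line $L$ inside the segment $[\h{\alpha'},\h{\beta'}]$ whose endpoints are normalized roots, and the limit roots of $W'$ --- in particular $x$ --- lie in $\ri([\h{\alpha'},\h{\beta'}])$, being accumulation points distinct from the endpoints $\h{\alpha'},\h{\beta'}\in\h\Phi$. Applying the face property of $F_I$ in $\conv(\h\Delta)$, the point $x\in F_I$ lying in the relative interior of a segment with both endpoints in $\conv(\h\Delta)$ forces $\h{\alpha'},\h{\beta'}\in F_I$. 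By the facial dictionary $\alpha',\beta'\in\Phi_I$, whence $x\in L(\h{\alpha'},\h{\beta'})\cap\h Q=L(\h{\alpha'},\h{\beta'})\cap\h{Q_I}\seq E_2(\Phi_I)$.

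The main obstacle is the passage to the canonical simple roots in the reverse inclusion. An arbitrary pair $\alpha,\beta$ of roots of $W'$ need not bracket $x$, since the normalized roots of an infinite dihedral group accumulate at each limit root \emph{from one side only}; hence the face property cannot be applied directly to $[\h\alpha,\h\beta]$. It is precisely the extremal pair $\{\h{\alpha'},\h{\beta'}\}$ that straddles $x$, and here one must invoke that a dihedral reflection subgroup has a well-defined canonical simple system whose normalized roots are the two extreme points of the segment carrying all its normalized roots. A secondary point to pin down is the facial dictionary $\h\Phi\cap F_I=\h{\Phi_I}$, which rests on the positive independence of $\Delta$ (so the $\h\gamma$ are genuine vertices of $\conv(\h\Delta)$) and on the identification $\Phi\cap\Span(\Delta_I)=\Phi_I$ valid for facial $I$; once these are secured, the argument reduces to the elementary observation that a face contains the endpoints of any of its segments, which is the natural geometric avatar here of the dominance relation between the roots $\alpha',\beta'$ and the interior roots of $W'$.
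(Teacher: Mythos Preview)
Your proof is correct and follows essentially the same route as the paper's own argument, which is packaged as Lemma~\ref{lem:facial}(i): both reduce to the canonical simple system $\{\alpha',\beta'\}$ of the dihedral subgroup so that $x$ lies in the open segment $]\h{\alpha'},\h{\beta'}[$, then use the face property of $F_I$ together with $\Phi_I=\Phi\cap V_I$ to force $\alpha',\beta'\in\Phi_I$. Your invocation of the abstract definition of a face (``endpoints of any segment meeting $F_I$ in its interior lie in $F_I$'') is slightly cleaner than the paper's explicit separating-hyperplane argument via $V_I\cap V_1$, but the content is the same.
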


The proof will be given in \S\ref{ss:intersection}. The question of
characterizing subsets of $E$ that verify the same facial
restriction equality is open and is discussed a little bit more in
\S\ref{ss:facialinter}.  Still, we are able to give more examples of
such subsets in this section. They are all subsets of $E_2(\Phi)$
built from the notion of the \emph{dominance order}, \emph{elementary
  roots} and \emph{the root poset}. We also provide, along the way,
useful geometric interpretations of the \emph{``normalized version''}
of these important combinatorial tools using the geometry of the
normalized isotropic cone $\h Q$.

\subsection{Dominance order, elementary roots, elementary limit roots}
\label{ss:elem}

We collect here some definitions used throughout this section.

\begin{defi}
\label{def:dom}
Let $(\Phi,\Delta)$ be a based root system.
\begin{itemize}
\item The \emph{dominance order} is a partial order on $\Phi$ defined by:
\[
\alpha\preceq \beta \quad \textrm{if and only if}\quad \forall w\in W, \quad w(\beta)\in \Phi^-\Longrightarrow w(\alpha)\in\Phi^-
\]
(we say that $\beta$ \emph{dominates} $\alpha$).
\item A positive root $\beta$ is called \emph{elementary}\footnote{These
  roots are also called \emph{humble} or \emph{small} in the
  literature. We adopt here the terminology of \cite{BH:aut}. See
  \cite[Notes, p.130]{bjorner-brenti} for more detail.} when $\beta$ dominates
no other positive roots than itself: \[\forall \alpha\in\PP,\ \alpha\preceq \beta \implies \alpha=\beta.\]
\item We denote by $\Sigma(\Phi)$ (or $\Sigma$ when $\Phi$ is clear)
  the set of elementary roots.
\end{itemize}
\end{defi}

For instance, the simple roots are elementary: $\Delta\subseteq
\Sigma$, but there can be other elementary roots than the simple roots. For example, in Figure~\ref{fig:intro}, the elementary roots are $\alpha$, $\beta$, $\gamma$, $s_\alpha \cdot \gamma$ and $s_\gamma \cdot \alpha$; in Figure~\ref{fig:elem} they are the points in purple.

\begin{figure}[!h]
\centering
\scalebox{1}{\input{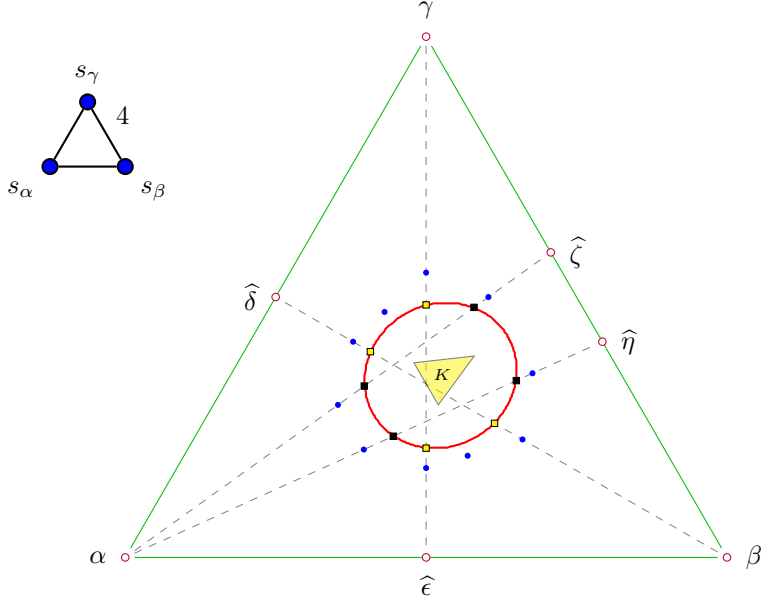}}
\caption{Example of elementary roots and elementary limit roots for
  the rank $3$ root system with diagram in the upper left corner. As in
  Figure~\ref{fig:intro}, $\h{Q}$ is in red. In blue and purple are
  the first normalized roots (with depth $\leq 4$): the elementary
  (normalized) roots are in purple (small circles), while the
  non-elementary ones are in blue (full dots) (see \S\ref{ss:short}
  for an interpretation). The elementary limit roots are the small
  squares on $\h Q$ (yellow and black). In yellow are the ones in
  $\Efcov(\Phi)$, constructed from fundamental covers of dominance
  (see \S\ref{ss:fund}); for example, $-\delta \dprecf \beta$ since
  $\beta + \delta \in \mathcal K$. The polytope $K$ is in shaded
  yellow, and illustrates Remark~\ref{rk:barycenter}.}
\label{fig:elem}
\end{figure}

We give in \S\ref{ss:domgeom} other characterizations of the dominance
order, including a geometric interpretation. The dominance order and
elementary roots are a fundamental tool allowing one to build a finite
state automaton for the language of words in Coxeter groups, as shown
by B.~Brink and R.~Howlett~\cite{BH:aut}. The key point in their
construction is the property that $\Sigma$ is a finite set (see
\cite{BH:aut}, or also \cite[\S4.7]{bjorner-brenti}). Translating this
fact into our framework, we construct the set of \emph{elementary
 root limits}:
\begin{equation}
\label{eq:elem}
\Eelem(\Phi):=\bigcup_{\alpha,\beta\in\Sigma(\Phi)}
L(\h\alpha,\h\beta)\cap \h Q,
\end{equation}
which is finite since $\Sigma$ is finite and $|L(\h\alpha,\h\beta)\cap
\h Q|\leq 2$ for all $\alpha,\beta\in \PP$. Note that by definition,
one has: $\Eelem(\Phi)\subseteq E_2(\Phi) \subseteq E(\Phi)$. 
By Theorem~\ref{cor:minimal}(b) we get immediately the following result.

\begin{prop}\label{prop:ElemLimits} The union of the   $W$-orbits of points in the finite set $\Eelem(\Phi)$ is dense in $E$:
 \[E(\Phi)=\ol{W\cdot \Eelem(\Phi)}.\]
\end{prop}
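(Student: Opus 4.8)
The plan is to read the statement as a direct consequence of the minimality of the $W$-action on $E(\Phi)$, so I may assume $\Phi$ is irreducible, as required by Theorem~\ref{cor:minimal}. First I record the trivial inclusion: by construction $\Eelem(\Phi)\subseteq E_2(\Phi)\subseteq E(\Phi)$, and since $E(\Phi)=\Acc(\h\Phi)$ is closed and $W$-invariant, the orbit $W\cdot\Eelem(\Phi)$ lies in $E(\Phi)$, hence so does its closure; thus $\ol{W\cdot\Eelem(\Phi)}\subseteq E(\Phi)$. If $\Phi$ is finite then $E(\Phi)=\eset=\Eelem(\Phi)$ and the asserted equality is vacuous, so I may assume $\Phi$ infinite.

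For the reverse inclusion the key point is Theorem~\ref{cor:minimal}(b): for every $x\in E(\Phi)$ one has $\ol{W\cdot x}=E(\Phi)$. Equivalently, $E(\Phi)$ has no proper non-empty closed $W$-invariant subset, since any non-empty closed $W$-stable $C\subseteq E(\Phi)$ contains $\ol{W\cdot x}=E(\Phi)$ for any $x\in C$. Now $\ol{W\cdot\Eelem(\Phi)}$ is exactly such a subset, so it is either empty or all of $E(\Phi)$, and it is empty precisely when $\Eelem(\Phi)=\eset$. Hence, as soon as $\Eelem(\Phi)\neq\eset$, choosing any $x\in\Eelem(\Phi)\subseteq E(\Phi)$ gives $E(\Phi)=\ol{W\cdot x}\subseteq\ol{W\cdot\Eelem(\Phi)}$, which together with the trivial inclusion yields the desired equality. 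This is the sense in which the proposition follows \emph{immediately} from minimality: the self-similarity encoded by a minimal action spreads a single elementary limit root densely over all of $E(\Phi)$.

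It therefore remains to check that $\Eelem(\Phi)\neq\eset$ whenever $\Phi$ is infinite, and I expect this non-emptiness to be the one point requiring genuine care. Recall that $L(\h\alpha,\h\beta)\cap\h Q\neq\eset$ exactly when the restriction of $\Mpair$ to $\Span(\alpha,\beta)$ fails to be positive definite, i.e.\ when the dihedral reflection subgroup $\langle s_\alpha,s_\beta\rangle$ is infinite; so the task is to exhibit two \emph{elementary} roots generating an infinite dihedral subgroup. When two simple reflections have infinite order this is immediate, since $\Delta\subseteq\Sigma(\Phi)$ and the two simple roots then span a non-positive-definite plane. In the remaining cases (all Coxeter labels finite, e.g.\ affine or compact hyperbolic type) one uses that an infinite irreducible $W$ always contains an infinite dihedral reflection subgroup, and takes for $\{\alpha,\beta\}$ generating roots of such a subgroup that are dominance-minimal, hence elementary by the Brink--Howlett description of $\Sigma(\Phi)$; the line through them meets $\h Q$ in the two (or one, in the degenerate case) limit roots of that subgroup, producing a point of $\Eelem(\Phi)$. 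The density itself is formal once a single point of $\Eelem(\Phi)$ is in hand, so this verification of non-emptiness in the all-finite-labels case is the main obstacle.
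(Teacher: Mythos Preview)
Your approach is essentially the paper's one-line argument: the proposition is stated there as an immediate consequence of minimality (Theorem~\ref{cor:minimal}(b)), and you have correctly spelled out the formal deduction that any non-empty closed $W$-stable subset of $E$ equals $E$. The paper also records a second, direct geometric proof in \S\ref{ss:density} (via covers of dominance and a rank-$3$ reduction, Proposition~\ref{prop:Ecovdense}) that avoids minimality entirely; that route is longer but more elementary.

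You rightly identify non-emptiness of $\Eelem(\Phi)$ for infinite irreducible $\Phi$ as the one point needing care, which the paper's one-liner glosses over. However, your treatment of the all-finite-labels case has a gap: the canonical simple roots of an infinite dihedral reflection subgroup are dominance-minimal only \emph{within that subgroup}; they may still dominate positive roots of $\Phi$ lying outside it, so there is no reason for them to lie in $\Sigma(\Phi)$. A cleaner fix uses Proposition~\ref{prop:rootposet} directly. Since $\Sigma(\Phi)$ is finite and $\PP$ is infinite, pick any $\rho\in\PP\setminus\Sigma$. Any root-poset chain from a simple root to $\rho$ must contain a long edge; take the first one, say between $\gamma$ and $s_\alpha(\gamma)$ with $\alpha\in\Delta$ and $|\mpair{\alpha,\gamma}|\geq 1$. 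The short-edge chain from $\Delta$ up to $\gamma$ forces $\gamma\in\Sigma$, and of course $\alpha\in\Delta\subseteq\Sigma$. Then $L(\h\alpha,\h\gamma)\cap\h Q\neq\eset$, so $\Eelem(\Phi)\neq\eset$ as required.
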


\begin{rem} 
In an early version of this research,  before we had the idea of Theorem~\ref{cor:minimal}(b), $\Eelem(\Phi)$ allowed us to prove the existence of a dense subset of $E$ that is a finite union of $W$-orbits of points of $E$. We think the techniques in this  direct elementary proof of Proposition~\ref{prop:ElemLimits} are still of  interest and it will be presented in \S\ref{ss:density}. 
\end{rem}

An example is shown in Figure~\ref{fig:elem}. We will explain the
picture in more detail once we have described a way to compute
algorithmically the elementary roots. In order to do so, we need an
alternative characterization of $\Sigma$, using another partial order
on $\PP$, which we introduce now (see \cite[\S4.6]{bjorner-brenti} for
details).

\subsection{Geometric construction of elementary roots and the root poset}
\label{ss:short}

We consider $\PP$ with the partial order defined as the transitive
closure of the following relation: let $\alpha\in \Delta$ and $\beta
\in \PP \setminus \{\alpha\}$, we write $\beta \leq
s_\alpha(\beta)$ if $\mpair{\alpha,\beta}\leq 0$ and $\beta \geq
s_\alpha(\beta)$ otherwise.  The poset $(\PP,\leq)$ is called the
\emph{root poset on $\PP$}. This poset is an interesting tool to
build roots algorithmically by means of the depth. Recall that the depth of
$\beta\in\PP$ is:
\[
\dep(\beta)= 1 + \min\{k \,|\, \beta=s_{\alpha_1} s_{\alpha_2}\dots
s_{\alpha_k}(\alpha_{k+1}), \text{ for } \alpha_1,\dots, \alpha_k, \alpha_{k+1} \in \Delta\}.
\]
Note that for $\alpha\in \Delta$ and $\beta \in \PP \setminus
\{\alpha\}$, $\beta \leq s_\alpha(\beta)$ precisely when the depth
increases (see \cite[\S4.6]{bjorner-brenti}). The root poset is ranked by the
depth, i.e., the length of a chain from $\beta$ to $\gamma$ in $\PP$ such
that $\beta\leq \gamma$ is $\dep(\gamma)-\dep(\beta)$.

The root poset is intimately linked with elementary roots: given
$\alpha\in\Delta$ and $\beta\in\PP$, the edge
$\beta$\,---\,$s_\alpha(\beta)$ in the root poset of $\PP$ is called
\emph{a short edge} if $|\mpair{\alpha,\beta}|<1$; otherwise it is
called \emph{a long edge}. With this terminology, one has the following characterization of elementary roots.

\begin{prop}[{see \cite[\S4.6]{bjorner-brenti}}]
  \label{prop:rootposet}
  Let $\beta\in \PP$, then $\beta \in \Sigma$ (i.e., $\beta$ is an
  elementary root) if and only if there is a chain from a simple root
  to $\beta$ constituted only in short edges.
\end{prop}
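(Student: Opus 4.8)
The plan is to argue by induction on the depth $\dep(\beta)$, using a dictionary between the covering structure of the root poset and the dominance order. I would lean on two standard facts (see \cite[\S4.6--4.7]{bjorner-brenti}): first, that each $s_\alpha$ with $\alpha\in\Delta$ permutes $\PP\setminus\{\alpha\}$, and that dominance is $W$-equivariant on positive roots, in the sense that for $\gamma,\delta\in\PP$ and $w\in W$ with $w(\gamma),w(\delta)\in\PP$ one has $\gamma\preceq\delta\iff w(\gamma)\preceq w(\delta)$ (this follows immediately from the defining implication by right-translating the sets $\{w\mid w(\cdot)\in\Phi^-\}$); and second, the criterion that for $\alpha\in\Delta$ and $\beta\in\PP\setminus\{\alpha\}$, $\beta$ dominates $\alpha$ (i.e. $\alpha\preceq\beta$) if and only if $\mpair{\alpha,\beta}\geq 1$.

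The heart of the argument is a local statement. Fix $\alpha\in\Delta$ and $\beta'\in\PP\setminus\{\alpha\}$ with $\mpair{\alpha,\beta'}<0$, and set $\beta:=s_\alpha(\beta')$, so $\beta'$\,---\,$\beta$ is an up-edge of the root poset and $\mpair{\alpha,\beta}=-\mpair{\alpha,\beta'}>0$; note $\dep(\beta)\geq 2$, so $\beta\neq\alpha$. I claim: (a) if the edge is long, i.e. $\mpair{\alpha,\beta}\geq 1$, then $\beta$ dominates the simple root $\alpha\neq\beta$, whence $\beta\notin\Sigma$; and (b) if the edge is short, i.e. $\mpair{\alpha,\beta}<1$, then $\beta\in\Sigma$ if and only if $\beta'\in\Sigma$. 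Part (a) is immediate from the simple-root criterion. For (b), assume $\beta'\in\Sigma$ and suppose $\beta$ dominated some $\delta\in\PP\setminus\{\beta\}$: if $\delta=\alpha$ the criterion forces $\mpair{\alpha,\beta}\geq 1$, contradicting shortness; if $\delta\neq\alpha$ then $s_\alpha(\delta)\in\PP\setminus\{\beta'\}$ and, by $W$-equivariance applied to $s_\alpha$, $\beta'=s_\alpha(\beta)$ would dominate $s_\alpha(\delta)$, contradicting $\beta'\in\Sigma$. The reverse implication is symmetric: if $\beta'$ dominated some $\gamma\neq\beta'$, then $\gamma\neq\alpha$ (since $\mpair{\alpha,\beta'}<0<1$ rules out $\alpha\preceq\beta'$), and transporting by $s_\alpha$ shows $\beta$ dominates $s_\alpha(\gamma)\neq\beta$, so $\beta\notin\Sigma$.

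With (a)--(b) in hand the proposition follows by induction on depth. Simple roots have depth $1$, are elementary, and are joined to themselves by the empty chain, giving the base case. For $\dep(\beta)\geq 2$: if $\beta\in\Sigma$, then \emph{every} down-edge of $\beta$ is short, since a long one, $\mpair{\alpha,\beta}\geq 1$, would make $\beta$ dominate $\alpha$ by the criterion; choosing any down-neighbour $\beta'$ (one exists because $\beta\notin\Delta$), the short edge $\beta'$\,---\,$\beta$ and (b) give $\beta'\in\Sigma$, so by induction $\beta'$ is joined to a simple root by short edges, and appending $\beta'$\,---\,$\beta$ yields the desired chain for $\beta$. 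Conversely, if $\beta$ is the top of a short-edge chain from a simple root, its predecessor $\beta'$ is elementary by induction, and (b) propagates elementarity up the final short edge to $\beta$.

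The step I expect to be the real content — as opposed to bookkeeping — is the simple-root dominance criterion $\alpha\preceq\beta\iff\mpair{\alpha,\beta}\geq 1$ together with the $W$-equivariance of dominance; these are precisely the inputs that convert the combinatorial short/long dichotomy of edges into an order-theoretic statement about $\Sigma$. Everything afterward is a depth induction, the only delicate points being the bidirectionality of (b) and the observation that an elementary root of depth $\geq 2$ always admits a short down-edge along which to descend.
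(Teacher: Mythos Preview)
Your argument is correct. The paper itself does not supply a proof of this proposition: it is stated with a citation to \cite[\S4.6]{bjorner-brenti} and used as a known input. Your induction on depth via the local lemma (a)--(b) is precisely the standard argument one finds in that reference, and the ingredients you invoke---$W$-equivariance of dominance (Proposition~\ref{prop:rappelDomin}(ii) in this paper) and the criterion $\al\preceq\bt\iff\mpair{\al,\bt}\geq 1$ for $\al\in\Delta$ (which follows from Proposition~\ref{prop:rappelDomin}(i),(iv) since $\dep(\al)=1$)---are available here. One minor stylistic remark: you phrase equivariance as holding for positive roots mapped to positive roots, but in fact dominance is defined on all of $\Phi$ and is fully $W$-equivariant, so there is no need for the positivity caveat; your application of it is unaffected.
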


This statement gives a useful way to construct elementary roots. First of all, recall from \cite{HLR} that the dihedral reflection subgroup generated by $s_\alpha$ and $s_\beta$ (with $\alpha,\beta\in \Phi$) is finite if and only if $|\mpair{\alpha,\beta}|<1$, if and only if the line $L(\h\alpha,\h\beta)$ does not intersect $\h Q$. We obtain the following geometric characterization of short and long edges: let $\alpha\in\Delta$ and $\beta\in\PP$, then:
\begin{itemize}
\item $\beta$\,---\,$s_\alpha(\beta)$ is a \emph{short edge} if and
  only if $L(\h\alpha,\h\beta)\cap Q=\varnothing$;
\item $\beta$\,---\,$s_\alpha(\beta)$ is a \emph{long edge} if and only
  if $L(\h\alpha,\h\beta)\cap Q\not=\varnothing$.
\end{itemize}
We build $\Sigma$ by induction:
\begin{description}
\item[Initial step] Take $\Sigma_1:=\Delta$;
\item[Inductive step] Assume $\Sigma_k$ is built.  Draw the lines
  between the roots in $\Sigma_k$ and the simple roots and select
  those that do not cut $\h Q$:
  \[
  \Sigma_{k+1}:=\Sigma_k\cup\{s_\alpha(\beta)\,|\, \alpha\in
  \Delta,\ \beta\in \Sigma_k,\ L(\alpha,\h\beta)\cap Q=\varnothing\}.
  \]
  In Figure~\ref{fig:intro}, we only have to consider the line
  $L(\alpha,\gamma)$ to build $\Sigma_2$.
\item[Final step] By Proposition~\ref{prop:rootposet}, $\Sigma_k$ is
  composed of elementary roots for any~$k$; moreover,
  $\Sigma_k\setminus \Sigma_{k-1}$ is constituted of roots of depth
  $k$. So $\bigcup_k \Sigma_k = \Sigma$. Since $\Sigma$ is finite,
  there exists $N\geq 1$ such that
  ${{\Sigma=\Sigma_N=\Sigma_{N+1}}}$. In the example of Figure~\ref{fig:intro},
  $\Sigma=\Sigma_2$.
\end{description}

\subsection{Geometric interpretation of dominance order}
\label{ss:domgeom}

The ingredients here are not new, and already appear in
\cite{BH:aut}. However, we provide here a geometric interpretation of
the dominance order inside $\conv(\h\Delta)$, using the normalized
isotropic cone $\h Q$ and the particular geometry of infinite dihedral
reflection subgroups. The idea is the following: given $\rho, \gamma
\in \PP$, $\rho \preceq \gamma$ if and only if when looking at $\h Q$,
$\h\rho$ is able to see $\h\gamma$ (see Figure~\ref{fig:geodomin}). As
far as we know, such a description never appeared in the literature,
we feel that to get a geometric intuition of the properties of the
dominance order could be very useful in future works (for example,
many of the properties proved in \cite{fu:imc} have a natural
interpretation in this setting).

\medskip

We need first to recall basic properties of the dominance order, from the seminal work in \cite{BH:aut}.

\begin{prop}[{see \cite[\S2]{BH:aut}}]
\label{prop:rappelDomin}
Let $\alpha,\beta\in\Phi$.
\begin{enumerate}[(i)]
\item There is a dominance relation between $\alpha$ and $\beta$ of and only if $\mpair{\alpha,\beta}\geq 1$.
\item Let $w\in W$, then $w(\alpha)\preceq w(\beta)$ if and only if $\alpha\prec\beta$.
\item Assume that $\alpha \preceq \beta$. Then:
\begin{enumerate}[(a)]
\item if  $\alpha\in\PP$, then $\beta\in\PP$;
\item if $\beta\in\Phi^-$, then $\alpha\in\Phi^-$;
\item $-\beta\preceq -\alpha$.
\end{enumerate}
\item If $\alpha,\beta\in\PP$, then $\alpha\preceq \beta$ if and only if $\mpair{\alpha,\beta}\geq 1$ and $\dep(\alpha)\leq \dep(\beta)$.
\item If $\alpha\in\Phi^-$ and $\beta\in\PP$, then $\alpha\preceq \beta$ if and only if $\mpair{\alpha,\beta}\geq 1$.
\item Let $(\Phi',\Delta')$ be a   root subsystem of $(\Phi,\Delta)$ (i.e. it is a based root system such that $\Delta'\subseteq \PP$ and therefore $\Phi'\subseteq \Phi$). Denote by $\preceq'$ the
dominance order of the root system $(\Phi', \Delta')$. Then:
\[\forall \rho,\gamma \in\Phi', \rho \preceq \gamma
\Leftrightarrow \rho \preceq' \gamma .\]
\end{enumerate}
\end{prop}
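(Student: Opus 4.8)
The plan is to split the statement into the parts that follow formally from the definition of $\preceq$ and the parts, essentially the inner-product characterizations, that carry the genuine content of \cite{BH:aut}. I would first treat the formal parts (ii) and (iii). For (ii), the relation $\alpha\preceq\beta$ is an implication quantified over all $u\in W$; since $u\mapsto uw$ is a bijection of $W$ and $u(w(\delta))=(uw)(\delta)$, reindexing transforms the defining implication for $w(\alpha)\preceq w(\beta)$ into that for $\alpha\preceq\beta$, giving the equivalence. For (iii), the instance $u=e$ of the definition yields directly that $\beta\in\Phi^-$ implies $\alpha\in\Phi^-$, which is (b); since $\Phi=\PP\sqcup\Phi^-$, its contrapositive is (a). For (iii)(c), taking contrapositives in the defining implication and using $\PP=-\Phi^-$ shows that ``$u(\beta)\in\Phi^-\Rightarrow u(\alpha)\in\Phi^-$'' is the same statement as ``$u(-\alpha)\in\Phi^-\Rightarrow u(-\beta)\in\Phi^-$''; quantifying over $u\in W$ gives $\alpha\preceq\beta\iff -\beta\preceq-\alpha$. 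None of these uses $\Mpair$.

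The substance is (i), (iv) and (v), which I would deduce from Brink--Howlett \cite{BH:aut}, guided by the geometric dictionary recalled earlier: $|\mpair{\alpha,\beta}|<1$ iff $\langle s_\alpha,s_\beta\rangle$ is finite iff $L(\h\alpha,\h\beta)\cap\h Q=\eset$, so a dominance relation can occur only when this dihedral subgroup is infinite. The easy implication of (i) — that a dominance relation forces $\mpair{\alpha,\beta}\geq1$ — is obtained by passing to the dihedral reflection subgroup $\langle s_\alpha,s_\beta\rangle$ and analysing dominance explicitly in the rank-two case. For the hard implication of (i), that distinct $\alpha,\beta\in\PP$ with $\mpair{\alpha,\beta}\geq1$ are comparable, I would induct on $\dep(\beta)$: pick a simple $\gamma$ with $\mpair{\gamma,\beta}>0$, so $\dep(s_\gamma\beta)=\dep(\beta)-1$, and set $\beta'=s_\gamma\beta\in\PP$, $\alpha'=s_\gamma\alpha$, noting $\mpair{\alpha',\beta'}=\mpair{\alpha,\beta}\geq1$. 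If $\alpha'\in\PP$, the inductive hypothesis gives a dominance relation between $\alpha'$ and $\beta'$, and (ii) transports it through $s_\gamma$ to one between $\alpha$ and $\beta$. The only other possibility is $\alpha'\in\Phi^-$, which forces $\alpha=\gamma$ to be simple; this boundary case, together with the sharpness of the threshold $\mpair{\alpha,\beta}\geq1$, is where the infiniteness of $\langle s_\alpha,s_\beta\rangle$ must be exploited directly, and I expect it to be the main obstacle. Granting (i), part (v) is formal: for $\alpha\in\Phi^-$, $\beta\in\PP$ with $\mpair{\alpha,\beta}\geq1$, (i) makes them comparable and (iii)(a) excludes $\beta\preceq\alpha$, leaving $\alpha\preceq\beta$; and (iv) follows by combining (i) with the depth bookkeeping from the same induction (the reflection lowering $\dep(\beta)$ also lowers $\dep(\alpha)$), which orients the relation by depth.

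It remains to prove the subsystem compatibility (vi). Because $\Delta'\subseteq\PP$, the positive, resp.\ negative, roots of $\Phi'$ are $\Phi'\cap\PP$, resp.\ $\Phi'\cap\Phi^-$, and $W'=\langle s_\delta\mid\delta\in\Delta'\rangle$ is a subgroup of $W$ stabilizing $\Phi'$. The implication $\rho\preceq\gamma\Rightarrow\rho\preceq'\gamma$ is then immediate by restricting the quantifier from $W$ to $W'$: if $w\in W'$ sends $\gamma$ to a negative root of $\Phi'$, hence an element of $\Phi^-$, then $w(\rho)\in\Phi^-$ by $\rho\preceq\gamma$, while $w(\rho)\in\Phi'$, so $w(\rho)\in\Phi'\cap\Phi^-$. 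For the converse I would use (i), noting that the form on $\Span(\Delta')$ is the restriction of $\Mpair$, so $\mpair{\rho,\gamma}$ is the same in both systems. If $\rho\preceq'\gamma$ then $\mpair{\rho,\gamma}\geq1$ by (i) in $\Phi'$, whence $\rho$ and $\gamma$ are comparable in $\Phi$ by (i) in $\Phi$. If one had $\gamma\preceq\rho$ with $\rho\neq\gamma$, the easy implication just proved would give $\gamma\preceq'\rho$, contradicting antisymmetry of the partial order $\preceq'$ applied to $\rho\preceq'\gamma$; therefore $\rho\preceq\gamma$. This settles (vi) using only (i) and the fact that $\preceq$ and $\preceq'$ are genuine partial orders.
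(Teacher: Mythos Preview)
Your treatment of (vi) is exactly the paper's proof: the forward implication by restricting the quantifier to $W'$ and using $\Phi'^{-}=\Phi'\cap\Phi^{-}$, and the converse by invoking (i) in both $\Phi'$ and $\Phi$ together with antisymmetry. For (i)--(v) the paper simply cites \cite[\S2]{BH:aut} without further argument, so your formal derivations of (ii) and (iii) from the definition, and your depth-induction sketch for (i) with (iv),(v) as corollaries, go beyond what the paper does; they are correct in outline and in the spirit of Brink--Howlett. One small point to watch: your sketch of the hard direction of (i) treats only the case $\alpha,\beta\in\Phi^{+}$, but you then invoke (i) in full to deduce (v) for mixed signs; to avoid circularity you should either extend the induction to the mixed-sign case directly (reducing to the infinite dihedral subgroup on $\{-\alpha,\beta\}$) or observe that once (i) is known for positive pairs, the remaining sign cases follow from (iii)(c) and a separate short argument.
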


\begin{proof}
The proof of \emph{(i)-(v)} can be found in \cite[\S2]{BH:aut}. For
\emph{(vi)}, Let $(\Phi',\Delta')$ be a   root subsystem of
$(\Phi,\Delta)$, and denote by $W'$ the associated reflection subgroup
of $W$. Let $\rho \neq \gamma\in\Phi'$.

First suppose that $\rho\preceq \gamma$. For $w\in W'$, if $w(\gamma)\in
\Phi'^-\subseteq \Phi^-$, then $w(\rho)\in \Phi^-$ by hypothesis. But
as $W'$ preserves $\Phi'$, we also have $w(\rho)\in \Phi'$, so
$w(\rho)\in \Phi^-\cap \Phi'=\Phi'^-$. Hence $\rho\prec'\gamma$.

Conversely, suppose now that $\rho\preceq'\gamma$. Then, by
\emph{(i)} applied to $\Phi'$, we have
$\mpair{\rho,\gamma}\geq 1$. So by the same property \emph{(i)}, we have either
$\rho\preceq \gamma$ or $\gamma \preceq \rho$. If we suppose $\gamma \preceq
\rho$, we get by the first implication that $\gamma \preceq'\rho$ as well, so
$\gamma=\rho$, which is a contradiction.
\end{proof}

The above proposition is a key to give a geometric interpretation of
the dominance order: for a dominance relation to be possible between
$\rho$ and $\gamma$ in $\Phi$, the line $L(\h\rho,\h\gamma)$ must
intersect $Q$. However, this does not imply that there will be a
dominance relation between $\rho$ and $\gamma$, since $\mpair{\rho,\gamma}$
could be negative.  We aim to get a criterion on the line
$L(\h\rho,\h\gamma)$, which represents the dihedral reflection
subgroup $W':=\{s_\rho,s_\gamma\}$, for a root on this line to
dominate another.  \smallskip

Let $\alpha,\beta\in \PP$ such that $\mpair{\alpha,\beta}\leq -1$ (so $L(\h\alpha,\h\beta)$ intersects $Q$). Set:
\[
\Delta'=\{\alpha,\beta\},\  W':=\{s_\alpha,s_\beta\}\textrm{ and }\Phi':=W'(\Delta').
\]
We know that in this case $(\Phi',\Delta')$ is a root system with
associated infinite dihedral reflection subgroup $W'$ (see
\cite[Proposition~1.5(2)]{HLR}). Moreover,
by~\cite[Proposition~3.5]{HLR}, we know that $L(\h\alpha,\h\beta)\cap
Q=\{x,s_\alpha\cdot x\}$ for some $x$. Choose notation so 
$x\in \conv(\set{\alpha,s_{\alpha}\cdot x})$. We recall here some basic facts on $W'$, see
for instance \cite[Example~3.8]{HLR} or \cite[Prop.~3.7]{fu:imc}.

\medskip We refer to Figure~\ref{fig:dihedraldom} for a pictorial 
representation of the following description. Let us give to the line $L(\h\alpha,\h\beta)$ the total order inherited from
$\mathbb R$ such that \[\h\alpha<_{\mathbb R} x\leq_{\mathbb R}
s_\alpha\cdot x<_{\mathbb R}\h\beta.\] So
\[
[\h\alpha,\h\beta]=[\h\alpha,x[\sqcup[x,s_\alpha\cdot x]\sqcup]s_\alpha\cdot x,\h\beta],
\]
and $\h{\Phi'}$ is ordered as follows:
\begin{align}
\label{equ1}
 \h\alpha<_{\mathbb R}s_\alpha\cdot \h\beta<_{\mathbb R}s_\alpha s_\beta \cdot \h\alpha<_{\mathbb R}\dots<_{\mathbb R}(s_\alpha s_\beta)^n\cdot \h\alpha<_{\mathbb R} (s_\alpha s_\beta)^n s_\alpha\cdot \h\beta <_{\mathbb R}\dots <_{\mathbb R}x \\
\textrm{and }\ s_\alpha\cdot x<_{\mathbb R} \dots<_{\mathbb R}s_\beta (s_\alpha s_\beta)^n\cdot \h\alpha<_{\mathbb R} (s_\beta s_\alpha)^n \cdot \h\beta<_{\mathbb R}\dots
<_{\mathbb R} s_\beta \cdot \h\alpha\cdot<_{\mathbb R} \h\beta. \notag
\end{align}

Moreover, note that:
\begin{enumerate}[(a)]
\item the function $\Mpair$ is positive on {{$[\h\alpha,x[\times [\h\alpha,x[$}} and $]s_\alpha\cdot x,\h\beta]\times]s_\alpha\cdot x,\h\beta]$; and negative on $[\h\alpha,x[\times]s_\alpha\cdot x,\h\beta]$.
\item the depth function $\dep'$ on $\Phi'$ is increasing on $[\h\alpha,x[\cap \h\Phi$ and decreasing on ${{]s_\alpha\cdot x,\h\beta]\cap \h\Phi}}$ (we obviously mean here that  $\dep'(\h\nu):=\dep'(\nu)$ for any $\nu\in\h{\Phi'}$).
\end{enumerate}

\noindent Now, let $\rho \neq \gamma\in\Phi'^+$. Using
Proposition~\ref{prop:rappelDomin}~\emph{(iv)} and \emph{(vi)}, we
have $\rho\preceq \gamma$ if and only if $\dep'(\rho)\leq \dep'(\gamma)$
and $\mpair{\rho,\gamma}\geq 1$. Note that we always have
$|\mpair{\rho,\gamma}|\geq 1$ (since $\Phi'$ is infinite), so
$\mpair{\rho,\gamma}\geq 1$ if and only if $\mpair{\rho,\gamma}\geq 0$. Then, by
(a) above we obtain:
\begin{equation}
\label{equ2}
\rho\preceq \gamma\textrm{ if and only if  }\dep'(\rho)\leq \dep'(\gamma)\textrm{ and } \h\rho,\h\gamma\in [\h\alpha,x[\textrm{ or }\h\rho,\h\gamma\in ]s_\alpha\cdot x,\h\beta].
\end{equation}
Now by (b) we get:
\begin{equation}
\label{equ3}
\rho\preceq \gamma\textrm{ if and only if  }\h\gamma\in [\h\rho,x[\textrm{ or }\h\gamma\in ]s_\alpha\cdot x,\h\rho].
\end{equation}
So we deduce that the dominance order on the positive roots corresponding to normalized roots in $[\h\alpha,x[\cap \h\Phi$ is precisely  the order $<_{\mathbb{R}}$ on  this interval:
\begin{equation}
\label{equ4}
\alpha\prec s_\alpha( \beta)\prec s_\alpha s_\beta (\alpha)\prec \dots\prec (s_\alpha s_\beta)^n(\alpha)\prec (s_\alpha s_\beta)^n s_\alpha(\beta) \prec \dots ,
\end{equation}
while it is the reverse of $<_{\mathbb{R}}$ for the positive roots corresponding to $]s_\alpha\cdot x,\h\beta]$:
\begin{equation}
\label{equ5}
  \dots\succ s_\beta (s_\alpha s_\beta)^n(\alpha)\succ  (s_\beta s_\alpha)^n (\beta)\succ \dots
\succ s_\beta s_\alpha(\beta)\succ s_\beta(\alpha)\cdot\succ  \beta.
\end{equation}

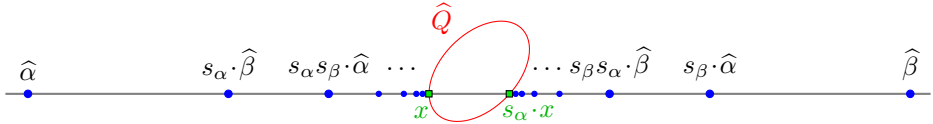
\begin{figure}[h!]

\centering
\newcommand{\ccdot}{\!\cdot \!}

\begin{tikzpicture}
	[scale=1,
	 pointille/.style={densely dashed},
	 root/.style={inner sep=1pt,circle,draw=blue,fill=blue,text=blue},
         sroot/.style={inner sep=0.7pt,circle,draw=blue,fill=blue},
         limroot/.style={inner sep=1.2pt,rectangle,draw=black,fill=green!90!black},
	 ]


\begin{scope} [rotate=45]
  
  \draw[red] (0,0) ellipse (0.8cm and 0.5cm);
\end{scope}

\draw[color=red] (-0.5,0.7) node{$\h Q$} ;
\draw[color=gray, thick] (-6.3,-0.3) -- (6,-0.3);




\node[root, label=above :{$\h\alpha$}] at ( -6.00000000000000 ,-0.3) {};
\node[root, label=above :{$s_\alpha \ccdot \h\beta$}] at (
-3.33500000000000 ,-0.3) {};
\node[root, label=above :{$s_\alpha s_\beta \ccdot \h\alpha$}] at (
-2.00250000000000 ,-0.3) {};
\node[sroot] at ( -1.33625000000000 ,-0.3) {};
\node[sroot, label={[shift={(0,0.15)}]$\dots$}] at ( -1.00312500000000 ,-0.3) {};
\node[sroot] at ( -0.836562500000000 ,-0.3) {};
\node[sroot] at ( -0.753281250000000 ,-0.3) {};

\node[root, label=above :{$\h\beta$}] at ( 5.73000000000000 ,-0.3) {};
\node[root, label=above :{$s_\beta \ccdot \h\alpha$}] at (
3.06500000000000 ,-0.3) {};
\node[root, label=above :{$s_\beta s_\alpha \ccdot \h\beta$}] at (
1.73250000000000 ,-0.3) {};
\node[sroot] at ( 1.06625000000000 ,-0.3) {};
\node[sroot, label={[shift={(0.2,0.15)}]$\dots$}] at ( 0.733125000000000 ,-0.3) {};
\node[sroot] at ( 0.566562500000000 ,-0.3) {};
\node[sroot] at ( 0.483281250000000 ,-0.3) {};

\node [limroot, 
] at (-0.67,-0.3) {};
\node[green!70!black] at (-0.77,-0.55) {$x$};
\node[limroot
] at (0.4,-0.3) {} ;
\node[green!70!black] at (0.65,-0.55) {$s_\alpha \ccdot x$};
\end{tikzpicture}












\caption{Schematic visualization of the dominance order restricted to the root subsystem with simple roots $\{\alpha,\beta\}$ (where $\alpha,\beta \in \PP$ are such that $\mpair{\alpha,\beta}<-1$, and $x,s_\alpha \cdot x$ are the two limit roots of this dihedral root subsystem). There are two chains of dominance, given in Equations \eqref{equ4} and \eqref{equ5}.}
\label{fig:dihedraldom}
\end{figure}

From this discussion, we obtain an interpretation of the relation of dominance within our framework (and therefore that can be easily seen in our affine pictures, see Figure~\ref{fig:geodomin} and Figure~\ref{fig:dihedraldom}).  We say that a point \emph{$x\in\h Q$ is visible from $v\in V_1$} if $[x,v]\cap Q=\{x\}$ (\cite[Definition~3.7]{HLR}). More generally, if $u,v\in V_1$ are two points, we say that \emph{$u$ is visible from $v$ looking at $\h Q$} if the line $L(u,v)$ cuts $\h Q$ in $x$  such that $x$ is visible from $v$ and  $u\in [v,x]$.

\begin{prop}
\label{prop:geodomin}
 Let $\rho\neq \gamma\in\PP$.  Then $\rho\prec \gamma$ if and only if there is  $x\in L(\h\rho,\h\gamma)\cap Q$ that is visible from $\h\rho$ and such that  $\h\gamma\in [\h\rho,x]$. In other words, $\rho\prec \gamma$ if and only if $\h\gamma$ is visible from $\h\rho$ looking at $\h Q$.
 
  In particular, there is a dominance relation between~$\rho$ and~$\gamma$ if and only if  {{$L(\h\rho,\h\gamma)\cap Q\not = \varnothing$}} and $[\h\rho,\h\gamma]\cap Q=\varnothing$. 
 \end{prop}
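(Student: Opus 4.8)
The plan is to reduce both equivalences to the dihedral reflection subgroup $W'=\langle s_\rho,s_\gamma\rangle$ and then read off the answer from the dihedral analysis \eqref{equ1}--\eqref{equ5} together with Proposition~\ref{prop:rappelDomin}. First I would dispose of the degenerate case: if $|\mpair{\rho,\gamma}|<1$, then $L(\h\rho,\h\gamma)\cap Q=\eset$, so neither the geometric condition nor, by Proposition~\ref{prop:rappelDomin}(i), a dominance relation can hold, and both sides of each asserted equivalence are false. So I may assume $|\mpair{\rho,\gamma}|\geq 1$; then $W'$ is infinite dihedral, its canonical simple system $\Delta'=\{\alpha,\beta\}$ has $\mpair{\alpha,\beta}\leq -1$, $(\Phi',\Delta')$ is a dihedral root subsystem with $\rho,\gamma$ positive roots of $\Phi'$, and by Proposition~\ref{prop:rappelDomin}(vi) dominance in $\Phi$ and in $\Phi'$ coincide on $\Phi'$. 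Thus I may test $\prec$ inside $\Phi'$ via \eqref{equ3}. Write $L(\h\rho,\h\gamma)\cap Q=\{x,x'\}$ with $x'=s_\alpha\cdot x$ and with the ordering $\h\alpha<_{\mathbb R}x\leq_{\mathbb R}x'<_{\mathbb R}\h\beta$ of \eqref{equ1}.

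For the forward direction, suppose $\rho\prec\gamma$. By Proposition~\ref{prop:rappelDomin}(i) we have $\mpair{\rho,\gamma}\geq 1>0$, so property (a) of the dihedral analysis forces $\h\rho$ and $\h\gamma$ onto the same outer segment, say both in $[\h\alpha,x[$ (the other side is symmetric under $\alpha\leftrightarrow\beta$). Then \eqref{equ3} gives $\h\gamma\in[\h\rho,x[\seq[\h\rho,x]$. Since $\h\rho<_{\mathbb R}x\leq_{\mathbb R}x'$, the segment $[\h\rho,x]$ meets $Q$ only at $x$, i.e.\ $x$ is visible from $\h\rho$; combined with $\h\gamma\in[\h\rho,x]$ this is exactly the asserted geometric condition. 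Conversely, suppose some $x\in L(\h\rho,\h\gamma)\cap Q$ is visible from $\h\rho$ with $\h\gamma\in[\h\rho,x]$. Visibility means $[\h\rho,x]\cap Q=\{x\}$, so the second intersection point $x'$ is not in $[\h\rho,x]$; as $\h\rho$ is a root it lies on one of the outer segments, and $x'\notin[\h\rho,x]$ rules out $]x',\h\beta]$, forcing $\h\rho\in[\h\alpha,x[$. Then $\h\gamma\in[\h\rho,x]$ is also a root, hence $\h\gamma\neq x$, so $\h\gamma\in[\h\rho,x[$, and \eqref{equ3} yields $\rho\preceq\gamma$; since $\rho\neq\gamma$, $\rho\prec\gamma$.

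For the ``in particular'' clause, a dominance relation between $\rho$ and $\gamma$ holds iff $\mpair{\rho,\gamma}\geq 1$ (Proposition~\ref{prop:rappelDomin}(i)), and I must match this with the conjunction $L(\h\rho,\h\gamma)\cap Q\neq\eset$ and $[\h\rho,\h\gamma]\cap Q=\eset$. The first conjunct is equivalent to $|\mpair{\rho,\gamma}|\geq 1$, so assume this. By property (a), $\mpair{\rho,\gamma}\geq 1$ holds precisely when $\h\rho,\h\gamma$ lie on the same outer segment, in which case $[\h\rho,\h\gamma]$ stays in that segment and avoids $Q$; whereas $\mpair{\rho,\gamma}\leq -1$ puts them on opposite sides, so $\h\rho<_{\mathbb R}x\leq_{\mathbb R}x'<_{\mathbb R}\h\gamma$ (up to swapping) and $[\h\rho,\h\gamma]\supseteq\{x,x'\}$ meets $Q$. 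Hence $\mpair{\rho,\gamma}\geq 1$ is equivalent to $[\h\rho,\h\gamma]\cap Q=\eset$, giving the claim.

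The main obstacle here is not any deep new idea---the substance is already packaged in \eqref{equ1}--\eqref{equ5} and Proposition~\ref{prop:rappelDomin}---but rather the careful bookkeeping that translates the abstract definition of ``visible from'' into the $\mathbb R$-ordering of $L(\h\rho,\h\gamma)\cap\conv(\h\Delta)$, and that correctly locates the two points of $L(\h\rho,\h\gamma)\cap Q$ relative to $\h\rho$ in both the tangent (affine, $x=x'$) and the strictly hyperbolic ($x\neq x'$) cases.
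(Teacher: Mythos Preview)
Your proof is correct and follows essentially the same approach as the paper's: reduce to the infinite dihedral subgroup $W'=\langle s_\rho,s_\gamma\rangle$ via Proposition~\ref{prop:rappelDomin}(vi), then invoke the dihedral analysis \eqref{equ2}--\eqref{equ3}. The paper's proof is extremely terse (it simply asserts that \eqref{equ2} and \eqref{equ3} complete the proof), whereas you spell out the case analysis and the ``in particular'' clause explicitly.

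One minor notational point: in your converse direction you reuse the letter $x$ for the generic visible point of the hypothesis, after having already fixed $x$ and $x'=s_\alpha\cdot x$ in the ordering $\h\alpha<_{\mathbb R}x\leq_{\mathbb R}x'<_{\mathbb R}\h\beta$. This makes the sentence ``$x'\notin[\h\rho,x]$ rules out $]x',\h\beta]$'' read ambiguously. The argument is still sound because by the $\alpha\leftrightarrow\beta$ symmetry one may assume without loss of generality that the visible point is the one closer to $\h\alpha$, but you should say so rather than overload the symbol.
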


\begin{figure}[!h]
\begin{minipage}[b]{\linewidth}
\centering
\scalebox{0.5}{\input{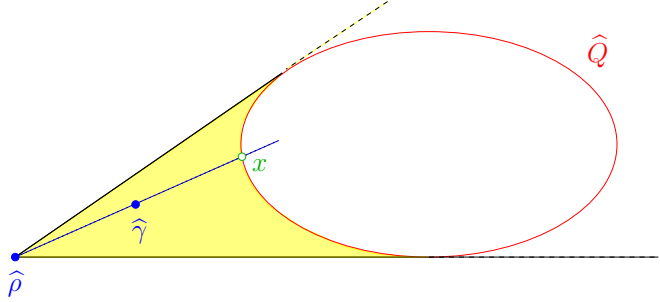}}
\end{minipage}%
\caption{Geometric interpretation of the dominance order: $\rho \prec
  \gamma$ if and only if~$\h \rho $ can see $\h\gamma$ when looking at
  $\h Q$ (see Proposition~\ref{prop:geodomin}).}
\label{fig:geodomin}
\end{figure}

\begin{ex} On the top picture of Figure~\ref{fig:intro}, we can see that $\alpha\prec s_\alpha(\beta)$, since $t$ is visible from both $\alpha$ and $s_\alpha\cdot\beta$ and $s_\alpha\cdot\beta \in [\alpha,t]$. However, there is no dominance relation between $\beta$ and $s_\alpha(\beta)$, since $t\in [s_\alpha\cdot\beta,\beta]\cap Q$.  By considering the line ${{L(s_\alpha\cdot\gamma,s_\alpha\cdot\beta)}}$, we also see that there is no dominance relation between $s_\alpha(\gamma)$ and~$s_\alpha(\beta)$ for a similar reason.
\end{ex}

\begin{proof}  If $\rho\prec\gamma$ then  $\mpair{\rho,\gamma}\geq 1$, by Proposition~\ref{prop:rappelDomin}~$(i)$ and so $L(\h\rho,\h\gamma)$ intersects~$Q$. So, for both directions of the equivalence we have to show $L(\h\rho,\h\gamma)\cap Q\not = \varnothing$. Therefore there is $\Delta':=\{\alpha,\beta\} \subseteq \PP$ that is  a simple system for the infinite dihedral reflection subgroup $W':=\{s_\rho,s_\gamma\}$; see for instance~\cite[Proposition~1.5(2)]{HLR}. Set $\Phi':=W'(\Delta')$. We are therefore in the situation of the discussion above. In particular Properties~(\ref{equ2})~and~(\ref{equ3}) hold, which completes the proof.
\end{proof}

\subsection{Fundamental dominance and cover of dominance}
\label{ss:fund}

Here we construct fundamental dominances, which is a useful tool when considering $W$-orbits since with the action of $W$ they generate all the dominances.

Recall that the imaginary cone of $(\Phi,\Delta)$ was constructed as the union of the sets in the $W$-orbit of the set $\mathcal K(\Phi)$ (see Definition~\ref{def:imc}):
\[ \mathcal K (\Phi):=\{v\in \cone(\Delta)\,|\, \mpair{v,\alpha}\leq 0, \ \forall \alpha\in\Delta\}.
\]

\begin{defi}[see \cite{fu:imc,dyer:imc}] A dominance $\alpha\prec \beta$ of distinct roots is called a \emph{fundamental dominance}, denoted by $\alpha \precf \beta$, if $\beta-\alpha\in\mathcal K(\Phi)$, $\beta\in \Phi^{+}$ and $\alpha\in \Phi^{-}$.
\end{defi}

The term \emph{fundamental dominance} is strongly suggested by the
following result, which is a direct consequence of~\cite[Theorem
  4.13]{fu:imc}:

\begin{thm}[{X.~Fu~\cite{fu:imc}
  }]
  \label{thm:fundamentalconj}
  Let $\alpha,\beta$ be two roots such that $\alpha\preceq \beta$ with $\alpha \neq \beta$, then there is $w\in W$ such that $w(\alpha)\precf w(\beta)$.
\end{thm}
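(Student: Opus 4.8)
The plan is to prove the statement by an explicit descent, moving the pair $(\alpha,\beta)$ by simple reflections until its difference lands in $\mathcal K(\Phi)$. Since dominance is $W$-equivariant (Proposition~\ref{prop:rappelDomin}(ii)), it suffices to produce $w\in W$ with $w(\alpha)\in\Phi^-$, $w(\beta)\in\PP$, and $w(\beta)-w(\alpha)\in\mathcal K(\Phi)$; this is exactly $w(\alpha)\precf w(\beta)$. First I would reach a \emph{mixed} position, i.e.\ a $w$ with $w(\alpha)\in\Phi^-$ and $w(\beta)\in\PP$. Such a position exists: writing $S_\gamma:=\{w\in W\mid w(\gamma)\in\PP\}$, the relation $\alpha\prec\beta$ says precisely that $S_\alpha\subseteq S_\beta$, and since a root is determined by its sign region, $\alpha\neq\beta$ forces the inclusion to be strict, so $S_\beta\setminus S_\alpha\neq\eset$. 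Replacing $(\alpha,\beta)$ by this initial mixed pair, set $\delta:=\beta-\alpha=\beta+(-\alpha)$; as $\beta,-\alpha\in\PP\subseteq\cone(\Delta)$ we have $\delta\in\cone(\Delta)\setminus\{0\}$, hence $\vphi(\delta)>0$ since $\vphi$ is positive on $\cone(\Delta)\setminus\{0\}$ by transversality of $V_1$.

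Next I would run the descent. As long as the defining condition of $\mathcal K(\Phi)$ fails, i.e.\ there is $\gamma\in\Delta$ with $\mpair{\delta,\gamma}>0$, I apply $s_\gamma$ to the pair. The two points to verify are that this keeps us in a mixed position and strictly lowers the height $\vphi(\delta)$. The height is immediate: $\vphi(s_\gamma\delta)=\vphi(\delta)-2\mpair{\delta,\gamma}\,\vphi(\gamma)<\vphi(\delta)$, because $\mpair{\delta,\gamma}>0$ and $\vphi(\gamma)>0$. For mixedness, I must rule out $\beta=\gamma$ and $-\alpha=\gamma$ at a violating direction $\gamma$ (the only way $s_\gamma$ could flip a sign). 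Here Proposition~\ref{prop:rappelDomin}(v) is the tool: in a mixed position $\mpair{\alpha,\beta}\geq 1$, so if $\beta=\gamma$ then $\mpair{\delta,\gamma}=1-\mpair{\alpha,\gamma}=1-\mpair{\alpha,\beta}\leq 0$, and if $-\alpha=\gamma$ then $\mpair{\delta,\gamma}=\mpair{\beta,\gamma}+1=-\mpair{\alpha,\beta}+1\leq 0$; either way $\gamma$ is not violating. Hence at a violating $\gamma$ both $\beta$ and $-\alpha$ differ from $\gamma$, so $s_\gamma(\beta),s_\gamma(-\alpha)\in\PP$ and the new pair is again mixed, with difference $s_\gamma\delta\in\cone(\Delta)$.

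Finally I would prove termination. The successive differences all lie in $\cone(\Delta)$ with strictly decreasing value of $\vphi$; each equals $\beta'+(-\alpha')$ for positive roots $\beta',-\alpha'$ of height at most $\vphi(\delta)$. Since $\Phi$ is discrete and $\{v\in\cone(\Delta)\mid\vphi(v)\leq M\}$ is bounded (the cone $\cone(\Delta)$ being pointed, as $\Delta$ is positively independent), only finitely many positive roots have bounded height, so only finitely many such differences occur. A strictly decreasing sequence taking finitely many values must stop, so the descent halts at a pair $(w(\alpha),w(\beta))$ which is mixed and admits no violating $\gamma$; that is, $w(\beta)-w(\alpha)\in\cone(\Delta)$ with $\mpair{w(\beta)-w(\alpha),\gamma}\leq 0$ for all $\gamma\in\Delta$, i.e.\ $w(\beta)-w(\alpha)\in\mathcal K(\Phi)$. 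This gives $w(\alpha)\precf w(\beta)$, as required. I expect the delicate step to be the sign-preservation analysis of the second paragraph, since it is exactly there that the dominance hypothesis (through Proposition~\ref{prop:rappelDomin}(v)) is used to keep the descent inside the set of mixed positions; the termination bookkeeping, though routine, relies essentially on the local finiteness of the root system.
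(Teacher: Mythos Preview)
The paper does not actually prove Theorem~\ref{thm:fundamentalconj}; it is quoted from Fu~\cite[Theorem~4.13]{fu:imc} (and \cite{dyer:dom}) without argument. So there is no in-paper proof to compare against.

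Your descent argument is correct and self-contained. The three ingredients all check out. For the existence of a mixed position, your appeal to ``a root is determined by its sign region'' is just antisymmetry of dominance (which the paper takes as given in Definition~\ref{def:dom}); equivalently, conjugate so that $\alpha\in\Delta$, and then $S_\alpha=S_\beta$ forces $\beta\in\PP$ and $s_\alpha(\beta)\in\Phi^-$, whence $\beta=\alpha$. The sign-preservation step is exactly right: at a violating $\gamma\in\Delta$ one has $\mpair{\delta,\gamma}>0$, and the two computations using Proposition~\ref{prop:rappelDomin}(v) rule out $\beta=\gamma$ and $-\alpha=\gamma$, so $s_\gamma$ keeps both $\beta$ and $-\alpha$ positive. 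Termination is fine as you wrote it: $\vphi(\beta'),\vphi(-\alpha')\leq\vphi(\delta_0)$ throughout, and since $\Phi$ is discrete and $\{v\in\cone(\Delta)\mid\vphi(v)\leq M\}$ is compact (as $\cone(\Delta)$ is pointed by positive independence of $\Delta$), only finitely many pairs $(\beta',-\alpha')$ occur, so the strictly decreasing sequence $\vphi(\delta_n)$ stops.

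This is essentially the standard ``bring the difference into $\mathcal K$'' argument; it is in the spirit of the proofs in \cite{fu:imc} and \cite{dyer:dom}, though your packaging via the height functional $\vphi$ attached to the transverse hyperplane is a clean way to organize the finiteness.
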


Note that given the definition of the imaginary cone $\imc$, Theorem~\ref{thm:fundamentalconj} implies that if $\alpha\preceq \beta$, then $\beta - \alpha \in \imc$. It turns out that the converse of this property is true, which gives another geometric characterization of the dominance order.

\begin{thm}[{see \cite[Cor.~4.15]{fu:imc}}]
Let $\alpha,\beta \in \Phi$. Then $\alpha\preceq \beta$ if and only if
$\beta - \alpha \in \imc$.
\end{thm}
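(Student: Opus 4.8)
The forward implication requires no new work: it is precisely the observation recorded just before the statement. When $\alpha=\beta$ one has $\beta-\alpha=0\in\mathcal K\subseteq\imc$, and when $\alpha\neq\beta$ with $\alpha\preceq\beta$, Theorem~\ref{thm:fundamentalconj} produces $w\in W$ with $w(\alpha)\precf w(\beta)$, whence $w(\beta)-w(\alpha)\in\mathcal K$ and $\beta-\alpha=w^{-1}(w(\beta)-w(\alpha))\in w^{-1}(\mathcal K)\subseteq\imc$. So the real content is the converse, and the plan is to reduce it, by $W$-equivariance, to a configuration in which $\beta-\alpha$ lies in $\mathcal K$ itself, and then settle the direction of the dominance relation.

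Assume $\beta-\alpha\in\imc$, the case $\alpha=\beta$ being trivial by reflexivity. Since $\imc=\bigcup_{w\in W}w(\mathcal K)$, I would first choose $w\in W$ with $w^{-1}(\beta-\alpha)\in\mathcal K$ and set $\alpha_0:=w^{-1}(\alpha)$, $\beta_0:=w^{-1}(\beta)$, so that $v:=\beta_0-\alpha_0\in\mathcal K$; by Proposition~\ref{prop:rappelDomin}(ii) it suffices to establish $\alpha_0\preceq\beta_0$. Next I would extract the inner-product information: because $\mathcal K\subseteq\imc\subseteq Q^-$ one has $\mpair{v,v}\leq0$, and expanding with $\mpair{\alpha_0,\alpha_0}=\mpair{\beta_0,\beta_0}=1$ yields $\mpair{\alpha_0,\beta_0}\geq1$. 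By Proposition~\ref{prop:rappelDomin}(i) this guarantees that a dominance relation does hold between $\alpha_0$ and $\beta_0$, so (as $\alpha_0\neq\beta_0$) at least one of $\alpha_0\prec\beta_0$ or $\beta_0\prec\alpha_0$ holds.

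The crux is to rule out the wrong direction $\beta_0\prec\alpha_0$, and this is where I expect the only real subtlety to lie — in particular in the isotropic boundary case $\mpair{\alpha_0,\beta_0}=1$, where $v$ is lightlike and $v\in\mathcal K$ need not itself be a fundamental dominance. The clean way around this is to feed $\beta_0\prec\alpha_0$ back into the already-proven forward implication: it would give $\alpha_0-\beta_0=-v\in\imc\subseteq\cone(\Delta)$. But $v\in\mathcal K\subseteq\cone(\Delta)$ with $v\neq0$, and $\cone(\Delta)$ is a pointed cone since $\Delta$ is positively independent (condition~(i) in the definition of a simple system); hence $v$ and $-v$ cannot both lie in $\cone(\Delta)$, a contradiction. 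Therefore $\alpha_0\prec\beta_0$, and applying $w$ through Proposition~\ref{prop:rappelDomin}(ii) gives $\alpha\preceq\beta$. The appeal to pointedness handles the generic and boundary cases uniformly, which is why I would avoid trying to verify the fundamental-dominance sign conditions $\alpha_0\in\Phi^-$, $\beta_0\in\Phi^+$ directly.
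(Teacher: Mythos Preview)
Your argument is correct. Note that the paper itself does not supply a proof of this theorem: it records the forward implication as an immediate consequence of Theorem~\ref{thm:fundamentalconj} and the definition of $\imc$, and then cites \cite[Cor.~4.15]{fu:imc} for the converse. So there is no in-paper proof to compare against.

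That said, your proof of the converse is clean and self-contained relative to the results already assembled in the paper. The key step --- feeding a hypothetical $\beta_0\prec\alpha_0$ back through the forward implication to get $-v\in\imc\subseteq\cone(\Delta)$, and then invoking pointedness of $\cone(\Delta)$ (positive independence of $\Delta$) to contradict $v\in\mathcal K\subseteq\cone(\Delta)$ with $v\neq 0$ --- is exactly the right idea, and it sidesteps any case analysis on signs of $\alpha_0,\beta_0$ or on whether $\mpair{\alpha_0,\beta_0}=1$. Every ingredient you use (Proposition~\ref{prop:rappelDomin}(i)--(ii), $\imc\subseteq\cone(\Delta)\cap Q^-$, Theorem~\ref{thm:fundamentalconj}) is available in the paper prior to this statement, so the argument stands on its own.
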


Using Theorem~\ref{thm:fundamentalconj}, we can construct easily
another subset of $E_2$, whose orbit is exactly $E_2$. Define the set
of fundamental limit roots as the set of limit roots obtained by the
lines associated to couples of fundamental dominance:
\[
\Ef(\Phi):=\bigcup_{\alpha\precf\beta} L(\h\alpha,\h\beta)\cap \h Q.
\]

Then we have the following property.

\begin{prop}
\label{prop:orbitEf}
The orbit $W\cdot \Ef(\Phi)$ of $\Ef(\Phi)$ is equal to $E_2(\Phi)$.
\end{prop}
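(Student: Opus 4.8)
The plan is to establish the two inclusions $W\cdot\Ef(\Phi)\subseteq E_2(\Phi)$ and $E_2(\Phi)\subseteq W\cdot\Ef(\Phi)$ separately, the whole argument resting on a single equivariance property of the $W$-action on normalized roots together with Fu's reduction of dominance to fundamental dominance (Theorem~\ref{thm:fundamentalconj}). The key geometric fact I would isolate first is that for any $\alpha,\beta\in\Phi$ with $\h\alpha\neq\h\beta$ and any $w\in W$, one has
\[ w\cdot\bigl(L(\h\alpha,\h\beta)\cap\h Q\bigr)=L(\h{w(\alpha)},\h{w(\beta)})\cap\h Q. \]
Indeed, the points $\h\alpha$, $\h\beta$, and any $x\in L(\h\alpha,\h\beta)\cap\h Q$ all lie in the $2$-plane $P=\Span(\alpha,\beta)$, with $x\in Q\cap P$. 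Since $w$ is a linear isometry of $\Mpair$, it carries $P$ to $\Span(w(\alpha),w(\beta))$ and $Q\cap P$ to $Q\cap w(P)$; normalizing—which is legitimate because $x\in E_2(\Phi)\subseteq E\subseteq D^+$, so $w\cdot x$ is defined, and because roots never lie in $V_0$—shows that $w\cdot x$ lies on the line through $\h{w(\alpha)}$ and $\h{w(\beta)}$ and on $\h Q$. Applying the same with $w^{-1}$ gives the reverse inclusion and hence equality.

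Granting this, the inclusion $W\cdot\Ef(\Phi)\subseteq E_2(\Phi)$ is immediate: if $y\in L(\h\alpha,\h\beta)\cap\h Q$ for a fundamental dominance $\alpha\precf\beta$ (so that $\alpha,\beta$ are genuine roots) and $w\in W$, then $w\cdot y\in L(\h{w(\alpha)},\h{w(\beta)})\cap\h Q$ with $w(\alpha),w(\beta)\in\Phi$, whence $w\cdot y\in E_2(\Phi)$ by definition of $E_2(\Phi)$.

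For the reverse inclusion I would take $x\in E_2(\Phi)$, say $x\in L(\h\alpha,\h\beta)\cap\h Q$ with $\h\alpha\neq\h\beta$ (otherwise the line is undefined, so $\beta\neq\pm\alpha$). Because the line meets $\h Q$, the dihedral subgroup $\langle s_\alpha,s_\beta\rangle$ is infinite, so $|\mpair{\alpha,\beta}|\geq 1$; replacing $\beta$ by $-\beta$ if necessary (which changes neither $\h\beta$ nor the line) I may assume $\mpair{\alpha,\beta}\geq 1$. By Proposition~\ref{prop:rappelDomin}(i) there is then a dominance relation between $\alpha$ and $\beta$, say $\alpha\preceq\beta$ after possibly exchanging the two roots. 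Now Fu's Theorem~\ref{thm:fundamentalconj} provides $w\in W$ with $w(\alpha)\precf w(\beta)$; set $\rho:=w(\alpha)\in\Phi^{-}$ and $\gamma:=w(\beta)\in\PP$. Since $\rho\precf\gamma$ and $\rho\neq\pm\gamma$ (as $\beta\neq\pm\alpha$), the line $L(\h\rho,\h\gamma)$ is well-defined and $L(\h\rho,\h\gamma)\cap\h Q\subseteq\Ef(\Phi)$. By the equivariance fact, $w\cdot x\in L(\h\rho,\h\gamma)\cap\h Q\subseteq\Ef(\Phi)$, so $x=w^{-1}\cdot(w\cdot x)\in W\cdot\Ef(\Phi)$, which gives $E_2(\Phi)\subseteq W\cdot\Ef(\Phi)$ and completes the proof.

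The only substantive input is Fu's reduction theorem; once it is granted, the argument is essentially formal. Accordingly, I expect the main point to watch is not a deep obstacle but careful bookkeeping: tracking the signs of the two roots and the direction of the dominance relation so as to guarantee $\h\rho\neq\h\gamma$ (hence a genuine line), and verifying cleanly that the projective $W$-action carries the $\h Q$-chords through two roots to the $\h Q$-chords through their images.
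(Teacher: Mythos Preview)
Your proof is correct and follows essentially the same route as the paper's: both rest on the equivariance $w\cdot(L(\h\alpha,\h\beta)\cap\h Q)=L(\h{w(\alpha)},\h{w(\beta)})\cap\h Q$ (which the paper simply cites from \cite[Prop.~3.5(ii)]{HLR}), together with Fu's reduction Theorem~\ref{thm:fundamentalconj} and the observation that a sign change on one root turns $|\mpair{\alpha,\beta}|\geq 1$ into a genuine dominance relation via Proposition~\ref{prop:rappelDomin}(i). The paper presents this as a single chain of equalities of unions rather than two separate inclusions, but the content is the same.
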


Note that this implies (without having to use the minimality of the $W$-action from Corollary~\ref{cor:minimal}) that the orbit of $\Ef(\Phi)$ is dense in $E(\Phi)$, by \cite[Thm.~4.2]{HLR}.
\begin{proof}
Recall that for all $w\in W$, $x\in L(\h\alpha,\h\beta)$ if and only
if $w\cdot x\in L(w\cdot\h\alpha,w\cdot\h\beta)$ (\cite[Proposition
  3.5~$(ii)$]{HLR}). So by Theorem~\ref{thm:fundamentalconj} we have
\[
W\cdot \Ef(\Phi)=\bigcup_{\substack{\alpha\precf\beta\\ w\in W}}
L(w\cdot\h\alpha,w\cdot\h\beta)\cap \h Q=\bigcup_{\gamma\prec\rho}
\bigl(L(\h\gamma,\h\rho)\cap \h Q\bigr).
\]
On the other hand, $E_{2}(\Phi)=\bigcup (L(\h\g,\h \rho\,)\cap \h Q)$ where the union is over all $\g,\rho\in \Phi$ with $\g\neq \pm \rho$.  But for any such  roots, and any $\epsilon,\eta\in \set{\pm 1}$,  one has  $L(\h{\epsilon \g},\h{\eta\rho} )=L(\h \g,\h \rho\,)$ and
 $L(\h\gamma,\h\rho\,)\cap
\h Q\not = \varnothing$  if and only if $|\mpair{\gamma,\rho}
|\geq 1$. The equality  $W\cdot \Ef(\Phi)=E_{2}(\Phi)$ follows on 
noting that  $|\mpair{\gamma,\rho}|\geq 1$ if and only if 
$\epsilon \g\preceq \eta \rho$ for some $\epsilon,\eta\in \set{\pm 1}$, by
Proposition~\ref{prop:rappelDomin}(i).  
\end{proof}

\smallskip

\begin{defi}
  ~
  \begin{itemize}
  \item A relation of dominance   is a \emph{cover of dominance}, denoted by $\alpha\dprec \beta$, if there are no roots in between $\alpha$ and $\beta$, i.e., if $\alpha\not = \beta$ and 
    \[
    \forall\gamma\in\Phi,\ \alpha\preceq\gamma\preceq \beta \Longrightarrow \alpha= \gamma \textrm{ or }\gamma=\beta.
    \]
  \item A cover of dominance that is a
    fundamental dominance is called a \emph{fundamental cover of dominance}
    and is denoted by $\alpha\dprecf\beta$.
  \end{itemize}
\end{defi}

Note that, as  dominance order  is preserved by the action of
$W$, so are the covers of dominance. The following result will provide a relation between limit roots coming from fundamental covers of dominance, and elementary limit roots.

\begin{prop}[{X.~Fu~\cite[Corollary 4.17]{fu:imc}
  }]
  \label{cor:elem-dom}   If $\alpha\dprecf \beta$, then $-\alpha,\beta\in \Sigma$. In particular, there is a finite number of fundamental covers of
 dominance.
 \end{prop}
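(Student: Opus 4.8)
The plan is to deduce the finiteness assertion from the first claim, and to prove the first claim by a symmetry reduction followed by one short inner-product computation. For finiteness: once we know that every fundamental cover $\alpha\dprecf\beta$ satisfies $-\alpha,\beta\in\Sigma$, the assignment $(\alpha,\beta)\mapsto(-\alpha,\beta)$ injects the set of fundamental covers into $\Sigma\times\Sigma$, which is finite because $\Sigma$ is finite (Brink--Howlett); hence there are at most $|\Sigma|^{2}$ fundamental covers. So everything rests on showing $-\alpha,\beta\in\Sigma$.

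First I would dispose of one of the two roots by symmetry. Negation $v\mapsto -v$ is a bijection of $\Phi$ exchanging $\PP$ and $\Phi^{-}$ and reversing dominance (Proposition~\ref{prop:rappelDomin}(iii)(c)), so it carries covers to covers; moreover $(-\alpha)-(-\beta)=\beta-\alpha\in\mathcal K(\Phi)$, with $-\alpha\in\PP$ and $-\beta\in\Phi^{-}$. Thus $-\beta\dprecf -\alpha$ is again a fundamental cover, and the statement that the positive root in a fundamental cover is elementary, applied to this new cover, will give $-\alpha\in\Sigma$. It therefore suffices to prove: in any fundamental cover $\alpha\dprecf\beta$, the root $\beta$ is elementary.

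For this, suppose $\beta\notin\Sigma$, so there is $\gamma\in\PP$ with $\gamma\neq\beta$ and $\gamma\prec\beta$. The key observation is that the \emph{fundamental} hypothesis upgrades the defining inequalities of $\mathcal K(\Phi)$ from simple roots to all of $\cone(\Delta)$: writing $\gamma=\sum_{\delta\in\Delta}c_{\delta}\delta$ with $c_{\delta}\geq 0$ and using $\mpair{\beta-\alpha,\delta}\leq 0$ for every $\delta\in\Delta$ (since $\beta-\alpha\in\mathcal K(\Phi)$), one gets $\mpair{\beta-\alpha,\gamma}\leq 0$. As $\gamma\prec\beta$ with both roots positive forces $\mpair{\beta,\gamma}\geq 1$ (Proposition~\ref{prop:rappelDomin}(i)), this yields $\mpair{\alpha,\gamma}=\mpair{\beta,\gamma}-\mpair{\beta-\alpha,\gamma}\geq \mpair{\beta,\gamma}\geq 1$. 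Since $\alpha\in\Phi^{-}$ and $\gamma\in\PP$, Proposition~\ref{prop:rappelDomin}(v) then gives $\alpha\preceq\gamma$, and $\alpha\neq\gamma$ as they have opposite signs, so $\alpha\prec\gamma$. Combined with $\gamma\prec\beta$ and $\gamma\notin\{\alpha,\beta\}$, this exhibits a root strictly between $\alpha$ and $\beta$ for the dominance order, contradicting that $\alpha\dprec\beta$ is a cover. Hence $\beta$ is elementary, which completes the reduction.

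The only genuinely delicate point --- the step I would treat as the heart of the argument --- is recognizing that membership in $\mathcal K(\Phi)$ (rather than merely in the imaginary cone) is exactly what allows one to bound $\mpair{\alpha,\gamma}$ from below for an \emph{arbitrary} dominated positive root $\gamma$; the remaining verifications (that $\gamma$ lies strictly between $\alpha$ and $\beta$, and the bookkeeping of signs and strictness) are routine applications of Proposition~\ref{prop:rappelDomin}.
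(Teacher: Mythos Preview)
Your argument is correct. The symmetry reduction via $\alpha\mapsto -\alpha$ is clean, and the core computation---using $\beta-\alpha\in\mathcal K(\Phi)$ to bound $\mpair{\beta-\alpha,\gamma}\leq 0$ for any $\gamma\in\cone(\Delta)$, then invoking Proposition~\ref{prop:rappelDomin}(v) to get $\alpha\prec\gamma$---is exactly the right idea and goes through without gaps. The finiteness deduction is immediate as you say.

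As for comparison: the paper does not give its own proof of this proposition. It is stated with attribution to Fu \cite[Corollary~4.17]{fu:imc} and Dyer \cite{dyer:dom}, and no argument appears in the text (the narrative moves directly to Remark~\ref{rk:barycenter}). So your proof is a genuine addition rather than a reproduction. For what it is worth, your approach is essentially the natural one: the whole point of the \emph{fundamental} hypothesis is precisely that it makes the vector $\beta-\alpha$ lie in $\mathcal K(\Phi)$ rather than merely in $\imc$, so that the inequality $\mpair{\beta-\alpha,\,\cdot\,}\leq 0$ holds against every positive root simultaneously, not just after acting by some $w\in W$.
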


\begin{rem} 
  \label{rk:barycenter}
  In Figure~\ref{fig:elem}, we describe the pairs $(\h\rho,\h\sigma)$
  such that $-\rho\dprecf \sigma$, and draw the polytope $K=\mathcal K
  \cap V_1$. Note that for any of these pairs, we have $\sigma + \rho
  \in \mathcal K$ and $\rho,\sigma\in \Phi^+$, so there is $\lambda
  \in ]0,1[$ such that $\lambda \h\sigma + (1-\lambda) \h\rho \in
      K$. Thus a necessary condition for $(\h\rho,\h\sigma)$ to be
      such a pair is that the open interval $\h\rho$, $\h\sigma$ cuts
      $K$. From this we see already on the example of
      Figure~\ref{fig:elem} that the only candidates for elements of
      $\Efcov$ are the points in yellow, coming from the pairs $(\h
      \beta, \h \delta)$ and $(\h\gamma, \h\epsilon)$.

      More precisely, one has:
  \[ \h{\sigma + \rho} = \frac{\vphi(\sigma)}{\vphi(\sigma)+\vphi(\rho)} \h \sigma + \frac{\vphi(\rho)}{\vphi(\sigma)+\vphi(\rho)} \h \rho,
  \]
  where $\vphi$ is the linear form such that the hyperplane transverse
  to $\PP$ is $V_1= \vphi^{-1}(1)$.  A definition of fundamental
  dominance involving $K$ instead of $\mathcal K$ is therefore:
  $\rho\dprecf \sigma$ if and only if the barycenter of the system
  $(\h\rho,\vphi(\rho))$ and $(\h\sigma, \vphi(\sigma))$ is in $K$. In
  the example (where the transverse hyperplane $V_1$ is assumed to be
  the one passing through $\Delta$), one has $\delta=\alpha +\gamma$,
  so the barycenter to consider is the one of $(\h \beta, 1)$,
  $(\h\delta, 2)$, which is just in $K$.
\end{rem}

\noindent We consider:
\begin{itemize}
\item the finite set $ \Efcov(\Phi)$ of limit points obtained by the lines corresponding to fundamental covers of dominance:
\[
\Efcov(\Phi):=\bigcup_{\alpha\dprecf\beta} L(\h\alpha,\h\beta)\cap \h Q  \subseteq  \Ef(\Phi);
\]
\item the set $\Ecov(\Phi)$ of limits points obtained by the lines corresponding to covers of dominance:
\[
 \Ecov(\Phi):=\bigcup_{\alpha\dprec\beta} L(\h\alpha,\h\beta)\cap \h Q  \subseteq E_2(\Phi).
\]
\end{itemize}

Recall that $\Eelem(\Phi)$ denotes the set of elementary limits of
roots, introduced in~\S\ref{ss:elem}:
\[\Eelem(\Phi):=\bigcup_{\alpha,\beta\in\Sigma} L(\h\alpha,\h\beta)\cap \h Q .\]

The following proposition follows easily from the definitions and Proposition~\ref{cor:elem-dom}.

\begin{prop}
\label{prop:orbEcov}
 $ \Efcov(\Phi)\subseteq \Eelem(\Phi)$ and
  $\Ecov(\Phi)=W\cdot \Efcov(\Phi)\subseteq E_2(\Phi).$
\end{prop}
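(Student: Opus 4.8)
The plan is to deduce both assertions directly from Fu's results recalled just above, treating them separately.

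For the inclusion $\Efcov(\Phi)\subseteq\Eelem(\Phi)$, I would start from a fundamental cover of dominance $\alpha\dprecf\beta$. By Proposition~\ref{cor:elem-dom} both $-\alpha$ and $\beta$ lie in $\Sigma(\Phi)$. The key elementary observation is that normalization does not distinguish a root from its negative: since $\h v=v/\vphi(v)$ and $\vphi(-v)=-\vphi(v)$, one has $\h{-\alpha}=\h\alpha$, whence $L(\h\alpha,\h\beta)=L(\h{-\alpha},\h\beta)$. As $-\alpha,\beta\in\Sigma(\Phi)$, this line occurs among those in the definition of $\Eelem(\Phi)$, so $L(\h\alpha,\h\beta)\cap\h Q\subseteq\Eelem(\Phi)$. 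Taking the union over all fundamental covers yields the claimed inclusion.

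For the equality $\Ecov(\Phi)=W\cdot\Efcov(\Phi)$, I would argue by double inclusion, using that dominance (hence cover of dominance) is $W$-equivariant by Proposition~\ref{prop:rappelDomin}(ii), together with the $W$-equivariance of the line/isotropic-cone operation recalled in the proof of Proposition~\ref{prop:orbitEf}: for $w\in W$ one has $w\cdot\bigl(L(\h\gamma,\h\rho)\cap\h Q\bigr)=L(\h{w(\gamma)},\h{w(\rho)})\cap\h Q$, since $w\cdot L(\h\gamma,\h\rho)=L(w\cdot\h\gamma,w\cdot\h\rho)$ and $\h Q$ is $W$-stable. For $\supseteq$: a fundamental cover is in particular a cover, so $\Efcov(\Phi)\subseteq\Ecov(\Phi)$; and $\Ecov(\Phi)$ is $W$-stable by the equivariance just noted (as $\alpha\dprec\beta\Leftrightarrow w(\alpha)\dprec w(\beta)$), giving $W\cdot\Efcov(\Phi)\subseteq\Ecov(\Phi)$. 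For $\subseteq$: given $z\in L(\h\alpha,\h\beta)\cap\h Q$ with $\alpha\dprec\beta$, apply Theorem~\ref{thm:fundamentalconj} to obtain $w\in W$ with $w(\alpha)\precf w(\beta)$; since the cover relation is preserved, $w(\alpha)\dprecf w(\beta)$, so $w\cdot z\in L(\h{w(\alpha)},\h{w(\beta)})\cap\h Q\subseteq\Efcov(\Phi)$, whence $z\in W\cdot\Efcov(\Phi)$. The final inclusion $W\cdot\Efcov(\Phi)\subseteq E_2(\Phi)$ is then immediate from $\Ecov(\Phi)\subseteq E_2(\Phi)$.

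I do not expect a serious obstacle here; the real content sits in Fu's theorems, and what remains is careful bookkeeping. The two points requiring attention are the normalization identity $\h{-\alpha}=\h\alpha$ (without which the matching of a fundamental cover to a pair of \emph{elementary}, hence positive, roots would fail, since $\alpha$ is negative in a fundamental dominance) and the verification that intersecting a line with $\h Q$ commutes with the projective $W$-action, which rests on the $W$-invariance of $Q$ and on the compatibility of the $W$-action with lines from \cite{HLR}.
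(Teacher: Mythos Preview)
Your proof is correct and follows exactly the approach the paper indicates: the paper gives no explicit proof, stating only that the proposition ``follows easily from the definitions and Proposition~\ref{cor:elem-dom},'' and your argument fills in precisely those details using Proposition~\ref{cor:elem-dom}, Theorem~\ref{thm:fundamentalconj}, and the $W$-equivariance of dominance and of the line/$\h Q$ intersection. Your care with the identity $\h{-\alpha}=\h\alpha$ is the one point that might otherwise trip a reader up, and you handle it correctly.
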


Figure~\ref{fig:elem} demonstrates an example where the inclusion
$\Efcov(\Phi)\subseteq \Eelem(\Phi)$ is strict.

\begin{rem} It is easy to see that
$\Efcov(\Phi)\subseteq \Ef(\Phi)\cap \Ecov(\Phi)$. However, we do not know if the equality holds.
\end{rem}

\subsection{Restriction  of the dominance order to facial root subsystems}
\label{ss:resdom}

Before discussing the restriction of the dominance order to facial root subsystems, let us briefly recall some basic properties of
cosets of standard parabolic subgroups. For a fixed subset $I
\subseteq S$, the set
\[W^I = \{ u \in W \,|\, \forall \alpha\in\Delta_I , \, u(\alpha)\in\PP\}\]
is a set of minimal length coset representatives for $W/W_I$.
It is well known and a useful fact (\cite[\S5.12]{humphreys})
that for each element $w \in W$, there is a
unique decomposition $w = w^I w_I$ where $w^I \in W^I$ and $w_I \in W_I$. Moreover:
\[w^I(\PP_I)\subseteq \PP\setminus\Phi_I\textrm{ and }w_I(\Phi_I)=\Phi_I.\]
We call the pair $(w^I, w_I)$ the parabolic
components of $w$ along $I$. 

Recall that $I\subseteq S$ is said to be facial for the   root system
  $(\Phi,\Delta)$ if $\conv(\h{\Delta_I})$ is a face of
 $\conv(\h\Delta)$ (see \S\ref{ss:facial}). By \cite[Proposition 2.4(b)]{dyer:imc}  that $\Phi_I=\Phi\cap V_I$ where $V_I=\Span(I)$.

 Denote again $F_I=\conv(\h{\Delta_I})$ the face of $\conv(\h\Delta)$ corresponding to $I$, so that $\h{\Phi_I}=\h\Phi\cap F_I$.
 We list below some other useful properties
related to the restriction of the dominance order to $W_I$, for $I$ facial.

\begin{prop}
  \label{prop:domin-parabolic}
  Let $I\subseteq S$ be facial.
  \begin{enumerate}[(i)]
  \item Let $\alpha,\beta \in \PP$ such that $\alpha\preceq \beta$. If $\beta\in\Phi_I$, then $\alpha\in\Phi_I$.
  \item If $\alpha,\beta \in \Phi_I$, then:
    \begin{enumerate}[(a)]
    \item $\alpha\prec\beta$ in $\Phi_I$ if and only if $\alpha\prec\beta$ in $\Phi$.
    \item   $\alpha\dprec\beta$ is a cover of dominance in $\Phi_I$ if and only if $\alpha\dprec\beta$ is a cover of dominance in $\Phi$.
    \item  $\alpha\precf \beta$ in $\Phi$ if and only if $\alpha\precf \beta$ in $\Phi_I$.
    \end{enumerate}
  \item Denote by $\Sigma$ the set of elementary roots and by
    $\Sigma_I=\Sigma(\Phi_I)$ the set of elementary roots of the root
    subsystem $(\Phi_I,\Delta_I)$. Then $\Sigma_I=\Sigma\cap \Phi_I$,
    so $\Sigma_I=\Sigma\cap V_I$ and $\h{\Sigma_I}=\h{\Sigma}\cap
    F_I$.
  \item Let $\alpha\in \Sigma$ and $w\in W^I$ such that $w^{-1}(\alpha)\in \Phi_I$, then $w^{-1}(\alpha)\in\Sigma_I$.
  \end{enumerate}
\end{prop}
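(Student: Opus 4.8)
The plan is to reduce all four items to two pieces of facial geometry and then combine them with the Brink--Howlett properties collected in Proposition~\ref{prop:rappelDomin} and with Fu's characterization $\alpha\preceq\beta\iff\beta-\alpha\in\imc$ (\cite[Cor.~4.15]{fu:imc}). First I would record the geometry of the face. Since $F_I=\conv(\h{\Delta_I})$ is a face of $\conv(\h\Delta)$, coning from the origin shows that $\cone(\Delta_I)$ is a face of $\cone(\Delta)$; the supporting linear form $L$ (with $L\le 0$ on $\cone(\Delta)$ and $\cone(\Delta)\cap\ker L=\cone(\Delta_I)$) vanishes on $\Delta_I$, hence on $V_I=\Span(\Delta_I)\subseteq\ker L$, which gives
\[
\cone(\Delta)\cap V_I=\cone(\Delta_I).
\]
Using in addition that $\mpair{\delta,\gamma}\le 0$ for distinct simple roots, this also yields $\mathcal K(\Phi)\cap V_I=\mathcal K(\Phi_I)$, the cone form of \cite[Lemma~3.4]{dyer:imc}. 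The feature I would exploit repeatedly is the \emph{face property}: if $u+v\in\cone(\Delta_I)$ with $u,v\in\cone(\Delta)$, then $u,v\in\cone(\Delta_I)$.

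For (i), I would write $\beta=\alpha+(\beta-\alpha)$ with $\alpha\in\cone(\Delta)$ and $\beta-\alpha\in\imc\subseteq\cone(\Delta)$; since $\beta\in\Phi_I^+\subseteq\cone(\Delta_I)$, the face property forces $\alpha\in\cone(\Delta_I)\subseteq V_I$, so $\alpha\in\Phi\cap V_I=\Phi_I$. Item (ii)(a) is exactly Proposition~\ref{prop:rappelDomin}(vi) applied to the subsystem $(\Phi_I,\Delta_I)$. For (ii)(b) I would run the same face argument on an intermediate root: given $\alpha\prec\gamma\prec\beta$ in $\Phi$ with $\alpha,\beta\in\Phi_I$, both $\gamma-\alpha$ and $\beta-\gamma$ lie in $\imc\subseteq\cone(\Delta)$, and their sum $\beta-\alpha\in\cone(\Delta)\cap V_I=\cone(\Delta_I)$, so $\gamma-\alpha\in\cone(\Delta_I)\subseteq V_I$ and $\gamma\in\Phi_I$. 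Hence any chain witnessing the failure of a cover in $\Phi$ already sits in $\Phi_I$; combined with (a) (a cover in $\Phi$ has no intermediate root at all, so none in $\Phi_I$) this shows the cover relations in $\Phi$ and in $\Phi_I$ agree on $\Phi_I$. Finally (ii)(c) is a matter of matching definitions: the sign conditions coincide since $\Phi_I\cap\Phi^+=\Phi_I^+$ and $\Phi_I\cap\Phi^-=\Phi_I^-$, while $\beta-\alpha\in V_I$ gives $\beta-\alpha\in\mathcal K(\Phi)\iff\beta-\alpha\in\mathcal K(\Phi)\cap V_I=\mathcal K(\Phi_I)$.

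Items (iii) and (iv) then follow formally. For (iii) I would prove the two inclusions of $\Sigma_I=\Sigma\cap\Phi_I$: if $\beta\in\Sigma_I$ and some $\alpha\in\PP$ dominates $\beta$ in $\Phi$, then (i) yields $\alpha\in\Phi_I$ and (ii)(a) transports the relation into $\Phi_I$, forcing $\alpha=\beta$, so $\beta\in\Sigma$; conversely any $\beta\in\Sigma\cap\Phi_I$ is elementary in $\Phi_I$ directly by (ii)(a). The identities $\Sigma_I=\Sigma\cap V_I$ and $\h{\Sigma_I}=\h{\Sigma}\cap F_I$ then drop out of $\Phi_I=\Phi\cap V_I$ and $\h{\Phi_I}=\h\Phi\cap F_I$. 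For (iv) I would first check $w^{-1}(\alpha)\in\Phi_I^+$: were it negative, $w\in W^I$ would send $-w^{-1}(\alpha)\in\Phi_I^+$ to the positive root $-\alpha$ (using $w(\Phi_I^+)\subseteq\PP\setminus\Phi_I$), contradicting $\alpha\in\PP$. Then, for any $\mu\in\Phi_I^+$ with $\mu\preceq w^{-1}(\alpha)$ in $\Phi_I$, equivariance of dominance (Proposition~\ref{prop:rappelDomin}(ii)) together with $w(\Phi_I^+)\subseteq\PP$ gives $w(\mu)\in\PP$ and $w(\mu)\preceq\alpha$; elementarity of $\alpha$ forces $w(\mu)=\alpha$, i.e. $\mu=w^{-1}(\alpha)$, so $w^{-1}(\alpha)\in\Sigma_I$.

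The main obstacle I anticipate is the first paragraph: making rigorous the passage between the \emph{polytopal} notion of facial subset and the \emph{linear} face $\cone(\Delta_I)$ of $\cone(\Delta)$ — in particular the identity $\cone(\Delta)\cap V_I=\cone(\Delta_I)$ and the compatibility $\mathcal K(\Phi)\cap V_I=\mathcal K(\Phi_I)$ — while also tracking the signs of the roots at each step. Once that dictionary is in place, the face property performs all the essential work, and items (ii)--(iv) are bookkeeping with Proposition~\ref{prop:rappelDomin} and the definitions.
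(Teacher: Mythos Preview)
Your argument is correct and essentially complete; it differs from the paper mainly in how (i) and (ii)(b) are handled. The paper proves (i) via the geometric visibility characterization of dominance (Proposition~\ref{prop:geodomin}): if $\alpha\notin\Phi_I$, the segment $[\h\alpha,x]$ containing $\h\beta$ would have to cross the supporting hyperplane of the face $F_I$, forcing either $\h\alpha$ or $x$ outside $\conv(\h\Delta)$. For (ii)(b) the paper does a sign case analysis on the intermediate root $\gamma$, invoking (i) in each case. You instead pull both items from Fu's characterization $\alpha\preceq\beta\iff\beta-\alpha\in\imc$ together with the face property of $\cone(\Delta_I)$ inside $\cone(\Delta)$: in (i) you decompose $\beta=\alpha+(\beta-\alpha)$ inside the face, and in (ii)(b) you decompose $\beta-\alpha=(\gamma-\alpha)+(\beta-\gamma)$, which neatly avoids any discussion of the sign of $\gamma$. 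This is a cleaner and more uniform route; the paper's approach has the advantage of being self-contained (it does not import \cite[Cor.~4.15]{fu:imc}), and of illustrating the visibility picture developed just before. For (ii)(c), (iii) and (iv) your arguments coincide with the paper's, including the computation $\mathcal K(\Phi)\cap V_I=\mathcal K(\Phi_I)$.

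One wording slip in (iii): you write ``some $\alpha\in\PP$ dominates $\beta$'', but what you need (and what your subsequent invocation of (i) actually uses) is $\alpha\preceq\beta$, i.e.\ $\beta$ dominates $\alpha$. With that corrected the logic is exactly right.
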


\begin{rem}
  Properties (i), (ii)(a)-(b), and $\Sigma_I=\Sigma\cap \Phi_I$ in
  (iii) remain in fact valid even if $I$ is not facial, because they
  can be written as combinatorial properties of the group $W$ itself
  so they do not depend on the choice of a root system.
\end{rem}

\begin{proof}   $(i)$ Since $\alpha\preceq \beta$, then by Proposition~\ref{prop:geodomin} there is $x\in L(\h\alpha,\h\beta)\cap Q$ that is visible from $\h\alpha$ and such that $\h\beta\in[\h\alpha,x]$.  Assume that $\alpha\notin \Phi_I$, i.e., $\alpha\notin V_I$.  We know, since $I$ is facial, that $V_I\cap V_1$ is the affine subspace of $V$ supporting the face $F_I$ of the polytope $\conv(\h\Delta)$.  Let $H$ be the  subspace of $V_1$ spanned by $\alpha$ and $V_I\cap V_1$. So $\h\beta,\h\alpha,x \in H$. Since $\h\beta\in V_I\cap [\h\alpha,x]$, we necessarily have that  $\h\alpha$ and $x$ are separated by the affine hyperplane $V_I\cap V_1$ in $H$. Since $F_I$ is a face of $\conv(\h\Delta)\cap H$, that   means that either $\h\alpha$ or $x$ is outside $\conv(\h\Delta)$, contradicting the inclusion $\h\Phi\sqcup E(\Phi)\subseteq \conv (\Delta)$.

\smallskip
\noindent $(ii)$ $(a)$ is a particular case of
Proposition~\ref{prop:rappelDomin}\emph{(vi)}.

\smallskip
\noindent $(ii)$ $(b)$ Let $\alpha,\beta \in \Phi_I$. We just have to show  that if $\alpha\dprec\beta$ is a cover of dominance in $\Phi_I$ then $\alpha\dprec\beta$ is a cover of dominance in $\Phi$. Let $\gamma\in\Phi$ such that $\alpha\preceq \gamma\preceq \beta$. First, suppose that $\gamma\in\PP$. Then $\beta\in\Phi_I^+$, and $\gamma \in\Phi_I$ by $(i)$. Since $\alpha\dprec\beta$ is a cover of dominance in $\Phi_I$, we get $\alpha=\gamma$ or $\beta=\gamma$, which proves that $\alpha\dprec\beta$ is a cover of dominance in $\Phi$. Suppose now that $\gamma\in\Phi^-$. Then $\alpha\in\Phi^-$, so $-\gamma\preceq -\alpha$ by Proposition~\ref{prop:rappelDomin}. Then the same line of reasoning as above, with $-\alpha$ in the role of $\beta$ and $-\gamma$ in the role of $\gamma$, shows that $\alpha=\gamma$ or $\beta=\gamma$, which proves again that $\alpha\dprec\beta$ is a cover of dominance in $\Phi$.

\smallskip
\noindent $(ii)$ $(c)$ We will first show that $\mathcal K(\Phi_I)=\mathcal K (\Phi)\cap V_I$. Recall that
\[
\mathcal K (\Phi_I):=\{v\in \cone(\Delta_I)\,|\, \forall \alpha\in\Delta_I, \ \mpair{v,\alpha}\leq 0\}
\]
and
\[
\mathcal K (\Phi):=\{v\in \cone(\Delta)\,|\, \forall \alpha\in\Delta, \ \mpair{v,\alpha}\leq 0\}.
\]
So, since $\cone(\Delta)\cap V_I=\cone(\Delta_I)$, it is clear that
$\mathcal K (\Phi)\cap V_I\subseteq \mathcal K (\Phi_I)$.  Now let
$v\in \mathcal K(\Phi_I)$ and let $\alpha\in\Delta$. If
$\alpha\in\Delta_I$ we know by definition that $\mpair{v,\alpha}\leq 0$. If
$\alpha\in\Delta\setminus\Delta_I$, then $\mpair{\gamma,\alpha}\leq 0$ for
all $\gamma\in\Delta_I$, by definition of a root system. Since $v\in
\cone(\Delta_I)$, we can write
$v=\sum_{\gamma\in\Delta_I}v_\gamma\gamma$ with $v_\gamma\geq 0$, so
by linearity $\mpair{v,\alpha}\leq 0$. Hence $v\in \mathcal K (\Phi)\cap
V_I$.

Now let $\alpha,\beta\in\Phi_I$ such that $\alpha\precf\beta$ in
$\Phi$. So $\beta-\alpha\in\mathcal K(\Phi)$, $\beta\in \Phi^{+}$ and $\al\in \Phi^{-}$. Therefore
$\beta-\alpha\in\mathcal K(\Phi)\cap V_I=\mathcal K (\Phi_I)$, $\beta\in \Phi_{I}^{+}$ and $\al\in \Phi_{I}^{-}$, so 
$\alpha\precf\beta$ in $\Phi_I$. The converse implication is trivial.

   \smallskip
\noindent $(iii)$ Let $\alpha\in\Sigma_I$. Then $\alpha\in
\Phi_I^+$. Consider $\gamma\in\PP$ such that $\gamma\prec
\alpha$. By $(i)$ we know that $\gamma\in\Phi_I^+$. Since $\alpha$ is
elementary in $\Phi_I$, we obtain $\gamma=\alpha$. So $\alpha$ is also
elementary in $\Phi$, i.e., $\alpha\in\Sigma$. Hence $\Sigma_I \subseteq \Sigma \cap \Phi_I$. The reverse inclusion is straightforward.

  \smallskip
\noindent $(iv)$ Let $\alpha\in \Sigma$ and $w\in W^I$ such that
$w^{-1}(\alpha)\in \Phi_I$. We know that $w(\PP_I)\subseteq
\PP$, so $w(\Phi^-_I)\subseteq \Phi^-$. Since $\alpha \in \Sigma
\subseteq \PP$, necessarily $w^{-1}(\alpha) \notin \Phi^-_I$,
i.e., $w^{-1}(\alpha)\in\PP_I$. Now consider $\gamma\in\Phi_I^+$
such that $\gamma\preceq w^{-1}(\alpha)$. By
Proposition~\ref{prop:rappelDomin}~$(ii)$, we get $w(\gamma)\prec
\alpha$. Since $\gamma\in\Phi_I^+$ and $w \in W^I$, we have
$w(\gamma)\in \PP$. But $\alpha$ is elementary, so we obtain
$w(\gamma)=\alpha$ and therefore $w^{-1}(\alpha)=\gamma$. Hence
$w^{-1}(\alpha)\in \Sigma_I$.
\end{proof}

\subsection{Facial restriction to subsets of $E_2(\Phi)$ related to dominance}
\label{ss:intersection}

 In \cite[Example~5.8]{HLR}, it is shown that, in general, the
restriction of $E(\Phi)$ to the face~$F_I$ is not equal to
$E(\Phi_I)$. It turns out that however, this property of good facial
restrictions holds for all the subsets of $E$ that we have defined in
this section. Recall that, given a based root system $(\Phi,\Delta)$,
we have constructed in \S\ref{ss:elem} and \S\ref{ss:fund} the set
$\Ef(\Phi)$, its $W$-orbit $E_2(\Phi)$, the set $\Efcov(\Phi)$, its
$W$-orbit $\Ecov(\Phi)$, and the set $\Eelem(\Phi)$. 
The following theorem states that all these six ``functorial'' 
subsets of $E$ restrict well to facial root subsystems.
 Theorem~\ref{thm:E2facial} is the first item below.

\begin{thm}
  \label{thm:intersection}
  Let $(\Phi,\Delta)$ be a based root system with associated Coxeter
  group $(W,S)$.  Let $I\subseteq S$ be facial, and
  $F_I=\conv(\h{\Delta_I})$ denote the associated face of
  $\conv(\h{\Delta})$. Then:
  \begin{enumerate}[(i)]
  \item $E_2(\Phi_I)= E_2(\Phi)\cap F_I$;
  \item $\Ef(\Phi_I)= \Ef(\Phi)\cap F_I$;
  \item $\Ecov(\Phi_I)= \Ecov(\Phi)\cap F_I$ and $\Efcov(\Phi_I)= \Efcov(\Phi)\cap F_I$;
  \item  $\Eelem(\Phi_I)=\Eelem(\Phi)\cap F_I$ and $W_{I}\cdot\Eelem(\Phi_I)=(W\cdot\Eelem(\Phi))\cap F_I$.
  \end{enumerate}
\end{thm}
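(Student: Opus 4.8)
The plan is to prove, uniformly, the inclusion $X(\Phi_I)\subseteq X(\Phi)\cap F_I$ for each $X\in\{E_2,\Ef,\Ecov,\Efcov,\Eelem\}$, then its reverse, and finally to treat the orbit equality $W_I\cdot\Eelem(\Phi_I)=(W\cdot\Eelem(\Phi))\cap F_I$ using the first four items. The inclusion $X(\Phi_I)\subseteq X(\Phi)\cap F_I$ is the easy direction: each $X$ is a union of sets $L(\h\rho,\h\sigma)\cap\h Q$ indexed by pairs $\rho,\sigma$ in a relation (dominance, cover, fundamental dominance, fundamental cover, or membership in $\Sigma$), and Proposition~\ref{prop:domin-parabolic}(ii)--(iii) shows the relation on $\Phi_I$ is the restriction of the one on $\Phi$; moreover, for $\rho,\sigma\in\Phi_I$ the line $L(\h\rho,\h\sigma)$ lies in $\aff(F_I)=V_I\cap V_1$, so $L(\h\rho,\h\sigma)\cap\h Q=L(\h\rho,\h\sigma)\cap\h{Q_I}$ consists of dihedral limit roots of $\Phi_I$, hence lies in $E(\Phi_I)\subseteq\conv(\h{\Delta_I})=F_I$.

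The reverse inclusions for (i)--(iii) and the first equality of (iv) all rest on a single geometric observation. If $\rho,\sigma\in\Phi$ satisfy $|\mpair{\rho,\sigma}|\geq1$, then $\langle s_\rho,s_\sigma\rangle$ is infinite dihedral with a canonical positive simple system $\{a,b\}$, and by the description \eqref{equ1} every point of $L(\h\rho,\h\sigma)\cap\h Q$ lies in the \emph{open} segment $(\h a,\h b)$, whose endpoints lie in $\conv(\h\Delta)$. Hence if such a limit point $x$ lies in $F_I$, the defining property of a face forces $\h a,\h b\in F_I$, so $a,b\in\Phi\cap V_I=\Phi_I$; consequently $\rho,\sigma\in\Span(a,b)\subseteq V_I$, i.e.\ $\rho,\sigma\in\Phi_I$. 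Applying this to a point $x\in X(\Phi)\cap F_I$ puts the indexing roots into $\Phi_I$, and Proposition~\ref{prop:domin-parabolic}(ii)(a)--(c),(iii) then shows the defining relation already holds in $\Phi_I$, whence $x\in X(\Phi_I)$. This yields (i)--(iii) and $\Eelem(\Phi_I)=\Eelem(\Phi)\cap F_I$.

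For the orbit equality in (iv), the inclusion $W_I\cdot\Eelem(\Phi_I)\subseteq(W\cdot\Eelem(\Phi))\cap F_I$ is immediate, since $\Eelem(\Phi_I)\subseteq\Eelem(\Phi)$ and $W_I$ preserves $E(\Phi_I)\subseteq F_I$. For the converse, let $x=w\cdot e$ with $e\in L(\h\mu,\h\nu)\cap\h Q$, $\mu,\nu\in\Sigma(\Phi)$, and $x\in F_I$. By part (i) (Theorem~\ref{thm:E2facial}, already proved) we have $x\in E_2(\Phi)\cap F_I=E_2(\Phi_I)\subseteq E(\Phi_I)$, which is $W_I$-stable. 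Write $w=w_I'\,u$ with $w_I'\in W_I$ and $u$ the minimal-length representative of the right coset $W_I w$, so that $u^{-1}\in W^I$, and set $x_0:=u\cdot e=(w_I')^{-1}\cdot x$; then $x_0\in E(\Phi_I)\subseteq F_I$. Since $x_0$ is a dihedral limit root of $u(\mu),u(\nu)$, the geometric observation gives $u(\mu),u(\nu)\in\Phi_I$, and Proposition~\ref{prop:domin-parabolic}(iv), applied to the elementary roots $\mu,\nu$ and the element $u^{-1}\in W^I$, yields $u(\mu),u(\nu)\in\Sigma_I$. Therefore $x_0\in\Eelem(\Phi_I)$ and $x=w_I'\cdot x_0\in W_I\cdot\Eelem(\Phi_I)$.

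The routine verifications are the uniform easy inclusion and the remark that two distinct elementary roots $\mu,\nu$ with $L(\h\mu,\h\nu)\cap\h Q\neq\eset$ must satisfy $\mpair{\mu,\nu}\leq-1$ (so they are genuinely the canonical simple roots of $\langle s_\mu,s_\nu\rangle$), which follows because a dominance between two elementary roots would force them equal. The main obstacle is the orbit equality of (iv): the difficulty is that elementary roots are \emph{not} $W$-stable, so one cannot transport $e$ by $w$ directly. The key device is the factorization $w=w_I'\,u$ with $u^{-1}\in W^I$, which isolates precisely the $W^I$-part to which Proposition~\ref{prop:domin-parabolic}(iv) applies, combined with the prior knowledge (from part (i)) that $x\in E(\Phi_I)$, ensuring the reduction $x_0=(w_I')^{-1}\cdot x$ remains inside $F_I$.
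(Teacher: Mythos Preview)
Your proof is correct and follows essentially the same route as the paper's. The paper isolates your ``geometric observation'' as a separate key lemma (Lemma~\ref{lem:facial}) and invokes it in the same places; for the orbit equality in (iv), the paper factors the element sending the facial point \emph{to} the elementary limit root via the left-coset decomposition $w=w^{I}w_{I}$ with $w^{I}\in W^{I}$, whereas you factor its inverse via a right-coset decomposition, but the two arguments correspond exactly under $w\leftrightarrow w^{-1}$.
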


\begin{ex} In Figure~\ref{fig:intro}, take $I=\{\beta, \gamma\}$. Then $\Eelem(\Phi_I)=\{x,y\}=W_I\cdot \Eelem(\Phi_I)$. 
\end{ex}

Before getting into the proof of Theorem~\ref{thm:intersection}, we
first need the following key lemma.

\begin{lem}
\label{lem:facial}
Let $x\in E_2(\Phi)$ and $\alpha,\beta\in\Phi$ be distinct such that $x\in L(\h\alpha,\h\beta)$. Let $I\subseteq S$  be facial, and denote by $F_I$ the face $\conv(\h{\Delta_I})$.
\begin{enumerate}[(i)]
\item We have $x\in F_I$ if and only if $\h\alpha,\h\beta\in F_I$, if and only if $\alpha,\beta\in\Phi_I$.
\item Assume $\alpha,\beta\in\PP$ and let $y\in E_2(\Phi_I)$ such that $x=w\cdot y$ for some $w\in W^I$. Then $w^{-1}(\alpha),w^{-1}(\beta)\in \Phi_I^+$ and
$  y\in L(w^{-1}\cdot \h{\alpha},w^{-1}\cdot\h{\beta})$. 
\end{enumerate}
\end{lem}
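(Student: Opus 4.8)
The plan is to first pin down the geometric position of $x$ on the line $L(\h\alpha,\h\beta)$, and then deduce everything from the face property of $F_I$. For part (i), the key input I would establish is that $x$ lies in the \emph{open} segment $]\h\alpha,\h\beta[$. Since $x\in E_2(\Phi)\subseteq\h Q$ lies on $L(\h\alpha,\h\beta)$, the line meets $\h Q$, so $|\mpair{\alpha,\beta}|\geq 1$ and $\langle s_\alpha,s_\beta\rangle$ is infinite dihedral (by \cite[Prop.~1.5(2)]{HLR}); its canonical simple system $\{\mu,\nu\}$ is a subset of $\{\pm\alpha,\pm\beta\}$ with $\mpair{\mu,\nu}\leq-1$, and as $\h\mu,\h\nu$ are distinct elements of $\{\h\alpha,\h\beta\}$ we get $\{\h\mu,\h\nu\}=\{\h\alpha,\h\beta\}$. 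The dihedral analysis recalled in \S\ref{ss:domgeom} (the order in \eqref{equ1}, cf.\ \cite[Example~3.8]{HLR}) then shows that both points of $L(\h\alpha,\h\beta)\cap\h Q$, in particular $x$, lie strictly between $\h\alpha$ and $\h\beta$.

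Granting this, part (i) follows formally. The equivalence $\h\alpha,\h\beta\in F_I\Leftrightarrow\alpha,\beta\in\Phi_I$ is immediate from $\h{\Phi_I}=\h\Phi\cap F_I$ together with $\Phi_I=-\Phi_I$ and $\h{-\alpha}=\h\alpha$. For $x\in F_I\Leftrightarrow\h\alpha,\h\beta\in F_I$: the implication $\Leftarrow$ is just convexity of $F_I$, which contains $[\h\alpha,\h\beta]\ni x$; and for $\Rightarrow$, since $x\in\,]\h\alpha,\h\beta[\,$ with $\h\alpha,\h\beta\in\conv(\h\Delta)$, the defining property of a face of $\conv(\h\Delta)$ forces both endpoints into $F_I$.

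For part (ii), I would first note $y\in E_2(\Phi_I)\subseteq\conv(\h{\Delta_I})=F_I$. Applying $w^{-1}$ to $x\in L(\h\alpha,\h\beta)$ and using that the projective $W$-action sends lines to lines (\cite[Prop.~3.5(ii)]{HLR}) gives $y=w^{-1}\cdot x\in L(w^{-1}\cdot\h\alpha,w^{-1}\cdot\h\beta)$, which is the second claim. For the membership $w^{-1}(\alpha),w^{-1}(\beta)\in\Phi_I^+$, I would reduce the $\Phi_I$-part to (i): since $\Phi_I\subseteq\Phi$ we have $y\in E_2(\Phi)$, so applying part (i) to the distinct roots $w^{-1}(\alpha),w^{-1}(\beta)$ and the point $y\in F_I$ yields $w^{-1}(\alpha),w^{-1}(\beta)\in\Phi_I$. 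Positivity then comes from $w\in W^I$: if $w^{-1}(\alpha)$ were negative then $-w^{-1}(\alpha)\in\PP_I$, and $w(\PP_I)\subseteq\PP$ would give $-\alpha=w(-w^{-1}(\alpha))\in\PP$, contradicting $\alpha\in\PP$; the same argument applies to $\beta$.

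The main obstacle is the first step, establishing $x\in\,]\h\alpha,\h\beta[\,$: this is what makes the face argument work, and it must be handled for arbitrary $\alpha,\beta\in\Phi$ rather than just simple roots, which is precisely why I pass to the canonical simple system of the dihedral subgroup. Once the open-segment containment is in hand, both parts are essentially formal manipulations with the face property, the relation $\h{\Phi_I}=\h\Phi\cap F_I$, and the standard properties of the minimal coset representatives $W^I$.
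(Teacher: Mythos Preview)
There is a genuine gap in your argument for (i). Your claim that the canonical simple system $\{\mu,\nu\}$ of $\langle s_\alpha,s_\beta\rangle$ is contained in $\{\pm\alpha,\pm\beta\}$ is false in general, and with it the assertion $x\in\,]\h\alpha,\h\beta[\,$ fails. For a concrete counterexample, take positive roots $\alpha,\beta'$ with $\mpair{\alpha,\beta'}\leq -1$, so that $\{\alpha,\beta'\}$ is the canonical simple system of $W'=\langle s_\alpha,s_{\beta'}\rangle$, and put $\beta:=s_\alpha(\beta')\in\PP$. Then $\langle s_\alpha,s_\beta\rangle=W'$ still has canonical simple system $\{\alpha,\beta'\}$, and $\beta'\notin\{\pm\alpha,\pm\beta\}$. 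Worse, the ordering in \eqref{equ1} (applied to the simple pair $\alpha,\beta'$) gives $\h\alpha<_{\mathbb R}\h\beta<_{\mathbb R}\cdots$ on the line, with \emph{both} points of $L(\h\alpha,\h\beta)\cap\h Q$ lying beyond $\h\beta$. So neither limit root on this line belongs to $]\h\alpha,\h\beta[$, and both the $\Leftarrow$ direction (convexity) and the $\Rightarrow$ direction (face property) of your argument break down for such a pair.

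The fix, which is what the paper does, is to pass not to $\{\pm\alpha,\pm\beta\}$ but to the actual canonical simple system $\{\alpha_0,\beta_0\}$ of $W'=\langle s_\alpha,s_\beta\rangle$, and to observe that $\alpha,\beta\in\Phi_I$ if and only if $\alpha_0,\beta_0\in\Phi_I$: each pair lies in $\Phi_{W'}$ and each generates $W'$, so either condition is equivalent to $\Phi_{W'}\subseteq\Phi_I$. After this reduction one genuinely has $x\in\,]\h{\alpha_0},\h{\beta_0}[\,$, and then your direct appeal to the defining property of a face (which is in fact tidier than the separation argument the paper writes out) finishes (i) immediately. Your argument for (ii) is correct and essentially identical to the paper's.
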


\begin{proof} $(i)$ First, note that $\h\alpha,\h\beta\in F_I$ if and
  only if $L(\h\alpha,\h\beta)\subseteq V_I$, since $F_I$ is a face of $\conv(\h\Delta)$; that is, if and only if
  $\alpha,\beta\in\Phi_I=\Phi\cap V_I$. Then, note that any line
  $L(\h\alpha,\h\beta)$ contains two normalized roots
  $\h\alpha_0,\h\beta_0$ such that $\alpha_0,\beta_0$ form a simple
  system for the dihedral reflection subgroup $W'$ generated by
  $s_\alpha$ and $s_\beta$. So, $\alpha,\beta\in \Phi_I$ if and only
  if $\alpha_0,\beta_0\in\Phi_I$. Therefore, we assume without loss
  of generality that $\alpha,\beta$ is a simple system for $W'$. In
  particular, $x$ lies in the interior of the segment
  $[\h\alpha,\h\beta]$.

  We just have to show that if $x\in E_2(\Phi_I)$, then
  $\h\alpha,\h\beta\in F_I$ (the remaining implication is trivial). If
  $\h\alpha\in F_I$, then $L(\h\alpha,\h\beta)=L(\h\alpha,x)\subseteq
  V_I$. Therefore $\h\alpha,\h\beta\in F_I$. The symmetric case
  $\h\beta\in F_I$ is handled the same way.

 Suppose now that neither $\h\alpha$ nor $\h\beta$ is in $V_I$. We
 know that $V_I\cap V_1$ is an affine subspace in $V_1$ supporting the
 face $F_I$ of $\conv(\h\Delta)$.  Let $H$ be the affine subspace of
 $V_1$ spanned by $\h\alpha$ and $V_I\cap V_1$. Since $x\in V_I\cap
 [\h\alpha,\h\beta]$, necessarily $\h\alpha$ and $\h\beta$ are
 separated by $V_I\cap V_1$ in $H$. Since $F_I$ is a face of
 $\conv(\h\Delta)\cap H$, that means that either $\h\alpha$ or
 $\h\beta$ is outside $\conv(\h\Delta)$, contradicting the inclusion
 $\h\Phi\subseteq \conv (\h\Delta)$. (Remark: this last argument is
 almost the same as the one used to prove
 Proposition~\ref{prop:domin-parabolic}~$(i)$.)
\smallskip

\noindent $(ii)$ Set $\alpha'=w^{-1}(\alpha)$ and $\beta'=w^{-1}(\beta)$. Then by \cite[Proposition 3.6~(ii)]{HLR}, we have that $z=w^{-1}\cdot x\in L(\h{\alpha'},\h{\beta'})$. So by $(i)$ we have that $\alpha',\beta'\in\Phi_I$, since $y\in E_2(\Phi_I)\subseteq F_I$. Finally $\alpha',\beta'\in\Phi_I^+$ because $\alpha,\beta\in \PP$ and $w\in W^I$ (same argument as the beginning of the proof of \ref{prop:domin-parabolic}\emph{(iv)}).
\end{proof}

Now we are ready to prove Theorem~\ref{thm:intersection}, namely, that the six subsets of $E$ defined in the previous subsections all have the property of good facial restriction.

\begin{proof}[Proof of Theorem~\ref{thm:intersection}]  
\noindent (i) The inclusion $\sreq$ is a direct consequence of Lemma~\ref{lem:facial}(i) and $\seq$ is clear.

\noindent (ii) The inclusion $\seq$ follows from  follows Lemma \ref{lem:facial}(i) on recalling from the proof of \ref{prop:domin-parabolic}(ii)(c) that $\mc{K}(\Phi_{I})=\mc{K}\cap V_{I}$, while  $\sreq$ is proved using   Proposition~\ref{prop:domin-parabolic}(ii) and Lemma~\ref{lem:facial}(i) as follows. Let $x\in \Ef(\Phi)\cap F_I$, so there is $\alpha,\beta\in\Phi$ such that $x\in L(\h \alpha, \h \beta)$ and $\alpha\precf\beta$ in $\Phi$. Since $x\in F_I$, $\alpha,\beta\in\Phi_I$ and therefore  $\alpha\precf \beta$ in $\Phi_I$. So $x\in \Ef(\Phi_I)$.

\noindent (iii) is proved similarly as (ii), using Proposition~\ref{prop:domin-parabolic}(ii) and Lemma~\ref{lem:facial}(i).

\noindent (iv)  From
Proposition~\ref{prop:domin-parabolic}(iii) we know that
$\Sigma_I=\Sigma\cap \Phi_I$, so:
\[
\Eelem(\Phi_I)\subseteq \Eelem(\Phi)\cap F_I\ \textrm{ and }\
W_I\cdot \Eelem(\Phi_I)\subseteq (W\cdot \Eelem(\Phi))\cap F_I.
\]
Now let $x\in \Eelem(\Phi)\cap F_I$. So there is
$\alpha,\beta\in\Sigma$ such that $x\cap L(\h\alpha,\h\beta)$. By Lemma~\ref{lem:facial}(i) we know that
$\alpha,\beta\in\Phi_I$, since $x\in E_2(\Phi)\cap F_I$. So
$\alpha,\beta\in\Sigma_I$ and therefore $x\in \Eelem(\Phi_I)$.

Let $z\in (W\cdot \Eelem(\Phi))\cap F_I$. So there is $w\in W$ and
$x\in \Eelem(\Phi)$ such that $w\cdot z=x$.  Write $w=w^Iw_I$ with
$v=w^I\in W^I$ and $w_I\in W_I$ and set $y=w_I \cdot z$. Since
$w_I(\Phi_I)=\Phi_I$, we have $y \in (W\cdot \Eelem(\Phi))\cap
F_I\subseteq E_2(\Phi)\cap F_I= E_2(\Phi_I)$.

Let $\alpha,\beta\in\Sigma$ such that $x\in
L(\h\alpha,\h\beta)$. Since $x=v\cdot y$ with $v\in W^I$, ${{y\in
E_2(\Phi_I)}}$ and $x\in E_2(\Phi)$, we have by Lemma~\ref{lem:facial}(ii)
that $v^{-1}(\alpha),v^{-1}(\beta)\in \Phi_I^+$ and $y\in
L(v^{-1}\cdot \h{\alpha},v^{-1}\cdot\h{\beta})$.  So by
Proposition~\ref{prop:domin-parabolic}(iv) we have that
$v^{-1}(\alpha),v^{-1}(\beta) \in\Sigma_I$, since
$\alpha,\beta\in\Sigma$. Therefore $y\in \Eelem(\Phi_I)$, hence
$z=w_{I}^{-1}\cdot y \in W_I\cdot \Eelem(\Phi_I)$.
\end{proof}

\subsection{Direct proof of the density of the fundamental limit roots}
\label{ss:density}

Proposition~\ref{prop:ElemLimits} is a consequence of the following statement. 

\begin{prop}
  \label{prop:Ecovdense}
  The set $\Ecov(\Phi)$ is dense in $E(\Phi)$. Consequently, both
  $\Efcov(\Phi)$ and $\Eelem(\Phi)$ provide examples of finite subsets, the union 
 the  orbits of which is dense in $E$.
\end{prop}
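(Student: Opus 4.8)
The plan is to prove that $\Ecov(\Phi)$ is dense in $E(\Phi)$ and then deduce the statement about $\Efcov(\Phi)$ and $\Eelem(\Phi)$ from the relations already established. The strategy relies on reducing the density of $\Ecov(\Phi)$ to the density of $E_2(\Phi)$, which is known from \cite[Thm.~4.2]{HLR}. Recall that $E_2(\Phi)=\bigcup_{\alpha\prec\beta} L(\h\alpha,\h\beta)\cap \h Q$ (using Proposition~\ref{prop:rappelDomin}(i), every line through two normalized roots meeting $\h Q$ corresponds to a dominance relation after choosing signs), while $\Ecov(\Phi)=\bigcup_{\alpha\dprec\beta} L(\h\alpha,\h\beta)\cap \h Q$ only involves \emph{covers} of dominance. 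So the key point is that passing from arbitrary dominances to covers of dominance does not shrink the closure.

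First I would fix a point $x\in E(\Phi)$ and an open neighborhood $U$ of $x$ in $V_1$; by density of $E_2(\Phi)$ there exist $\gamma,\rho\in\Phi$ with $\gamma\prec\rho$ and a point $z\in L(\h\gamma,\h\rho)\cap \h Q$ lying in $U$. The natural idea is to replace the dominance $\gamma\prec\rho$ by a cover of dominance on the \emph{same line}: since $\gamma$ and $\rho$ generate an infinite dihedral subsystem $\Phi'=W'(\{\gamma,\rho\})$ whose normalized roots all lie on $L(\h\gamma,\h\rho)$, and since on this line the dominance order is a total order described explicitly in Equations~\eqref{equ4}--\eqref{equ5}, I can find two roots $\gamma',\rho'\in\Phi'$ with $\gamma'\dprec\rho'$ a \emph{cover} of dominance whose line $L(\h{\gamma'},\h{\rho'})=L(\h\gamma,\h\rho)$ is unchanged. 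Indeed, consecutive roots in the chains \eqref{equ4} and \eqref{equ5} are exactly covers of dominance inside $\Phi'$, and by Proposition~\ref{prop:rappelDomin}(vi) a cover of dominance in $\Phi'$ is a cover of dominance in $\Phi$ (one must check no root of the larger system $\Phi$ can squeeze between two consecutive roots of $\Phi'$ on that line — this follows because any such intermediate root would itself lie on $L(\h\gamma,\h\rho)$, hence belong to $\Phi'$). Therefore $z\in L(\h{\gamma'},\h{\rho'})\cap\h Q\subseteq \Ecov(\Phi)$, and $z\in U$, proving $\Ecov(\Phi)$ meets every neighborhood of every point of $E$; since $\Ecov(\Phi)\subseteq E$, this gives $\ol{\Ecov(\Phi)}=E$.

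With the density of $\Ecov(\Phi)$ in hand, the consequences follow quickly. By Proposition~\ref{prop:orbEcov} we have $\Ecov(\Phi)=W\cdot \Efcov(\Phi)$, so $\ol{W\cdot \Efcov(\Phi)}=E$, which shows $\Efcov(\Phi)$ is a finite set whose orbit is dense in $E$ (finiteness of $\Efcov(\Phi)$ comes from Proposition~\ref{cor:elem-dom}). Since $\Efcov(\Phi)\subseteq \Eelem(\Phi)$ by Proposition~\ref{prop:orbEcov}, and $\Eelem(\Phi)\subseteq E$ is finite, we also obtain $E=\ol{W\cdot \Efcov(\Phi)}\subseteq \ol{W\cdot \Eelem(\Phi)}\subseteq E$, giving $\ol{W\cdot \Eelem(\Phi)}=E$ as well. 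This recovers Proposition~\ref{prop:ElemLimits} without invoking the minimality theorem.

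The main obstacle I anticipate is the verification on a single dihedral line that one can always descend from a general dominance relation to a \emph{cover} of dominance without leaving that line, together with the bookkeeping that a cover within the dihedral subsystem $\Phi'$ genuinely remains a cover within the full system $\Phi$. The second part is essentially the content of Proposition~\ref{prop:rappelDomin}(vi) combined with the observation that $\Phi\cap L(\h\gamma,\h\rho)$ coincides with $\h{\Phi'}$ on that line (any root whose normalized image lies on the line belongs to the dihedral subsystem these two roots generate); the explicit ordering in \eqref{equ4}--\eqref{equ5} then guarantees that between two consecutive roots of a dominance chain there is nothing, so consecutive pairs are covers. Once this geometric reduction is clean, the rest is formal.
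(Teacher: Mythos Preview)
Your approach has a genuine gap at the step where you claim that a cover of dominance in the dihedral subsystem $\Phi'=W'(\{\gamma,\rho\})$ remains a cover in the full system $\Phi$. Your justification is that ``any such intermediate root would itself lie on $L(\h\gamma,\h\rho)$, hence belong to $\Phi'$''. But this is false: if $\gamma'\prec\sigma\prec\rho'$ in the dominance order of $\Phi$, there is no reason for $\h\sigma$ to lie on the line $L(\h{\gamma'},\h{\rho'})$. Dominance is a partial order, not a linear order, and intermediate elements need not be collinear. For a concrete counterexample, take the rank~$3$ generic universal root system with simple roots $\alpha_1,\alpha_2,\alpha_3$ and $\mpair{\alpha_i,\alpha_j}=-2$ for $i\neq j$. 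Set $\gamma=\alpha_1$ and $\rho=s_{\alpha_2}(\alpha_3)=\alpha_3+4\alpha_2$; then $\mpair{\gamma,\rho}=-10$, so $\{\gamma,\rho\}$ is a simple system for an infinite dihedral $\Phi'$, and $-\gamma\dprec'\rho$ is a cover in $\Phi'$. However in $\Phi$ one has $-\alpha_1\prec\alpha_2\prec\rho$ (check via Proposition~\ref{prop:rappelDomin}(iv)--(v)), with $\h{\alpha_2}\notin L(\h\gamma,\h\rho)$. So the dihedral cover is not a cover in $\Phi$, and the obstructing root $\alpha_2$ is off the line. Proposition~\ref{prop:rappelDomin}(vi) transfers \emph{dominance} between $\Phi'$ and $\Phi$, not \emph{covers}; the facial analogue in Proposition~\ref{prop:domin-parabolic}(ii)(b) does transfer covers, but it requires the subsystem to be facial, which an arbitrary dihedral subsystem is not.

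Because of this, you cannot in general find a cover of dominance on the \emph{same} line $L(\h\gamma,\h\rho)$, and your reduction of $\Ecov$ to $E_2$ collapses. The paper's direct proof sidesteps this entirely: rather than seeking a cover on the given line, it fixes one cover $\alpha\dprec\gamma$ in $\Phi$ (with $\alpha$ on the line $L(\h\alpha,\h\beta)$ through $x$ but $\gamma$ typically off it), then works in the rank~$3$ subsystem generated by $\alpha,\beta,\gamma$ and shows, via an explicit geometric argument on the circle $\h Q$ in that weakly hyperbolic subsystem, that the sequence $y_n=(s_\alpha s_\beta)^n\cdot y$ of points of $\Ecov(\Phi)$ converges to $x$ (here $y$ is the limit root on $L(\h\alpha,\h\gamma)$). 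The key mechanism is that the $W$-action transports covers to covers, so $w^n(\alpha)\dprec w^n(\gamma)$ for all $n$, while the visible arc from $w^n\cdot\h\alpha$ shrinks to $\{x\}$. Your concluding paragraph (deducing the statements about $\Efcov$ and $\Eelem$ from $\ol{\Ecov}=E$) is correct and matches the paper.
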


Even if it is a straightforward consequence of
Theorem~\ref{cor:minimal}(b),  we will give  a direct
(geometric) proof of Proposition~\ref{prop:Ecovdense}, without using
the minimality of the action, i.e., avoiding the reliance on the
machinery of the imaginary cone developed in \cite{dyer:imc} and used in \S\ref{se:imc}.  This direct
proof is elementary and relies only on a careful study of the geometry
in the case of a root system of rank $3$, and on the density of $E_2$
proved in \cite[Theorem~4.2]{HLR}. It illustrates techniques that may be useful in the study of open questions involving the relationship of dominance order and limit roots.
\medskip

Assume for now that $(\Phi,\Delta)$ is a rank $3$, irreducible root
system in $(V,\Mpair)$. It is well known that the signature of $\Mpair$ is
then $(3,0)$, $(2,0)$ or $(2,1)$, see for instance
\S\ref{ss:limitroots}. In the case where $(\Phi,\Delta)$ is weakly hyperbolic (signature $(2,1)$), we show the following property. This will allow us to pass from dihedral limit roots coming from dominance to dihedral limit roots coming from covers of dominance.

\begin{prop}
  \label{prop:rank3}
  Let $(\Phi,\Delta)$ be a based root system of rank $3$, of weakly
  hyperbolic type. Let $\alpha,\beta,\gamma\in\PP$ such that
  $\mpair{\alpha,\beta}\leq -1$ and $\alpha\prec \gamma$ (with $\alpha
  \neq \gamma$).  Let $x\in \h Q\cap L(\h\alpha,\h\beta)$ and $y\in \h
  Q\cap L(\h\alpha,\h\gamma)$ that are visible from $\h\alpha$. Set
  $w:=s_\alpha s_\beta$. Then we have:
  \begin{enumerate}[(i)]
  \item the sequence $w^n\cdot \h\alpha$ converges to $x$ when $n$
    tends to infinity. Moreover, $w^{n+1}\cdot \h\alpha \in \left]w^n\cdot
    \h\alpha,x \right[$ for all $n\in\mathbb N$.
  \item The sequence $y_n:=w^n\cdot y$ converges to $x$ when $n$
    tends to infinity. Moreover, $y_n\in \h Q\cap L(w^n\cdot
    \h\alpha,w^n\cdot \h\gamma)$ is visible from $w^n\cdot \h\alpha$
    for all $n\in\mathbb N$.
  \end{enumerate}
\end{prop}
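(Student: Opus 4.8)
The plan is to treat the two items separately, using for (i) the explicit description of the normalized roots of the infinite dihedral reflection subgroup and for (ii) a combination of the $W$-equivariance of the geometric picture with the spherical geometry of $\h Q$ available in the weakly hyperbolic case.

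For part (i): since $\mpair{\alpha,\beta}\leq -1$, the pair $\{\alpha,\beta\}$ is a simple system for the infinite dihedral reflection subgroup $W'=\langle s_\alpha,s_\beta\rangle$, and $\h{\Phi'}$ lies on the line $L(\h\alpha,\h\beta)$, which meets $\h Q$ in the two points $x$ and $s_\alpha\cdot x$ (see the discussion preceding \eqref{equ1}). I would invoke the total order $<_{\mathbb R}$ on that line, relative to which \eqref{equ1} lists the normalized roots of $\Phi'$. The subsequence $(w^n\cdot\h\alpha)_n=((s_\alpha s_\beta)^n\cdot\h\alpha)_n$ is then strictly increasing and bounded above by $x$, which yields at once $w^{n+1}\cdot\h\alpha\in\,]w^n\cdot\h\alpha,x[$. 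Being monotone and bounded it converges to some $\ell\leq_{\mathbb R} x$; since $\ell$ is an accumulation point of the discrete set $\h{\Phi'}$, it is one of the two limit roots $x,s_\alpha\cdot x$ of $\Phi'$, and as $s_\alpha\cdot x$ lies strictly beyond $x$ this forces $\ell=x$.

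For the ``moreover'' part of (ii): I would first record that $w^n(\alpha)\in\PP$ for every $n\geq 0$ (again from \eqref{equ1}), so that $\vphi(w^n(\h\alpha))>0$, while $\vphi(w^n(y))>0$ because $y\in E\subseteq D^+$. From $\alpha\prec\gamma$ and Proposition~\ref{prop:geodomin}, together with the hypothesis that $y$ is the intersection of $L(\h\alpha,\h\gamma)$ with $\h Q$ visible from $\h\alpha$, I obtain $\h\gamma\in[\h\alpha,y]$ with $[\h\alpha,y]\cap\h Q=\{y\}$. Since $\vphi\circ w^n$ is affine and positive at both endpoints, it is positive on the whole segment $[\h\alpha,y]$; hence the convex-combination formula from Proposition~\ref{rk:action} shows that $w^n$ maps this segment bijectively and monotonically onto $[w^n\cdot\h\alpha,y_n]$. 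This immediately gives $w^n\cdot\h\gamma\in[w^n\cdot\h\alpha,y_n]$, the collinearity $y_n\in L(w^n\cdot\h\alpha,w^n\cdot\h\gamma)$ (the action sends lines to lines, \cite[Prop.~3.5--3.6]{HLR}), and, as $w^n$ preserves $\h Q$ and $[\h\alpha,y]\cap\h Q=\{y\}$, the visibility $[w^n\cdot\h\alpha,y_n]\cap\h Q=\{y_n\}$.

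For the convergence $y_n\to x$ in (ii): by Proposition~\ref{prop:sphere} I may choose $V_1$ so that $\h Q$ is a round $(n-2)$-sphere bounding the strictly convex ball $\conv(\h Q)$, the conclusion being independent of this choice since the normalization maps between transverse hyperplanes are homeomorphisms on $\h\Phi\sqcup E$ (Remark~\ref{rk:extcomp}(2)). Every normalized root satisfies $\mpair{\h\rho,\h\rho}>0$, so $w^n\cdot\h\alpha$ lies strictly outside the closed ball; combined with $[w^n\cdot\h\alpha,y_n]\cap\h Q=\{y_n\}$ and strict convexity, the open segment $]w^n\cdot\h\alpha,y_n[$ lies entirely outside the closed ball. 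Passing to a subsequence with $y_n\to y_*\in\h Q$, the midpoints $\tfrac12 w^n\cdot\h\alpha+\tfrac12 y_n$ then lie outside the closed ball and, by part (i), converge to $\tfrac12 x+\tfrac12 y_*$; were $y_*\neq x$, this limit would be an interior point of the chord $[x,y_*]$, hence inside the open ball, a contradiction. Thus $y_*=x$, and compactness of $\h Q$ gives $y_n\to x$. The main obstacle is exactly this convergence: equivariance only transports the affine/visibility data, and without further input one cannot exclude that the visible isotropic point $y_n$ drifts away from $x$ while $w^n\cdot\h\alpha\to x$; it is the weakly hyperbolic hypothesis, through the sphere model and the fact that normalized roots sit outside the ball, that rules this out. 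I would also verify the sign statement $w^n(\alpha)\in\PP$ with care, as it is what licenses Proposition~\ref{rk:action} and fixes the orientation so that $y_n$ is the near, not the far, intersection.
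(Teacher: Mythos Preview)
Your argument is correct. Part (i) and the ``moreover'' clause of (ii) match the paper's proof essentially line for line: the paper also reduces (i) to the monotonicity in \eqref{equ1} and the fact that any limit must lie in $\h Q\cap[\h\alpha,x]=\{x\}$, and it obtains the collinearity/visibility of $y_n$ by the $W$-equivariance results \cite[Prop.~3.6(ii), 3.8(iii)]{HLR}, which is exactly what your segment-pushing via the convex-combination formula amounts to.

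The genuine difference is in the convergence $y_n\to x$. The paper works in the circle model and argues with \emph{tangent lines}: for a point $p$ outside the disk, the set of points of $\h Q$ visible from $p$ is the arc $\mathcal C_x(p)$ bounded by the two tangency points, and one checks that $\mathcal C_x(w^{n+1}\cdot\h\alpha)\subseteq\mathcal C_x(w^n\cdot\h\alpha)$ and that these arcs shrink to $\{x\}$ as $w^n\cdot\h\alpha\to x$; since $y_n\in\mathcal C_x(w^n\cdot\h\alpha)$ for all $n$, the conclusion follows. Your route instead exploits \emph{strict convexity of the ball} directly: visibility forces the open segment $]w^n\cdot\h\alpha,y_n[$ to stay outside the closed ball, so the midpoints lie outside it; any subsequential limit $y_*\in\h Q$ with $y_*\neq x$ would make the limiting midpoint an interior chord point, a contradiction. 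Your argument is a bit shorter, avoids the tangent-arc bookkeeping, and works verbatim for spheres of any dimension; the paper's version is more pictorial (it comes with a figure showing the shrinking arcs) and makes the monotone containment of the visible sets explicit, which is a slightly stronger intermediate statement than you need. Either way, the weak hyperbolicity enters at the same point and for the same reason: it is what produces a strictly convex body bounded by $\h Q$ with the normalized roots on the outside.
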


This property and its proof are illustrated in Figure~\ref{fig:exproof}.

\begin{proof}
  Set $W':=\langle s_\alpha, s_\beta\rangle$, so $w\in W'$, and set
  $L:=L(\h\alpha,\h\beta)$. Since $\mpair{\alpha,\beta}\leq -1$,
  $\Delta':=\{\alpha,\beta\}$ is a simple system for the root system
  $\Phi':=W'(\Delta')$.
  \medskip
  \noindent
  $(i)$ By~Equation~(\ref{equ1}) in \S\ref{ss:domgeom}, we
  know that $x$ is visible from $\h\alpha$ on the line $L$ and that
  $w^{n+1}\cdot \h\alpha \in]w^n\cdot \h\alpha,x]$. Therefore, the
  sequence $(w^n\cdot \h\alpha)_{n\in\mathbb N}$ is increasing (in
  the line $L$ ordered from $\h\alpha$ to $\h\beta$) and entirely
  contained in $[\h\alpha,x]$.  So $(w^n\cdot \h\alpha)_{n\in\mathbb
    N}$ has a limit $\ell$ in $[\h\alpha,x]$. This limit is in $E$, so
  also in $Q$ by~\cite[Theorem 2.7]{HLR}. Since $Q\cap
  [\h\alpha,x]=\{x\}$, we obtain $\ell=x$.

  \medskip
  \noindent $(ii)$ By~\cite[\S2.3]{HLR}, we can assume without loss of
  generality that $V_1$ is the transverse plane of
  Proposition~\ref{prop:sphere}, so $\h Q$ is a circle in the plane
  $V_1$. Since $y\in L(\h\alpha,\h\gamma)$ is visible from $\h\alpha$,
  we obtain by \cite[Proposition 3.6 $(ii)$ and Proposition 3.8
    $(iii)$]{HLR} that $y_n=w^n\cdot y\in L(w^n \cdot
  \h\alpha,w^n\cdot \h\gamma)$ is visible from $w^n\cdot\h\alpha$ for
  all $n\in\mathbb N$.

  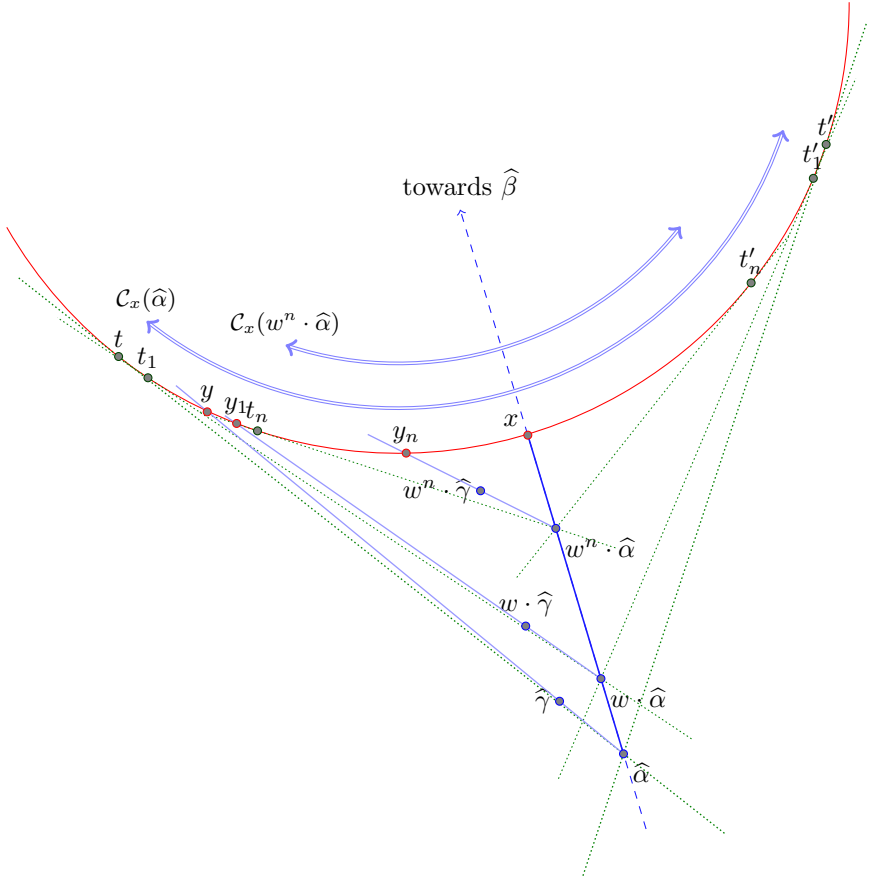
\begin{figure}[!h]
    \centering \captionsetup{width=\textwidth}
    \scalebox{1}{  \begin{tikzpicture}
  \tkzInit[xmax=5,ymax=4]

  
\tkzDefPoint(-3,0){O}  
\tkzDefPoint(3,0){A}
\tkzDefPointBy[rotation= center O angle 210](A)
 \tkzGetPoint{AA}

\tkzDrawArc[color=red](O,AA)(A)


\tkzDefPoint[label=-60:$\widehat\alpha$](0,-10){al}   
\tkzDefPoint[label=above left:$x$](-1.272,-5.76){x} 

\tkzDefPoint[label=above:towards $\h\beta$](-2.172,-2.76){xx} 

\tkzDefPoint[label=-60:$w\cdot\widehat\alpha$](-0.3,-9){A1}
\tkzDefPoint[label=-60:$w^n\cdot\widehat\alpha$](-0.9,-7){A2}

\tkzDrawLines[color=blue!90,add=0 and 0](al,x) 

\draw[color=blue!90,dashed,->] (0.3,-11) -- (-2.172,-2.76);


\tkzTangent[from=al](O,A) 
   \tkzGetPoints{t'}{f}
   \tkzGetPoints{k}{t}
\tkzDrawLines[densely dotted, color=green!50!black, line width=0.5pt](al,t al,t')    
\tkzLabelPoints[above](t,t')

\tkzDefPointBy[homothety=center O ratio .9](t) \tkzGetPoint{tt}
\tkzDefPointBy[homothety=center O ratio .9](t') \tkzGetPoint{tt'}
\tikzset{compass style/.append style={<->}}
\tkzDrawArc[color=blue!50,double](O,tt)(tt')
\tkzLabelPoint[above](tt){\small $\mathcal C_x(\widehat\alpha)$};


\tkzTangent[from=A1](O,A) 
   \tkzGetPoints{t'_1}{f1}
   \tkzGetPoints{k1}{t_1}
\tkzDrawLines[densely dotted, color=green!50!black, line width=0.4pt](A1,t_1 A1,t'_1)    
\tkzLabelPoints[above](t_1,t'_1)


\tkzTangent[from=A2](O,A) 
   \tkzGetPoints{t'_n}{f2}
   \tkzGetPoints{k2}{t_n}
\tkzDrawLines[densely dotted, color=green!50!black, line width=0.3pt](A2,t_n A2,t'_n)    
\tkzLabelPoints[above](t_n,t'_n)

\tkzDefPointBy[homothety=center O ratio .8](t_n) \tkzGetPoint{tt_n}
\tkzDefPointBy[homothety=center O ratio .8](t'_n) \tkzGetPoint{tt'_n}
\tikzset{compass style/.append style={<->}}
\tkzDrawArc[color=blue!50,double](O,tt_n)(tt'_n)
\tkzLabelPoint[above](tt_n){\small $\mathcal C_x(w^n\cdot\widehat\alpha)$};



\tkzDefPoint[label=left:$\h\gamma$](-0.85,-9.3){ga}
\tkzDrawLine[color=blue!40, line width=0.5pt,add=0 and 6](al,ga) 
\tkzInterLC(al,ga)(O,A) \tkzGetFirstPoint{y}


\tkzDefPoint[label=90:$w\cdot\h\gamma$](-1.3,-8.3){ga1}
\tkzDrawLine[color=blue!40, line width=0.5pt,add=0 and 4](A1,ga1) 
\tkzInterLC(A1,ga1)(O,A) \tkzGetFirstPoint{y_1}


\tkzDefPoint[label=left:$w^n\cdot\h\gamma$](-1.9,-6.5){ga2}
\tkzDrawLine[color=blue!40, line width=0.5pt,add=0 and 1.5](A2,ga2) 
\tkzInterLC(A2,ga2)(O,A) \tkzGetFirstPoint{y_n}

\tkzDrawPoints[color=green!25!black](t,t')
\tkzDrawPoints[color=green!25!black](t_1,t'_1)
\tkzDrawPoints[color=green!25!black](t_n,t'_n)

  \tkzDrawPoints[color=blue](ga2,ga1,ga,al,A1,A2)
  \tkzDrawPoints[color=red](x,y,y_1,y_n)
\tkzLabelPoints[above](y,y_1,y_n)

  \end{tikzpicture}}
    \caption{Illustration of the proof of
      Proposition~\ref{prop:rank3}: in red is a part of the circle $\h
      Q$ on which lives the arc $\mathcal C_x (\h\alpha)$ of all the
      points on $\h Q$ visible from $\h\alpha$. Here we adopt the
      notation $t:=t(\h\alpha)$ and $t':=t'(\h\alpha)$ for the
      intersection points of the two tangents to the circle $\h Q$
      passing through $\h\alpha$; similarly we consider $t_1:=t(w\cdot
      \h\alpha)$ and $t_n:=t(w^n\cdot \h\alpha_n)$. Note that in the
      case where $\mpair{\alpha,\beta} = -1$, the line
      $L(\h\al,\h\bt)$ is tangent to $\h Q$ and $x$ is equal to $t$ or
      $t'$.}
    \label{fig:exproof}
 \end{figure}%

  \smallskip In order to finish to prove $(ii)$, we need to cover some
  basic facts from classical Euclidean geometry. Any point $p$ outside
  the closed disk bounded by $\h Q$ has two tangent lines to $\h Q$
  passing through $p$; these tangent lines define two tangent points
  $t(p),t'(p)$.  Let $z$ be a point of the circle, visible from
  $p$. The arc $\mathcal C_z(p)$ of the circle $\h Q$ containing $z$
  and bounded by $t(p)$ and $t'(p)$ is precisely the set of elements
  in the circle that are visible from $p$, see
  Figure~\ref{fig:exproof}. It is an easy exercise to show the
  following two statements:
  \begin{itemize}
  \item for any $q\in [p,z]$, $z$ is visible from $q$, and $\mathcal
    C_z(q)\subseteq\mathcal C_z(p)$;
  \item let $(p_n)_{n\in \mathbb N}$ be a sequence of points outside
    the closed disk, such that $z$ is visible from $p_n$ for any $n$;
    if $(p_n)$ converges to $z$, then the length of the arc $\mathcal
    C_z(p_n)$ tends to $0$, and so $\bigcap_{n\in \mathbb N}\mathcal
    C_z(p_n)=\{z\}$.
  \end{itemize}

  \smallskip
  We apply these facts to our situation. For all $n\in\mathbb N$, we
  have:
  \begin{itemize}
  \item $y_n\in \mathcal C_x(w^n\cdot \h\alpha)$, since $y_n$ is
    visible from $w^n\cdot\h\alpha$;
  \item $x$ is visible from $w^n\cdot \h\alpha$ and $\mathcal
    C_x(w^{n+1}\cdot \h\alpha)\subseteq\mathcal C_x(w^n\cdot
    \h\alpha)$, by (1) (see Figure~\ref{fig:exproof}).
  \end{itemize}
  Since $w^n\cdot \h\alpha$ converges to $x$, we have therefore that
  $y_n$ converges to a limit that lives in the set
  \[
  \bigcap_{n\in\mathbb N} C_x(w^n\cdot \h\alpha)=\{x\}.
  \]
\end{proof}

\begin{proof}[Proof of Theorem~\ref{prop:Ecovdense}]
  Since $E_2(\Phi)$ is dense in $E(\Phi)$ by \cite[Thm.~4.2]{HLR}, it
  is enough to show that any $x\in E_2(\Phi)$ is the limit of a
  sequence in $\Ecov(\Phi)$.

  Let $x\in E_2(\Phi)$ and let $\alpha,\beta\in \PP$ such that $x\in
  L(\h\alpha,\h\beta)$. We can choose $\alpha,\beta$ such that
  $\mpair{\alpha,\beta}\leq -1$ and $x$ is visible from $\h\alpha$.

  \smallskip

  We have to prove that there is a sequence $(y_n)_{n\in\mathbb
    N}\subseteq \Ecov(\Phi)$ converging to $x$. Let $\gamma\in\PP$
  such that $\alpha\dprec \gamma$; for instance, any $\g\in \Phi^{+}$ with $\al\prec \g\prec s_{\al(\bt)}$, $\g\neq \al$ and $l(s_{\g})$ is minimal amongst $\g$ with these properties will do. Since $\alpha\prec \gamma$, there
  is $y\in L(\h\alpha,\h\gamma)$ such that $y$ is visible from
  $\h\alpha$ (Proposition~\ref{prop:geodomin}). We have two cases:

  \begin{enumerate}[(i)]
  \item If $\h\gamma\in L(\h\alpha,\h\beta)$, then $y\in
    L(\h\alpha,\h\gamma)= L(\h\alpha,\h\beta)$ and therefore $y=x\in
    \Ecov(\Phi)$ is the unique point in $L(\h\alpha,\h\beta)$ that is
    visible from $\h\alpha$.

  \item Assume that $\h\gamma\notin L(\h\alpha,\h\beta)$, so $x\not =
    y$. Set $V':=\Span\{\alpha,\beta,\gamma\}$, $W':=\langle
    s_\alpha,s_\beta,s_\gamma \rangle$ and
    $\Phi':=W'(\{s_\alpha,s_\beta,s_\gamma\})$. Using \cite{dyer:ref}, there is a simple
    system such that $(\Phi',\Delta')$ is a root system of rank $3$ in
    $(V',\Mpair)$ with associated Coxeter group $W'$.  Recall that
    $\mpair{\alpha,\beta}\leq -1$, so $w:=s_\alpha s_\beta$ has
    infinite order. Since $\alpha\dprec \gamma$, $\alpha\not = \gamma$
    and $\mpair{\alpha,\gamma}\geq 1$. Therefore $s_\alpha s_\gamma$
    also has infinite order. Thus $W'$ must be irreducible, of rank
    $3$, infinite and cannot be affine (since $y\not = x$).  So the
    signature of the restriction of $\Mpair$ to $V'$ is $(2,1)$.
    Since $\alpha\dprec\gamma$ in $W$, then $\alpha\prec\gamma$ in
    $W'$. Since $x$ and $y$ are visible from $\h\alpha$,
    Proposition~\ref{prop:rank3} applies to our situation: there is a
    sequence $y_n:=w^n\cdot y$ in $E(\Phi')\subseteq E(\Phi)$ that
    converges to $x$; moreover, $y_n\in \h Q\cap L(w^n\cdot
    \h\alpha,w^n\cdot \h\gamma)$ for all $n\in\mathbb N$. Since cover
    of dominance is preserved under the action of $W$, we have
    $w^n(\alpha)\dprec w^n(\gamma)$ and therefore $y_n\in \Ecov(\Phi)$
    for all $n\in\mathbb N$.
   \end{enumerate}
\end{proof}

 \section{Faithfulness of the action on limit roots and universal Coxeter groups} \label{se:univ}
%

 Let $(\Phi,\Delta)$ be a based
root system in $(V,\Mpair)$ with associated Coxeter system $(W,S)$.  A
question that naturally arose in~\cite[Remark 3.4]{HLR} is: is the
$W$-action on the set $E$ of limit roots faithful?

Obviously, if $\Phi$ is finite, it is not the case since $E$ is
empty. If $\Phi$ is of affine type or is indefinite dihedral, then $E$ is finite whereas $W$ is
infinite, so the action cannot be faithful either.  If  $\Phi$ is not irreducible, and we write 
the decomposition in irreducible parts $\Phi=\bigsqcup_{i=1}^p
\Phi_i$, $W=W_1\times \dots \times W_p$, then one sees from
Remark~\ref{rem:irred} that the $W$-action on $E(\Phi)$ is faithful if and only if the $W_{i}$ action on each $E(\Phi_{i})$ is faithful.
 The answer to the question  is therefore given by the following result, the proof of which is one of the aims of this section. \begin{thm}\label{thm:faithful} Assume that $(\Phi,\Delta)$ is irreducible of indefinite type, of rank $\geq 3$. Then the  $W$-action on $E$ is faithful.
\end{thm}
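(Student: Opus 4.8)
The plan is to identify the kernel $K:=\{w\in W\mid w\cdot x=x \text{ for all }x\in E\}$ of the action and show $K=\{e\}$. Since the set of limit roots and the $W$-action on it are insensitive to extension and restriction of the ambient space (Remark~\ref{rem:SingularB}), I may assume $V=\Span(\Delta)$; and since the geometric representation $W\to\GL(V)$ is faithful (classical, see the references in \S\ref{se:Intro}), it suffices to prove that every $w\in K$ acts as the identity on $V$. Fix such a $w$. For $x\in E\subseteq D^{+}$ the relation $w\cdot x=\widehat{w(x)}=x$ forces $w(x)=\lambda(x)\,x$ with $\lambda(x)=\vphi(w(x))>0$, so each $x\in E$ spans a $w$-eigenray with positive eigenvalue. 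As $\Span(E)=V$ by Proposition~\ref{prop:aff}, these eigenvectors span $V$; hence $w$ is diagonalizable with positive real eigenvalues, and its eigenvalues are exactly the values taken by the map $\lambda\colon E\to\real_{>0}$. Being continuous with finite image, $\lambda$ is locally constant.

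Next I would exploit the eigenspace decomposition. Write the distinct eigenvalues as $\lambda_{1},\dots,\lambda_{m}$, with eigenspaces $U_{1},\dots,U_{m}$, and set $E_{i}:=E\cap\mathbb{P}(U_{i})=\lambda^{-1}(\lambda_{i})$. Each $E_{i}$ is nonempty (every eigenvalue is realised on $E$) and clopen in $E$ (a fibre of a locally constant map; equivalently, the $\mathbb{P}(U_{i})$ are pairwise disjoint compacts, hence at positive distance in $\mathbb{P}(V)$). Since $V=\bigoplus_{i}U_{i}$ while $\sum_{i}\Span(E_{i})\supseteq\Span(E)=V$ and $\Span(E_{i})\subseteq U_{i}$, I obtain $\Span(E_{i})=U_{i}$ for every $i$. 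Thus, if $w$ were not scalar, some clopen nonempty $E_{i}$ would linearly span only the proper subspace $U_{i}\subsetneq V$.

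The crux — and the main obstacle — is therefore to prove that \emph{every nonempty open subset $O$ of $E$ linearly spans $V$}: this forces $U_{i}=V$ for all $i$, hence $m=1$ and $w=\lambda\,\Id$. This is precisely where the hypothesis rank $\geq 3$ enters, since for the infinite dihedral case $E$ is two points and a one-point open set spans only a line, so faithfulness genuinely fails there. To establish the spanning property I would combine minimality (Theorem~\ref{cor:minimal}) with the self-similar contraction of Theorem~\ref{thm:fractact}: choosing $x\in O$, a basis $e_{1},\dots,e_{d}$ of $V$ ($d=\dim V$) taken from $E\setminus(\ol{Z}\cap x^{\perp})$, and a sequence $\widehat{\alpha_{k}}\to x$, part~(b) of that theorem gives $s_{\alpha_{k}}\cdot e_{j}\to x$ for each $j$; as each $s_{\alpha_{k}}$ is a linear isomorphism it keeps the $d$ rays $s_{\alpha_{k}}\cdot e_{1},\dots,s_{\alpha_{k}}\cdot e_{d}$ independent, and for large $k$ they all lie in $O$, forcing $\Span(O)=V$. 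The one delicate point is to secure a basis of $V$ inside $E\setminus x^{\perp}$ (equivalently, that the limit roots not orthogonal to $x$ still span $V$): for hyperbolic type this is immediate because $E=\h Q$ is a full $(n-2)$-sphere and a nonempty open piece of a nondegenerate sphere of dimension $\geq 1$ is never flat (Theorem~\ref{thm:Qincl}), and in the general indefinite case one reduces to this using the fractal description of $E$ as the closure of the $W$-orbit of the isotropic spheres $\h{Q_{I}}$ of the generating facial subsystems (Corollary~\ref{cor:fractal1}), each spanning $\Span(\Delta_{I})$, together with Lemma~\ref{lem:exp} to control orthogonality.

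Finally, once $w=\lambda\,\Id$ is known, I would conclude quickly: $w$ preserves the nonzero form $\Mpair$, so $\lambda^{2}\mpair{u,v}=\mpair{\lambda u,\lambda v}=\mpair{u,v}$ for all $u,v$ yields $\lambda^{2}=1$, and $\lambda>0$ gives $\lambda=1$, i.e.\ $w=\Id$ on $V$. By faithfulness of the geometric representation, $w=e$; hence $K=\{e\}$ and the $W$-action on $E$ is faithful.
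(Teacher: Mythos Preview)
Your overall architecture coincides with the paper's: reduce faithfulness to the claim that every nonempty open subset of $E$ linearly spans $V=\Span(\Delta)$, then run the eigenvector argument. You also correctly identify Theorem~\ref{thm:fractact} as the contraction tool that pushes a spanning finite subset of $E\setminus x^{\perp}$ into a given open neighbourhood of $x$, and your endgame ($w=\lambda\,\Id$ with $\lambda>0$, hence $\lambda=1$ via $\Mpair$-invariance) is clean and in fact slightly slicker than the paper's $\det(w)=\pm 1$ plus $-\Id\notin W$ argument.

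The genuine gap is exactly at your ``delicate point'': securing a basis of $V$ inside $E\setminus x^{\perp}$. Your proposed route for the non-hyperbolic indefinite case --- via Corollary~\ref{cor:fractal1} and Lemma~\ref{lem:exp} --- does not close it. Lemma~\ref{lem:exp} produces only a \emph{single} limit root off $x^{\perp}$; the generating subsystems $\Delta_I$ of Corollary~\ref{cor:fractal1} may all be affine (contributing isolated points), and even hyperbolic ones give spheres $\h{Q_I}$ that span only the proper subspace $\Span(\Delta_I)$. Nothing in your sketch assembles these pieces into a full basis of $V$ avoiding $x^{\perp}$, nor rules out that $E\subseteq x^{\perp}\cup H$ for some second hyperplane $H$. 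The paper fills this gap by a different device: it first establishes the existence of a \emph{generic universal} root subsystem $(\Lambda,\Psi)$ with $\Span(\Psi)=\Span(\Delta)$ (Proposition~\ref{largeuniv}, proved by a nontrivial ``condensation'' induction, Lemma~\ref{condens}). From $\Psi=\{\al_1,\dots,\al_n\}$ one obtains the $n(n-1)$ dihedral limit roots $u_{Q}(\h\al_i,\h\al_j)\in E$, and an elementary argument on the edges of the simplex $\conv(\h\Psi)$ shows this finite set $P$ satisfies $\aff(P\setminus H)=\aff(E)$ for \emph{every} affine hyperplane $H$ (Lemma~\ref{genpos}). That is precisely the missing ingredient that makes your contraction step via Theorem~\ref{thm:fractact} go through; it is the real content behind the rank $\geq 3$ hypothesis, and your outline does not supply a substitute for it.
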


We actually prove a stronger property: for any open set $U$ with $U\cap E\neq \eset$, if some
$w\in W$ fixes $U\cap E$ pointwise, then $w=1$
(Theorem~\ref{thm:faithful2}(c)).

\medskip

The main ingredient of the proof of Theorem~\ref{thm:faithful} is the
following existence property. Given a based root system
$(\Phi,\Delta)$ (irreducible, indefinite) with Coxeter group~$W$, one
can find a root subsystem $(\Phi',\Delta')$ of $(\Phi,\Delta)$ such
that $\Span(\Delta')=\Span(\Delta)$ and  for all $\alpha , \beta \in \Delta'$ ($\al\neq\bt$),
$\mpair{\alpha,\beta}<-1$ (Proposition~\ref{largeuniv}). This implies
the existence of non-dihedral universal  reflection subgroups of $W$  (see \S\ref{ss:refsg} for the definitions).

Refinement of the proof
 leads to a positive answer to another important question, raised
by the first author in~\cite[Question 9.8]{dyer:imc}: can one
approximate, with arbitrary precision, the set of limit roots (resp.,
imaginary convex body) of $\Phi$ with the sets of limit roots (resp.,
imaginary convex body) of its root subsystems associated to reflection
subgroups which are universal Coxeter groups. Actually, we extend the result in two directions.  First, we establish a more general result on similar  approximations of arbitrary faces of the imaginary convex body. Second,  we also  consider approximations by  finite subsets of  the $W$-orbit of a point in the imaginary convex body, and their convex closures. In order to be able to
state precisely these results, we first collect in \S\ref{ss:refsg} some
definitions and known properties of reflection subgroups and universal
Coxeter groups. In \S\ref{ss:distance} we recall the definition of the
Hausdorff metric on compact sets, and state the approximation theorem
(Theorem~\ref{ZEapprox}).

We then continue to the core of this section, starting the steps of
the proofs of Theorems~\ref{thm:faithful} and~\ref{ZEapprox}. In
\S\ref{ss:exist} we state Proposition~\ref{largeuniv} mentioned above,
concerning the existence of reflection subgroups of $W$ that are
universal Coxeter groups. In \S\ref{ss:lines} we give some notations
 and facts related to the way lines in $V_1$
intersect $\h Q$; they will be helpful in shortening the proofs in the
following subsections. In \S\ref{ss:faithful} we check the faithfulness
of the $W$-action on $E$ (Theorem~\ref{thm:faithful}), by proving a
stronger result (Theorem~\ref{thm:faithful2}); the main component of
the proof of which is the existence of universal Coxeter subgroups from
Proposition~\ref{largeuniv}. Among the consequences of this stronger
theorem are also the facts that $E$ has no isolated points, and that
$E$ is not contained in any countable union of proper affine subspaces
of $\aff(E)$. In
\S\ref{ss:cardEext}-\ref{ss:cantor} we state and prove two other
direct consequences of Theorem~\ref{thm:faithful2}: $\ol{Z}=\conv(E)$
has uncountably many extreme points, and $E$ contains a subset
homeomorphic to the Cantor set (in particular, $E$ has the cardinality of $\real$). 
In the last part,
\S\ref{ss:proofapprox}, the proof of the approximation
theorem~\ref{ZEapprox} is completed, using Theorem~\ref{thm:faithful2}
and the tools introduced in \S\ref{ss:lines}. 
Although some of the steps of the proof are  a bit technical,  they
are always constructed from a geometric intuition which is explicitly given.

 \emph{From  \S\ref{ss:faithful} on in this section, we assume unless otherwise stated that $(\Phi,\Delta)$ is an irreducible based root system of indefinite type and rank at least three.}

\subsection{Reflection subgroups and universal Coxeter groups}
\label{ss:refsg}
 Let $(W,S)$ be a Coxeter group. The set
 $T$ of reflections of $(W,S)$ is the conjugacy closure of $S$. A
 \emph{reflection subgroup} $W'$ of $(W,S)$ is a subgroup of $W$
 generated by the reflections it contains, i.e., $W'=\left< W'\cap
 T\right>$. It is easy to see that  $T=\{s_\beta\,|\,\beta\in \PP \}$, so  reflection subgroups
  are  subgroups of $W$ that naturally associated with subsets of $\PP$.
   Let us discuss this relation a bit more (see \cite[\S3.3]{dyer:ref}, or also \cite{bonnafe-dyer}, for more details).
 
 Let $(\Phi,\Delta)$ be a based root system associated to $(W,S)$. For $A\subseteq \PP$, we consider the  reflection subgroup $W'$ generated by the reflections associated to $A$: $W'= \left< s_\beta,\ \beta\in A\right>$. Set:
\[
\Phi'= \Phi_{W'}:=\{\al \in \Phi \mid s_\al \in W'\}\ \text{ and }\ \Phi_{W'}^+:=\Phi_{W'}\cap \Phi^+.
 \]
 To find a set of canonical generators for $W'$, we will first build a simple system for $\Phi_{W'}$. Let 
 \[
 \Delta'= \Delta_{W'}:=\left\{\al \in \Phi_{W'}^+ \,|\, s_\al ( \Phi_{W'}^+\setminus\{\al\}) = \Phi_{W'}^+\setminus\{\al\} \right\}.
 \]
Set $S'=S_{W'}=\{s_\alpha\,|\, \alpha\in \Delta_{W'}\}$, then $(\Phi_{W'},\Delta_{W'})$ is a based root system in $(V,\Mpair)$ with positive roots $\Phi_{W'}^+$ and associated Coxeter system $(W',S')$ (see \cite[Lemma~3.5]{bonnafe-dyer}). Any based root system arising this way will be called below a \emph{root subsystem} of $(\Phi,\Delta)$. In particular, facial root subsystems defined in \S\ref{se:fractal} are  examples of root subsystems. 

\medskip

Note that, even when $\Phi$ is the standard root system of $(W,S)$ and $S$ is finite, $\Delta_{W'}$ may be linearly dependent, and one may have $\mpair{\alpha,\beta}<-1$ for some $\alpha,\beta \in \Delta_{W'}$. This is the main reason for having considered from the beginning of this article (and already in \cite{HLR} and \cite{dyer:imc}) a larger class of root systems than the usual one (see for instance \cite[\S5.1]{HLR} for some examples). Actually $\Delta_{W'}$ may even be infinite. When it is finite, the reflection subgroup (resp., the root subsystem) is said to be of finite rank, or finitely generated.

\medskip

The following fact (from \cite[Theorem~4.4]{dyer:ref}) is fundamental: given a subset~$\Gamma \subseteq \PP$, its associated reflections $R=\{s_\gamma \mid \gamma \in \Gamma\}$ and  reflection subgroup $W'=\left< R\right>$, one has $R=S_{W'}$, i.e., $\Gamma=\Delta_{W'}$, if and only if 
\begin{equation*}
\text{for all distinct }\alpha,\beta \in \Gamma, \mpair{\alpha,\beta} \in \ ]-\oo,-1] \cup
    \{-\cos\left(\frac{\pi}{k}\right), k\in \mathbb Z_{\geq 2} \}.
\end{equation*} 
This is equivalent to saying that $\Gamma$ satisfies the axiom (ii) of a simple system seen in the introduction of~\S\ref{se:imc}. Geometrically, this means that $\h\Gamma$ 
 is the set of extreme points of $\conv(\h{\Phi'})$.

\medskip

A Coxeter group with no non-trivial braid relations, canonically
isomorphic to the free product of cyclic groups of order two generated
by its simple reflections, is called below a \emph{universal Coxeter
  group}. It is the free object for Coxeter groups.  Given the characterization above, a reflection subgroup
$W'$ of $W$ is universal if and only if $\mpair{\al,\bt}\leq -1$ for
all distinct $\al,\bt\in \Delta_{W'}$. It is easily seen that any
reflection subgroup of a universal Coxeter group is universal.  If all the $\mpair{\al,\bt}$ are equal, then a simple computation of eigenvalues shows that the root system is weakly hyperbolic. Otherwise, it is not always the case; see for instance the example of Figure~\ref{fig:nonhyp} and also \cite[Example~1.4]{dyer:imc}. We shall say that a based root system $(\Phi, \Delta)$ is \emph{generic universal} if $\mpair{\al,\bt}<-1$ for all distinct $\al,\bt\in \Delta$.
     
\subsection{Approximation of $E$ and $\ol{Z}$ using reflection subgroups}
\label{ss:distance}

Let $\mathbf{X}$ denote the set of non-empty compact subsets of
$V$. There is a natural distance on $\mathbf{X}$, called the
\emph{Hausdorff metric}, that may be defined as follows (see for
instance \cite[\S2.7]{webster} for details of the definition and
proofs of the few basic properties needed here).  Fix a norm
$\Vert\cdot \Vert$ on $V$ (inducing the standard topology on $V$). For
$K\in \mathbf{X}$ and $\epsilon\in \real_{\geq 0}$, define the
$\epsilon$-neighbourhood of $K$ (see \cite[Fig.~2.12]{webster}):
\begin{equation*} K_{\epsilon}:=\mset{v\in V\mid \Vert v-a\Vert
    \leq \epsilon \text{ \rm for some } a\in K}.
\end{equation*} The Hausdorff metric $\dist \colon \mathbf{X}\times \mathbf{X}\to \real$ on $\mathbf{X}$ is defined by 
\begin{equation*}
 \dist(K,L):=\inf(\mset{\epsilon\in \real_{\geq 0}\mid K \subseteq
   L_{\epsilon} \text{ \rm and }L \subseteq K_{\epsilon}}), \text{ \rm
   for }K,L\in \mathbf{X}.
\end{equation*}  
It is well known that $\dist$ is a metric on $\mathbf{X}$, that the resulting
topology on $\mathbf{X}$ is independent of the choice of norm on $V$ and that
\begin{equation}
\label{eq:distconv}
 \dist(\conv(K),\conv(L))\leq \dist(K,L), \text{ \rm for }K,L\in \mathbf{X}.
\end{equation} 
Another simple property used later in this section is that for $K,L\in \mathbf{X}$
and $\epsilon_{1},\epsilon_{2}\in \real_{\geq 0}$ with $L \subseteq
K_{\epsilon_{2}}$, one has $L_{\epsilon_{1}} \subseteq
K_{\epsilon_{1}+\epsilon _{2}}$. Finally, we shall use the fact that
  \begin{equation}\label{eq:uniondist} \dist(\bigcup_{i=1}^{n}K_{i},\bigcup_{i=1}^{n}L_{i})\leq \max(\mset{\dist(K_{i},L_{i})\mid i=1,\ldots, n})\end{equation} if $K_{i},L_{i}\in \mathbf{X}$ for $i=1,\ldots, n$, where $n>0$.

The second main result of this section is the theorem below. In the special case in which $F=\ol{Z}$, parts of (a)  were raised as a question in \cite[Question 9.8]{dyer:imc} and
were previously established in the case of hyperbolic $W$ by Tom Edgar in
\cite{edgar}.
 
\begin{thm}
  \label{ZEapprox}
  Assume that $(\Phi,\Delta)$ is an irreducible based root system of
  indefinite type. Abbreviate $Z:=Z(\Phi)$ and $E:=E(\Phi)$.  Fix  a face $F$  of $\ol{Z}$ (e.g. $F=\ol{Z}$) and any  $\epsilon >0$.
  
 \begin{num}
 \item There is a finite rank  based root subsystem $(\Phi',\Delta')$  with associated reflection
  subgroup $W'$ of $W$ such that, writing $E'=E(\Phi')$ and $Z'=Z(\Phi')$:
  \begin{subconds}
  \item $(\Phi',\Delta')$ is  generic  universal, so $W'$ is a universal Coxeter group;
  \item $\Span(\Delta')=\Span(\Delta)$;
  \item $\dist(E',F\cap E)<\epsilon$,
  $\dist(\h{\Phi'}\cup E',F\cap E)<\epsilon$ and  $\dist(\wh {\Delta'},F\cap E)<\epsilon$;  \item $\dist\left(\ol{Z'},F\right)<\epsilon$ and 
   $\dist\left(\conv(\h{\Delta'}),F\right)<\epsilon$.
  \end{subconds}  
  Moreover, given any non-empty set of $W$-orbits on $\Phi$, one may
  chose $\Delta'$ so it contains roots from those $W$-orbits and no
  others.
  \item  For any   $z\in \ol{Z}$, there exists  a finite subset $G\seq W\cdot z$   with  
  $\Span(G)=\Span(\Delta)$, $\dist\left(G,F\cap E\right)<
  \epsilon$ and   $\dist\left(\conv(G),F\right)<
  \epsilon$.\end{num}
If $\Phi$ has rank at  least three, one may in addition, for any 
$m\in \Nat$, choose $W'$ and~$G$ above so $W'$ has rank at 
least $m$ and~$G$ has cardinality at least  $m$. 
\end{thm}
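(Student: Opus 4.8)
Both parts rest on the same two ingredients: the minimality of the $W$-action (Theorem~\ref{cor:minimal}) together with its consequence $\conv(E)=\ol{Z}$ (Theorem~\ref{thm:imc-closure}), and the existence of a generic universal root subsystem spanning $\Span(\Delta)$ (Proposition~\ref{largeuniv}). Throughout I will use that a face $F$ of $\ol{Z}$ satisfies $F=\conv(F\cap E)$ (its extreme points lie in $\Eext\seq E$), that $F\cap E$ is compact, that a root subsystem $(\Phi',\Delta')$ has $E(\Phi')\seq E(\Phi)$ and $\ol{Z(\Phi')}=\conv(E(\Phi'))\seq\conv(\h{\Delta'})$, and the Hausdorff estimates \eqref{eq:distconv}--\eqref{eq:uniondist}. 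By Remark~\ref{rem:SingularB} I may assume $\Mpair$ non-degenerate.

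\textbf{Part (b).} The plan here is purely soft. By Theorem~\ref{cor:minimal}(a) one has $\ol{W\cdot z}\sreq E\sreq F\cap E$, so every point of the compact set $F\cap E$ is a limit of points of $W\cdot z$. First I would fix a finite $(\e/2)$-net of $F\cap E$ and replace each net point by a point of $W\cdot z$ within $\e/2$, producing a finite $G_{0}\seq W\cdot z$ with $\dist(G_{0},F\cap E)<\e$; then \eqref{eq:distconv} and $\conv(F\cap E)=F$ give $\dist(\conv(G_{0}),F)<\e$. To upgrade $G_{0}$ to a spanning set $G$ I would adjoin finitely many further points of $W\cdot z$ lying within $\e$ of $F\cap E$: for rank at least three, Theorem~\ref{thm:faithful2} shows $E$ affinely spans $\aff(\h\Delta)$ in every neighbourhood of any of its points, so orbit points near $F\cap E$ can be chosen to linearly span $\Span(\Delta)$; the remaining low-rank cases are immediate since distinct points of $V_{1}$ accumulating at a single limit root already span. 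Enlarging $G_{0}$ this way preserves both Hausdorff bounds, and refining the net gives the cardinality addendum.

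\textbf{Part (a).} The plan is to realize $\Delta'$ as a family of deep roots shadowing a finite net of $F\cap E$. First I would choose a finite $\dt$-net $\mset{y_{1},\dots,y_{k}}\seq F\cap E$ (so $\dist(\mset{y_{i}},F\cap E)<\dt$ and, by \eqref{eq:distconv}, $\dist(\conv\mset{y_{i}},F)<\dt$), arranged so that $\mpair{y_{i},y_{j}}<0$ for all $i\neq j$. The inequality $\mpair{y_{i},y_{j}}\leq 0$ is automatic from \eqref{eq:ineqICB}; \emph{strictness} is the first delicate point: if $\mpair{y_{i},y_{j}}=0$, then (were $y_{j}$ orthogonal to a whole neighbourhood of $y_{i}$ in $E$) the local affine spanning of $E$ from Theorem~\ref{thm:faithful2} together with non-degeneracy of $\Mpair$ would force $y_{j}$ into the radical of $\Mpair$, which is impossible, so the net can be perturbed within $E$ into general position (for a proper face this uses the analogous local structure of $F\cap E$). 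Next, writing $\h\alpha=\alpha/\vphi(\alpha)$ and using $\vphi\to\oo$ along roots approaching a limit root, the identity $\mpair{\alpha,\beta}=\vphi(\alpha)\vphi(\beta)\,\mpair{\h\alpha,\h\beta}$ shows $\mpair{\alpha,\beta}\to-\oo$ whenever $\h\alpha\to y_{i}$ and $\h\beta\to y_{j}$ with $i\neq j$; hence choosing roots $\alpha_{i}$ with $\h\alpha_{i}$ close enough to $y_{i}$ forces $\mpair{\alpha_{i},\alpha_{j}}<-1$ for all $i\neq j$. By the characterization of $\S\ref{ss:refsg}$, $\Delta':=\mset{\alpha_{1},\dots,\alpha_{k}}$ is then the canonical simple system of a generic universal reflection subgroup $W'$, giving (i). Since $E=\Acc(W\cdot\alpha)$ for every $\alpha\in\Phi$ (Theorem~\ref{cor:minimal}(c)), the $\alpha_{i}$ may be drawn from any prescribed non-empty set of $W$-orbits, and then $\Phi'\seq\bigcup_{i}W\alpha_{i}$ meets only those; the spanning condition (ii) is arranged by including enough net directions.

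\textbf{Estimating $E'$, $\ol{Z'}$, and the main obstacle.} It remains to verify (iii)--(iv), and here lies the crux. One inclusion is soft: $E'=E(\Phi')\seq E$ and $\ol{Z'}=\conv(E')\seq\conv(\h{\Delta'})\seq F_{\dt}$ (the $\dt$-neighbourhood of the convex set $F$), so by compactness $E'\seq E\cap F_{\dt}\seq(F\cap E)_{\e}$ for $\dt$ small, and $\ol{Z'}\seq F_{\e}$. For the reverse inclusions I would note that for each pair $i\neq j$ the chord $L(\h\alpha_{i},\h\alpha_{j})$ joins two points close to $y_{i},y_{j}\in\h Q$, hence meets $\h Q$ in two points within $O(\dt)$ of $y_{i}$ and of $y_{j}$; these are dihedral limit roots of $\Phi'$ (cf.\ \eqref{equ1}), so each $y_{i}$ is within $O(\dt)$ of $E'$, giving $F\cap E\seq(E')_{\e}$ and, after taking convex hulls, $F\seq(\ol{Z'})_{\e}$. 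The genuinely technical step — the one I expect to be the main obstacle — is to keep the \emph{entire} set $\h{\Phi'}\cup E'$ inside $(F\cap E)_{\e}$, since a priori a root of $\Phi'$ could stray into the interior of $\conv(\h{\Delta'})$, far from $F\cap E$. Here the quantitative line-intersection estimates of $\S\ref{ss:lines}$, combined with the contraction furnished by Theorem~\ref{thm:fractact} (a deep reflection $s_{\alpha_{i}}$ collapses $\ol{Z}$ off the face $y_{i}^{\perp}$ into a small neighbourhood of $y_{i}$), should show that $W'$ acts as a contracting system all of whose root images and whose whole limit set $E'$ remain within $O(\dt)$ of $\mset{y_{1},\dots,y_{k}}$; taking the $\h\alpha_{i}$ deep enough makes this $O(\dt)$ less than $\e$. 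Assembling these through \eqref{eq:distconv}--\eqref{eq:uniondist} gives (iii)--(iv). Finally, for rank at least three $E$ is infinite, so arbitrarily fine nets with pairwise-negative products exist and the lower bounds $m$ follow by enlarging the net; this fails in rank two precisely because every reflection subgroup is then dihedral.
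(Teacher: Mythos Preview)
Your overall architecture matches the paper's: approximate $F\cap E$ by a finite net $\{y_1,\dots,y_k\}\subseteq E$ with pairwise strictly negative inner products (this is Lemma~\ref{mainlem}(a), proved exactly by the local-spanning-plus-nondegeneracy argument you sketch, though done inductively one coordinate at a time rather than by a single ``perturbation into general position''), then pick roots $\rho_i$ with $\h\rho_i$ near $y_i$ so that $\mpair{\rho_i,\rho_j}<-1$, and set $\Delta'=\{\rho_i\}$. Part~(b) is likewise the same as the paper's, which invokes Corollary~\ref{cor:acc-imc}(b). One small simplification: the $y_i$ need not lie in $F\cap E$ itself, only in $E$ within $\delta$ of $F\cap E$, so you do not need any ``analogous local structure of $F\cap E$'' for a proper face.

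There is, however, a real gap at exactly the point you flag as the main obstacle: controlling \emph{all} of $\h{\Phi'}\cup E'$, not just the dihedral limit roots on the edges of $\conv(\h{\Delta'})$. Your proposed fix via the contraction of Theorem~\ref{thm:fractact} is not how the paper proceeds, and it is not clear it would close the gap without substantial extra work (that theorem contracts towards a \emph{single} limit root and says nothing directly about iterates of several $s_{\rho_i}$ in $W'$). The paper instead proves a clean structural fact about generic universal root systems (Proposition~\ref{prop:decompGeneric} and its consequence Lemma~\ref{genuni}): if $(\Phi',\Delta')$ is generic universal and one sets
\[
D_i:=\conv\bigl(\{\h\rho_i\}\cup\{u_Q(\h\rho_i,\h\rho_j)\mid j\neq i\}\bigr),
\]
then $\h{\Phi'}\cup\bigl(\h Q\cap\conv(\h{\Delta'})\bigr)\subseteq\bigcup_i D_i$. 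Since $E'\subseteq\h Q\cap\conv(\h{\Delta'})$, this immediately gives $\h{\Phi'}\cup E'\subseteq\bigcup_i D_i$. Now Lemma~\ref{mainlem}(b) arranges not only $\h\rho_i\in U_i$ but also $u_Q(\h\rho_i,\h\rho_j)\in U_i$ for every $j\neq i$; since the $U_i$ are balls (hence convex), $D_i\subseteq U_i$, and the three estimates in (a)(iii) follow from~\eqref{eq:uniondist}. This decomposition is the missing key lemma; once you have it, no dynamical argument is needed.
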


\begin{figure}[!h]
\begin{minipage}[b]{\linewidth}
\centering
\scalebox{1}{\input{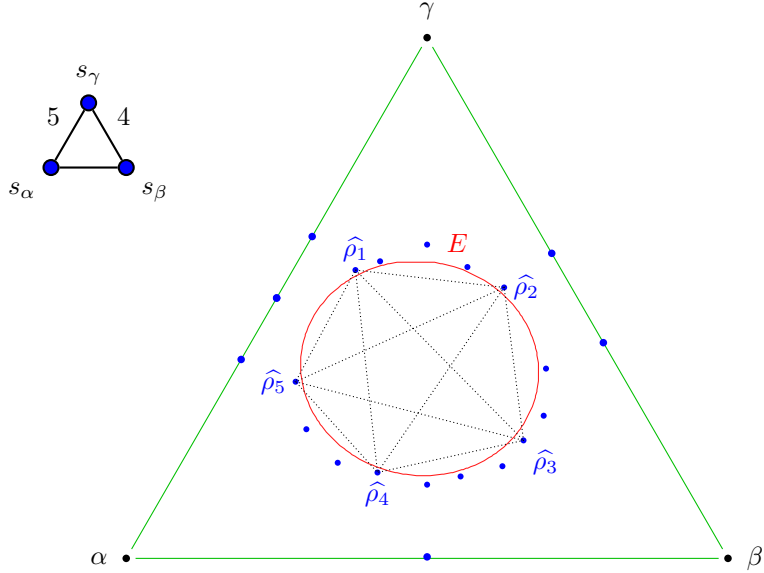}}
\end{minipage}%
   \caption{The geometric intuition behind Theorem~\ref{ZEapprox} when $F=\ol Z$, on
     a simple example (see discussion below the theorem): for any
     $\epsilon>0$, one choose a subset of positive roots
     $\Delta'=\{\rho_1,\dots, \rho_k\}$, such that $\h{\Delta'}$ is
     close enough to $E$ and numerous enough in order to verify
     $\mpair{\rho_i,\rho_j}<-1$ for all $i\neq j$, 
     and~$\dist\left(\conv(\h{\Delta'}),\conv(E(\Phi))\right)<\epsilon$.}
  \label{fig:approx}
\end{figure}

Note that $\mpair{\al,\bt}<-1$ if and only if the segment $[\h \alpha,
  \h \beta]$ intersects $\h Q$ in two distinct points (see for
instance \cite[Figure~3]{HLR}). In particular, in this case they
provide two limit points. In Figure~\ref{fig:approx} we give a
schematic representation of Theorem~\ref{ZEapprox} in the case where the face 
$F$ is $\ol{Z}$ (to which we temporarily restrict attention for
simplicity).  The ellipse represents the normalized isotropic cone $\h
Q$, which in general contains $E$, and in the example of the picture
is exactly $E$. The idea is to choose a sufficiently numerous subset
$\Delta'$ of roots such that $\h{\Delta'}$ is close enough to $\h Q$
that the line joining any two of the roots cuts $\h Q$ in two
points. It is intuitively clear that one could do this if, say, $\h Q$
really is (as in the diagram) the boundary of some strictly convex
body with the roots outside it (this is essentially the hyperbolic
case).  If one can do this, one has (i) automatically since the
segment between distinct normalized roots in $\h{\Delta'}$ cuts $\h Q$
in two points. The main subtlety is that the strict convexity need not
hold: for example, $\h Q$ may contain affine subspaces of positive
dimension (recall Proposition~\ref{prop:bounded}) and choosing roots
close to two limit roots in such an affine subspace does not guarantee
that the segment joining the roots cuts $\h Q$ (since the subspace is
flat).  In fact, even a (non-strictly) convex body need not exist with
properties as above (recall from \cite[Example 1.4]{dyer:imc} that
there are only very weak restrictions on the signature of $\Mpair$ on
$\Span(\Delta)$).  However, the part of $\h Q$ near limit roots behaves
enough like such a body for the proof to go through (see for example Figure~\ref{fig:nonhyp}).  
Since $E$ is the set of limit points of $\h{\Phi}$, it is intuitively reasonable that,
given some technical device to get around the subtleties, one should
be able to choose $\Delta'$ large enough so that $\h{\Delta'}$ is
arbitrarily close to $E$ and $\Delta'$ has the same span as $\Delta$,
giving (ii) and the last part of (iii) (with $F=\ol{Z}$).  The other
parts of (iii) hold since $E'$, $\wh \Phi'\cup E'$ and $\wh{ \Delta'}$
are automatically close for generic universal $\Phi'$ for which the
set of limit roots of rank two standard parabolic subsystems is
sufficiently close to $\wh{ \Delta'}$ (see Lemma \ref{genuni}) and the
position of the latter limit roots can be controlled.  Then part (iv)
follows from (iii) by inequality \eqref{eq:distconv}.  These intuitive
geometric arguments will be made rigorous in \S\ref{ss:exist}
and~\S\ref{ss:proofapprox}.

\begin{rem} 
   We could state an equivalent version of Theorem~\ref{ZEapprox} in
   the context of closed but possibly non-convex cones, 
   by replacing  each subset of $V_{1}$ in the statement of the theorem  by the union of rays through its points and  defining a distance on the set of closed non-empty, nonzero (possibly non-convex) cones included in
   $\cone(\Delta)$ by  $\dist'(C,C'):=\dist(C\cap V_1,C'\cap V_1)$. 
  It is easily seen that the resulting topology is independent of choice of $V_{1}$.  
\end{rem}

The rest of this section is now devoted to the proofs of
Theorem~\ref{thm:faithful} and Theorem~\ref{ZEapprox}.

\subsection{Existence of reflection subgroups that are universal Coxeter groups} 
\label{ss:exist}

The following proposition proves the existence of reflection subgroups
that are universal Coxeter groups; it corresponds to parts (i)-(ii) of
Theorem~\ref{ZEapprox}.

\begin{prop}
  \label{largeuniv}
  Assume $(\Phi,\Delta)$ is irreducible of indefinite type and rank
  two or greater. Then there exists a generic, universal based root
  subsystem $(\Lambda,\Psi)$ of $(\Phi, \Delta)$ such that
  $\Span(\Psi)=\Span(\Delta)$.  In particular $W_\Psi$ is a universal
  Coxeter group.
\end{prop}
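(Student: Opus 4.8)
The plan is to produce the required simple system $\Psi$ by approximating a suitable spanning family of limit roots by genuine roots. By Remark~\ref{rem:SingularB} I may assume the system is spanning, $V=\Span(\Delta)$, and set $n=\dim V$; since the rank is at least $2$ and the type is indefinite, $E$ is infinite and $\Span(E)=V$ by Proposition~\ref{prop:aff}. The characterization recalled in \S\ref{ss:refsg} (from \cite[Thm.~4.4]{dyer:ref}) reduces everything to exhibiting a finite set $\Psi\seq\PP$ with $\mpair{\rho,\rho'}<-1$ for all distinct $\rho,\rho'\in\Psi$ and with $\Span(\Psi)=V$: such a $\Psi$ is automatically the canonical simple system $\Delta_{W_\Psi}$ of $W_\Psi=\langle s_\rho\mid\rho\in\Psi\rangle$, and then $(\Phi_{W_\Psi},\Psi)$ is generic universal with $W_\Psi$ a universal Coxeter group.

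The approximation mechanism is as follows. For $\rho,\rho'\in\PP$ one has $\mpair{\rho,\rho'}=\vphi(\rho)\vphi(\rho')\,\mpair{\h\rho,\h{\rho'}}$ with $\vphi(\rho),\vphi(\rho')>0$. If $\h\rho\to x$ and $\h{\rho'}\to x'$ with $x\neq x'$ in $E$ and $\mpair{x,x'}<0$, then $\vphi(\rho),\vphi(\rho')\to\infty$ (as $x,x'$ are isotropic while $\mpair{\rho,\rho}=1$), so $\mpair{\rho,\rho'}\to-\infty$; in particular $\mpair{\rho,\rho'}<-1$ once $\h\rho,\h{\rho'}$ are close enough to $x,x'$ (geometrically the open segment $]x,x'[$ lies in $\{\mpair{\cdot,\cdot}<0\}$, so a nearby segment between two roots crosses $\h Q$ twice). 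Hence it suffices to find limit roots $x_1,\dots,x_n\in E$ that are linearly independent and \emph{pairwise non-orthogonal}, i.e.\ $\mpair{x_i,x_j}<0$ for $i\neq j$: choosing $\rho_i\in\PP$ with $\h{\rho_i}$ close enough to $x_i$ keeps $\{\rho_i\}$ linearly independent (an open condition) while forcing every pairwise product below $-1$, and $\Psi=\{\rho_1,\dots,\rho_n\}$ then does the job.

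So the whole content is the existence of a spanning, pairwise non-orthogonal family in $E$. When $(\Phi,\Delta)$ is weakly hyperbolic this is immediate: by Proposition~\ref{prop:sphere} I may take $\h Q$ to be a Euclidean sphere, and two distinct points of a sphere are never orthogonal as vectors, so \emph{any} $n$ linearly independent limit roots (which exist since $\Span(E)=V$) are automatically pairwise non-orthogonal. In general I would build the family greedily: given pairwise non-orthogonal $x_1,\dots,x_m$ spanning $V_m\sneq V$, the normalized barycenter $z=\h{x_1+\dots+x_m}\in\ol Z$ satisfies $\mpair{x_i,z}<0$ for every $i$ (since $\mpair{x_i,x_j}<0$ for $j\neq i$ and $\mpair{x_i,x_i}=0$), so the relatively open set $A_m=\{y\in\ol Z\mid\mpair{x_i,y}<0\ \forall i\}=\ol Z\setminus\bigcup_i(\ol Z\cap x_i^{\perp})$ is non-empty; what is needed to enlarge the family is a \emph{limit root} lying in $A_m$ and outside $V_m$, i.e.\ a point of $E\setminus\bigl(V_m\cup\bigcup_{i\leq m}x_i^{\perp}\bigr)$.

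The main obstacle is precisely this last step: producing a fresh limit root in the common negative region, which amounts to showing that $E$ is not contained in any finite union of proper linear subspaces of $V$. Distinct limit roots \emph{can} be orthogonal as soon as the signature has at least two negative squares — this is the flatness of $\h Q$ flagged in the discussion after Theorem~\ref{ZEapprox} — so the weakly hyperbolic shortcut is unavailable and a genuine argument is required here. I expect to prove the no-finite-union statement from the self-similar structure: fixing a minimal covering subspace $H$ and a point $p\in E\cap H$ lying in no other covering subspace, together with a sequence $\h{\al_n}\to p$, Theorem~\ref{thm:fractact} gives $s_{\al_n}\cdot z\to p$ for every limit root $z\notin\ol Z\cap p^{\perp}$; these $s_{\al_n}\cdot z$ are limit roots converging to $p\in H$, and since the remaining covering subspaces are closed and avoid $p$ they eventually lie in $H$, so that analysing their direction of approach through the identity $\vphi(s_{\al_n}(z))=1-2\mpair{z,\al_n}\vphi(\al_n)$ pins each such $z$ into a fixed class modulo $H$ and ultimately confines $E$ to a proper subspace, contradicting $\Span(E)=V$. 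Making this flatness analysis rigorous — equivalently, securing the two crossings of $\h Q$ near orthogonal limit roots — is the delicate point; everything else is soft, and by Remark~\ref{rem:SingularB} a family spanning $V=\Span(\Delta)$ also spans $\Span(\Delta)$ in the original ambient space, which completes the proof.
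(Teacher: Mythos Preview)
Your reduction is correct: it suffices to exhibit a spanning family $x_1,\dots,x_n\in E$ with $\mpair{x_i,x_j}<0$ for $i\neq j$, and then approximate by roots. Your treatment of the weakly hyperbolic case is fine. But the general case has a genuine gap. The crux is your claim that $E$ is not contained in a finite union of proper subspaces of $V$. The sketch you offer via Theorem~\ref{thm:fractact} is not a proof: from $s_{\al_n}(z)\in H_1$ you get $z\equiv 2\mpair{z,\al_n}\al_n\pmod{H_1}$, i.e.\ $[z]$ is a scalar multiple of $[\h{\al_n}]$ in $V/H_1$, but the scalar depends on $z$ and $[\h{\al_n}]\to 0$, so this does not force the various $[z]$ into a single line, nor does it control $E\cap p^{\perp}$. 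You yourself flag this step as ``the delicate point''; as written, it is missing.

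It is worth noting that in the paper's logical architecture the implication runs the other way: the statement you are trying to use (that $E$ avoids finite unions of hyperplanes, Theorem~\ref{thm:faithful2}) is proved \emph{from} Proposition~\ref{largeuniv}, via Lemma~\ref{genpos}. The paper's own proof of Proposition~\ref{largeuniv} is completely elementary and uses no limit-root machinery at all: it proceeds by induction on the rank (the rank~$2$ case being $\Psi=\Delta$), finds by the non-hyperbolicity of $W$ a proper irreducible facial subsystem of indefinite type to which induction applies, adjoins one more simple root, and then invokes a purely algebraic ``condensation'' lemma (Lemma~\ref{condens}) that, given any indecomposable finite $\Psi\subseteq\PP$ with all pairwise products $\leq 0$ and at least one $\leq -1$, replaces pairs $\{\al,\bt\}$ by $\{(s_\al s_\bt)^n\al,(s_\bt s_\al)^n\bt\}$ to drive all off-diagonal products below any prescribed $-N$. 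This gives the result without ever touching $E$, and is what then feeds into the faithfulness and no-finite-union theorems you wanted to invoke.
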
 
   
Geometrically, this means that one can always find normalized roots
$\{\h{\rho_1}, \dots, \h{\rho_k}\}$ such that for $i\neq j$, $\h{\rho_i}$ and $\h{\rho_j}$ are ``on both sides'' of $\h Q$, i.e., the segment $[\h{\rho_i},\h{\rho_j}]$ intersects $\h Q$ in two distinct points (see Figure~\ref{fig:approx}).

\begin{rem} 
  The proposition implies that the possible signatures of the
  restrictions of $\Mpair$ to $\Span(\Delta)$, for irreducible $\Phi$
  of indefinite type and rank at least three, coincide with those of
  generic universal root systems of the same rank. These are described
  in \cite[Example 1.4]{dyer:imc}; in fact, the proposition provides a
  conceptual explanation for the observation at the end of that
  example.
\end{rem}

Before proving this proposition, we need a technical lemma.  A subset
$\Psi$ of $\Phi$ is said to be \emph{indecomposable} if there is no
partition $\Psi=\Psi_{1}\sqcup\Psi_{2}$ of $\Psi$ into non-empty,
disjoint, pairwise orthogonal subsets $\Psi_{1}$, $\Psi_{2}$. If
$\Psi$ is finite, this is equivalent to indecomposability of the Gram
matrix $(\mpair{\al,\bt})_{\al,\bt\in \Psi}$ in the usual sense, and,
if $\Psi$ is a simple system, it corresponds to irreducibility of
$W_{\Psi}$.
  
\begin{lem}
  \label{condens}
  Let $\Psi$ be an indecomposable finite subset of $\PP$ such that
  $\vert \Psi\vert \geq 3$, $\mpair{\al,\bt}\leq 0$ for all distinct
  $\al,\bt\in \Psi$ and $\mpair{\al,\bt}\leq -1$ for some $\al,\bt\in
  \Psi$.  Let $N\in \real _{> 1}$. Then there exists $\Psi' \subseteq
  \PP$ with the following properties:
  \begin{conds}
  \item $W_{\Psi'} \subseteq W_{\Psi}$, and $\Span(
    \Psi')=\Span(\Psi)$.
  \item There is a bijection $\Psi\xrightarrow{\cong}\Psi'$ which maps
    each element of $\Psi$ to an element of $\Psi'$ in the same
    $W_{\Psi}$-orbit.
  \item If $\al,\bt\in \Psi'$ are distinct, then $\mpair{\al,\bt}<-N$.
   \end{conds}
  In particular,  there is a generic universal based root subsystem 
  $(\Lambda,\Psi')$ of $(\Phi,\Delta)$  with simple system $\Psi'$.  
  \end{lem}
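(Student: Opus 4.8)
The plan is to build $\Psi'$ by replacing each $\al\in\Psi$ with a very deep root in the same $W_\Psi$-orbit, chosen so that the normalized roots cluster near a prescribed family of pairwise \emph{non-orthogonal} limit roots. First I would dispose of the concluding sentence: once (i)--(iii) hold, all pairwise products in $\Psi'$ are $<-N\le -1$, so $\Psi'$ satisfies axiom (ii) of a simple system; by the characterization recalled in \S\ref{ss:refsg} (from \cite{dyer:ref}) this means $\Psi'=\Delta_{W_{\Psi'}}$, so $(\Lambda,\Psi'):=(\Phi_{W_{\Psi'}},\Psi')$ is a based root subsystem, generic universal by construction. Thus everything reduces to (i)--(iii). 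Throughout I would work in the based root subsystem $(\Phi_\Psi,\Delta_{W_\Psi})$ with group $W_\Psi$. Since $\Psi$ is indecomposable this system is irreducible, and it is of indefinite type: an indecomposable positive semidefinite Gram matrix with unit diagonal and nonpositive off-diagonal entries of size $\ge 3$ has all its principal $2\times2$ submatrices positive definite, which is incompatible with $\mpair{\al,\bt}\le -1$; hence $\Span(\Psi)$ carries an indefinite form (as is anyway forced by the conclusion). Consequently the results of the previous sections apply: writing $E:=E(\Phi_\Psi)$ one has $\Span(E)=\Span(\Psi)$ (Proposition~\ref{prop:aff}), and by minimality every limit root is an accumulation point of the $W_\Psi$-orbit of each $\al\in\Psi$ (Theorem~\ref{cor:minimal}(c)).

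The core estimate is the following: if $\rho,\sigma\in\Phi_\Psi^+$ have $\h\rho\to x$ and $\h\sigma\to y$ for limit roots $x,y$, then $\mpair{\h\rho,\h\rho}=\vphi(\rho)^{-2}\to\mpair{x,x}=0$ forces $\vphi(\rho),\vphi(\sigma)\to\oo$, and
\[ \mpair{\rho,\sigma}=\vphi(\rho)\vphi(\sigma)\,\mpair{\h\rho,\h\sigma}\sim \vphi(\rho)\vphi(\sigma)\,\mpair{x,y}, \]
so when $\mpair{x,y}<0$ this tends to $-\oo$. I would therefore fix pairwise non-orthogonal limit roots $x_\al$ ($\al\in\Psi$), one per element, whose linear span is $\Span(\Psi)$, and then by Theorem~\ref{cor:minimal}(c) choose, for each $\al$, a root $\rho_\al\in W_\Psi\cdot\al$ with $\h{\rho_\al}$ so close to $x_\al$ (hence so deep) that $\mpair{\rho_\al,\rho_\bt}<-N$ for all $\al\ne\bt$; there are finitely many pairs, so a single depth suffices. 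Setting $\Psi':=\set{\rho_\al\mid\al\in\Psi}$ gives (ii) and (iii) immediately, and since the products are $<-N<0$ the $\rho_\al$ are pairwise non-proportional, so $\al\mapsto\rho_\al$ is a bijection. For (i): $\rho_\al\in\Phi_\Psi$ gives $s_{\rho_\al}\in W_\Psi$, whence $W_{\Psi'}\seq W_\Psi$; and because $\set{x_\al}$ spans $\Span(\Psi)$, a suitable $d\times d$ coordinate determinant is nonzero at the $x_\al$, so it stays nonzero for the nearby $\h{\rho_\al}$, giving $\Span(\Psi')=\Span(\Psi)$.

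It remains to produce the pairwise non-orthogonal spanning family $\set{x_\al}$, which is the main obstacle. When $(\Phi_\Psi,\Delta_{W_\Psi})$ is weakly hyperbolic --- in particular whenever $\dim\Span(\Psi)\le 3$, by the signature constraint --- Proposition~\ref{prop:sphere} lets me cut $\h Q$ by a transverse hyperplane on which it is a genuine $(d-2)$-sphere, and for the induced Lorentzian form any two \emph{distinct} points of this sphere have strictly negative product; since $E$ is infinite and spans $\Span(\Psi)$, any spanning choice of distinct limit roots works. In the general indefinite case distinct limit roots may be orthogonal, and here I would build the $x_\al$ greedily: having chosen pairwise non-orthogonal $x_1,\dots,x_r$, each wall $x_i^\perp$ meets $\ol Z$ in a proper exposed face (Corollary~\ref{lem:isoface}), so $\bigcup_i (E\cap x_i^\perp)$ is a finite union of relatively closed subsets of $E$, each lying in a hyperplane; provided no such union exhausts $E$ or contains a spanning set, a further non-orthogonal limit root enlarging the span can be found, and after reaching $\Span(\Psi)$ the remaining elements are filled in by any further non-orthogonal limit roots.

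The crux is thus the \emph{flatness} statement that \emph{no non-empty relatively open subset of $E$ lies in a hyperplane of $\aff(E)$}, which I would establish from the self-similarity Theorem~\ref{thm:fractact}. If an open $U\seq E$ lay in $H=\ker\ell$, I would pick $x_0\in U$ and roots $\al_n$ with $\h{\al_n}\to x_0$; for any limit root $y\notin x_0^\perp$, Theorem~\ref{thm:fractact}(b) gives $s_{\al_n}\cdot y\to x_0\in U\seq H$, i.e. $\ell(s_{\al_n}(y))=0$, which rearranges to $\ell(y)=2\mpair{\al_n,y}\,\ell(\al_n)$ for all large $n$. Comparing this identity for two such limit roots $y,y'$ forces the functional $\mpair{\al_n,\cdot}$ to coincide with a fixed multiple of $\ell$ on $E\sm x_0^\perp$; since (using the argument again, $E\sm x_0^\perp$ being relatively open) this set spans $\Span(\Psi)$, all the distinct deep roots $\al_n$ would be proportional to a single vector --- impossible because $\h\Phi$ is discrete. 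The genuinely hard part of the whole argument is precisely this flatness statement together with the greedy selection: controlling how $E$ is distributed among the isotropic walls $x^\perp$ in the non--weakly-hyperbolic situation, where the clean spherical picture of Proposition~\ref{prop:sphere} is unavailable.
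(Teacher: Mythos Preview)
Your approach has a genuine gap: it is circular relative to the paper's logical structure.

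The paper's proof of Lemma~\ref{condens} is elementary and self-contained --- a direct ``condensation'' process that iteratively replaces a pair $(\al_i,\al_j)$ with $\mpair{\al_i,\al_j}\le -1$ by $((s_{\al_i}s_{\al_j})^n\al_i,\,(s_{\al_j}s_{\al_i})^n\al_j)$ for large $n$, checking that this does not increase any other pairwise product and that repeating it drives all products below $-N$. Crucially, it uses no properties of $E$. This makes Lemma~\ref{condens} foundational: it feeds Proposition~\ref{largeuniv}, which feeds Lemma~\ref{genpos}, which is the key input to Theorem~\ref{thm:faithful2}(a), the flatness statement that $\aff(U\cap E)=\aff(E)$ for every open $U$ meeting $E$.

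Your strategy inverts this dependency. The greedy construction of pairwise non-orthogonal, spanning limit roots in the non--weakly-hyperbolic case requires exactly that flatness: to extend $x_1,\dots,x_r$ you need $E\not\seq\bigcup_i x_i^\perp\cup\Span(x_1,\dots,x_r)$, and nothing short of Theorem~\ref{thm:faithful2}(a)--(b) supplies this in general. Your attempted direct proof of flatness via Theorem~\ref{thm:fractact} is itself circular: to conclude that the $\al_n$ are eventually proportional, you need $E\sm x_0^\perp$ to span $\Span(\Psi)$, and you justify this by ``using the argument again'' --- but that \emph{is} flatness, applied to the relatively open set $E\sm x_0^\perp$. (There is also a uniformity issue --- the threshold in $\ell(y)=2\mpair{\al_n,y}\ell(\al_n)$ depends on $y$ --- though Theorem~\ref{thm:fractact}(a) on finite subsets could repair this \emph{once} the spanning is in hand.)

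A secondary concern: you apply the results of \S\ref{se:imc}--\S\ref{ss:minimal} to $(\Phi_\Psi,\Delta_{W_\Psi})$, but the hypotheses of the lemma do not force $\Psi$ itself to be a simple system (the pairwise products are only required to be $\le 0$, not in $(-\oo,-1]\cup\{-\cos(\pi/k)\}$), so $\Delta_{W_\Psi}$ need not equal $\Psi$ and its finiteness --- assumed throughout the paper --- is not evident.

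What does survive: the core estimate $\mpair{\rho,\sigma}\to -\oo$ is correct, the weakly hyperbolic case works (this is essentially the argument in~\cite{edgar}), and the overall strategy is sound --- indeed, the paper implements precisely this idea later as Lemma~\ref{mainlem}, but only \emph{after} Lemma~\ref{condens} has made Theorem~\ref{thm:faithful2} available.
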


Note that the result does not hold if $|\Psi|=2$: take for example a dihedral reflection subgroup $\left< s_\al,s_\bt\right>$ with $\mpair{\al,\bt}=-1$.

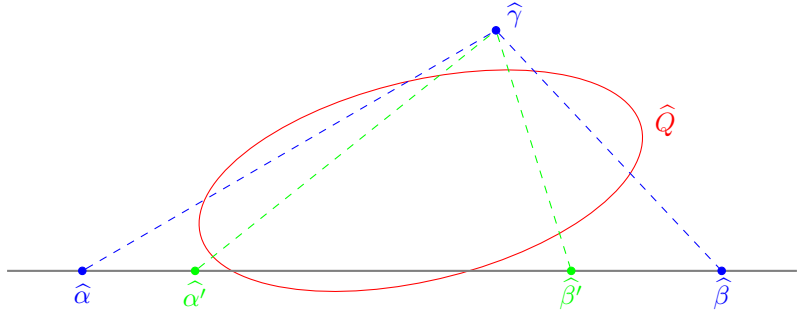
\begin{figure}[!h]
 \begin{minipage}[b]{\linewidth}
\centering
\scalebox{1}{\newcommand{\ccdot}{\!\cdot \!}

\begin{tikzpicture}
  [xscale=5,
    yscale=2,
    pointille/.style={densely dashed},
    root/.style={inner sep=1pt,circle,draw=blue,fill=blue},
    nroot/.style={inner sep=1pt,circle,draw=green,fill=green},
  ]


\begin{scope} [rotate=60]
  
  \draw[red] (0,0) ellipse (0.8cm and 0.5cm);
\end{scope}

\draw[color=red] (0.65,0.4) node{$\h Q$} ;
\draw[color=gray, thick] (-1.1,-0.6) -- (1,-0.6);

\node[root] (g) at (0.2,1) {};
\node[blue] at (0.25,1.1) {$\h{\gamma}$};

\node[root] (a) at (-0.9,-0.6) {};
\node[blue] at (-0.9,-0.75) {$\h\alpha$};

\node[root] (b) at (0.8,-0.6) {};
\node[blue] at (0.8,-0.75) {$\h{\beta}$};

\draw[color=blue, thin, dashed] (a) -- (g) -- (b);

\node[nroot] (aa) at (-0.6,-0.6) {};
\node[green] at (-0.6,-0.75) {$\h{\alpha'}$};

\node[nroot] (bb) at (0.4,-0.6) {};
\node[green] at (0.4,-0.75) {$\h{\beta'}$};

\draw[color=green, thin, dashed] (aa) -- (g) -- (bb);

\end{tikzpicture}











\end{minipage}%
  \caption{Condensation of a pair of roots $\{\al,\bt\}$: replacement of such a pair by a new pair $\{\alpha',\beta'\}$ which is closer to $Q$. This is the main idea of the proof of Lemma~\ref{condens}.}
  \label{fig:condens}
\end{figure}

The geometric idea of the proof is to replace progressively any pair
$(\al,\bt)$  such that $\mpair{\al,\bt}\leq -1$ by a pair  $(\al',\bt')$ such that 
$(\h{\al'}, \h{\bt'})$ is on the  
on the same line as $(\h\al, \h\bt)$ but closer to $\h Q$, and with $\al',\bt'$ conjugate to $\al,\bt$ respectively in $\pair{s_{\alpha},s_{\beta}}$ (we call this a
\emph{condensation} at $(\alpha,\beta)$): this will not increase the other inner products $\mpair{\gamma,
  \alpha}$, $\mpair{\gamma, \bt}$ (see
Figure~\ref{fig:condens}) and may decrease some of them. Properties (i)-(ii) are clearly preserved
by this replacement; it turns out one can iterate this process in
order to obtain property (iii).

\begin{proof}  Fix $N\in\real_{>1}$. We will first describe a relation $\rightarrow$ on \emph{admissible $k$-tuples} built up using condensation, and use this relation to show the existence of $\Psi'$.

For $k\in \Nat_{\geq 3}$, call a $k$-tuple $a=(\al_{1},\al_{2},\ldots, \al_{k})$ of positive roots $\al_{i}\in \Phi^{+}$ an \emph{admissible $k$-tuple} if $\mpair{\al_{i},\al_{j}}\leq 0$ for all $1\leq i< j\leq k$, $\mpair{\al_{i},\al_{j}}\leq -1$ for some 
$1\leq i< j\leq k$ and $\G_{a}:=\set{\al_{1},\ldots, \al_{k}}$ is indecomposable.  In particular, this implies that the $\al_{i}$ are pairwise distinct. Let $A_k$ be the set of all admissible $k$-tuples.   For instance, by the assumptions on $\Psi$, there  is   an admissible  $k$-tuple $a=(\al_{1},\ldots,\al_{k})\in A_k$ such that $\mpair{\al_{1},\al_{2}}\leq -1$ and   $\Psi=\G_a=\set{\al_{1},\ldots, \al_{j}}$ is indecomposable for $j=1,\ldots, k$.

Define the \emph{incompleteness index} $I(a)$ of an admissible  $k$-tuple $a\in A_k$ to be $I(a):=\mset{(j,i)\in \Nat\times \Nat \mid 1\leq i<j\leq k, \mpair{\al_{i},\al_{j}}\geq -N}$.  This index can be seen as the margin of error for $\Psi$ to be a desired $\Psi'$.

\smallskip
\noindent\emph{Condensation on $k$-tuples.} 
Consider an admissible $k$-tuple $a\in A_k$ as above.   For any distinct $i,j\in \Nat$ with  $1\leq i,j\leq k$ and $\mpair{\al_{i},\al_{j}}\leq -1$,   define another admissible $k$-tuple $b=(\bt_{1},\ldots, \bt_{k})\in A_k$, which we call a \emph{condensation} of $a$ (at 
$\set{\al_{i},\al_{j}}$), as follows. First, we set 
$\beta_{l}=\alpha_{l}$ for all $l$ with $1\leq l\leq k$ and $l\neq i,j$.  
  We shall set  
  \begin{equation*}
   	\bt_{i}:=(s_{\alpha_{i}}s_{\al_{j}})^{n}(\al_{i}),\qquad \bt_{j}:=(s_{\al_{j}}s_{\alpha_{i}})^{n}(\al_{j})
  \end{equation*} 
  for any    $n\in \Nat_{>0}$ chosen sufficiently large to satisfy the conditions described below. 

     Write $\mpair{\al_{i},\al_{j}}=-\cosh \lambda$ where $\lambda\in \real_{\geq
    0}$.  Define $p_{n}$ for $n\in \Int$ by
  \begin{equation*}
    p_{n}=\begin{cases}n,&\text{if
      $\lambda=0$;}\\ \frac{\sinh(n\lambda)}{\sinh \lambda},&\text{if
      $\lambda>0$.}\end{cases}
  \end{equation*} 
 It is well known and easily checked\footnote{See for example
    \cite[p.3]{howlett}.} that
  \begin{equation*}
    \bt_{i}=p_{2n+1}\al_{i}+p_{2n}\al_{j},\qquad
    \bt_{j}=p_{2n}\al_{i}+p_{2n+1}\al_{j}
  \end{equation*}   
  and $\mpair{\bt_{i},\bt_{j}}=-\cosh ((4n+1)\lambda)$. 
For $  \g\in \G_{a}\sm(\set{\al_{i},\al_{j}}\cup\set{\al_{i},\al_{j}}^{\perp})$ one has $\mpair{\g,\al_{i}}\leq 0$ and $\mpair{\g,\al_{j}}\leq 0$ with at least one of the inequalities being strict.
   Since $p_{m}\to
  +\infty$ as $m\to+\infty$, we may (and do) choose $n\in \Nat_{>0}$ 
 so that
\begin{equation}
\label{cond2} 
  \mpair{\bt_{i},\g}<-N, \quad  \mpair{\bt_{j},\g}<-N 
  \text{ \rm for all }\g\in \G_{\al}\sm(\set{\al_{i},\al_{j}}\cup\set{\al_{i},\al_{j}}^{\perp}). 
\end{equation}
This completes the definition of the $k$-tuple $b$.  We show now that $b\in A_k$.
  Using that  $p_{2n},p_{2n+1}\geq 1$  and $-\cosh ((4n+1)\lambda)\leq -\cosh
  \lambda$, it follows that
\begin{equation}
   \label{cond4}
  \mpair{\bt_{l},\bt_{m}}\leq \mpair{\al_{l},\al_{m}}\text{ \rm  for 
  all   $1\leq l,m\leq k$.}
\end{equation}   
This implies  that $b\in A_k$  with $I(b)\subseteq I(a)$.
  \smallskip
  
$(\star)$: Write $a\xrightarrow{i,j}b$ to indicate that $b\in A_k$ is a condensation of $a\in A_k$ at $\set{\al_{i},\al_{j}}$. This implies that \eqref{cond4} holds,  that   $W_{\G_{b}}\seq W_{\G_{a}}$,  that
 $\Span(\G_{b})=\Span(\G_{a})$  and   that
 $\al_{l}\mapsto \bt_{l}$ defines a bijection  $\G_{a}\to \G_{b}$ 
 with  the property that $\bt_{l}$ is in the  $W_{\G_{a}}$-orbit of $\al_{l}$ for 
 $l=1,\ldots, k$.     Moreover, if $J\seq \set{1,\ldots, k}$ with $\mset{\al_{i}\mid i\in J}$ indecomposable, then 
  $\mset{\bt_{i}\mid i\in J}$ is also indecomposable.
 
Write $a\xrightarrow{*,*} b$ to indicate that $b\in A_k$ is a condensation of $a\in A_k$ for some $\set{\al_{i},\al_{j}}$ and let $\rightarrow$  be the transitive closure of this relation.
 
 \medskip
 \noindent\emph{Back to the proof.} Now take $k=\vert \Psi\vert\geq 3$.
  By the assumptions on $\Psi$, we already noted at the beginning of the proof that there  is   an admissible  
  $k$-tuple $a=(\al_{1},\ldots,\al_{k})\in A_k$ such that $\mpair{\al_{1},\al_{2}}\leq -1$ and 
 $\Psi=\G_a=\set{\al_{1},\ldots, \al_{j}}$ is indecomposable for $j=1,\ldots, k$.
  Consider the set $A_k(a)$ of admissible $k$-tuples $b\in A_k$ such that
    $a\rightarrow b$.   Fix $b\in A_k(a)$ with $I(b)$ minimal under inclusion.  It will suffice to show that $I(b)=\eset$; 
    for then, setting $\Psi'=\G_b=\set{\bt_{1},\ldots, \bt_{k}}$, the above remarks $(\star)$ imply that  (i) holds 
    and that (ii) is satisfied by the bijection  $\al_{i}\mapsto\bt_{i}\colon \Psi\to \Psi'$,  and (iii) holds 
     since $I(b)=\eset$. 
     
     \smallskip
     Suppose to the contrary that $I(b)\neq \eset$. Then   there exists
 a minimum element      $(j,i)\in I(b)$   in the lexicographic total order on $I(b)\seq \Nat\times \Nat$  induced by  
 the standard  total order of $\Nat$. 
 First assume $j\leq 2$, so $(j,i)=(2,1)$. 
Since $\mpair{\bt_{1},\bt_{2}}\leq \mpair{\al_{1},\al_{2}}\leq -1$, 
we have $b\xrightarrow{1,2}g$ for some $g=(\g_1,\dots,\g_k)\in A_k(a)$.
  Since $\set{\al_{l}\mid l=1,2,3}$ is indecomposable, so is
  $\set{\bt_{l}\mid l=1,2,3}$ i.e. $\mpair{\bt_{3},\bt_{l}}<0$ for some 
  $l\in\set{1,2}$. From \eqref{cond2}, we get 
  $\mpair{\g_{3},\g_{l}}< -{N}<-1 $ for $l=1,2$. Now we may 
  define $d=(\delta_1,\dots,\delta_k)\in A_k(a)$ such that
  $g\xrightarrow {1,3}d$. Since 
  $\mpair{\g_{1},\g_{2}}\leq \mpair{\al_{1},\al_{2}}\leq -1$, we have 
  $\mpair{\delta_{1},\delta_{2}}< -N$ by \eqref{cond2}.  In 
  particular, $d\in A_k(a)$, as desired, with 
  $I(d)\seq I(b)\sm \set{(2,1)}\sneq I(\beta)$,
   contrary to minimality of $I(b)$.
   
    Hence $j\geq 3$ and $1\leq i<j$. Since 
  $\Psi'=\set{\al_{1},\ldots, \al_{j}}$ is indecomposable, there is  
  $l\in \Nat$ so $1\leq l<j$ and $\mpair{\al_{j},\al_{l}}<0$. Since 
  $j\geq 3$, we may choose $m\in \Nat$ so $1\leq m<j$,  $m\neq l$ and 
  $i\in \set{l,m}$. Since $l,m<j$,  the minimality of $(j,i)\in I(b)$
   in lexicographic order implies  $\mpair{\bt_{l},\bt_{m}}<-N< -1$. 
  Hence $b\xrightarrow{l,m}g$ for some $g\in A_k(a)$.
  Since $\mpair{\bt_{j},\bt_{i}}\leq \mpair{\al_{j},\al_{i}}<0$ with
  $i\in \set{l,m}$, it follows  by \eqref{cond2} that 
  $\mpair{\g_{i},\g_{j}}<-N$. Therefore, $\g\in A'$ with  
  $I(g)\seq I(b)\sm\set{(j,i)}\sneq I(b)$ contrary to minimality  of 
  $I(b)$. This contradiction completes the proof of Lemma~\ref{condens}.       
 \end{proof}

\begin{proof}[Proof of Proposition~\ref{largeuniv}]
  If $W$ is hyperbolic, this is proved in \cite{edgar}.  We sketch an
  argument for that case since it provides the starting point for the
  proof in general. Since $W$ is hyperbolic, from \S\ref{se:fractal}
  we get that $\h Q$ is the boundary of an ellipsoid inside $\conv(\h
  \Delta)$ and $\Eext=\ol{\Eext}= \h Q$ (see
  Corollary~\ref{cor:Eexthyp} and Theorem~\ref{thm:Qincl}). Choose a
  subset $\set{e_{1},\ldots,e_{n}}$ of $\h Q$ which is affinely
  independent and which has affine span equal to $\aff(\widehat
  \Delta)$.  By the Cauchy-Schwarz inequality, one has
  $\mpair{\frac{1}{2}e_{i}+\frac{1}{2}e_{j},\frac{1}{2}e_{i}+
    \frac{1}{2}e_{j}}<0$ for $i\neq j$. Choose positive roots
  $\rho_{i}$ such that $\widehat{\rho_{i}}$ is sufficiently close to
  $e_{i}$ that $\widehat\rho_{1},\ldots, \widehat \rho_{n}$ are
  affinely independent and
  $\mpair{\frac{1}{2}\widehat\rho_{i}+\frac{1}{2}\widehat\rho_{j},
    \frac{1}{2}\widehat\rho_{i}+ \frac{1}{2}\widehat\rho _{j}}<0$ for
  $i\neq j$. This implies that $\set{\rho_{1},\ldots, \rho_{n}}$ spans
  $\Span(\Delta)$, and a quick calculation shows that for $i\neq j$,
  $\mpair{\rho_{i},\rho_{j}} <-1$, as required in this case.
  
  Next we prove the result in the case $\Delta$ is linearly
  independent, by induction on $\vert \Delta\vert$. If $\Phi$ is of
  rank two, the required conditions are satisfied by taking
  $\Psi=\Delta$. Assume henceforward $\Phi$ is of rank three or
  greater and is not hyperbolic. We claim that $\Phi$ contains a
  proper irreducible standard parabolic subsystem of indefinite type.
  If $\mpair{\al,\bt}<-1$ for some $\al,\bt\in \Delta$ then
  $\Phi_{\left\langle s_{\al},s_{\bt}\right\rangle}$ is such a
  subsystem, so we may assume $\mpair{\al,\bt}\geq -1$ for all
  $\al,\bt\in \Delta$. Then the root system $\Phi$ is the standard
  root system associated to $W$ in \cite[Chapter 5]{humphreys}.  Since
  $W$ is not hyperbolic, by \cite[\S6.8]{humphreys}, there is a
  maximal proper standard parabolic subsystem of $\Phi$ which is not of
  ``positive type'' i.e. which has a component of indefinite
  type. This proves the claim.
  
  By the claim and irreducibility of $\Phi$, we may choose $\al\in
  \Delta$ such that $\Delta\sm\set{\al}$ is irreducible and contains
  the simple roots of an irreducible standard parabolic subsystem of $\Phi$ of
  indefinite type.  Hence $\Delta\sm\set{\al}$ is itself the set of
  simple roots of an irreducible standard parabolic subsystem $\Psi$
  of $\Phi$ of indefinite type. By induction, there are roots
  $\al_{1},\ldots, \al_{n-1}\in \Psi$ such that
  $\Span(\al_{1},\ldots,\al_{n-1})=\Span(\Delta\sm\set{\al})$ and
  $\mpair{\al_{i},\al_{j}}\leq -1$ for all $i,j$ with $1\leq i< j\leq
  n-1$.  Since $\al_{i}\in \cone(\Delta\sm\set{\al})$, one has
  $\mpair{\al,\al_{i}}\leq 0$ for all $i=1,\ldots, n-1$. The
  inequality must be strict for some~$i$ by irreducibility of
  $\Delta$.  Hence $\Psi:=\mset{\al_{1},\ldots, \al_{n-1},\al}$
  satisfies the conditions of Lemma~\ref{condens}. Then any subset
  $\Psi'$ of $\PP$ satisfying the conditions in the statement of
  Lemma~\ref{condens} meets the requirements on $\Psi'$ here.  This
  completes the proof in the special case in which $\Delta$ is
  linearly independent.  Finally, the case in which $\Delta$ is not
  linearly independent immediately reduces to that in which $\Delta$
  is linearly dependent by use of \cite[\S1.4]{dyer:imc}.
 \end{proof} 
  
 \subsection{Intersection of lines with the isotropic cone}
 \label{ss:lines}
 The proofs of Lemmas~\ref{genpos} and~\ref{mainlem}) require a slight
 extension of a computation in \cite[\S4.2]{HLR} that describes the
 intersection points of a line cutting $Q$.  We give the details here
 for ease of reference. In these results, $(\Phi,\Delta)$ can be an arbitrary based root system.

 Let $u,v\in V$. If $u,v$ are distinct, the line $L(u,v)$ passing
 through $u,v$ consists of all points $(1-t)u+tv$ for $t\in \real$,
 whereas if $u=v$, $(1-t)u+tv=u$ for all $t\in \real$.  In any case,
 for $t\in \real$, the point $(1-t)u+tv\in Q\cap L(u,v)$ if and only
 if
 \begin{equation}\label{quadeq}
 \mpair{u+t(v-u),u+t(v-u)}=t^{2}\mpair{v-u,v-u}+2t\mpair{u,v-u}+\mpair{u,u}=0.
\end{equation}
 The above equation, regarded as an equation for $t\in \real$,  has exactly two  distinct solutions if and only if 
 \begin{equation}\label{disccond}
 \mpair{v-u,v-u}\neq 0 \text{ \rm  and } \mpair{u,v}^{2}-\mpair{u,u}\mpair{v,v}>0.
 \end{equation} 
 In that case, the solutions are
\begin{equation*}
t=\frac{\mpair{u-v,u}\pm\sqrt{\mpair{v,u}^{2}-\mpair{u,u}\mpair{v,v}}}{\mpair{v-u,v-u}}
 \end{equation*} 
which we shall denote as  $t=\tmin(u,v)$, $t=\tmax(u,v)$ where $\tmin(u,v)<\tmax(u,v)$. So 
\[
L(u,v)\cap Q=\{(1-\tmin(u,v))u+\tmin(u,v) v, (1-\tmax(u,v))u+\tmax(u,v) v\}.
\]
Observe that by the symmetry between $u$ and $v$, one has 
 \begin{equation}\label{rootsym}
 \tmax(u,v)+\tmin(v,u)=1.
 \end{equation}
So by setting 
\begin{equation*}
  u_{Q}(u,v):=(1-\tmin(u,v))u+\tmin(u,v)v
\end{equation*}
one obtains $u_{Q}(v,u)=(1-\tmax(u,v))u+\tmax(u,v)v$. Therefore for pairs $(u,v)$ verifying~ \eqref{disccond} we have
\[
L(u,v)\cap Q=\{u_{Q}(u,v), u_{Q}(v,u)\},
\]
and $u_{Q}(u,v)< u_{Q}(v,u)$ for the order induced by the oriented line $\overrightarrow{(uv)}$. 
In other words, $u_Q(u,v)$ is the point of intersection of $L(u,v)$ with $Q$ that is seen from $u$ when looking at $Q$; 
whereas $u_Q(v,u)$ is the point of intersection of $L(u,v)$ with $Q$ that is seen from $v$ when looking at $Q$. 
For example, in Figure~\ref{fig:dihedraldom}, we have $x=u_Q(\h\al,\h\bt)=x$ and $y=u_Q(\h\bt,\h\al)$.
\smallskip

Let $U_Q$ be the open set of all pairs $(u,v)$ such that
$L(u,v)\cap Q$ consists of two distinct points, i.e., such that $(u,v)$ satisfies \eqref{disccond}.  The functions $\tmin,\tmax:U_Q\to\real$ are
continuous.  Therefore the function $u_Q\colon U_Q\to V$ given by
$(u,v)\mapsto u_Q(u,v)$ is also continuous.

\smallskip 
The following lemma will be needed in the proof of Lemma
  \ref{mainlem}; it is however natural to state and prove it here. 
  
\begin{lem} \label{quad}
  Suppose that $u,v\in Q$ and $\mpair{u,v}<0$.  Then there exist an
  open neighbourhood $\Omega_u$ of $u$ and an open neighbourhood
  $\Omega_v$ of $v$ with the following properties:
  \begin{conds}
  \item  $\Omega_{u}\times \Omega_{v} \subseteq  U_Q$. 
  \item If $x \in \Omega_{u}$ and $y \in \Omega_{v}$ with
    $\mpair{x,x}>0$ and $\mpair{y,y}>0$, then
    $0<\tmin(x,y)<\tmax(x,y)<1$. That is, $x$, $u_{Q}(x,y)$,
    $u_{Q}(y,x)$, $y$ are distinct and aligned in this order for the
    order induced by the oriented line $\overrightarrow{(xy)}$.
  \end{conds}
\end{lem}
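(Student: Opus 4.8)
The plan is to reduce the statement to an analysis of the quadratic \eqref{quadeq} in the parameter $t$, now regarded as a function of the pair $(x,y)$, and to combine continuity of its roots $\tmin,\tmax$ with the elementary observation that this quadratic takes the values $\mpair{x,x}$ and $\mpair{y,y}$ at the endpoints $t=0$ and $t=1$.

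First I would record the situation at the base point $(u,v)$. Writing $r:=\mpair{u,v}<0$ and using $\mpair{u,u}=\mpair{v,v}=0$, the discriminant in \eqref{disccond} equals $\mpair{u,v}^{2}-\mpair{u,u}\mpair{v,v}=r^{2}>0$, while $\mpair{v-u,v-u}=-2r\neq 0$; hence $(u,v)$ satisfies \eqref{disccond}, i.e.\ $(u,v)\in U_Q$. Evaluated at $(u,v)$, the quadratic \eqref{quadeq} becomes $-2r\,t^{2}+2r\,t=2r\,t(1-t)$, so its roots are $\tmin(u,v)=0$ and $\tmax(u,v)=1$ (consistently with $u_Q(u,v)=u$ and $u_Q(v,u)=v$, both of which lie on $Q$). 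Since $U_Q$ is open and $V\times V$ carries the product topology, I can then choose product neighbourhoods with $\Omega_u\times\Omega_v\subseteq U_Q$, which immediately gives (i) and guarantees $x\neq y$ and two distinct real roots $\tmin(x,y)<\tmax(x,y)$ for every $(x,y)\in\Omega_u\times\Omega_v$.

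For (ii) the key point is that, for any pair $(x,y)$, the function $t\mapsto \mpair{(1-t)x+ty,\,(1-t)x+ty}$ appearing in \eqref{quadeq} takes the value $\mpair{x,x}$ at $t=0$ and $\mpair{y,y}$ at $t=1$. Thus the hypotheses $\mpair{x,x}>0$ and $\mpair{y,y}>0$ say exactly that this quadratic is strictly positive at both endpoints. I would shrink $\Omega_u,\Omega_v$ using continuity so that throughout $\Omega_u\times\Omega_v$ the leading coefficient $\mpair{y-x,y-x}$ stays positive (it equals $-2r>0$ at the base point) and so that $\tmin(x,y)<\tfrac12<\tmax(x,y)$ (using continuity of $\tmin,\tmax$ on $U_Q$ and their base values $0$ and $1$). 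Then for an upward--opening parabola with distinct real roots $\tmin<\tmax$, strict positivity at $t=0$ forces $0\notin[\tmin,\tmax]$; combined with $\tmax>\tfrac12>0$ this yields $0<\tmin(x,y)$, and symmetrically strict positivity at $t=1$ together with $\tmin<\tfrac12<1$ yields $\tmax(x,y)<1$. Hence $0<\tmin(x,y)<\tmax(x,y)<1$. Since $t\mapsto(1-t)x+ty$ is an affine parametrisation of the oriented line $\overrightarrow{(xy)}$, the four points $x$, $u_Q(x,y)$, $u_Q(y,x)$, $y$ occur at the increasing parameters $0<\tmin<\tmax<1$, so they are distinct and aligned in this order, as required.

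The computations here are all routine; the one conceptual point to get right --- and the step I would treat most carefully --- is that continuity by itself only places the roots of the perturbed quadratic near $0$ and near $1$, without controlling on which side of these values they fall. It is precisely the sign conditions $\mpair{x,x}>0$ and $\mpair{y,y}>0$, read off as positivity of the quadratic at the endpoints, that pin the roots strictly inside $(0,1)$; making this interplay between continuity and the endpoint values airtight is the crux of the argument.
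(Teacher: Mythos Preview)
Your proof is correct and complete. It differs from the paper's argument in the mechanism used to locate the roots inside $(0,1)$. The paper shrinks the neighbourhoods so that $\mpair{x,y}<0$ throughout $\Omega_u\times\Omega_v$; under the hypotheses $\mpair{x,x}>0$, $\mpair{y,y}>0$ this forces $\mpair{y-x,y-x}>0$, and then Vieta's formulas for \eqref{quadeq} show that $\tmin(x,y)$ and $\tmax(x,y)$ have positive sum and positive product, hence are both positive; the symmetry relation \eqref{rootsym} (applied with the roles of $x,y$ exchanged) then gives that both are less than $1$. Your route instead enforces, by continuity, the open conditions $\mpair{y-x,y-x}>0$ and $\tmin<\tfrac12<\tmax$, and then reads off the strict inequalities from the sign of the quadratic at $t=0$ and $t=1$. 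The paper's argument is purely algebraic once the neighbourhoods are chosen and avoids invoking continuity of $\tmin,\tmax$; yours is perhaps more geometric and makes transparent why the hypotheses $\mpair{x,x}>0$, $\mpair{y,y}>0$ are exactly what is needed, as you note in your final paragraph. Both are short and elementary.
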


Figure~\ref{fig:quad} illustrates this lemma.

\begin{figure}[!h]
\begin{minipage}[b]{\linewidth}
\centering
\scalebox{0.5}{\input{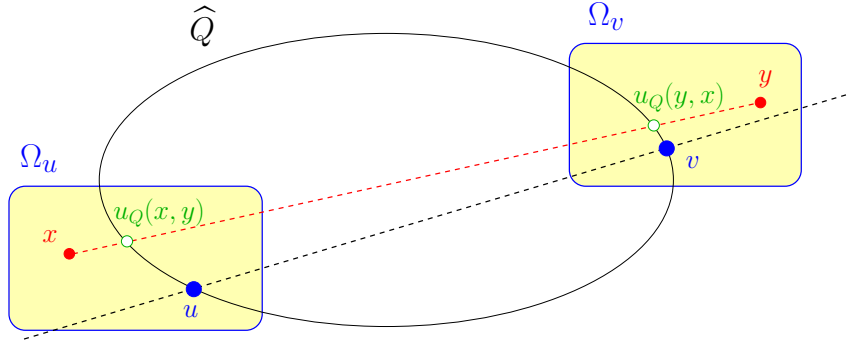}}
\end{minipage}%
\caption{Illustration of Lemma~\ref{quad}: for $u,v\in Q$, there are
  neighborhoods $\Omega_u$ and $\Omega_v$ of $u$ and $v$ such that for
  any $x,y$ in the ``outer side'' of $Q$, if $x\in \Omega_u$ and $y\in
  \Omega_v$, then $x$, $u_{Q}(x,y)$, $u_{Q}(y,x)$, $y$ are distinct
  and aligned in this order.}
\label{fig:quad}
\end{figure}

\begin{proof}   
  From the definitions, $(u,v)\in U_Q$, and $U_Q$ is open. Therefore
  there exist open neighbourhoods $\Omega_{u}$ of $u$ and $\Omega_{v}$
  of $v$ in $V$ such that $\Omega_{u}\times \Omega_{v} \subseteq U_Q$
  and $\mpair{x,y}<0$ for $(x,y)\in \Omega_{u}\times \Omega_{v}$.
  Observe that from Equation~\eqref{quadeq} one has, for all $(x,y)\in
  U_Q$:
  \begin{equation}\label{rootprod}
    \tmax(x,y)+\tmin(x,y)=\frac{2\mpair{x,x-y}}{\mpair{y-x,y-x}}
  \end{equation}
  \begin{equation}\label{rootsum} 
    \tmax(x,y) \tmin(x,y)=\frac{\mpair{x,x}}{\mpair{y-x,y-x}}.
  \end{equation}
  If $(x,y)\in \Omega_{u}\times \Omega_{v}$ are such that
  $\mpair{x,x}>0$ and $\mpair{y,y}>0$, then 
  $\mpair{y-x,y-x}>0$ (since $\mpair{x,y}<0$) and $\tmin(x,y)$ and
  $\tmax(x,y)$ have positive sum and positive product, by
  \eqref{rootprod} and \eqref{rootsum}. Hence $\tmin(x,y)$ and
  $\tmax(x,y)$ are both positive. Similarly, $\tmin(y,x)$ and
  $\tmax(y,x)$ are also positive. Using \eqref{rootsym}, we get that
  they all lie between $0$ and $1$ exclusive. This implies that
  $0<\tmin(x,y)<\tmax(x,y)<1$. The rest of (ii) follows directly.
\end{proof}

Although it is not needed in the proof of the main results of this section, we note also the following consequence of Lemma \ref{quad}. 

\begin{prop}
  \label{dominc}
  ~
  \begin{num}
  \item  Let $(\rho_{n})_{n\in \Nat}$ and 
 $(\tau_{n})_{n\in \Nat}$ be  sequences of  positive roots. Suppose  that for each $n\in \Nat$, $\rho_{n}\prec \tau_{n}$. Let $u\in E$ (resp., 
  $v\in E$) be a limit point of  $(\h\rho_{n})_{n\in \Nat}$ (resp., 
  $(\h\tau_{n})_{n\in \Nat}$). Then 
 $\mpair{u,v}=0$.
  \item Let $(\rho_{n})_{n\in \Nat}$  be a  sequence of  positive roots which is strictly increasing in dominance order:   for each $n\in \Nat$, $\rho_{n}\prec \rho_{n+1}$.  Then the   limit points of $(\h\rho_{n})$ are pairwise orthogonal (and isotropic).
  \end{num} 
\end{prop}

 \begin{proof}  For the proof of (a), one may assume, by passing to subsequences of $(\rho_{n})$, $(\tau_{n})$ if necessary, that 
 $\h\rho_{n}\to u$ and $\h\tau_{n}\to v$ as $n\to \infty$.  One has $u,v\in E$ by definition of $E$. Hence $u,v\in Q$ since $E\seq \h Q$. If $u=v$ then $\mpair{u,v}=0$ so assume $u\neq v$.  From  \eqref{eq:ineqICB}, it follows that $\mpair{u,v}\leq 0$.
 Suppose for a contradiction that $\mpair{u,v}<0$, and choose
 $\Omega_{u}$ and $\Omega_{v}$ as in   Lemma \ref{quad}.
 Choose $m$ sufficiently large that $\h\rho_{n}\in \Omega_{u}$ and $\h\tau_{n}\in \Omega_{v}$ for all $n\geq m$. Then   Proposition \ref{prop:geodomin} says that $[\wh \rho_{m},\wh\tau_{m}]\cap Q=\eset$ while Lemma \ref{quad}(ii) says that $[\wh \rho_{m},\wh\tau_{m}]\cap Q$ consists of two points. This contradiction completes the proof of (a). Part (b) follows by taking $\tau_{n}:=\rho_{n+1}$ in (a). 
    \end{proof}

\subsection{Decomposition of $\conv(\h\Delta)$ in the case of generic universal based root systems}

The function $u_Q$ gives, in the case of a generic universal based root system, a very nice decomposition of the polytope $\conv(\h\Delta)$.
 For instance, in the right-hand side picture in Figure~\ref{fig:ICB}, it looks like $\conv(\h\Delta)$ is the union of $Z$ with the union of the triangle $\conv(\{\gamma, u_Q(\gamma,\beta),u_Q(\gamma,\alpha)\})$, of the triangle $\conv(\{\alpha, u_Q(\alpha,\beta),u_Q(\alpha,\gamma)\})$ and of the triangle $\conv(\{\beta, u_Q(\beta,\gamma),u_Q(\beta,\alpha)\})$. The same kind of geometric intuition holds  in the case of generic universal root systems that are weakly hyperbolic, since in this case the transverse hyperplane can be chosen for $\h Q$ to be a sphere (Proposition~\ref{prop:sphere}). However in the general case it is not that obvious, see for instance Figure~\ref{fig:nonhyp}.  Still, this phenomenon is true in general and was observed first in the framework of the imaginary cone in \cite[\S9.12]{dyer:imc}. 
 
 Suppose that $(\Phi,\Delta)$ is a generic universal based root system.  Then for any distinct $\al,\bt\in \Delta$, we have $(\h\al,\h\bt)\in U_{Q}$, so $u_{Q}(\h\al,\h\bt)\in \conv(\set{\h\al,\h\bt})$ is defined. It is remarked  after the statement of Theorem~\ref{ZEapprox} that, quite generally, for   distinct $\al,\bt\in\Phi^{+}$ with $\mpair{\al,\bt}<-1$,  the line  joining  $\h \al$ to $\h\bt$ cuts $\h Q$ in the two limit roots $u_Q(\h\alpha,\h\beta)\not = u_Q(\h\beta,\h\alpha)$, which are  in the   open line segment joining $\h\al$ and $\h \bt$. For $\al\in \Delta$, let   
 \[
 D_{\al}:=\conv\left(\set{\h\al}\cup\set{u_{Q}(\h\al,\h\bt)\mid \bt\in\Delta\sm\set{\al}}\right) \seq \conv(\h \Delta).
 \] 
 It is clear that $D_\al$ is a polytope that spans the same affine space, and therefore is of the same dimension as $\conv(\h\Delta)$.

\begin{prop} 
\label{prop:decompGeneric}
 Suppose that $(\Phi,\Delta)$ is a generic universal based root system. 
 The polytope $\conv(\h\Delta)$ is the union of the imaginary convex set $Z$ together with the polytopes $D_\alpha$, $\alpha\in\Delta$:
 \[
  \conv(\h\Delta) = Z\cup \bigcup_{\alpha\in\Delta} D_\alpha .
\]
Moreover $Z\cap Q=\eset$ and $D_{\al}\cap D_{\bt}=\eset$ for distinct $\al,\bt\in \Delta$.
\end{prop}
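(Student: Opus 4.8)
The plan is to prove the three assertions in the order: first the disjointness statements $Z\cap Q=\eset$ and $D_\al\cap D_\bt=\eset$, and then the covering statement $\conv(\h\Delta)=Z\cup\bigcup_\al D_\al$, since the covering argument is cleanest once the geometry of each $D_\al$ is understood. For $Z\cap Q=\eset$, I would argue as follows: by Equation~\eqref{eq:ineqICB} we have $\mpair{x,x}\leq 0$ for all $x\in\ol Z$, so $Z\cap Q$ consists of those $x\in Z$ with $\mpair{x,x}=0$. But a point $x\in Z$ lies in the relative interior of a segment of $\ol Z$ (indeed $Z=W\cdot K$ and $K$ has nonempty interior in $\aff(\h\Delta)$ by Lemma~\ref{lem:sphere} and Proposition~\ref{prop:aff}, as the root system is irreducible of indefinite type — generic universal of rank $\geq 2$ forces this), and for $x,y\in\ol Z$ with $\mpair{x,x}=0$ one checks using $\mpair{x,y}\leq 0$ that $x$ must be an extreme point of $\ol Z$, hence lies in $E\subseteq\rb(\ol Z)$, contradicting $x\in Z$ (the relative interior part of $\ol Z$). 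I expect this to reduce to the observation that every isotropic point of $\ol Z$ is extreme, which follows from the negative-definiteness-like behaviour encoded in \eqref{eq:ineqICB}.

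\textbf{Disjointness of the $D_\al$.} For $D_\al\cap D_\bt=\eset$ with $\al\neq\bt$, the geometric picture is that $D_\al$ sits in the ``cap'' of $\conv(\h\Delta)$ cut off near the vertex $\h\al$ by the isotropic points $u_Q(\h\al,\h\g)$, and these caps are separated. The key is to produce, for each pair $\al\neq\bt$, a supporting functional that separates $D_\al$ from $D_\bt$. I would use the limit root $x:=u_Q(\h\al,\h\bt)\in\h Q$ and its associated exposed/supporting hyperplane $x^\perp$: by Proposition~\ref{prop:isoface} (applied with $\Mpair$ nonsingular, which we may assume by restriction/extension, Remark~\ref{rem:SingularB}), $F=\ol Z\cap x^\perp$ is a proper face of $\ol Z$, and $\mpair{x,z}\leq 0$ for all $z\in\ol Z$. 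One then checks that all the defining vertices of $D_\al$ lie weakly on one side of $x^\perp$ while those of $D_\bt$ lie strictly on the other side, using that $x$ lies on the open segment $(\h\al,\h\bt)$ together with $\mpair{x,x}=0$ and sign information on the inner products $\mpair{x,\h\g}$. This relies on the description in \cite[\S9.12]{dyer:imc}, which I would invoke for the detailed sign analysis.

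\textbf{Covering statement.} For the covering, I would show that for any $p\in\conv(\h\Delta)$, either $p\in Z$ or $p\in D_\al$ for some $\al$. The natural approach is via the retraction/nearest-point structure: write $p=\sum_\al\lambda_\al\h\al$ as a convex combination of the normalized simple roots, and consider the segment from $p$ towards $\ol Z=\conv(E)$. Since $\conv(\h\Delta)$ is the union of $\ol Z$ with the region ``outside'' the isotropic cone near the vertices, the complement $\conv(\h\Delta)\setminus Z$ is covered by the caps $D_\al$. Concretely, if $p\notin Z$, then $\mpair{p,p}\geq 0$ is impossible to have $p$ strictly inside, and one locates $p$ inside a unique cap by examining which vertex $\h\al$ is ``visible'' from $p$ across $\h Q$; the points $u_Q(\h\al,\h\g)$ are exactly the boundary-on-$Q$ points of this cap.

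\textbf{Main obstacle.} The hardest step is the covering inclusion $\conv(\h\Delta)\subseteq Z\cup\bigcup_\al D_\al$, because it requires controlling the global convex geometry of $\ol Z$ in the possibly non-weakly-hyperbolic case, where $\h Q$ need not be a sphere and can contain affine subspaces (cf.\ Corollary~\ref{cor:bounded} and Figure~\ref{fig:nonhyp}). In the weakly hyperbolic case the spherical model from Proposition~\ref{prop:sphere} makes this transparent, but in general I expect to reduce the statement to the corresponding decomposition of the imaginary cone proved in \cite[\S9.12]{dyer:imc}: normalizing the cone decomposition there by intersecting with $V_1$ yields exactly the stated polytopal decomposition, with $\ol\imc\cap V_1=\ol Z$ (Theorem~\ref{thm:imc-closure}) accounting for the $Z$ piece and the cone over each vertex accounting for $D_\al$. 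Thus the cleanest route is likely to translate the already-established conical decomposition from \cite{dyer:imc} into the affine normalized setting rather than re-deriving it, with the disjointness statements following from the corresponding transversality of the cones.
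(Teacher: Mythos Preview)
Your argument for $Z\cap Q=\eset$ has a genuine gap. You claim that every isotropic point of $\ol Z$ is an extreme point of $\ol Z$, deducing this from \eqref{eq:ineqICB}. But that inference fails: if $x=\tfrac12(y+z)$ with $y,z\in\ol Z$ and $\mpair{x,x}=0$, expanding gives $\mpair{y,y}+2\mpair{y,z}+\mpair{z,z}=0$, and since all three terms are $\leq 0$ you only conclude that $y,z$ are isotropic and $\mpair{y,z}=0$ --- not that $y=z$. Indeed Remark~\ref{rk:Eext}(1) exhibits an irreducible based root system with a limit root that is \emph{not} extreme in $\ol Z$, and generic universal root systems need not be weakly hyperbolic (Figure~\ref{fig:nonhyp}), so you cannot fall back on Corollary~\ref{cor:Eexthyp}. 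The companion claim that $Z$ is ``the relative interior part of $\ol Z$'' is also unjustified: $K$ is closed and $Z=W\cdot K$ is a union of closed polytopes, so nothing prevents $Z$ from meeting $\rb(\ol Z)$ a priori.

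The paper's proof of $Z\cap Q=\eset$ is quite different and uses the specific structure of generic universal systems: given $z\in Z\cap\h Q$, act by some $w\in W$ to bring $w\cdot z$ into $K\cap\h Q$, write $w\cdot z=\sum_{\al\in\Delta'}a_\al\al$ with $a_\al>0$, and observe that $0=\mpair{w\cdot z,w\cdot z}=\sum a_\al\mpair{\al,w\cdot z}$ with each summand $\leq 0$ forces $w\cdot z$ into the radical of $\Mpair$ restricted to $\Span(\Delta')$; this makes $\Delta'$ the simple system of an irreducible affine parabolic, which cannot exist since all off-diagonal inner products are $<-1$. For the covering statement, citing \cite[\S9.12]{dyer:imc} is legitimate, but the paper also supplies a self-contained proof here: it introduces $P=\conv\{u_Q(\h\al,\h\bt):\al\neq\bt\}$, notes $\conv(\h\Delta)=P\cup\bigcup_\al D_\al$ and $\ri(P)\subseteq Z$, and then handles the boundary of $P$ by induction on the rank via facial restriction. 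The disjointness $D_\al\cap D_\bt=\eset$ is obtained by a direct sign check of $\mpair{\h\al,\,\cdot\,}$ on the vertices of the $D_\gamma$, which is simpler than your proposed separation by $u_Q(\h\al,\h\bt)^\perp$.
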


This proposition is a direct reformulation in affine terms of (parts of)  \cite[Lemma 9.11 and Lemma 9.12]{dyer:imc}. We give a proof here for convenience. 

\begin{proof} Assume that $\Delta$ is linearly independent. The general case is dealt with by choosing a lift up as in \cite[\S5.3]{HLR}, see the details in \cite[Proof of Lemma 9.12 (d)]{dyer:imc}. Let $P$ be the polytope $P:=\conv\{u_Q(\h\alpha,\h\beta)\,|\, \alpha\not = \beta \in\Delta\}$. Since $\Delta$ is linearly independent and the root system is universal, the set $\set{u_{Q}(\h\al,\h\bt)\mid \bt\in\Delta\sm\set{\al}}$ is affinely independent of cardinality $|\Delta|-1$ for any $\alpha\in \Delta$. Therefore, for any $\alpha\in\Delta$, the polytope $\conv(\set{u_{Q}(\h\al,\h\bt)\mid \bt\in\Delta\sm\set{\al}})$ is a facet of both $P$ and $D_\al$; the other facets of $P$ and $D_\alpha$ being contained in the facets of $\conv(\h\Delta)$.  In particular, it is not difficult to see that
\[
\conv(\h\Delta)=P\cup \bigcup_{\alpha\in\Delta} D_\alpha.
\]
Since $u_Q(\h\alpha,\h\beta)$ is a limit root, we have that $P\subseteq \conv(E)=\ol Z$, by Theorem~\ref{thm:imc-closure}. Therefore $\ri(P)\subseteq \ri(\ol Z)=\ri (Z)\subseteq Z$, where $\ri(X)$ denotes the relative interior of $X$ (see for instance \cite[Appendix A and Eq. A.2.2]{dyer:imc}).

To show that $\conv(\h\Delta) = Z\cup \bigcup_{\alpha\in\Delta} D_\alpha$, we show by induction on the dimension of $\conv(\h\Delta)$ that $P\subseteq Z\cup \bigcup_{\alpha\in\Delta} D_\alpha$.

If the dimension $\conv(\h\Delta)$ is $2$ then $\Delta=\{\alpha,\beta\}$ and $\conv(\h\Delta)=[\h\al,\h\bt]$. Moreover 
 \[
 D_\al=[\h\al,u_Q(\h\al,\h\bt)],\ D_\bt=[\h\bt,u_Q(\h\bt,\h\al)]\textrm{ and } P=[u_Q(\h\al,\h\bt),u_Q(\h\bt,\h\al)].
 \]
But  $Z=]u_Q(\h\al,\h\bt),u_Q(\h\bt,\h\al)[$ which concludes the dimension $2$ case. Assume now that the dimension of $\conv(\h\Delta)$ 
 is strictly greater than $2$. Let $z\in P$, we may assume $z\in P\setminus \ri (P)$, since $\ri(P)\subseteq Z$.
  So $z$ is in a facet $P'$ of $P$.  We know from the discussion above that either $P'$ is also  a facet of $D_\alpha$, for $\alpha\in\Delta$, and $z\in D_\alpha$; or $P'$ is contained in a facet $\conv(\h\Delta\setminus\{\h\gamma\})$, $\gamma\in\Delta$, of $\conv(\h\Delta)$. In this last case, it is easy to see that, for any $\alpha\in \Delta'=\Delta\setminus\{\gamma\}$,  the set $D_\alpha\cap P'$ is the equivalent object to $D_\alpha$ for the based root subsystem associated to~$\Delta'$. Moreover, $Z\cap \conv(\h{\Delta'})$ is the imaginary convex set for for the root subsystem associated to $\Delta'$, as noted at the beginning of \S\ref{se:dom} (see also~\cite[Lemma~3.4]{dyer:imc}) and 
  \[
  P'=\conv\{u_Q(\h\alpha,\h\beta)\,|\, \alpha\not = \beta \in\Delta'\}.
  \]
So by induction we have that
\[
z\in P'\subseteq \big(Z\cap\conv(\h{\Delta'})\big) \bigcup_{\alpha\in\Delta'} (D_\alpha\cap \conv(\h{\Delta'}))\subseteq Z\cup \bigcup_{\alpha\in\Delta} D_\alpha.
\]
This concludes the first part of the proof.  
  \smallskip

 Now let us prove the ``Moreover'' part of the proposition.   If $z\in \h Q\cap Z$ then there is $w\in W$ such that $w\cdot z \in K\cap \h Q$. Since $w\cdot z\in K\subseteq \conv(\h\Delta)\subseteq \cone(\Delta)$, we can write $w\cdot z=\sum_{\alpha\in\Delta} a_\alpha \alpha$ with $a_\alpha\geq 0$. Let $\Delta'=\{\alpha\in\Delta\,|\,a_\alpha > 0\}$. This set is non-empty, since $w\cdot z\not = 0$, and $|\Delta'|\geq 2$ since $B(w\cdot z,w\cdot z)=0$ and $B(\al,\al)>0$ for any root $\al$. Since $w\cdot z\in K\cap Q$, we have
  \[
  0=B(w\cdot z,w\cdot z)=\sum_{\alpha\in\Delta'} a_\alpha B(\alpha,w\cdot z)\leq 0.
  \]
This forces $ B(\alpha,w\cdot z)=0$, for all $\alpha\in\Delta'$, since $a_\alpha\not =0$. Hence  $w\cdot z$ is in the radical of $B$ restricted to $\Span(\Delta')$. 
Since $w\cdot z=\sum_{\alpha\in\Delta'} a_\alpha \alpha$ with all $a_\alpha\neq 0$ and $\Delta'$ is connected, this implies $\Delta'$ is the simple system of an irreducible affine standard parabolic subsystem of $\Phi$ (see for instance \cite[\S4.5]{dyer:imc}). But since  $(\Phi,\Delta)$ is a \emph{generic} universal based root system, there are no such subsystems, a contradiction which completes the proof that $Q\cap Z=\eset$. The last parts of the proposition hold since for $\alpha\not =\beta$ in $\Delta$ we have $B(\h\al,\h\al)>0$, $B(\h\al,u_Q(\h\al,\h\bt))>0$ and $B(\h\bt,u_Q(\h\al,\h\bt))<0$, whereas for $\gamma\not = \alpha$ in $\Delta$ we have $B(\h\gamma,\al)<0$ and $B(\h\gamma,u_Q(\h\al,\h\bt))<0$.
\end{proof}

\smallskip

\begin{rem} 
  ~
  \begin{enumerate}
  \item This decomposition still holds in the case of universal based root systems that are not necessarily generic, see for an example the right-hand side picture in Figure~\ref{fig:fractalConj}.  But in this case, $Z$ may meet $Q$ and the $D_\al$'s may not be disjoint. For instance $Z$ meets $Q$ in limit points arising from facial affine dihedral based root subsystems, i.e., when $B(\al,\bt)=-1$.
  \item Let $Q^+=\{v\in V\,|\, B(v,v)\geq 0\}$.  If the based root system is generic universal, we have necessarily $Q^+\cap\conv(\h\Delta)\subseteq \bigcup_{\alpha\in\Delta} D_\alpha$, since $Z\subseteq Q^-$ and $Z\cap Q=\eset$,  and the union is disjoint.
  \end{enumerate}
\end{rem}

As a direct consequence of the last statement in the remark above is the following lemma that will be required for the proof of Theorem~\ref{ZEapprox}. 

\begin{lem} 
\label{genuni}
Suppose that $(\Phi,\Delta)$ is a generic universal based root system. Then $\wh \Phi^{+}\cup (\h Q\cap \conv(\h \Delta))\seq \bigcup_{\al\in \Delta} D_{\al}$, and the union is disjoint.
\end{lem}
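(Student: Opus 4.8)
The plan is to deduce Lemma~\ref{genuni} directly from Proposition~\ref{prop:decompGeneric} and the second item of the Remark immediately preceding it. Recall that for a generic universal based root system we have established $\conv(\h\Delta) = Z\cup \bigcup_{\al\in\Delta} D_\al$, with $Z\cap Q=\eset$ and the $D_\al$ pairwise disjoint. The first task is to show that $\h Q\cap\conv(\h\Delta)\seq \bigcup_{\al\in\Delta} D_\al$. Since $\h Q\subseteq Q\seq Q^+$ (where $Q^+:=\{v\in V\,|\, B(v,v)\geq 0\}$) and $Q\cap Z=\eset$ forces any point of $\h Q\cap\conv(\h\Delta)$ to lie outside $Z$, the decomposition gives that such a point lies in some $D_\al$. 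This is precisely the content of item~(2) of the Remark: $Q^+\cap\conv(\h\Delta)\seq\bigcup_{\al\in\Delta} D_\al$, with the union disjoint.

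Next I would handle the positive roots $\h\Phi^+$. The key observation is that for any $\rho\in\Phi^+$ one has $B(\rho,\rho)=1>0$, so $\h\rho\in Q^+$; and $\h\rho\in\conv(\h\Delta)$ always holds (the normalized roots lie in the polytope). Hence $\h\rho\in Q^+\cap\conv(\h\Delta)$, and again by the Remark $\h\rho$ lies in $\bigcup_{\al\in\Delta} D_\al$. Combining the two cases, $\h\Phi^+\cup(\h Q\cap\conv(\h\Delta))\seq Q^+\cap\conv(\h\Delta)\seq\bigcup_{\al\in\Delta} D_\al$.

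Finally, the disjointness of the union is immediate from the ``Moreover'' part of Proposition~\ref{prop:decompGeneric}, which asserts $D_\al\cap D_\bt=\eset$ for distinct $\al,\bt\in\Delta$ in the generic universal case. This completes the argument.

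There is no real obstacle here, as the lemma is essentially a restatement of facts already assembled; the only point requiring minor care is to verify that $\h\rho\in Q^+$ for positive roots, which follows from normalization scaling a vector $\rho$ with $B(\rho,\rho)=1$ by a positive scalar $1/\vphi(\rho)$ (positive since $\rho\in\cone(\Delta)\subseteq V_0^+$), so $B(\h\rho,\h\rho)=1/\vphi(\rho)^2>0$, confirming $\h\rho\in Q^+$. The bulk of the work has already been done in proving Proposition~\ref{prop:decompGeneric}, so the proof of the lemma itself is short.
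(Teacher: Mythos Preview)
Your proof is correct and follows essentially the same approach as the paper, which simply notes that the lemma is a direct consequence of item~(2) of the Remark preceding it (i.e., $Q^{+}\cap\conv(\h\Delta)\seq\bigcup_{\al\in\Delta}D_{\al}$ with disjoint union). You have merely spelled out the verification that both $\h\Phi^{+}$ and $\h Q\cap\conv(\h\Delta)$ lie in $Q^{+}\cap\conv(\h\Delta)$, which is exactly the intended argument.
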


\subsection{Faithfulness of the $W$-action}
\label{ss:faithful}

For the remainder of Section \S\ref{se:univ}, we assume unless otherwise stated that $(\Phi,\Delta)$ is an irreducible based root system of indefinite type and rank at least three. We aim to
prove next that the $W$-action on $E$ is faithful. Here is roughly the idea
of the proof. Assume $w\in W$ is such that for all $x\in E$, $w\cdot
x=x$. By the definition of the $W$-action, this means that any $x$ in
$E$ is an eigenvector for $w$ with positive eigenvalue (for the linear action of $w$ in
$V$). Hence, $E$ is contained in the union of the eigenspaces of
$w$. It would then be easy to conclude provided we prove the following
fact:
\begin{equation} \label{eq:notcontained} E \textnormal{ is not contained in any finite
union of proper linear subspaces of }\Span(\Delta).\end{equation} It
is clear that $E$ cannot be contained in \emph{one} proper linear
subspace of $\Span(\Delta)$, since we know that
$\Span(E)=\Span(\Delta)$ (see Proposition~\ref{prop:aff}). To prove \eqref{eq:notcontained}, we need to show that even after removing a proper subspace, there are still enough points in $E$ to span $\Span(\Delta)$, and we must be able to repeat this process indefinitely. It would be sufficient to prove that for any $x$ in $E$, there exists an open neighbourhood $U$ of $x$ in $V$ such that $U\cap E$ is enough to span $\Span(E)$. This is what is stated in the theorem below, together with the logical consequences. 

\begin{thm} 
  \label{thm:faithful2} 
   Let $U$ be an open subset of $V$
  with $U\cap E(\Phi)\neq \eset$. Then
  \begin{num}
  \item $\aff(U\cap E(\Phi))=\aff(E(\Phi))$.
  \item $U\cap E(\Phi)$ is not contained in the union of any countable
    collection of proper affine subspaces  of $\aff(E(\Phi))$.
  \item If for all $x\in U\cap  E(\Phi)$, we have $w\cdot x = x$, then $w=1$.
 \end{num}
\end{thm}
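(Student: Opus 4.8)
The plan is to prove the three statements in the logical order (a) $\Rightarrow$ (b) $\Rightarrow$ (c), since (b) is the genuinely new input and (c) follows from it by a short linear-algebra argument. First I would establish part (a). The key tool here is Proposition~\ref{largeuniv} combined with the approximation philosophy, but more directly I would invoke the fractal self-similarity from Theorem~\ref{thm:fractact}. Given $x\in U\cap E$, choose a sequence $(\al_n)$ in $\PP$ with $\h{\al_n}\to x$. For any fixed $y\in \Eexp$ with $\mpair{x,y}<0$ (which exists by Lemma~\ref{lem:exp}(b)), Theorem~\ref{thm:fractact}(b) gives $s_{\al_n}\cdot y\to x$. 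More usefully, applying part~(a) of that theorem with a closed set $P$ disjoint from the face $F=\ol Z\cap x^\perp$, I can push an entire affinely-spanning configuration of points of $E$ into $U$. Concretely, by Proposition~\ref{prop:aff} we have $\aff(E)=\aff(\h\Delta)$; I would select finitely many exposed limit roots $y_1,\dots,y_m\in\Eexp$ that are affinely independent and span $\aff(E)$, chosen so none lies in $F$ (possible since $F$ is a proper face, using Lemma~\ref{lem:exp}). Setting $P=\{y_1,\dots,y_m\}$, Theorem~\ref{thm:fractact}(a) yields $N$ with $s_{\al_N}\cdot P\subseteq U$. Since $s_{\al_N}$ acts by a projective transformation that preserves $E$ and preserves affine independence of points in general position on $\h Q$ (the image points are again in $E\subseteq\h Q$), the images $s_{\al_N}\cdot y_i$ lie in $U\cap E$ and still affinely span $\aff(E)$. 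Hence $\aff(U\cap E)=\aff(E)$, giving~(a). The one point requiring care is that a projective transformation need not preserve affine independence in general; I would argue instead that if $\aff(s_{\al_N}\cdot P)$ were a proper subspace, pulling back would force $P\subseteq$ a proper subspace (using that $s_{\al_N}$ is invertible and maps the relevant affine span to an affine span, since it preserves convex closures by Proposition~\ref{rk:action}), contradicting the choice of $P$.

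Next I would deduce part (b) from part (a) by a Baire-category-style argument, but adapted to the affine setting. Suppose for contradiction that $U\cap E\subseteq \bigcup_{k\in\Nat} A_k$ where each $A_k$ is a proper affine subspace of $\aff(E)$. The strategy is to show that one can shrink $U$ repeatedly while keeping a nonempty intersection with $E$ that still spans. Fix $x_0\in U\cap E$. Since $A_1$ is a proper affine subspace, by part (a) applied to the open set $U$ there is a point $x_1\in U\cap E$ with $x_1\notin A_1$ (otherwise $U\cap E\subseteq A_1$, forcing $\aff(U\cap E)\subseteq A_1\subsetneq\aff(E)$, contradicting~(a)). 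Now choose an open ball $U_1\subseteq U$ around $x_1$ disjoint from the closed set $A_1$; then $U_1\cap E\neq\eset$ and $U_1\cap E\subseteq\bigcup_{k\geq 2}A_k$. Iterating, I build a nested sequence of open sets $U_1\supseteq U_2\supseteq\cdots$ with $\ol{U_{k+1}}\subseteq U_k$, $U_k\cap E\neq\eset$, and $U_k\cap E\cap A_j=\eset$ for all $j\leq k$; taking centres $x_k\in U_k\cap E$ and using compactness of $E$ (it is closed and bounded in $\conv(\h\Delta)$), a subsequence converges to a point $x_\infty\in E$ lying in $\bigcap_k\ol{U_k}\subseteq U$ yet in no $A_j$, contradicting $U\cap E\subseteq\bigcup_k A_k$. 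This establishes~(b). The main obstacle in this section is ensuring at each stage that one can avoid $A_k$ while staying inside $E$ and inside the shrinking open set; this is exactly where part~(a) is indispensable, as it guarantees $U_k\cap E$ is never confined to a proper affine subspace.

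Finally I would prove part (c), which is the shortest step. Suppose $w\in W$ fixes $U\cap E$ pointwise. By the definition~\eqref{eq:Wact} of the $W$-action, $w\cdot x=x$ means $w(x)=\vphi(w(x))\,x$, so every $x\in U\cap E$ is an eigenvector of the linear map $w\colon V\to V$ with a (positive) real eigenvalue. The distinct real eigenvalues of $w$ partition the corresponding eigenvectors into finitely many eigenspaces $V_{\lambda_1},\dots,V_{\lambda_r}$, so $U\cap E\subseteq \bigcup_{i=1}^r V_{\lambda_i}$. Intersecting with $\aff(E)$ (noting $\Span(E)=\Span(\Delta)$ by Proposition~\ref{prop:aff}), each $V_{\lambda_i}\cap\aff(E)$ is an affine subspace of $\aff(E)$. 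By part~(b), $U\cap E$ cannot be covered by finitely many \emph{proper} affine subspaces, so some $V_{\lambda_i}\cap\aff(E)=\aff(E)$, i.e.\ $V_{\lambda_i}\supseteq\Span(E)=\Span(\Delta)$; thus $w$ acts as the scalar $\lambda_i$ on all of $\Span(\Delta)$. Since $w$ is an isometry of $\Mpair$ restricted to $\Span(\Delta)$ and $\Mpair$ is not identically zero there, the scalar must satisfy $\lambda_i^2=1$, and positivity of the eigenvalue on limit roots forces $\lambda_i=1$, so $w$ fixes $\Span(\Delta)$ pointwise. For a spanning or non-degenerate representative (Remark~\ref{rem:SingularB}) this gives $w=1$; in general, since the $W$-action on $\Span(\Delta)$ is faithful for the reflection representation of an irreducible infinite Coxeter group, we conclude $w=1$. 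Theorem~\ref{thm:faithful} then follows as the special case $U=V$.
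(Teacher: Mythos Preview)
Your overall architecture---prove (a), deduce (b) by a nested Baire-type construction, then deduce (c) by an eigenspace argument---is exactly the paper's, and your arguments for (b) and (c) are essentially identical to the paper's (your use of positivity of the eigenvalue in (c) is in fact slightly cleaner than the paper's determinant-and-$(-1)\notin W$ argument).

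There is, however, a real gap in your proof of (a). You assert that one can select affinely independent $y_1,\dots,y_m\in\Eexp$ spanning $\aff(E)$ with none lying in $F=\ol{Z}\cap x^\perp$, justifying this by ``$F$ is a proper face, using Lemma~\ref{lem:exp}''. But Lemma~\ref{lem:exp} produces only a \emph{single} exposed limit root outside $x^\perp$; it does not yield a spanning set. Nothing you cite rules out the possibility that $E\setminus x^\perp$ lies in a proper affine subspace of $\aff(E)$ (for a general subset $S$ with $\aff(S)=A$, removing a hyperplane $H$ can easily destroy spanning). This is precisely the obstacle the paper isolates as Lemma~\ref{genpos}: there exists a finite $P\subseteq E$ not contained in the union of any \emph{two} proper affine subspaces of $\aff(E)$, so in particular $P\setminus x^\perp$ still spans. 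The proof of that lemma is not a triviality one can wave away: it invokes Proposition~\ref{largeuniv} to get a generic universal subsystem with simple roots $\alpha_1,\dots,\alpha_n$ forming a basis of $\Span(\Delta)$, takes $P$ to be the $n(n-1)$ limit roots $u_Q(\h{\alpha_i},\h{\alpha_j})$ on the edges of the simplex $\conv(\h{\alpha_1},\dots,\h{\alpha_n})$, and then checks combinatorially that removing any single hyperplane leaves a spanning set. You mention Proposition~\ref{largeuniv} at the outset but then bypass it; in fact it is doing essential work here, and supplying this step is what is missing from your argument.
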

 
\begin{rem} 
  \label{rk:faithful2}
  Part (c) is a stronger version of the faithfulness of the $W$-action
  on $E$, i.e., implies Theorem~\ref{thm:faithful}. Together with part (b), it has
  several consequences on~$W$-orbits in $\ol{Z}$ and on the cardinality of $E$ and $\Eext$, statements that we
  postpone to \S\ref{ss:acc-imc}--\ref{ss:cantor}. From (a) we
  already have that $E$ is perfect (i.e. contains no isolated points), so in particular
  it is infinite.
\end{rem}

In the proof of Theorem~\ref{thm:faithful2}, part (b) will be
naturally deduced from (a) and will imply quite easily part (c) (as
explained before the theorem). The difficult part
is to prove (a). The idea is to exhibit enough points in $U\cap E$ to
affinely generate $\aff(E)$. This will be a consequence of the
following fact.

\begin{lem}
  \label{genpos}
 There exists a finite subset $P$ of $E$
  which is not contained in the union of any two proper affine
  subspaces of $\aff(E)$.
\end{lem}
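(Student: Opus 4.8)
The plan is to first reduce the lemma to a statement about a \emph{generic universal} root subsystem, and then prove that statement by induction on the rank. By Proposition~\ref{largeuniv} there is a generic universal based root subsystem $(\Lambda,\Psi)$ of $(\Phi,\Delta)$ with $\Span(\Psi)=\Span(\Delta)$; writing $\Psi=\{\rho_1,\dots,\rho_n\}$, this forces $\Psi$ to be a basis of $\Span(\Delta)$, so the normalized simple roots $\h{\rho_1},\dots,\h{\rho_n}$ are affinely independent and $\aff(\h\Psi)=\aff(\h\Delta)=\aff(E)$ by Proposition~\ref{prop:aff}. Since $\Lambda\seq\Phi$ gives $E(\Lambda)=\Acc(\h\Lambda)\seq\Acc(\h\Phi)=E$, any finite $P\seq E(\Lambda)$ not contained in a union of two proper affine subspaces of $\aff(E(\Lambda))=\aff(E)$ will prove the lemma. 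Thus it suffices to establish the following claim by induction on rank: for every generic universal irreducible based root system of indefinite type and rank $\geq 3$, such a finite $P\seq E$ exists. Throughout I will use that a proper affine subspace is contained in an affine hyperplane, so it is enough to defeat coverings by two \emph{hyperplanes} of $\aff(E)$.

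For the base case $n=3$, the affine space $\aff(E)$ is a plane and $\h Q$ is a nondegenerate conic (signature $(2,1)$). Any line meets $\h Q$ in at most two points, so any two lines meet $E\seq\h Q$ in at most four points. As $E$ is infinite (indefinite type), I would choose any five distinct points of $E$ for $P$; two proper affine subspaces of the plane lie in two lines and hence cannot contain all five, settling this case.

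For the inductive step $n\geq 4$, I would exploit that $\Psi$ is a basis, so every $J\seq\{1,\dots,n\}$ is facial and $(\Lambda_J,\Psi_J)$ is again generic universal, irreducible of indefinite type. For each $i$ put $J_i:=\{1,\dots,n\}\sm\{i\}$, giving a rank $n-1\geq 3$ facet subsystem with $\aff(E(\Lambda_{J_i}))=\aff(F_{J_i})$, where $F_{J_i}=\conv(\h{\Psi_{J_i}})$. By induction choose a finite $P_i\seq E(\Lambda_{J_i})\seq E$ not covered by two proper affine subspaces of $\aff(F_{J_i})$, and set $P:=P_1\cup\dots\cup P_n$. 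Suppose $P\seq H_1\cup H_2$ for hyperplanes $H_1,H_2$ of $\aff(E)$. For each $i$, if neither $H_1$ nor $H_2$ contained $\aff(F_{J_i})$, then $H_1\cap\aff(F_{J_i})$ and $H_2\cap\aff(F_{J_i})$ would be two proper affine subspaces of $\aff(F_{J_i})$ covering $P_i$, contradicting the choice of $P_i$. Hence some $H_{k(i)}\supseteq\aff(F_{J_i})$. With $n\geq 3$ facets and only two hyperplanes, the pigeonhole principle yields $i\neq j$ with one hyperplane $H_k\supseteq\aff(F_{J_i})$ and $H_k\supseteq\aff(F_{J_j})$. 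Since every vertex $\h{\rho_l}$ lies in $F_{J_i}$ (for $l\neq i$) or in $F_{J_j}$ (for $l\neq j$), the two facets together contain all vertices, so $H_k\supseteq\aff(\h\Psi)=\aff(E)$, contradicting properness of $H_k$.

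The main obstacle is precisely that a naive ``general position'' approach is circular: selecting points of $E$ avoiding finite unions of subspaces is essentially the conclusion of Theorem~\ref{thm:faithful2}(b) one is working toward. The two devices that break this circularity are, first, the reduction via Proposition~\ref{largeuniv} to a \emph{simplex} (so that pairs of opposite facets span the whole space, which is false for a general polytope $\conv(\h\Delta)$), and second, the conic intersection bound that supplies a genuinely explicit base case in dimension two. Care is needed to verify that each facet subsystem $\Lambda_{J_i}$ remains generic universal, irreducible, and of indefinite type of rank $\geq 3$ so that the induction hypothesis applies, and that $\aff(E(\Lambda_{J_i}))$ really equals $\aff(F_{J_i})$ via Proposition~\ref{prop:aff}.
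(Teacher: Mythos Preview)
Your proof is correct and takes a genuinely different route from the paper's. Both arguments begin identically, using Proposition~\ref{largeuniv} to pass to a generic universal root subsystem $(\Lambda,\Psi)$ with $\Span(\Psi)=\Span(\Delta)$, so that $\h\Psi$ is the vertex set of a simplex. From there the paper proceeds \emph{non-inductively}: it sets $P=\{u_Q(\h\rho_i,\h\rho_j):i\neq j\}$, the $n(n-1)$ intersection points of the simplex edges with $\h Q$, and argues by a short direct case analysis that for any affine hyperplane $H$ of $\aff(E)$ one still has $\aff(P\setminus H)=\aff(E)$, using only the combinatorics of which edge-points $H$ can separate from a chosen vertex. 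Your argument instead runs an induction on the rank of the generic universal subsystem: in rank~$3$ you exploit that $\h Q$ is a nondegenerate conic (signature $(2,1)$), so two lines meet it in at most four points and any five limit roots suffice; for $n\geq 4$ you take $P=\bigcup_i P_i$ with each $P_i$ furnished by the inductive hypothesis on the facet $(\Lambda_{J_i},\Psi_{J_i})$, and then a pigeonhole argument forces one of the two covering hyperplanes to contain two distinct facets of the simplex, hence all its vertices, contradicting properness.

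What each approach buys: the paper's construction yields an explicit $P$ of size $n(n-1)$ and a self-contained combinatorial argument that avoids induction and any appeal to the rank-$3$ geometry of conics. Your inductive argument is more structural and arguably cleaner in the step---the facet-plus-pigeonhole idea is very transparent---but produces a much larger $P$ (its size grows roughly like $5\cdot n!/6$ through the recursion) and requires checking that each facet subsystem remains generic universal, irreducible and indefinite so that Proposition~\ref{prop:aff} applies at every stage, which you note at the end. Both are valid; the paper's version is more economical if one later needs control on $\vert P\vert$.
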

 
We give below a proof for Lemma~\ref{genpos}, and deduce afterwards the proof of Theorem~\ref{thm:faithful2}.

\begin{proof} 
  Note that for any roots $\al,\bt\in \PP$ with $\mpair{\al,\bt}<-1$,
  one has $(\widehat\al,\widehat\bt)\in U_{Q}$ and so $u_{Q}(\widehat
  \al,\widehat \bt)$ and $u_{Q}(\widehat \bt,\widehat \al)$ are
  defined and distinct.  Using Proposition~\ref{largeuniv}, one can
  find $\al_{1},\ldots, \al_{n}\in \PP$ with
  $\mpair{\al_{i},\al_{j}}<-1$ if $1\leq i<j\leq n$, and which form a
  basis for $\Span(\Delta)$ (by taking a subset of $\Psi$ if
  necessary).  The set $A=\mset{\widehat{\al_{i}}\mid i=1,\ldots, n}$ is
  affinely independent and $\aff(A)=\aff(\widehat \Delta)$.  Set
  $\al_{i,j}=u_{Q}(\widehat\al_{i},\widehat\al_{j})\in E$. Then
  \begin{equation}
    P:=\mset{\al_{i,j}\mid 1\leq i\leq n,1\leq j\leq n,i\neq j}
  \end{equation} is a set of $n(n-1)$ 
  points in the simplex $A':=\conv(A)$ with $A$ as vertex set. Note
  that $A$ does not contain any vertex of $A'$ and contains exactly
  two points on each of the $\binom{n}{2}$ edges of $A'$.  In
  particular, $\aff(\set{\al_{i,j},\al_{j,i}} =
  \aff(\set{\h{\al_{i}},\h{\al_{j}}}$ for $i\neq j$.
 
  Clearly, $\aff(P)=\aff(A)=\aff(\h\Delta)$. Denote
  $V':=\aff(\h\Delta)$. It suffices to show that if $H$ is any affine
  hyperplane in $V'$, then $\aff(P\sm H)=\aff(P)$. This follows by a
  simple general argument as follows.  One has $\h\al_{i}\not\in H$ for
  some $i$, say for $i=1$ by reindexing.  Then for each $j=2,\ldots,
  n$, $H$ cannot contain both $\al_{1,j}$ and $\al_{j,1}$ since their
  affine span contains $\h\al_{1}$. We now consider two cases as
  follows.  Assume first that for some $k$ with $2\leq k\leq n$, $P\sm
  H$ contains both $\al_{1,k}$ and $\al_{k,1}$. Then $\aff(P\sm H)$
  contains $\h\al_{1}$. For each $j=2,\ldots, n$, $P\sm H$ contains
  either $\al_{1,j}$ or $\al_{j,1}$, so $\aff(P\sm H)$ contains
  $\h\al_{j}$. Thus, $\aff(P\sm H)$ contains $A$ and so is equal to $V'$
  as required in this case.  In the other case, for each $k$ with
  $2\leq k\leq n$, $H$ contains exactly one of $\al_{1,k}$ and
  $\al_{k,1}$. Then $H$ strictly separates $\h\al_{1}$ from the other
  vertices $\h\al_{2},\ldots,\h\al_{n}$ of $A'$, and so $\al_{j,k}\in P\sm
  H$ for all distinct $j,k$ in $\set{2,\ldots, n}$. Obviously
  $\aff(P\sm H)\supseteq \aff(\set{\h\al_{2},\ldots,\h \al_{n}})$.  Since
  also either $\al_{1,2}$ or $\al_{2,1}$ is in $P\sm H$, we therefore
  get $\h\al_{1}\in \aff(P\sm H)$ as well and so $\aff(P\sm H)=\aff(A)$
  as required in this case too.
\end{proof}
 
\begin{proof}[Proof of Theorem~\ref{thm:faithful2}]
  (a) Choose $x\in U\cap E$. By Proposition~\ref{prop:isoface2},
  $x^{\perp }\cap\ol{Z}$ is a proper face of $\ol{Z}=\conv(E)$ and
  in particular, $x^{\perp}\cap \aff(E)$ is a proper affine subspace
  of $\aff(E)$. By Lemma \ref{genpos}, there is a finite subset $P$ of
  $E\sm x^{\perp}$ with $\aff(P)=\aff(E)$.  By Theorem
  \ref{thm:fractact}, there exists $w\in W$ such that $w\cdot P
  \subseteq E\cap U$. Since the geometric action of $W$ on $V$ is by
  invertible linear maps, we have $ \dim (\Span (w(P))) = \dim (\Span
  (P))=\dim (\Span (E))$. But $\Span (w(P))=\Span (w\cdot P) \subseteq
  \Span(E)$, so we get $ \Span (w\cdot P) =\Span(E)$. Hence
  $\aff(E)=\aff(w\cdot P)\subseteq \aff(E\cap U)$, which completes the
  proof of (a).
  
  \smallskip

  (b) It will suffice to show that if $(H_{n})_{n\in \Nat}$ is a
  family of proper affine subspaces of $\aff(E)$, then there is a
  point $x\in (U\cap E)\sm ( \cup_{n\in \Nat} H_{n})$. By (a), there
  is a point $x_{1}\in (U\cap E)\sm H_{1}$. Choose an open
  neighbourhood $U_{1}$ of $x_{1}$ in $V$ with compact closure
  $\ol{U_{1}} \subseteq U\sm H_{1}$. Since $x_{1}\in E\cap U_{1}$,
  there exists by (a) a point $x_{2}\in (U_{1}\cap E)\sm H_{2}$.
  Choose an open neighbourhood $U_{2}$ of $x_{2}$ with compact closure
  $\ol{U_{2}} \subseteq U_{1}\sm H_{2}$.  Continuing to use (a) in
  this way, choose for each $n\in \Nat_{\geq 3}$ a point $x_{n}\in
  (E\cap U_{n-1})\sm H_{n}$ and an open neighbourhood $U_{n}$ of
  $x_{n}$ in $V$ with compact closure $\ol{U_{n}} \subseteq U_{n-1}\sm
  H_{n}$.  Since $E$ is compact, the sequence $(x_{n})_{n\in \Nat}$ in
  $E$ has a limit point $x\in E$.  Let $n\in \Nat_{\geq 1}$. Since
  $x_{m}\in U_{m} \subseteq U_{n}$ for all $m\geq n$, it follows that
  $x\in \ol{U_{n}}$. Since $\ol{U_{n}}\cap H_{n}=\eset$, $x\not \in
  H_{n}$.  Since $x\in \ol{U_{1}} \subseteq U$, this completes the
  proof of (b).
 
  \smallskip

  (c) Assume that $w\in W$ fixes $E\cap U$ pointwise.  Then
  for each $x \in E\cap U$, $wx=\lambda_{x}x$ for some $\lambda_{x}\in
  \real_{>0}$.  For each $\lambda\in \real$, let $V_{\lambda}$ be the
  $\lambda$-eigenspace of $w$ on $V$.  The above shows that $E\cap U
  \subseteq \cup_{\lambda\in \real}(V_{\lambda}\cap \aff(E))$. Only
  finitely many affine subspaces $V_{\lambda}\cap \aff(E)$ of
  $\aff(E)$ are non-empty, so by (b) we must have $\aff(E) \subseteq
  V_{\lambda}$ for some $\lambda\in \real$. Hence $\Phi \subseteq
  \Span(E) \subseteq V_{\lambda}$. It follows that $w$ is an homothety
  on $V$. Since $W$ is a reflection group, we have $\det(w)=\pm 1$, so
  $w=\pm 1$. But it is well known that for an infinite Coxeter group
  $W$, $-1$ cannot be an element of $W$ (for example by using the
  characterization of the length function in terms of roots, see
  \cite[\S5.6]{humphreys}). Therefore $w=1$.
\end{proof}

\subsection{Accumulation points} 
\label{ss:acc-imc} The fact that  $E$ is perfect  enables  us to prove the following results on accumulation points of $W$-orbits on $\overline{Z}$.  
\begin{cor}
\label{cor:acc-imc}
~
\begin{num}
\item If $z\in E$, then $\Acc(W\cdot z)=E$.
\item If $z\in \ol{Z}$ then $\Acc(W\cdot z)\sreq E$.
\item If $(\Phi,\Delta)$ is of hyperbolic (resp., weakly hyperbolic) type, then
$\Acc(W\cdot z)=E$ for all  $z\in\ol{Z}$ (resp., for all $z\in Z$).  
\end{num}
\end{cor}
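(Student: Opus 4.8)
\begin{num}
\item If $z\in E$, then $\Acc(W\cdot z)=E$.
\item If $z\in \ol{Z}$ then $\Acc(W\cdot z)\sreq E$.
\item If $(\Phi,\Delta)$ is of hyperbolic (resp., weakly hyperbolic) type, then
$\Acc(W\cdot z)=E$ for all $z\in\ol{Z}$ (resp., for all $z\in Z$).
\end{num}

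---

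The plan is to combine the closure statements of Theorem~\ref{cor:minimal} with the fact, recorded in Remark~\ref{rk:faithful2} (from Theorem~\ref{thm:faithful2}(a)), that $E$ is perfect, i.e. has no isolated points. The bridge between \emph{closure} and \emph{accumulation} will be the following elementary observation, which I would isolate first: if $P$ is a perfect subset of $V_1$ and $S\seq V_1$ satisfies $P\seq \ol{S}$, then $P\seq \Acc(S)$. Indeed, given $x\in P$ and a neighbourhood $U$ of $x$, perfectness furnishes some $y\in P\cap U$ with $y\neq x$; choosing a neighbourhood $V\seq U$ of $y$ with $x\notin V$ and using $y\in P\seq \ol{S}$ produces a point of $S$ lying in $V\seq U$ and distinct from $x$, so $x\in \Acc(S)$.

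For (a), let $z\in E$. Since $E$ is $W$-invariant, $W\cdot z\seq E$, whence $\Acc(W\cdot z)\seq \ol{W\cdot z}\seq \ol{E}=E$; and Theorem~\ref{cor:minimal}(b) gives $\ol{W\cdot z}=E$, so the observation (with $P=E$, $S=W\cdot z$) yields $E\seq \Acc(W\cdot z)$, hence equality. For (b), let $z\in \ol{Z}$. Theorem~\ref{cor:minimal}(a) gives $E\seq \ol{W\cdot z}$, so the same observation gives $E\seq \Acc(W\cdot z)$, that is $\Acc(W\cdot z)\sreq E$.

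For (c) the work is to prove the reverse inclusion $\Acc(W\cdot z)\seq E$, which I would extract from Theorem~\ref{thm:Wact}. Since $\ol{Z}$ is closed and $W$-stable (Proposition~\ref{rk:action}(2)), $W\cdot z\seq \ol{Z}$ forces $\Acc(W\cdot z)\seq \ol{Z}$; and for $z\in Z\cup(\ol{Z}\cap \h Q)$, Theorem~\ref{thm:Wact} gives $\Acc(W\cdot z)\seq \h Q$. Thus $\Acc(W\cdot z)\seq \ol{Z}\cap \h Q$ for every such $z$. In the weakly hyperbolic case Theorem~\ref{thm:fractal2}(i) identifies $\ol{Z}\cap \h Q=\conv(E)\cap \h Q=E$, so for $z\in Z$ one gets $\Acc(W\cdot z)\seq E$, and together with (b) this is an equality. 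In the hyperbolic case $E=\h Q$ (Theorem~\ref{thm:Qincl}) is a sphere and $\ol{Z}=\conv(E)=\conv(\h Q)$ is the closed ball it bounds, so again $\ol{Z}\cap \h Q=\h Q=E$; moreover $Z=\imc\cap V_1$ is the open ball (the tiling of hyperbolic space by the $W$-translates of $K$, cf. \cite[Prop.~9.4(c)]{dyer:imc} as already used in the proof of Theorem~\ref{thm:Qincl}), so $\ol{Z}\setminus Z\seq \h Q$ and therefore $\ol{Z}=Z\cup(\ol{Z}\cap \h Q)$. Hence every $z\in \ol{Z}$ satisfies $\Acc(W\cdot z)\seq \ol{Z}\cap\h Q=E$, and (b) again gives equality.

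Parts (a) and (b) are routine once the perfectness observation is in place. The main obstacle is the reverse inclusion in (c): it rests on the containment $\ol{Z}\setminus Z\seq \h Q$ in the hyperbolic case — equivalently, that the interior of the ball $\ol{Z}$ is entirely covered by $Z=W\cdot K$ — which is exactly where the hyperbolic geometry (the chamber tiling of hyperbolic space by $W$) enters, via \cite{dyer:imc}. In the merely weakly hyperbolic case this containment can fail, which is why there the conclusion must be restricted to $z\in Z$ rather than $z\in \ol{Z}$.
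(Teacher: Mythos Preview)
Your proof is correct and follows essentially the same route as the paper's: perfectness of $E$ (Remark~\ref{rk:faithful2}) together with Theorem~\ref{cor:minimal} for (a)--(b), and Theorem~\ref{thm:Wact} plus the identification $\ol{Z}\cap\h Q=E$ (Theorems~\ref{thm:Qincl} and~\ref{thm:fractal2}) for (c). The only cosmetic differences are that the paper handles (b) by splitting into $z\in E$ (reducing to (a)) and $z\in\ol{Z}\setminus E$ (where $W\cdot z$ is disjoint from $E$, so perfectness is not needed), whereas you apply your perfectness observation uniformly; and for the hyperbolic case in (c) the paper simply asserts $\ol{Z}\seq Z\cup(\ol{Z}\cap\h Q)$ while you justify it via the hyperbolic tiling.
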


\smallskip

\begin{rem}
~
\begin{num}
\item
If $(\Phi,\Delta)$ is irreducible of affine type, (a)--(c) all fail.
If it is hyperbolic dihedral, then  for $z\in \ol{Z}$, one has $\Acc(W\cdot z)\sreq E$ if and only if $\Acc(W\cdot z)= E$. 
\item We do not know if $E=\Acc (W\cdot z)$ for  arbitrary irreducible, non-dihedral    $\Phi$ of indefinite type and  all $z\in \ol{Z}$ (or even just all $z\in Z$).  
\end{num}
\end{rem}

\begin{proof}
Let $z\in E$. Then  $ \ol{W\cdot z}= E$ by Theorem \ref{cor:minimal}. Hence $E\sm W\cdot z=\ol{W\cdot z}\sm W\cdot z\seq \Acc(W\cdot z)\seq \ol{W\cdot z}=E$.   The equality $ \Acc(W\cdot z)=E$ therefore  holds since $E$ has no isolated points, as 
observed  in Remark \ref{rk:faithful2}. This proves (a). By (a), it 
suffices to prove (b)  for $z\in \ol{Z}\setminus E$. Then $W\cdot z \cap E=\eset$, so
\[ \Acc (W\cdot z) \supseteq \ol{W\cdot z} \setminus W\cdot z \supseteq E \] by Theorem~\ref{cor:minimal}. This proves (b).
To prove (c), assume $\Phi$ is hyperbolic (resp., weakly hyperbolic). By (b), it suffices to prove that if $z\in \ol{Z}$ (resp., $z\in Z$), then $\Acc(W\cdot z)\seq E$. Note that if $(\Phi,\Delta)$ is hyperbolic, then $\ol{Z}\seq Z\cup (\ol{Z}\cap \h Q)$. Hence in both the hyperbolic and  weakly hyperbolic cases, 
 Theorem~\ref{thm:Wact} gives the inclusion $\Acc (W\cdot z)\subseteq \h Q$. So, using Theorem~\ref{thm:fractal2}, we have $\Acc (W\cdot z)\subseteq \h Q\cap \ol{Z}=E$. This gives the required equalities.
\end{proof}

\subsection{Cardinality of $\Eext$}
\label{ss:cardEext}
The fact that $E$ is not contained in any countable union of affine
proper subspaces of $\aff(E)$ (implied by
Theorem~\ref{thm:faithful2}(b)) has the following easy consequence.

\begin{cor}
  \label{cor:cardEext} 
  ~
  \begin{num}
  \item The imaginary convex body $\ol{Z(\Phi)}$ and the closed
    imaginary cone $\ol{\imc(\Phi)}$ both have uncountably many faces;
  \item  $\Eext(\Phi)$ is uncountable.
  \end{num}
\end{cor}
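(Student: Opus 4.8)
The proof will rest entirely on Theorem~\ref{thm:faithful2}(b): the set $E=E(\Phi)$ is not contained in any countable union of proper affine subspaces of $\aff(E)$. Throughout I use the running hypothesis that $(\Phi,\Delta)$ is irreducible of indefinite type and of rank at least three. The plan is to first establish (b), that $\Eext$ is uncountable, and then to deduce (a) almost for free, since a single extreme point is already a zero-dimensional face of $\ol{Z}$ and spans a one-dimensional face (extreme ray) of $\ol{\imc}$.

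To prove (b) I would argue by contradiction: suppose $\Eext$ is countable. Fix any $x\in E$. Since $x\in E\subseteq \h Q$ is isotropic, Corollary~\ref{lem:isoface} shows that $F_x:=\ol{Z}\cap x^{\perp}$ is a non-empty proper exposed face of $\ol{Z}$. Being a closed subset of the compact set $\ol{Z}$, $F_x$ is compact, hence by the finite-dimensional Krein--Milman theorem it has an extreme point $e$; and an extreme point of a face of $\ol{Z}$ is an extreme point of $\ol{Z}$, so $e\in\Eext$. By construction $e\in x^{\perp}$, i.e. $\mpair{x,e}=0$, equivalently $x\in e^{\perp}$. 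As $x$ was arbitrary, this yields
\[ E\ \subseteq\ \bigcup_{e\in\Eext} e^{\perp}. \]
Next I claim each $e^{\perp}\cap\aff(E)$ is a \emph{proper} affine subspace of $\aff(E)$. Indeed $e\in E\subseteq\cone(\Delta)\setminus\{0\}$, so by Lemma~\ref{lem:exp} there exists $y\in\Eexp\subseteq E$ with $\mpair{e,y}\neq 0$; thus $y\in\aff(E)$ but $y\notin e^{\perp}$, whence $\aff(E)\not\subseteq e^{\perp}$. Therefore $E$ is contained in the countable union $\bigcup_{e\in\Eext}(e^{\perp}\cap\aff(E))$ of proper affine subspaces of $\aff(E)$, contradicting Theorem~\ref{thm:faithful2}(b). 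Hence $\Eext$ is uncountable, proving (b).

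For (a), recall that a point $c$ of a convex set is extreme exactly when $\{c\}$ is a face. Hence the singletons $\{e\}$, $e\in\Eext$, are pairwise distinct faces of $\ol{Z}$, and by (b) there are uncountably many of them; so $\ol{Z}=\ol{Z(\Phi)}$ has uncountably many faces. For the cone, recall $\ol{\imc(\Phi)}=\cone(E)$ is a pointed closed convex cone (it lies inside the pointed cone $\cone(\Delta)$) with compact base $\ol{Z}=\ol{\imc}\cap V_1$. Each extreme point $e$ of $\ol{Z}$ spans an extreme ray $\real_{\geq 0}\,e$ of $\ol{\imc}$, which is a one-dimensional face; distinct extreme points give distinct rays, so $\ol{\imc(\Phi)}$ also has uncountably many faces.

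The argument is short because all the difficulty is already packaged into Theorem~\ref{thm:faithful2}(b). The only points requiring a little care are the two places where I must guarantee properness of the relevant subspaces: that $F_x$ is a proper face (supplied by Corollary~\ref{lem:isoface}) and that each $e^{\perp}$ meets $\aff(E)$ properly (supplied by Lemma~\ref{lem:exp}, which rules out $e$ lying in the radical of $\Mpair$ on $\Span(\Delta)$). I do not anticipate any genuine obstacle beyond invoking these results correctly and recording the standard correspondence between extreme rays of a pointed cone and extreme points of its base.
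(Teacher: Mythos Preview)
Your proof is correct, but it proceeds in the reverse order from the paper and uses a genuinely different covering of $E$. The paper first proves (a) and then deduces (b), whereas you prove (b) directly and get (a) for free.

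For (a), the paper observes that $E=\ol{\Eext}\subseteq\rb(\ol{Z})=\bigcup_{F}F\subseteq\bigcup_{F}\aff(F)$, where $F$ ranges over the proper faces of $\ol{Z}$; since each $\aff(F)$ is a proper affine subspace of $\aff(E)$, Theorem~\ref{thm:faithful2}(b) forces uncountably many faces. For (b), the paper then argues by a counting trick: every face $F$ satisfies $F=\ol{Z}\cap\aff(\{x_1,\dots,x_p\})$ for some finite subset of $\Eext$, so a countable $\Eext$ would yield only countably many faces, contradicting (a). Your argument replaces the covering by affine spans of faces with the covering $E\subseteq\bigcup_{e\in\Eext}e^{\perp}$, obtained by pushing each $x\in E$ into the exposed face $\ol{Z}\cap x^{\perp}$ and extracting an extreme point there via Krein--Milman. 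This bypasses the density result $E=\ol{\Eext}$ (Theorem~\ref{thm:Eext}) and the face-counting step entirely, at the cost of invoking Corollary~\ref{lem:isoface} and Lemma~\ref{lem:exp} instead. Both routes ultimately rest on Theorem~\ref{thm:faithful2}(b); yours is somewhat more direct for (b) and exploits the bilinear form more explicitly, while the paper's route makes the face structure of $\ol{Z}$ do the work.
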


\begin{rem}
  \label{rk:cardEext}
 Part (a) is a consequence of (b) (since extreme points are particular
 faces), but will be the first step in the proof of (b).  In particular $E(\Phi)$ is strictly bigger than its countable (dense) subset
 $E_2$. In \S\ref{ss:cantor} we prove a stronger property, namely, any
 open neighbourhood of a point in $E$ is uncountable, by constructing
 a Cantor set inside such a neighbourhood
 (Corollary~\ref{cor:cantor}).
\end{rem}

\begin{proof}  
  (a) The set $\ol{Z}=\conv(\Eext)$ is a convex body living in the
  affine space $A:=\aff(Z)$. We call relative interior $\ri(\ol{Z})$
  of $\ol{Z}$ the interior of $\ol{Z}$ for the induced topology on
  $A$, and relative boundary of $\ol{Z}$ the set $\rb(\ol{Z})=\ol{Z}
  \setminus\ri(\ol{Z})$. It is well known that $\rb(\ol{Z})$ is equal
  to the union of the proper faces of $\ol{Z}$ (see for example
  \cite[Thm.~2.4.12]{webster}). In particular, $\Eext$ is contained in
  $\rb(\ol{Z})$. Moreover, $\rb(\ol{Z})$ is closed, and $E=\ol{\Eext}$
  from Theorem~\ref{thm:Eext}. Thus $E \subseteq \rb(\ol{Z})=\bigcup_F
  F \subseteq \bigcup_F \aff(F)$, where $F$ runs over the proper faces
  of $\ol{Z}$. Using Theorem~\ref{thm:faithful2}(b), this implies that
  $\ol{Z}$ has uncountably many faces, as well as
  $\ol{\imc}=\cone(\ol{Z})$.

  \smallskip

  (b) We first prove that any proper face can be constructed from a
  finite number of extreme points.  Let $F$ be a face of $\ol{Z}$.
  Thus $F=\ol{Z}\cap \aff(F)$. Denote by $X$ the set of
  extreme points of $F$. We have $F=\conv(X)$, so $\aff(X)=\aff
  (F)$. Since $\aff(F)$ is finite-dimensional, one can choose in $X$ a
  finite number of points $x_1,\dots, x_p$ such that $\aff(\{x_1,\dots,
  x_p\})=\aff(F)$. Note that since $F$ is a face of $\ol{Z}$, extreme
  points of $F$ are also extreme points of $\ol{Z}$, so the points
  $x_i$ are in $\Eext$. Thus we can associate to any face $F$ of
  $\ol{Z}$ a finite subset $\{x_1,\dots, x_p\}$ of $\Eext$ such that
  $F=\ol{Z}\cap \aff(\{x_1,\dots, x_p\})$. This construction is clearly
  injective.

  Now suppose by contradiction that $\Eext$ is countable. Then there
  are also countably many finite subsets of $\Eext$. From the
  injection constructed above, this would imply that the set of faces
  of $\ol{Z}$ is countable, contradicting (a).
\end{proof}

\subsection{A Cantor space inside $E$} 
\label{ss:cantor}

We know from Corollary~\ref{cor:cardEext} that $E$ is uncountable. We prove below (as another corollary of
Theorem~\ref{thm:faithful2}) that  any open
neighbourhood of a point in $E$ is also uncountable. In order to do so, we construct, for
any open subset $U$ of $V$ such that $U\cap E\neq \eset$, a Cantor
space living inside $U\cap E$.  Recall that a \emph{Cantor space} is a
topological space that is homeomorphic to the classical (ternary)
Cantor set, or, equivalently, to a  product $\prod_{n\in \Nat}\set{0,1}$ of
 countably infinitely 
many copies of a discrete two-point space. A space is a Cantor space if and only if it is non-empty,
compact, metrizable, totally disconnected (i.e., it has no non-trivial
connected subsets), and perfect,
 see
\cite[\S30]{willard}. A Cantor space has the cardinality of $\real$.

\begin{cor} 
  \label{cor:cantor} 
    Let $U$ be an open subset of $V$
  with $U\cap E\neq \eset$. Then $U\cap E$ contains a subset
  homeomorphic to the Cantor set. Consequently, $U\cap E$ has the
  cardinality of $\real$.
\end{cor}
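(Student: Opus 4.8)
The plan is to realize a copy of the Cantor space $\set{0,1}^{\Nat}$ as a closed subset of $U\cap E$ by a standard Cantor scheme, the only nontrivial input being that $E$ is a non-empty, compact, perfect metric space. Recall that $E$ is compact (being a closed bounded subset of $V$) and, by Remark~\ref{rk:faithful2} (itself a consequence of Theorem~\ref{thm:faithful2}(a)), $E$ has no isolated points. Moreover $U\cap E$ is itself perfect: if $x\in U\cap E$, then $x$ is a limit point of $E$, and since $U$ is open, $x$ is even a limit point of $U\cap E$. Throughout I fix a norm on $V$ inducing its topology, with associated open and closed balls.

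First I would construct, by induction on the length of finite binary words $s\in\bigcup_{n\geq 0}\set{0,1}^{n}$, a family of open subsets $U_{s}\seq V$ with the following properties: $\ol{U_{\eset}}\seq U$; each $\ol{U_{s}}$ is compact and meets $E$; the diameter of $U_{s}$ is at most $2^{-|s|}$; and for the two children one has $\ol{U_{s0}}\sqcup\ol{U_{s1}}\seq U_{s}$ (disjoint union). For the root, pick $x\in U\cap E$ and a small open ball about $x$ whose closure lies in $U$. For the inductive step, given $U_{s}$ meeting $E$, use perfectness of $U\cap E$ to choose two distinct points $y_{0},y_{1}\in U_{s}\cap E$, and then disjoint open balls $U_{s0}\ni y_{0}$, $U_{s1}\ni y_{1}$ of diameter at most $2^{-|s|-1}$ whose closures are contained in $U_{s}$. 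Setting $F_{s}:=\ol{U_{s}}\cap E$, I obtain a binary tree of non-empty compact subsets of $U\cap E$ satisfying $F_{s0},F_{s1}\seq F_{s}$, $F_{s0}\cap F_{s1}=\eset$, and $\mathrm{diam}(F_{s})\to 0$ along every branch.

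Then I would set $C:=\bigcap_{n\geq 0}\bigcup_{|s|=n}F_{s}\seq U\cap E$. For each infinite word $\sigma\in\set{0,1}^{\Nat}$ the sets $F_{\sigma|_{n}}$ form a decreasing sequence of non-empty compacta of diameters tending to $0$, so their intersection is a single point $\phi(\sigma)$. The resulting map $\phi\colon\set{0,1}^{\Nat}\to C$ is a bijection (surjectivity follows from the nesting, injectivity from the disjointness of siblings) and is continuous (if $\sigma,\tau$ agree on their first $n$ letters, then $\phi(\sigma),\phi(\tau)$ lie in the same $F_{s}$ with $|s|=n$, of diameter $\leq 2^{-n}$). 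As a continuous bijection from the compact space $\set{0,1}^{\Nat}$ onto the Hausdorff space $C$, it is a homeomorphism; thus $C$ is a Cantor space contained in $U\cap E$. Finally, since $C$ has the cardinality of $\real$ and $U\cap E\seq V$ is contained in a finite-dimensional real space, the Cantor--Bernstein theorem yields $|U\cap E|=|\real|$, which is the last assertion.

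The argument is routine once perfectness is in hand, so there is no serious obstacle; the only points requiring care are purely bookkeeping—arranging the closures of the two children to be disjoint and contained in the parent (possible because $V$ is metric and the two chosen limit roots are distinct), and verifying that the branch-indexing map $\phi$ is a homeomorphism (immediate from compactness of $\set{0,1}^{\Nat}$ and Hausdorffness of $C$). Alternatively, one could simply invoke the general fact that every non-empty perfect Polish space contains a homeomorphic copy of the Cantor set, applied to the open (hence Polish) subspace $U\cap E$ of the compact metric space $E$; I prefer to exhibit the scheme explicitly for self-containedness.
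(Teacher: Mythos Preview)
Your proof is correct and follows essentially the same strategy as the paper: both rely on the key input that $E$ is perfect (from Theorem~\ref{thm:faithful2}(a) via Remark~\ref{rk:faithful2}), and then deduce the existence of a Cantor subspace in the perfect completely metrizable space $U\cap E$. The only difference is presentational: the paper invokes the black-box theorem that any non-empty perfect topologically complete space contains a Cantor space (citing \cite{katok}), whereas you unpack that theorem by writing out the standard binary Cantor scheme explicitly---an alternative you yourself mention at the end.
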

\begin{proof} We give a proof using well-known facts of general topology.
Recall that a topological space is said to be \emph{topologically 
complete} if its  topology is induced by a complete metric. Any 
closed or open subspace of a topologically complete space is  
topologically complete \cite[\S 7.2, Exercise 6]{munkres}.   It is known that any non-empty,
perfect, topologically complete space contains a Cantor space as 
a subspace \cite[Proposition 3.2.8]{katok}. 
Now we have already seen that $E$ is perfect (see Remark~\ref{rk:faithful2}).
Let $U$ be open in $V$ with $E\cap U\neq \eset$. Since $E$ is perfect and $U$ is open,  $E\cap U$ is perfect.  As $V$ is topologically complete and $E$ is closed (in fact, compact) in $V$,   $E\cap U$ is topologically complete. Hence $E\cap U$ contains a Cantor subspace. \end{proof}
\begin{rem}   \label{rk:cantor}    For   root systems of universal 
Coxeter groups with no affine dihedral subgroups, it is known  that 
$E$ itself is a Cantor set in some cases and it is conjectural that it 
has a Cantor set as quotient space,
with topological balls (of unknown dimensions, possibly all $0$), as 
the fibers of the quotient map more generally
 (see \cite[\S9]{dyer:imc}). For general irreducible root systems, 
 Corollary \ref{cor:fractal1} gives a description of $E$ as  the 
 closure of the union of countably many 
 (not necessarily pairwise disjoint) topological spheres and points
 (see Figure \ref{fig:ICB}). In particular, $E$ is not necessarily a Cantor set. 
 \end{rem}

\subsection{Proof of Theorem~\ref{ZEapprox}} 
\label{ss:proofapprox}

The following technical statement contains the main content of
the proof of Theorem~\ref{ZEapprox}.

\begin{lem}\label{mainlem}
  Let $\al_1, \dots, \al_n \in \Phi$, $x_1, \dots, x_n\in E$, and for
  any $i=1,\dots, n$, let $U_{i}$ be an open neighbourhood of $x_{i}$
  in $V$. Then
  \begin{num}
  \item There exist some $y_{i}\in E\cap U_{i}$, for $i=1,\ldots, n$,   such
    that $\mpair{y_{i},y_{j}}<0$ for all $i\neq j$.
  \item For $i=1,\ldots,n$, there exists some $\rho_{i}\in W\al_{i}\cap\PP$ 
    with the following properties:
    \begin{subconds}
    \item $\h{\rho_i}\in U_{i}$ for all $i$.
    \item $(\h{\rho_i},\h{\rho_j})\in U_{Q }$ and
      $u_{Q}(\h{\rho_i},\h{\rho_j})\in U_{i}$ for all
      $i\neq j$.
    \item $\mpair{\rho_{i},\rho_{j}}<-1$ for $i\neq j$.
    \end{subconds}
  \item If $\aff(\set{x_{1},\ldots, x_{n}})=\aff(E)$, one may further
    require $\aff(\set{y_{1},\ldots, y_{n}})=\aff(E)$ in $\text{\rm
      (b)}$ and $\aff(\set{\wh \rho_{1},\ldots, \wh\rho_{n}})=\aff(E)$
    in $\text{\rm (c)}$.
  \end{num}
 \end{lem}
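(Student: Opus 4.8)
The plan is to establish the three parts in order, reducing (b) to (a) and obtaining (c) from an openness argument. The hard part will be (a): the inner products $\mpair{y_i,y_j}$ are automatically $\le 0$ by \eqref{eq:ineqICB}, so the entire content is the \emph{strictness}, which fails precisely on the orthogonal hyperplanes $y^{\perp}$. The leverage for beating this degeneracy is Theorem~\ref{thm:faithful2}(b): $E$ is so spread out near any of its points that it cannot be trapped in a finite (even countable) union of proper affine subspaces of $\aff(E)$.

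For (a), I would select the $y_i$ one at a time. Since every $y\in E\seq\h Q$ is isotropic, Corollary~\ref{lem:isoface} gives that $\ol{Z}\cap y^{\perp}$ is a \emph{proper} face of $\ol Z$, whence $\aff(E)=\aff(\ol Z)\not\seq y^{\perp}$ and $y^{\perp}\cap\aff(E)$ is a proper affine subspace of $\aff(E)$. Set $y_{1}:=x_{1}\in U_{1}\cap E$, and suppose $y_{1},\dots,y_{k-1}\in E$ have been chosen with $y_{l}\in U_{l}$ and $\mpair{y_{l},y_{m}}<0$ for distinct $l,m<k$. By Theorem~\ref{thm:faithful2}(b), $U_{k}\cap E$ (which contains $x_k$, hence is nonempty) is not contained in the union of the proper affine subspaces $y_{l}^{\perp}\cap\aff(E)$, $1\le l<k$; choosing $y_{k}\in U_{k}\cap E$ outside all of them forces $\mpair{y_{l},y_{k}}\ne 0$, hence $\mpair{y_{l},y_{k}}<0$ (as $\le 0$ always holds). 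This completes the induction.

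For (b), I would start from the points $y_{i}\in E\cap U_{i}$ of (a), so $y_{i}\in Q$ and $\mpair{y_{i},y_{j}}<0$ (in particular $y_i\ne y_j$) for $i\ne j$. A direct computation shows $(y_{i},y_{j})\in U_{Q}$ with $\tmin(y_i,y_j)=0$, i.e.\ $u_{Q}(y_{i},y_{j})=y_{i}$; so, applying Lemma~\ref{quad} to each ordered pair $(y_{i},y_{j})$ and using continuity of $u_{Q}$ on $U_{Q}$, I can produce neighbourhoods $\Omega_{i}^{(j)}\ni y_{i}$ with $\Omega_{i}^{(j)}\seq U_{i}$ such that, for $x\in\Omega_{i}^{(j)}$ and $x'\in\Omega_{j}^{(i)}$, one has $(x,x')\in U_{Q}$, $u_{Q}(x,x')\in U_{i}$, and $0<\tmin(x,x')<\tmax(x,x')<1$ whenever $\mpair{x,x}>0$ and $\mpair{x',x'}>0$. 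Put $\Omega_{i}:=U_{i}\cap\bigcap_{j\ne i}\Omega_{i}^{(j)}$. Since $y_{i}\in E=\Acc(W\cdot\al_{i})$ by Theorem~\ref{cor:minimal}(c), the normalized orbit $\h{W\al_{i}}$ accumulates at $y_{i}$; as $-\al_i=s_{\al_i}(\al_i)\in W\al_i$, I may pick a root in $W\al_{i}$ whose normalization lies in $\Omega_{i}$ and take its positive representative $\rho_{i}\in W\al_{i}\cap\PP$. Then $\h{\rho_{i}}\in\Omega_{i}\seq U_{i}$ is (i); because $\mpair{\h{\rho_{i}},\h{\rho_{i}}}>0$ for any root, Lemma~\ref{quad} yields $(\h{\rho_{i}},\h{\rho_{j}})\in U_{Q}$ and $u_{Q}(\h{\rho_{i}},\h{\rho_{j}})\in U_{i}$, which is (ii); and the two intersection points of $[\h{\rho_{i}},\h{\rho_{j}}]$ with $\h Q$ being distinct and interior gives $\mpair{\rho_{i},\rho_{j}}<-1$ by the remark following Theorem~\ref{ZEapprox}, which is (iii).

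For (c), I would exploit that $\aff(E)=\aff(\h\Delta)$ (Proposition~\ref{prop:aff}), so every $y_{i}\in E$ and every normalized root $\h{\rho_{i}}\in\conv(\h\Delta)$ already lies in $\aff(E)$. The set of $n$-tuples in $\aff(E)^{n}$ whose affine hull is all of $\aff(E)$ is open (it is the maximal-rank locus of a matrix), and it contains $(x_{1},\dots,x_{n})$ by hypothesis. Hence there are open neighbourhoods $U_{i}'\ni x_{i}$ with $U_{i}'\seq U_{i}$ such that any choice of $p_{i}\in U_{i}'\cap\aff(E)$ satisfies $\aff(\set{p_{1},\dots,p_{n}})=\aff(E)$. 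Re-running the constructions of (a) and (b) with $U_{i}'$ in place of $U_{i}$ then yields $y_{i}\in U_{i}'\cap E$ and $\h{\rho_{i}}\in U_{i}'\cap\aff(E)$, forcing $\aff(\set{y_{1},\dots,y_{n}})=\aff(E)$ and $\aff(\set{\h{\rho_{1}},\dots,\h{\rho_{n}}})=\aff(E)$, as required.
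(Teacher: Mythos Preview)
Your proof is correct and follows essentially the same route as the paper's. The only noteworthy difference is in part (a): the paper runs a \emph{double} induction (outer on $n$, inner on $m$ to adjust the last point $y_n$ one constraint at a time) using Theorem~\ref{thm:faithful2}(a) together with Lemma~\ref{lem:exp}(a), whereas you dispatch the inductive step in one stroke by invoking Theorem~\ref{thm:faithful2}(b) to avoid the finitely many hyperplanes $y_l^{\perp}\cap\aff(E)$ simultaneously. Your version is a little cleaner; parts (b) and (c) match the paper's argument.
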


The lemma is straightforward for $n=1$, and the case $n=2$ can be done
using the construction of Lemma~\ref{quad}; the technical part is to
be able to approximate all the $x_i$'s at the same time, 
preserving the properties we want. We illustrate this construction in
Figure~\ref{fig:lemma}.

\begin{figure}[!h]
\begin{minipage}[b]{\linewidth}
\centering
\scalebox{0.5}{\input{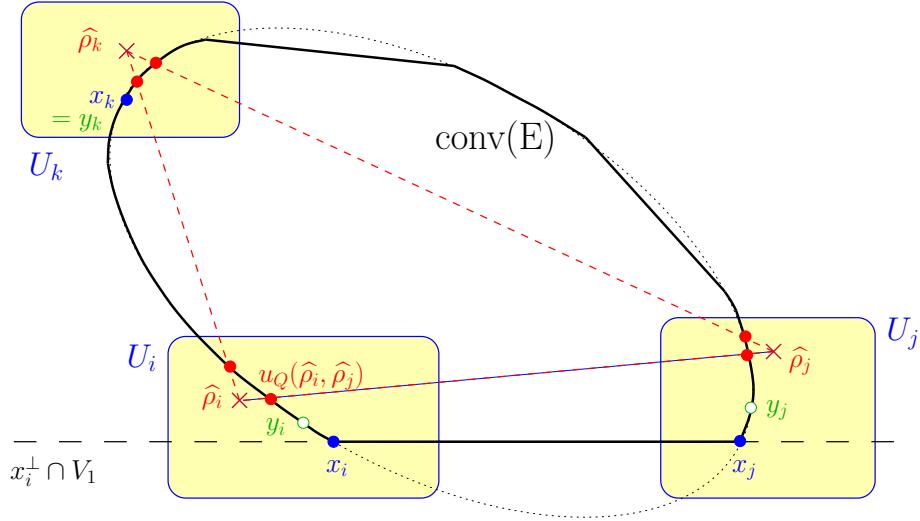}}
\end{minipage}%
\caption{Illustration of Lemma~\ref{mainlem}: $\h Q$ is drawn in black dotted line, and the boundary of
  $\ol{Z}=\conv(E)$ in full line. Given three limit roots $x_i$,
  $x_j$, $x_k$ and their respective neighborhoods $U_i$, $U_j$, $U_k$,
  some of the constructions given by Lemma~\ref{mainlem} are
  depicted. Here we can choose $y_k=x_k$, but, since
  $\mpair{x_i,x_j}=0$, we have to find other $y_i,y_j \in E$ such
  that $y_i \in U_i$, $y_j\in U_j$ and $\mpair{y_i,y_j}<0$. The
  normalized roots $\h{\rho_i}$, $\h{\rho_j}$ and $\h{\rho_k}$
  illustrate item (b) of Lemma~\ref{mainlem}. }
\label{fig:lemma}
\end{figure}

 \begin{proof} 
   The assertions are easily checked if $\Phi$ is dihedral, so we
   assume $\Phi$ has rank at least three.  

   (a) We proceed by induction on $n$.  For $n=1$, it holds trivially.
   Assume that $n>1$ and choose by induction $y_{1},\ldots,y_{n-1}$
   with $y_{i}\in U_{i}$ and $\mpair{y_{i},y_{j}}<-1$ for all $1\leq
   i<j\leq n-1$.  We show by induction on $m$ that for $m=0,1,\ldots,
   n-1$ there is a point $z_{m}\in U_{n}\cap E$ such that
   $\mpair{z_{m},y_{i}}<0$ for $i=1,\ldots, m$. For $m=0$, one may
   take $z_{0}=x_{n}$. Assume that $1\leq m\leq n-1$ and $z_{m-1}$
   exists with $z_{m-1}\in U_{n}\cap E$ and $\mpair{z_{m-1},y_{i}}<0$
   for $i=1,\ldots, m-1$. There is an open neighbourhood $\Omega
   \subseteq U_{n}$ of $z_{m-1}$ in $V$ such that $\mpair{v,y_{i}}<0$
   for all $v\in \Omega$ and all $i=1,\ldots, m-1$.  By
   Theorem~\ref{thm:faithful2}(a), $\Span(E\cap \Omega)
   =\Span(E)$. Since $y_{m}\not\perp E$ by Lemma~\ref{lem:exp}(a),
   there is some $z_{m}\in E\cap \Omega$ with $\mpair{z_{m},y_{m}}\neq
   0$. By Equation~\eqref{eq:ineqICB}, one has $\mpair{z_{m},y_{m}}<0$.
   Since $z_{m}\in \Omega$, $\mpair{z_{m},y_{i}}<0$ for $i=1,\ldots,
   m-1$.  Hence $z_{m}$ has the required properties, and the induction
   on $m$ is complete. In particular, $z_{n-1}$ is defined. We set
   $y_{n}:= z_{n-1}\in E\cap U_{n}$.  Then $y_{i}\in E\cap U_{i}$ for
   $i=1,\ldots, n$ and $\mpair{y_{i},y_{j}} <0$ for $1\leq i<j\leq n$
   as required. This completes the induction on $n$ and hence the
   proof of (a).
   
   \smallskip

   (b) For each $i=1,\ldots, n$, choose by
   Theorem~\ref{cor:minimal}(c) a sequence $(\rho_{i,k})_{k\in \Nat}$
   in $W\al_{i}\cap \PP$ such that $\h{\rho_{i,k}}\to y_{i}$ as $k\to
   \infty$. It follows immediately from the definitions that for all
   distinct $i,j$ in $\set{1,\ldots, n}$, one has $(y_{i},y_{j})\in
   U_{Q}$. By Lemma~\ref{quad}, by passing to subsequences of the
   sequences $(\rho_{i,k})_{k\in \Nat}$ if necessary, we may assume
   without loss of generality that
   \begin{enumerate}
   \item $\h{\rho_{i,k}}\in U_{i}$ for all $i=1,\ldots, n$ and all
     $k\in \Nat$
   \item $(\h{\rho_{i,k}},\h{\rho_{j,\ell}})\in U_{Q}$ for all distinct
     $i,j$ and all $k,\ell\in \Nat$.
   \item $u_{Q}( \h{\rho_{i,k}},\h{\rho_{j,\ell}})$ (resp.,
     $u_{Q}(\h{\rho_{j,\ell}},\h{\rho_{i,k}}))$ is in the open
     interval with endpoints $\h{\rho_{i,k}}$ and
     $u_{Q}(\h{\rho_{j,\ell}},\h{\rho_{i,k}}))$ (resp., $u_{Q}(
     \h{\rho_{i,k}},\h{\rho_{j,\ell}})$ and $\h{\rho_{j,\ell}}$) for
     all distinct $i,j$ and all $k,\ell\in \Nat$.
   \end{enumerate}
   Condition (3) implies that $\mpair{ \rho_{i,k},\rho_{j,\ell}}<-1$
   since the closed interval with the normalized roots
   $\h{\rho_{i,k}}$ and $\h{\rho_{j,\ell}}$ as endpoints cuts the
   isotropic cone in two (distinct) points.  As $k\to \infty$,
   $u_{Q}(\h{\rho_{i,k}},\h{\rho_{j,k}}) \to u_{Q}(y_{i},y_{j})= y_{i}$
   for distinct $i,j$, by continuity of $u_{Q}$.  Hence we may choose
   a sufficiently large integer $N$ such that $u_{Q}(\h{\rho_{i,N}},
   \h{\rho_{j,N}})\in U_{i}$ for all distinct $i,j=1,\ldots, n$.
   Setting $\rho_{i}:=\rho_{i,N}$ for $i=1,\ldots, n$, the conditions
   (i)--(iii) above hold as required. This proves (b).

   \smallskip
   
   (c) Choose open neighbourhoods $U'_{i} \subseteq U_{i}$ of $x_{i}$
   such that for any $v_{i}\in U'_{i}\cap \aff(E)$ for $i=1,\ldots,
   n$, one still has $\aff(\set{v_{1},\ldots, v_{n}})=\aff(E)$. Then
   one can apply (a)--(b) with $U_{i}$ replaced by $U'_{i}$.
  \end{proof}

 \begin{proof}[Proof of Theorem~\ref{ZEapprox}]  
 We assume for simplicity that $\Phi$ has rank at least three, 
 leaving the  indefinite  dihedral case to the 
 reader.   Fix $\epsilon >0$ and $m\in \Nat$. Set $\ek=\epsilon/5$. First we prove (a).  Note that $F$ is closed in $\ol{Z}$, hence compact and convex.  Therefore $F$ is the 
 convex hull of its extreme points, which are extreme points of 
 $\ol{Z}$  and hence are contained in $E\cap F$.
Thus, $F=\conv(E\cap F)$.
   Since $E\cap F$ is compact, there is a finite subset
   $Y$ of $E\cap F$ such that
   $F\cap E\seq Y_{\ek}$.  Since 
   $Y\seq F\cap E\seq (F\cap E)_{\ek}$, it follows that
   $\dist(F\cap E, Y)\leq \ek$.  Since $E$  has no isolated points
   and the affine span of any non-empty open subset of $E$ coincides with $\aff(E)$, one may choose a subset 
   $X=\mset{x_{1},\ldots, x_{n}}$ of $E$, where $\vert X\vert =n\geq
   \max(m,2)$, $\aff(X)=\aff(E)$ and $\dist(X,Y)<\ek$.
   Hence $\dist(F\cap E,X)\leq 2\ek$.
   Set $U_{i}$ to be the open ball with center $x_i$ and radius
   $\ek$. Choose $\rho_{i}\in \PP$ as in
   Lemma~\ref{mainlem}(b)--(c) (for any choice of the $\al_{i}\in
   \Phi$). Define the reflection subgroup $W':=\left< s_{\rho_{i}}\mid
   i=1,\ldots, n\right>$, and denote by $(\Phi',\Delta')$ the
   associated based root system. To simplify notation, write $E'':=\wh {\Phi'}\cup E'$  for  the closure of  $\wh{\Phi'}$. By
   Lemma~\ref{mainlem}(b)--(c),    $\Delta'=\mset{\rho_{1},\ldots, \rho_{n}}$ and (a)(i)--(ii) hold.   
  For $i=1,\ldots, n$, let $D_{i}'=\set{\rho_{i}}\cup
  \mset{u_{Q}(\h{\rho_i},\h{\rho_{j}})\mid
       j\in \set{1,\ldots, n}, j\neq i}$ and $D_{i}:=\conv(D_{i}')$.
       Also set $D:=\bigcup_{j=1}^{n}D_{j}$.  One has $D_{i}'\seq U_{i}$ by  Lemma~\ref{mainlem}(a),(b). Hence $D_{i}\seq U_{i}$ since $U_{i}$ is convex. Note that 
       $ \h{\Delta'}\cap D_{i}$, $E''\cap D_{i}$ and $ E'\cap D_{i}$ are all non-empty. In fact,  the first  is equal to
       $\set{(\h\rho_{i})}$ (using Lemma  \ref{genuni}), the second contains the first  and the last contains 
       $u_{Q}(\h\rho_{i},\h\rho_{j})$ for all $1\leq j\leq n$ with $j\neq i$ (recall $n\geq 2$). 
           This implies  $\dist(\h{\Delta'}\cap D_{i}, \set{x_{i}})\leq\ek$,     $\dist(E''\cap D_{i}, \set{x_{i}})\leq \ek$ and $\dist(E'\cap D_{i}, \set{x_{i}})\leq \ek$. Note  that,  $\h{\Delta'}\seq D$ (trivially)  and, by Lemma \ref{genuni}, 
                $\h{\Phi'}\seq D$   and  $E'\seq \h Q\cap \conv(\h \Delta')\seq  D$.  Hence $E''=\h{\Phi'}\cup E'\seq D$ also.   It follows from \eqref{eq:uniondist}  that       $\dist(\h{\Delta'},X)\leq\ek$, $\dist(E'',X)\leq \ek$ and  $\dist(E',X)\leq\ek$. 
           Together with $\dist(X,F\cap E)\leq 2\ek$ as already established, the triangle inequality now implies  (a)(iii) (with $\epsilon$ replaced by $4\ek<\epsilon$). Then (a)(iv) follows using \eqref{eq:distconv} and $\conv(F\cap E)=F$.      
 
   The final claim in (a) (with statement beginning by ``Moreover'') is proved
   by taking~$X$ sufficiently large (which is possible since $E$ is
   infinite, see Remark~\ref{rk:faithful2}) and choosing the
   $\al_{i}\in \Phi$ above so $\set{\al_{1},\ldots,\al_{n}}$
   contains at least one root from each of the specified $W$-orbits,
   and no roots from the other $W$-orbits.
   \smallskip
   
   Now we prove (b). Choose $X$ as in the proof of (a). By
   Corollary \ref{cor:acc-imc}(b), one may choose a finite subset $G$ of $W\cdot Z$, with $\vert G\vert\geq m$ and $\aff(G)=\aff(X)$, such that $\dist(X,G)<\ek$. Then $\dist(G,F\cap E)<3\ek<\epsilon$  and, using  \eqref{eq:distconv} again, $\dist(\conv(G),F)<\epsilon$. 
\end{proof}

\section{Open problems}
\label{se:questions}

\subsection{Geometric characteristics of $E(\Phi)$ and $\ol{Z(\Phi)}$}
We already formulated an important open question in
\S\ref{se:fractal}, about whether the equality $E=\h Q\cap \conv(E)$
(valid for the weakly hyperbolic case by Theorem~\ref{thm:fractal2})
is true in general irreducible systems. This would provide a nice ``fractal'' description
of $E$, see Proposition~\ref{prop:F}.

 In view of Remark~\ref{rk:cantor}, a
natural problem would be to understand the root systems $\Phi$,
$\Phi'$ such that $E(\Phi)$ and $E(\Phi')$ are homeomorphic. A more
general question is the following: to which extent does the
set of limit roots $E(\Phi)$ characterize the root system $\Phi$?
 It would be interesting to characterize the  root systems
 for which~$E$ is connected, or locally connected, or 
  totally disconnected.

Some questions asked in \cite[\S9.7]{dyer:imc}, on the geometry of the
imaginary convex body, also remain unanswered. For example, we do not
know whether the equality~$\Eexp=\Eext$ holds (see
\S\ref{ss:extpoint}). 

We present in this section other avenues of research and open problems that should be investigated.  The questions raised above and below are generally of greatest interest for irreducible $\Phi$,  even if we do not explicitly make that assumption.

\subsection{Facial structure for subsets of $E$}
\label{ss:facialinter}
In the same way as many combinatorial properties of a Coxeter group
behave well through restriction to parabolic subgroups, the geometric
properties of a based root system usually behave as expected through
restriction to facial root subsystems. Given a based root system
$(\Phi,\Delta)$ with Coxeter group $(W,S)$, recall that $I\subseteq S$
is said to be facial if $\conv(\h \Delta_I)$ is a face of $\conv(\h
\Delta)$. The root subsystem $(\Phi_I,\Delta_I)$ is then said to be
facial (when~$\Delta$ is a basis for $V$, this construction
corresponds to standard parabolic subgroups of~$W$; see
\S\ref{ss:facial} for details). For $I$ facial, denote by $F_I$ the
face $\conv(\h \Delta_I)$. The root system respects the facial
structure: for $I$ facial, we have $\Phi_I=\Phi \cap \Span
(\Delta_I)$, so~$\h{\Phi_I} = \h \Phi \cap F_I$ (hence $\h{\Phi_I}\cap
\h{\Phi_J}=\h{\Phi_{I\cap J}}$ for $I$, $J$ facial). 

 Consider a mapping $E_*$ which associates to any based root system $(\Phi,\Delta)$ a subset $E_*(\Phi)$ of $E(\Phi)$. We say that $E_*$ is a \emph{functorial subset of $E$} if for a based root system $(\Phi,\Delta)$ and a facial root subsystem $(\Phi_I,\Delta_I)$, $E_*(\Phi_I)$ is contained in~$E_*(\Phi)$. Obviously $E$ itself is a functorial subset of $E$. In addition, all the subsets constructed in \S\ref{se:dom} are also functorial: $E_f$ (as well as its $W(\Phi)$-orbit $E_2$), $\Efcov$ (also its orbit $\Ecov$), and $\Eelem$ (and its orbit). Let us say that a functorial subset $E_*$ of~$E$ respects the facial structure if in addition,
\begin{equation} \label{eq:resfac}
E_*(\Phi_I)=E_*(\Phi)\cap F_I, \quad  \text{for any } I \text{ facial.} 
 \end{equation} 
 All the six subsets mentioned above have this property, by Theorem~\ref{thm:intersection}. However, as it was already noted,~$E$ does not satisfy  \eqref{eq:resfac} (see \cite[Ex.~5.8]{HLR}). We ask the general question about how to characterize the functorial subsets of~$E$ which respect the facial structure. A first direction to follow would be to explore what happens in the case of the $W$-orbit $W\cdot x$ of a point $x$.

\subsection{Facial restriction for $E(\Phi)$} 

As mentioned above, 
$E$ does not respect the facial structure as in \eqref{eq:resfac}. 
We would still like to understand the relation, for~$I$ facial, 
between $E(\Phi_I)$ and $E(\Phi)\cap F_I$. Let us describe an 
approach towards  understanding this. The counterexample in
 \cite[Ex.~5.8]{HLR} can be generalized in the following way. 
 Suppose $(\Phi,\Delta)$ is irreducible, and $I$ is facial such that
  $(\Phi_I, \Delta_I)$ is not irreducible. Write
   $\Phi_I=\Phi_1\sqcup \Phi_2$, with 
   $\Phi_1$ $\Mpair$-orthogonal to $\Phi_2$ (this corresponds to 
   taking two subsets of $S$ which are not connected in the 
   Coxeter diagram of $W$). Then we have
    $E(\Phi_I)=E(\Phi_1)\sqcup E(\Phi_2)$. Calculations suggest  
 the possibility   that for any $x\in E(\Phi_1)$, $y\in E(\Phi_2)$, the segment 
    joining $x$ and $y$ is contained in $E(\Phi)$. 
This would create many counterexamples to the facial restriction 
formula \eqref{eq:resfac} for $E$, provided $E(\Phi_1)$ 
and~$E(\Phi_2)$ are non-empty. We do not know whether this property 
 is the only obstruction to the facial formula, i.e., whether in this 
 setting $E(\Phi)\cap V_I$ is exactly the join of all the 
 $E(\Phi_{I,k})$ where the $\Phi_{I,k}$ are the irreducible 
 components of $\Phi_I$. If so, this would imply in particular that 
 when $\Phi_I$ is irreducible, $E(\Phi_I)=E(\Phi)\cap F_I$.

\subsection{Relation with hyperbolic geometry and geometric group theory}\label{sse:RelationGeo}

The relations of our setting with relevant topics in hyperbolic geometry or  geometric group theory are mainly unexplored, but look fertile. 
For instance, consider $(\Phi,\Delta)$ a based root system of rank 3 or 4, and of indefinite type which is  weakly hyperbolic. The Coxeter group~$W$ acts on $E$ as a group generated by hyperbolic reflections, so can be seen as a Fuchsian or Kleinian group, which explains the shape of Apollonian gasket obtained in the figures (see Remark~\ref{rk:klein}). Some of our results are generalizations of known theorems in Kleinian group theory, such as the minimality of the action. 

\smallskip
In \cite{HPR}, which was written after a first version of this paper was circulated, the authors explore some of the relations between hyperbolic geometry and our setting. If $\Phi$ is weakly hyperbolic, it means that $\Phi$ is a root system in the Lorentzian space $(V,B)$, which contains models for the hyperbolic space $\mathbb H^n$, where $n+1=\dim(V)$. In particular, each root is Lorentzian-normal to a hyperbolic hyperplane, so~$W$ turns out to be a discrete subgroup of isometries of $\mathbb H^n$ generated by reflections. Moreover, the set of limit roots $E(\Phi)$ is precisely the limit set $\Lambda(W)$ of $W$ seen as a Kleinian group. 

Starting from this point, a dictionary between our terminology and the terminology commonly used in hyperbolic geometry can be developed. As an example, we may interpret  the {\em convex core} associated to $W$ as follows,  see for instance~\cite[p.637]{Ra06} for the definition of convex core. 

From Proposition~\ref{prop:sphere} and the remark that follows, we
know that, in the weakly hyperbolic type, the transverse hyperplane can
be chosen so that $\h Q$ is a sphere.  Therefore $\conv(\h Q)$ is a
$W$-invariant ball, and its interior $\mathcal B^n$
is a $W$-invariant open ball of dimension $n$.
 Recall from \S\ref{ss:imc}  and Proposition~\ref{rk:action}, that the imaginary convex set $Z(\Phi)=W\cdot K$, i.e., the projective version of the imaginary cone, is the $W$-orbit of the fundamental convex polytope $K$, and that  the closure of $Z(\Phi)$ is $\overline{Z(\Phi)}=\conv(E(\Phi))$, which is contained  in the ball $\conv(\h Q)$.  So the {\em convex core of $\mathcal B^n/W$} is by definition 
\[
C(\mathcal B^n/W)=\big(\conv(E(\Phi))\cap \mathcal B^n\big)/W .
\]

\smallskip

We also point out that $W$ is of finite covolume if and only if the fundamental polyhedron for $W$ in $\mathbb H^n$ is contained in the conical hull of the simple roots, see \cite[\S3.5.2]{HPR}. Our results and framework presented here are valid for all discrete reflection groups generated by reflections in the isometry group of $\mathbb H^n$, so in particular for all discrete reflection groups of infinite covolume. 

Actually, our framework (limit roots $E$, imaginary convex body $Z$, fundamental convex polytope $K$) and many of the results are valid for any Coxeter group geometrically represented as a subgroup of an orthogonal group $O_B(V)$, where $B$ is a (not necessarily non-degenerate) symmetric bilinear form; in this sense our work could be relevant for the community studying infinite covolume actions of discrete groups in more generality. An interesting approach would be then to try to generalize in our framework other classical properties of limit sets of Kleinian groups relative to the dynamics of the $W$-action.

\subsection{Dynamics of the projective action of $W$} Another natural question concerns the dynamics of the projective action of $W$ on all directions of the vector space $V$, not only on the roots and the imaginary cone. After a first version of this paper was circulated, H.~Chen and J.-P.~Labb\'e gave some answers to this question for    $W$ associated to a weakly hyperbolic root system. It turns out that in this case, $E(\Phi)$ is also equal to the set of limit directions arising from the projective action on the \emph{weights} of the root system \cite{chenlabbe1}, but some directions outside $E(\Phi)$ can occur in limit sets of orbits of another direction \cite{chenlabbe2}.

\subsection{Ergodic theory for the $W$-action on $E$}

It is a classical question in ergodic theory of discrete groups, given
a limit set of a group, whether there exists a (unique) invariant
measure on this set, and how to construct it (see for example
\cite[Ch.~3]{nicholls}). Thus a natural problem in our framework is
the search for $W$-invariant measures on $E$. When $W$ acts on a
hyperbolic space (in the context of generalizations of Kleinian
groups), these are well-known questions (see \cite{sullivan}).

When the root system is of indefinite type, and not hyperbolic, $E$
can be qualified as ``fractal'' (see Theorem~\ref{thm:fractact},
Corollaries~\ref{cor:fractal1} and~\ref{cor:cantor} for some fractal
properties). Thus a natural question is to compute the Hausdorff
dimension of $E$. When the root system corresponds to the universal
Coxeter group of rank 4, $E$ is the usual Apollonian gasket inscribed
in a sphere (see \cite[Fig.~9]{HLR}), and its Hausdorff dimension is
about 1.3057 (see \cite{mcmullen}).

\subsection{Construction of converging sequences, combinatorics and dominance order}
Many questions on the precise way in which the normalized roots
converge to~$E$ have been left open. For example, the rate of
convergence (as a  function of the depth of roots) is unknown. Also it
would be interesting to describe explicitly for which sequences of
roots the associated normalized roots converge. More precisely, given
a sequence of positive roots $(\rho_1\leq \rho_2 \leq \dots)$ (increasing  
in the root poset, see~\S\ref{ss:elem}), when does the sequence
 $(\h{\rho_n})_{n\in \Nat}$ converge? This comes down to study the possible limit points of
a sequence $(s_k s_{k-1}\dots s_1(\alpha_0))_{k\in \Nat}$, where
$\alpha_0\in \Delta$ and $(\dots s_k \dots s_1)$ is a (left-)infinite
reduced word of $W$. The case where the word is periodic is of special
interest. When the period is 2, it will provide limit roots 
in~$E_2(\Phi)$. In general, this question requires the precise study of the
asymptotics of sequences of the form $(w^n(\alpha))_{n\in \Nat}$ for
$w\in W$, $\alpha\in \Delta$.

Other related questions are as  follows. Consider  a sequence
 $(\rho_{n})_{n\in \Nat }$  of positive roots, with $\rho_1\prec \rho_2
\prec \dots$ i.e. strictly  increasing in the
dominance order (see~\S\ref{def:dom}).   We do not know   if  $(\h {\rho_n})_{n}$
 has a unique limit root. However, it follows from  Proposition~\ref{dominc}(b)   that any two limit roots
 of a fixed such sequence are
 orthogonal (compare Proposition~\ref{prop:isoface}).  It can be shown  that   in general, not every  limit root in~$E$ is  a  limit root from   
  a  dominance increasing  sequence $(\rho_{n})_{n}$, but it is a limit root of some sequence $(\h {\tau_{n}})$
  where $(\tau_{n})$ is related to some dominance increasing $(\rho_{n})$ as in Proposition~\ref{dominc}(a). 
 
  This suggests a way to associate  subsets of~$E$ to ends of dominance order. The result 
  \cite[Proposition 7.10(c)]{dyer:imc} also suggests  an approach to attaching isotropic faces 
  of the imaginary cone  to ends of weak order.  These ideas have been worked out most fully 
  for generic universal root systems (see~\cite[9.9--9.16]{dyer:imc}) but  basic questions 
  remain open even  in  that specially simple case.
  Clarifying these ideas and their relationships  in general
would contribute to a better understanding of the relation between the
combinatorics of the root system and the distribution of the
normalized roots.  

As shown in \cite{dyer:imc}, the Coxeter system $(W,S)$,
specified by its  Coxeter graph with 
vertex set  $\Delta$, together with the set of  facial subsets of $\Delta$,    suffice to 
determine the face lattice  of the imaginary cone (as lattice with $W$-action). 
 One might  speculate that this information together 
 with the additional data given by the set  of  ``affine edges'' 
$\mset{\set{\alpha,\beta}\mid \al,\bt\in \Delta,\ \mpair{\al,\bt}=-1}$ may 
determine the face lattice of the closed imaginary cone combinatorially.
It  is not  incompatible with what is currently known  that the set  of limit roots  
 may admit a combinatorial description  which  determines it  as a set 
(or perhaps even up to homeomorphism) in terms of this data.

\subsection{Generalization to other frameworks} The concept of root system has many different incarnations, depending on the framework: Coxeter groups, semi-simple Lie algebras, Kac-Moody Lie algebras, extended affine Lie algebras, reductive algebraic groups...; see the many references in the introduction of \cite{loos-neher:root}, where Loos-Neher developed a general framework in order to clarify all these structures (see also H\'ee \cite{hee:root} and the recent work of Fu \cite{fu:pair}). In most of these contexts, the limit roots can still be defined, and in some cases, the isotropic cone as well. We expect that a part of the results in \cite{HLR} and in the present work generalize well to these other settings.

For example, some classes of based root systems appear naturally in the
context of quiver representation, where the positive roots can be
interpreted as dimension vectors for the indecomposable
representations (see \cite{weyman}). The question of an interpretation of the limit roots in this setting is intriguing.

\begin{figure}[h!]
\captionsetup{width=\textwidth} 
\begin{minipage}[b]{\linewidth}
\centering
\scalebox{0.9}{\input{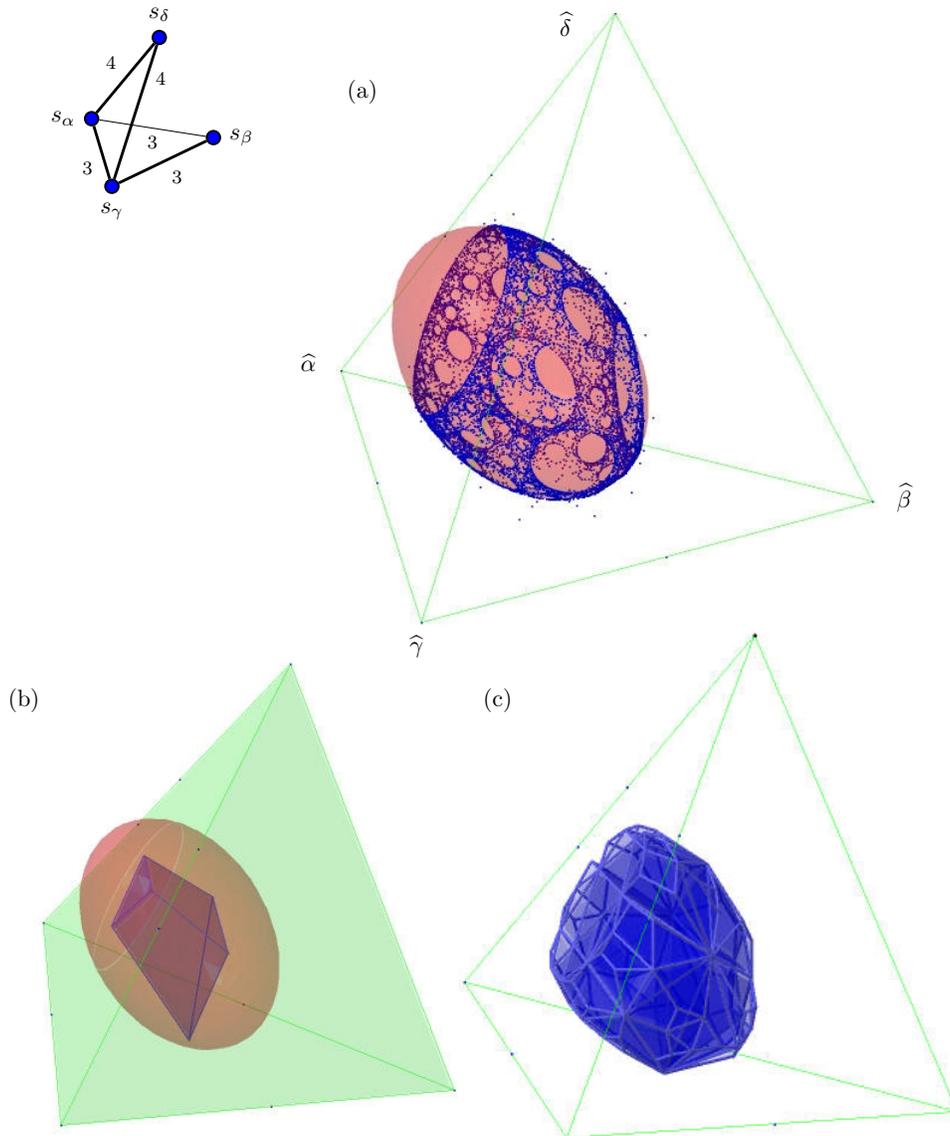}}
\end{minipage}%

\caption{An example of weakly hyperbolic root system (with diagram on the top left corner). (a) Normalized roots (blue dots, drawn until depth 11), which quickly tend to an Apollinian gasket-like shape living on $\h Q$ (in red), as explained in \S\ref{se:fractal}; the sets $\mathcal F_0$ and $\mathcal F$ described in \S\ref{ss:facial}-\ref{ss:fractal2} appear clearly. (b) The polytope $K$ defined in \S\ref{ss:imc}; note how it is truncated by the left face of $\conv(\h\Delta)$ and it touches the bottom face exactly on the limit root of the affine root subsystem generated by $\{\alpha,\beta,\gamma\}$. (c) The first steps of construction of the imaginary convex set $Z$, defined in \S\ref{ss:imc}. We draw all the polytopes $w\cdot K$ for $w$ of Coxeter length $\leq 3$ ($\h Q$ is not drawn, to lighten the picture).}
\label{fig:gasket}
\end{figure}

\newpage

\appendix
\section{Relation of limit roots to Benoist's limit sets}\label{se:Benoist}

Let $\Phi$ be a based root system, associated to a Coxeter group $W$. 
We assume here that $\Phi$ is irreducible, of indefinite type and of rank at least three, and that $\Phi$ spans $V$ linearly. When $\Phi$ is non-degenerate (i.e., the associated bilinear form is non-degenerate), we explain in this appendix how the set of limit roots $E(\Phi)$ can be identified with one of the \emph{projected limit sets} of Benoist \cite{benoist}, which are limit sets associated to a Zariski dense subgroup of a connected reductive algebraic group. Benoist's framework is described in \S\ref{ss:benoistsum}. Constructing the identification involves generalizing first a result by Benoist-De la Harpe \cite{BenHar} on the Zariski closure of a Coxeter group (\S\ref{ss:Zariski}). In \S\ref{ss:non-deg}, we prove the identification of $E$ with Benoist's limit set (Theorem~\ref{benthm}) and we obtain this way a new characterization of the set of limit roots in the non-degenerate case (Corollary~\ref{cor:AppA}).

These results do not extend directly to the case where $\Phi$ is degenerate, because the natural ambient algebraic group is not reductive (\S\ref{reductive?}). However, in this case $E(\Phi)$ will project onto some $E(\Psi)$ with $\Psi$ non-degenerate, as explained in \S\ref{radquot}.

\subsection{}\label{ss:benoistsum} This subsection describes, somewhat informally and imprecisely, a part of the  results of   \cite{benoist}, referring to   \cite{Bor} and \cite[Chapter 1]{Mar} for the necessary background.   Let $\Gamma$ be a Zariski dense subsemigroup  of the group of $k$-points $G(k)$ of  a connected, reductive algebraic group $G$ defined over a local field $k$. Benoist attaches
to $\Gamma$ certain (equivalent) notions of ``limit set'' for $\Gamma$ in $G$. We discuss the realization of the limit set as a  subset  $\Lambda$  of a suitable flag variety  $Y$.
 Our applications involve only   the special case in which $k=\mathbb{R}$, $G$ is semisimple  and $\Gamma$  is a group, and we assume this henceforward for simplicity. 
 Below,  the  set of $k$-points $X(k)$ of a  complex algebraic variety $X$ defined over $k$ is always considered as an analytic $k$-variety (in particular, it is taken  to have  the standard  Hausdorff topology induced from that of $k$).
 
The  standard  parabolic $k$-subgroups of $G$ may be naturally indexed   as $P_{\theta}$
for  subsets $\theta$ of the set $ \Pi $ of restricted simple roots, so that $P_{\theta}\supseteq P_{\theta'}$ if $\theta\subseteq \theta'$.  Attached to   $P_\theta$, one has a     ``flag variety''   $Y_{\theta}:=G(k)/P_\theta(k)$  which is a compact analytic $k$-manifold  on which a   maximal compact subgroup $K$   of $G(k)$ acts transitively.   There is a $K$-invariant probability measure $\mu_{\theta}$ on $Y_{\theta}$. Define $\Lambda_{\theta}$ to be the set of all points $x$ in $Y_{\theta}$  such that there is a sequence $(\gamma_{n})_{n\in \mathbb{N}}$ in $\Gamma$ such that the sequence $\gamma_{n}^{*}(\mu_{\theta})$ of pullback measures converges to a Dirac mass concentrated at $x$.

 The following facts  are from   \cite[\S3]{benoist} (see especially the first paragraph of \S3 and 3.5--3.6).  The set $\Lambda_{\theta}$ is a  closed, $\Gamma$-invariant subset of   $Y_\theta$.
 Let us denote by $Y:=Y_ \Pi $ the flag variety associated to the minimal parabolic $k$-subgroup,  and $\Lambda:=\Lambda_ \Pi $ the set of associated limit points  in $Y$ for $\Gamma$. Because of our assumption $k=\mathbb{R}$, one has $\Lambda\neq \eset$ and  any non-empty  $\Gamma$-invariant closed subset of $Y$  contains $\Lambda$. For $\theta\subseteq  \Pi $, $\Lambda_{\theta}$ is the image of  $\Lambda$ under the natural  projection  $Y\to Y_{\theta}$.

 The results of the preceding paragraph  imply that  for all $\theta\subseteq  \Pi  $, $\Lambda_{\theta}$ is a non-empty, closed, $\Gamma$-invariant subset of $Y_{\theta}$, and that any non-empty  $\G$-invariant  closed subset of $Y_{\theta}$ contains $\Lambda_{\theta}$.
 These properties uniquely characterize $\Lambda_{\theta}$ and may be     taken  as definitions for our purposes below.  They imply  in particular that  the $\Gamma$-action on each  $\Lambda_{\theta}$ is minimal.
 We call $\Lambda$  the limit set of $\Gamma$. By a  \emph{projected limit set}, we mean  a set $\Lambda_{\theta}$ for some $\theta\subseteq  \Pi  $.

\subsection{} \label{ss:Zariski} For non-degenerate, spanning based root systems  (associated to a Coxeter group $W$),   the set of limit roots  will be identified below with a suitable projected limit set.  First we need to understand what is the right algebraic group to consider. The Zariski closure of $W$ in its standard reflection representation is described by Benoist-De la Harpe in \cite{BenHar}. We extend below their result to the more general class of  reflection representations considered in our paper.  Although this  result is used  here only for non-degenerate forms, we state it in natural  generality corresponding to that in op. cit. 

Fix a based root system $(\Phi,\Delta)$ in $(V,B)$, together with its Coxeter group $W$.  We assume  from now on  that $\Phi$ is irreducible of indefinite type and  of rank at least three, and that  $\Phi$ spans  $V$ linearly. Let $O(V)=O(V,B)$ denote the orthogonal group of $(V,B)$; that is,   
\begin{equation*}
O(V):=\{g\in \text{\rm GL}(V)\mid B(gv,gv')=B(v,v')\text{ \rm for all $v,v'\in V$}\}. 
\end{equation*} Let   $V^\perp  =\{v\in V\mid B(v,v')=0 \text{ \rm for all $v'\in V$}\}$ denote the  radical of $(V,B)$, and define the   following  subgroup  of $O(V)$:

\begin{equation*}
 H(B):=\{g\in  O(V)\mid  g(v)=v \text{ \rm for all $v\in  V^\perp  $}\}.
\end{equation*}
  Let $(V_{\mathbb{C}},B_{\mathbb{C}})$ denote the quadratic space arising as the  complexification of $(V,B)$ (i.e., $V_{\mathbb{C}}:=V\otimes_{\mathbb{R}}\mathbb{C}$ and $B_{\mathbb{C}}$ is the symmetric bilinear form on 
$V_{\mathbb{C}}$ arising by extension of scalars to $\mathbb{C}$ from $B$ on $V$). Similarly as above, we define the orthogonal group $O(V_{\mathbb{C}})=O(V_{\mathbb{C}},B_{\mathbb{C}})$ and its subgroup  $H(B_{\mathbb{C}})=H(V_{\mathbb{C}},B_{\mathbb{C}})$. Regarding
the natural map $\text{\rm GL}(V,\mathbb{R})\to \text{\rm GL}(V_{\mathbb{C}},\mathbb{C})$
as an inclusion, we regard $H(B)$ as a subgroup of $H(B_{\mathbb{C}})$. 
Note that $H(B_{\mathbb{C}})$ is a linear algebraic group, since it is closed in the Zariski topology of $\text{\rm GL}(V_{\mathbb{C}},\mathbb{C})$. More precisely, we view 
$H:=H(B_{\mathbb{C}})$ as a complex linear algebraic group defined over $k$ with
$H(k):=H(B)$ as its (Zariski dense) group of $k$-points.

The main result of \cite{BenHar} extends to this setting as follows:

\begin{prop}\label{ss:Zariskithm} The Zariski closure of $W$   is $H$ (i.e., $W$ is Zariski dense in $H(k)$). \end{prop} 
\begin{proof} In case $\Delta$ is linearly independent the 
argument is the same, mutatis mutandis, as that in \cite{BenHar}. The general case can be reduced to that case  as follows. Choose a subset $\Delta'$ of $\Delta$ which is  inclusion maximal 
subject to the requirements that $\Delta'$ is linearly independent and  the corresponding 
standard parabolic subgroup  $W_{\Delta'}=\langle s_\alpha\,|\, \alpha\in \Delta'\rangle$ is irreducible. We claim that $\Delta'$ spans  $V$. Otherwise, there is some $\alpha\in \Delta\setminus \Span(\Delta')$. By irreducibility of $W$, one may suppose without loss of generality that $B(\alpha,\beta)\neq 0$
for some $\beta\in \Delta\cap \Span(\Delta')$.  This implies $B(\alpha,\gamma)\neq 0$ for some 
$\gamma\in \Delta'$. Then $\Delta'':=\Delta'\cup\set{\alpha}$ is linearly independent and $W_{\Delta''}$ is irreducible, contrary to maximality of $\Delta'$.
Note $W_{\Delta'}$ is of  rank at least three and is  of indefinite type,   since its  type is determined by the signature of $B$.  By the case of linearly independent simple roots, $W_{\Delta'}$ is Zariski dense in $H(k)$ and hence so is $W\supseteq W_{\Delta'}$.\end{proof}

\subsection{} \label{reductive?}  To apply Benoist's results, the ambient algebraic group should be connected and reductive. Since $H$ is not connected, we will first need to replace $H$ with $H^0$, its connected component of the identity, and $W$ with $W\cap H^0(k)$. We therefore need  the following simple fact.

\begin{prop} The algebraic group $H^0$ is reductive if and only if 
$V^\perp =\{0\}$.
\end{prop}

\begin{proof}
Choose a complementary subspace $U  $ to $V^\perp$ in $V$ and let $B_{U} $ 
be the restriction of $B$ to a symmetric bilinear form on $U$. 
Let $r$ denote the 
dimension of $V^\perp$ and $m$ that of $U  $.  Let $A$ denote the $m\times m$ matrix (with 
respect to 
some basis) of $B_{U} $ on $U  $. Then (with respect to a basis obtained by extending that 
basis by a basis of $V^\perp$) the matrix of $B$ on $V$  is a diagonal block matrix 
$\text{\rm diag}(A,0_r)$ , where  $0_r$ is the
 $r\times r$ zero matrix. 
Then    $H(k)$ (resp., $H$)   identifies with the group of all real (resp., complex) block  matrices
of the form 
\begin{equation}
\begin{bmatrix}\label{eq:matrix}X&0\\Y&\Id_{r}\end{bmatrix}
\end{equation} 
where $Y$, of size $r\times m$, is arbitrary and  $X$  satisfies   $X^{t}AX=A$. The subgroup of such (complex) matrices with
$Y= 0$ identifies with the complex semisimple  algebraic group 
$(O(U  )_{\mathbb{C}},(B_{U} )_{\mathbb{C}})\cong O(m,\mathbb{C})$. On the other hand, 
the set of   complex matrices  \eqref{eq:matrix} with  $X=\Id_{m}$ is a unipotent  normal (abelian) 
subgroup   of $H$; it is the unipotent radical $R_{u}H^{0}$. 
It follows that  $H^{0}$ is 
reductive if and only if $ V^\perp =\{0\}$, in which case $H^{0}$ is semisimple.
\end{proof}

\subsection{}  The following notation will prove convenient below. For any finite-dimensional real  vector space $U'$, let $\mathbb{P}(U')$
  denote the projective space with points the real 
 lines in $U'$, in the usual (Hausdorff) topology. 
 For $X\subseteq U'$, let $[X]\subseteq \mathbb{P}(U')$ denote the set of lines  spanned by non-zero points of   $X$.

\subsection{}
\label{ss:non-deg}   We assume in this subsection that $(\Phi,\Delta)$ is  (spanning and)  non-degenerate, i.e. $V^\perp=\{0\}$. 
Let us denote as usual by $Q$ the isotropic cone of $B$, $Q:=\mset{v\in V\mid B(v,v)=0}$. In the following we explain how to identify the set of isotropic lines $[Q]$ with some partial flag variety $Y_\theta$, for some $\theta\subseteq \Pi  $ as in \S\ref{ss:benoistsum}.

 The assumed  non-degeneracy of  $B$  implies  that  $H= O(V_{\mathbb{C}}, B_{\mathbb{C}})$, so the 
connected component  $G:=H^0= SO(V_{\mathbb{C}}, B_{\mathbb{C}})$ of  the identity of  $H$ is a  
semisimple  complex algebraic $k$-group.  Let $n=\dim V$, so that $G\cong SO(n,\mathbb{C})$.  Its group $G(k)$ of $k$-points identifies with 
$SO(V,B)\cong SO(p,q)$ where $(p,q)$ is the signature of  $(V,B)$.  Let $r:=\min(p,q)$ be the 
Witt index of $(V,B)$.  Since  $(\Phi,\Delta)$  is of indefinite type and rank at least three, we have $r\geq 1$ and $p+q\geq 3$.    Fix a   
choice of maximal $k$-split  torus in $G$ and a set $ \Pi  $ of simple relative roots for the  
corresponding relative root system for $G$, which is  of type $B_{r}$ if $p\neq q$ and $D_{r}$ (interpreted as $A_{1}\times A_{1}$, $A_{3}$ for $r=2,3$) if $p=q$ (see  \cite[23.4]{Bor}). The  standard  minimal parabolic $k$-subgroup $P_{ \Pi  }$ 
identifies (see loc. cit.) with the stabilizer in $G$ of a standard maximal flag, defined over $k$, 
 of totally isotropic subspaces of $(V_{\mathbb{C}},B_{\mathbb{C}})$. 
 The standard parabolic 
 $k$-subgroups $P_{\theta}$, where $\theta\subseteq  \Pi  $,  of $G$ are precisely  the 
 subgroups of $G$ which contain $P_{ \Pi  }$. The standard  $k$-parabolic subgroups all have  interpretations similar to that of $P_{ \Pi  }$, as 
 stabilizers of standard isotropic  flags in $V_{\mathbb{C}}$ defined over $k$, but there are complications  in 
 type $D$ because there are two $G$-orbits of maximal isotropic $\mathbb{C}$-subspaces of 
 $V_{\mathbb{C}}$. 
 
For  purposes here, it suffices to note that    
there is a  standard parabolic $k$-subgroup of $G$, which we write as $P_{\theta}$ for some 
$\theta\subseteq  \Pi  $, given by the stabilizer of the isotropic line in that standard maximal 
isotropic flag.  (We do not need the explicit description of $\theta$ as a subset of $\Pi$, but it may easily be determined for each type of root system). The corresponding (partial) flag variety  $Y_\theta=G(k)/P_\theta(k)$  identifies with the $G(k)$-orbit 
in $\mathbb{P}(V)$ of the corresponding (real)  line.
Now all isotropic lines in $(V,B)$ are in the same $G(k)=SO(V,B)$ orbit, since (by Witt's 
theorem) they  are in the same orbit for $O(V,B)$, and any one of them is stabilized by the 
reflection in some  non-isotropic vector orthogonal to that line (such a line always exists since 
$p+q\geq 3$). Hence $Y_{\theta}$ naturally identifies (as homogeneous spaces for  $G(k)$) 
with the set $[Q]$ of all isotropic lines in $[V]=\mathbb{P}(V)$. The above identification 
$[Q]=Y_{\theta}$ can be made as analytic manifolds, but it suffices here   to make it as 
topological spaces (which is straightforward since both are compact Hausdorff spaces).

Let $\Gamma:=W\cap G(k)=\{w\in W\mid l(w) \text{ \rm is even}\}$ be   the
 ``rotation subgroup''  of $W$, regarded as Zariski dense subgroup of $G(k)$.   Denote the projected limit set for $\Gamma$ in $Y_{\theta}$  as  $\Lambda_{\theta}$, as in \S\ref{ss:benoistsum}.

\begin{thm}\label{benthm} Assume  $(V,B)$ is non-degenerate and $\Delta$ spans $V$, and make the  identification $[Q]=Y_{\theta}$,   with the specific $\theta$ defined in the previous paragraphs.
 Then $\Lambda_{\theta}=[E(\Phi)]$, i.e., the projected limit set $\Lambda_\theta\subseteq Y_{\theta}$ for $\Gamma$ as a  Zariski dense subgroup  of $ G(k)$   identifies with the set of limit roots $E(\Phi)$, as subsets of $[Q]$. In particular,  $\Lambda_{\theta}$  is $W$-stable.  \end{thm}

\begin{proof}
Note $\Gamma$ is a  normal subgroup of $W$, which acts on $[Q]$ by restriction of the natural $O(V,B)$-action given by  $g[\mathbb{R}\alpha]=[\mathbb{R}g\alpha]$  for any $g\in O(V,B)$  and non-zero $\alpha\in Q$.  For any $w\in W$, $w\Lambda_{\theta}$ is 
a minimal non-empty closed  $\Gamma$-invariant subset of $[Q]$ (since $\Gamma=w\Gamma
w^{-1}$) and therefore coincides with $\Lambda_{\theta}$ (which is the unique minimal such subset). Hence $\Lambda_{\theta}$ is stable under the $W$-action on $[Q]$.

Since $[E({\Phi})]$ is a non-empty, closed, $\Gamma$-invariant subset of $[Q]$, one has 
$ \Lambda_{\theta}\subseteq [E(\Phi)]$, by minimality of $\Lambda_{\theta}$ amongst sets with those properties.  But then $\Lambda_{\theta}$ is a non-empty, closed $W$-invariant  subset of $[E(\Phi)]$, and  the minimality of the $W$-action (Theorem \ref{cor:minimal}) on $[E(\Phi)]$ forces equality in the inclusion.  
\end{proof}

\begin{cor}\label{cor:AppA} If  $(V,B)$ is degenerate and $\Delta$ spans $V$, any non-empty, closed $W$-invariant  subset of $[Q]$ contains $[E(\Phi)]$.  
\end{cor}
\begin{proof}
This follows since the previous theorem implies it holds  with $[E(\Phi)]$ replaced by $\Lambda_{\theta}$ and $W$ by its subgroup $\Gamma$. 
\end{proof}

\begin{rem}
\label{rk:5}
~
\begin{enumerate}
\item  Although \cite{benoist} and (our extension of) \cite{BenHar}  easily imply as above the existence of a unique  non-empty closed  $W$-invariant  subset of  $[Q]$  on which the $W$-action is minimal,  we do not know how  to prove that set identifies with $[E(\Phi)]$ except as above, i.e., by use of  our  Theorem \ref{cor:minimal}.  In particular, we do not have a way to relate the two notions of limit sets ($\Lambda_\theta$ and $E(\Phi)$) directly from their definitions, without using their characterizations via minimality.

\item We do not know how to prove Corollary~\ref{cor:AppA}   without use of \cite{benoist}.   The related result we have (Theorem~\ref{thm:fractal2}) assumes that the root system is weakly hyperbolic and states only that any non-empty, closed $W$-invariant  subset of $[Q]$, that is \emph{also contained in} $[\conv(\Delta)]$, is equal to $[E(\Phi)]$. 

\item  For $\Phi$ non-degenerate, Theorem \ref{benthm} provides an  interpretation of $[E(\Phi)]$ as a  projected limit set, and \cite{benoist} then yields many additional facts about $E(\Phi)$ which seem  likely to have significant applications (see for example Remark \ref{rk:6}(3)). 

\item It is an interesting question whether other projected limit sets $\Lambda_{\theta'}$ for $\Gamma$, and especially the limit set  $\Lambda$  itself, can be given interpretations similar  to those in the theorem in terms of the root system. The corresponding flag varieties $Y_{\theta'}$  involve flags containing higher dimensional totally  isotropic spaces,  and such isotropic subspaces already appear naturally in the study of limit roots
(see for instance Proposition  \ref{dominc}).
\end{enumerate}
\end{rem}

\subsection{}\label{radquot}  We now consider the situation for possibly degenerate root bases.  Let us explain a classical way (after Krammer) to obtain from a degenerate root system a non-degenerate one, with the  same Coxeter group. Let  $\pi\colon V\to V/V^\perp$ 
be the natural map. The restriction of $\pi$ to  $U $  identifies $U  $ isomorphically with 
$V/ V^\perp $ as real vector space, and we further identify $(U  ,B_{U} )$ with the quotient of $(V ,B)$ by its 
radical $ V^\perp$.  
Since $(\Phi,\Delta)$ is of indefinite type, one  has
$\cone(\Delta)\cap V^\perp=\{0\}$, and there is a (non-degenerate, spanning) based root system 
$(\Psi, \Sigma)$ for $(U  ,B_{U} )$ where  $\Psi=\pi(\Phi)$ and $ \Sigma=\pi(\Delta)$ 
(see \cite[6.1]{krammer}).
The Coxeter system attached to  $(\Psi, \Sigma)$  identifies canonically with $W$, with its natural 
action on the quotient space $U  =V/V^\perp$.

Let $V_{1}$ be the fixed affine subspace  of $V$ transverse to $\Phi$ and  
$V_{0}$ be its  translate   through $0$, as  in \S\ref{ss:limitroots}. The map   $v \mapsto [\{v\}]$ 
for $v\in V_{1}$   identifies $V_{1}$ homeomorphically with  
   $[V_{1}]=\mathbb{P}(V)\setminus \mathbb{P}(V_{0})$, an open subset of 
    $\mathbb{P}(V)$. The  rule
$[\{v\}]\mapsto [\{\pi(v)\}]$ for $v$ in $\cone(\Delta)\cap V_{1}$
  defines a continuous  surjective map $[\cone(\Delta)]\to [\cone( \Sigma)]$ of compact Hausdorff 
  spaces (hence it is  a closed, proper quotient map). Since $v$ is isotropic if and only if 
  $\pi(v)$ is isotropic, it  easily follows that this map restricts to a surjective continuous (closed, proper, quotient) map
   $[E(\Phi)]\to [E(\Psi)]$
(where $E(\Phi)$ and $E(\Psi)$ are the sets of limit roots for $\Phi$ and $\Psi$ respectively) 
 and that this latter map is in addition $W$-equivariant.

By  Theorem~\ref{cor:minimal}(b), the $W$-actions on  $[E(\Phi)]$ and $[E(\Psi)]$ are minimal. One easily 
sees that minimality  on $[E(\Phi)]$ directly implies that on $[E(\Psi)]$, but we do not know any 
direct argument for the converse implication.  Therefore, results on the limit sets for non-degenerate root systems do not easily extend to degenerate ones.

\subsection{} We give a simple example to show that the above map $[E(\Phi)]\to [E(\Psi)]$ is not bijective in general.  Let $(\Phi,\Delta)$ be the standard based root system attached to the following  Coxeter graph, in which vertices are labeled by the corresponding simple roots:
\begin{center}
\begin{tikzpicture}[sommet/.style={inner sep=2pt,circle,draw=blue!75!black,fill=blue!40,thick},] 
      \coordinate (ancre) at (0,-1.5);
      \node[sommet,label=below:$\alpha$] (t1) at (ancre) {};
      \node[sommet,label=below :$\beta$] (t2) at ($(ancre)+(1,0)$) {} edge[thick] node[auto,swap] {} (t1);
      \node[sommet,label=below :$\gamma$] (t3) at ($(ancre)+(2,0)$) {} edge[thick] node[auto,swap] {} (t2);
    	 \node[sommet,label=below :$\delta$] (t4) at ($(ancre)+(3,0)$) {} edge[thick] node[auto,swap] {} (t3);
      \node[sommet,label=below :$\epsilon$] at ($(ancre)+(4,0)$) {} edge[thick] node[auto,swap] {} (t4);
            \node[black] at (0.5,-1.25) {$\infty$};
            \node[black] at (3.5,-1.25) {$\infty$};
\end{tikzpicture} 
\end{center}

Denote the associated quadratic space as $(V,B)$.
 One easily checks that $\alpha+\beta-\delta-\epsilon$ is  in $V^{\perp}$. 
One has $\widehat{\alpha+\beta}=\lim_{n\to \infty}\widehat{(s_{\alpha}s_{\beta})^{n}\alpha}\in E(\Phi)$ and similarly 
$\widehat{\delta+\epsilon}\in E(\Phi)$. 
Hence the above map $[E(\Phi)]\to [E(\Psi)]$ sends the distinct  elements $[\{\alpha+\beta\}]$ and $[\{\delta+\epsilon\}]$ of $[E(\Phi)]$ to the same element of $[E(\Psi)]$. 

Note also that although $(\Phi,\Delta)$ is a standard based root system,
we are not able to deduce the minimality of the $W$-action on $E(\Phi)$ from Benoist's results.

\begin{rem}
\label{rk:6} 
~
\begin{enumerate}
\item We do not know  if, for  degenerate, spanning  $(\Phi,\Delta)$, with $ V^\perp$ defined as  the radical of  $(V,B)$,  any closed non-empty $W$-invariant subset of $[Q]\setminus [V^\perp]$ contains $[E(\Phi)]$ (though the corresponding  statement with  $[Q]\setminus [ V^\perp]$ replaced by $[Q]$ obviously fails in general).

\item In the case $\Phi$ is degenerate, \S \ref{radquot} gives a $W$-equivariant surjection from $E(\Phi)$ to some $E(\Psi)$ with $\Psi$ non-degenerate. This construction may allow one to transfer some of the properties known  in the non-degenerate case to the degenerate case, but not all. 
Remark (3) below illustrates both  this point and Remark \ref{rk:5}(3).  

\item We sketch  another proof of  faithfulness of the $W$-action on $E(\Phi)$  in the setting of Theorem~\ref{thm:faithful2}  ($\Phi$ indefinite of rank at least 3, and irreducible)  as follows.  From \S\ref{radquot} one always has a surjective $W$-equivariant map from $E(\Phi)$ to some $E(\Psi)$ where $\Psi$ is non-degenerate. So it is sufficient to prove the faithfulness property in the non-degenerate case.

Thus, assume now that $\Phi$ is non-degenerate. Using the notations and result of Theorem~\ref{benthm}, $[Q]$ identifies to $Y_\theta$ and $E(\Phi)$ to $\Lambda_\theta$. 
Using the projection  $Y \to Y_{\theta}$, the Zariski density of  $\Lambda$ in $Y$ 
(see \cite[Lemma 3.6]{benoist}) implies that of $\Lambda_{\theta}$ in $Y_{\theta}$.  Therefore, if $w\in W$ acts as the identity on $[E(\Phi)]$, it acts as the identity  on $[Q]$. This implies $w$ fixes each isotropic line in $V$. Since $\Phi$ is irreducible and non-degenerate, this readily  implies  that $w$ acts as the identity on $V$ and hence $w=1$ by faithfulness of the $W$-action on~$\Phi$.
\end{enumerate}\end{rem}

\bibliographystyle{alpha}
\bibliography{LimitRoots2}

\end{document}